\newtheorem{theorem}{Theorem}[section]
\newtheorem{lemma}[theorem]{Lemma}
\newtheorem{proposition}[theorem]{Proposition}
\theoremstyle{definition}
\newtheorem{definition}[theorem]{Definition}
\newtheorem{remark}[theorem]{Remark}
\newtheorem{example}[theorem]{Example}
\numberwithin{equation}{section}
\renewcommand{\div}{\mathrm{div}\,}			
\newcommand{\divh}{\mathrm{div}_{\!H}\,}	
\newcommand{\nablah}{\nabla_{\!H}\,} 		
\newcommand{\Deltah}{\Delta_{\!H}\,}  		
\newcommand{\Ah}{A_{\!H}}	  				
\newcommand{\Apr}{A_{\rm{pr}}\,} 			
\newcommand{\lso}{L^2_{\overline{\sigma}}} 	
\newcommand{\ls}{L^2_{\sigma}} 				
\newcommand{\Ps}{P_\sigma}					
\newcommand{\one}{{\mathds 1}}				
\newcommand{\hook}{\hookrightarrow}			
\newcommand{\harp}{\rightharpoonup}			
\providecommand{\norm}[1]{\left\| #1 \right\|} 
\renewcommand{\d}{\operatorname{d}} 			
\DeclareMathOperator{\dz}{\d\! z}				
\DeclareMathOperator{\dxyz}{\d\!\rm{(x,y,z)}}				
\DeclareMathOperator{\dt}{\d\! t}				
\DeclareMathOperator{\ds}{\d\! s}				
\DeclareMathOperator{\dr}{\d\! r} 				
\DeclareMathOperator{\dW}{\d\! W}				
\newcommand{\E}{\mathbb{E}}
\newcommand{\R}{\mathbb{R}}
\newcommand{\N}{\mathbb{N}}
\newcommand{\PP}{\mathbb{P}}
\newcommand{\hPP}{\hat{\PP}}
\newcommand{\cU}{\mathcal{U}}
\newcommand{\cS}{\mathcal{S}}
\newcommand{\cX}{\mathcal{X}}
\newcommand{\cF}{\mathcal{F}}
\newcommand{\bF}{\mathbb{F}}
\newcommand{\Unk}{\hat{U}^{n_k}}
\newcommand{\Wnk}{\hat{W}^{n_k}}
\newcommand{\hU}{\hat{U}}
\newcommand{\hW}{\hat{W}}
\newcommand{\hOmega}{\tilde{\Omega}}
\newcommand{\Rc}{\mathcal{R}}
\newcommand{\Ac}{\mathcal{A}}
\title[Stochastic Primitive Equations with Horizontal Viscosity]{Stochastic 
Primitive Equations with Horizontal Viscosity and Diffusivity}
\subjclass[2020]{Primary: 35Q35;
Secondary: 35A01, 35K65, 35M10, 35Q86, 35R60, 60H15, 76D03, 86A05, 86A10.} 
\keywords{Primitive equations, horizontal viscosity, nonlinear stochastic PDE, multiplicative noise}
\author[Saal]{Martin Saal}
\address{Scuola Normale Superiore,
Pisa, Italy}
\email[Corresponding author]{msaal@mathematik.tu-darmstadt.de}
\curraddr{TU Darmstadt, Schlossgartenstr. 7, 64289 Darmstadt, Germany}
\author[Slav\'ik]{Jakub Slav\'ik}
\address{The Czech Academy of Sciences, Institute of Information Theory and Automation, Prague, Czech Republic}
\email{slavik@utia.cas.cz}
\thanks{The first author gratefully acknowledges the financial support of the Deutsche Forschungsgemeinschaft (DFG) through the research fellowship SA 3887/1-1.
}
\begin{document}

\begin{abstract}
We establish the existence and uniqueness of pathwise strong solutions to the stochastic 3D primitive equations with only horizontal viscosity and diffusivity driven by transport noise on a cylindrical domain $M=(-h,0) \times G$, $G\subset \R^2$ bounded and smooth, with the physical Dirichlet boundary conditions on the lateral part of the boundary. Compared to the deterministic case where the uniqueness of $z$-weak solutions holds in $L^2$, more regular initial data are necessary to establish uniqueness in the anisotropic space $H^1_z L^2_{xy}$ so that the existence of local pathwise solutions can be deduced from the Gy\"{o}ngy-Krylov theorem. Global existence is established using the logarithmic Sobolev embedding, the stochastic Gronwall lemma and an iterated stopping time argument.
\end{abstract}

\maketitle
\tableofcontents


\section{Introduction and main results}
The 3D primitive equations, one of the fundamental models for geophysical flows, describe oceanic and atmospheric dynamics. They are derived from the compressible 
Navier-Stokes equations assuming hydrostatic balance and the Boussinesq approximation.
The subject of this work is the initial value problem for the primitive 
equations with horizontal viscosity and diffusivity and the physical lateral Dirichlet boundary conditions. 

The study of problems with partial viscosity and diffusivity is motivated by the fact that in many geophysical models, the horizontal viscosity and diffusivity is considered dominant and the vertical one is neglected. Models with partial viscosity and diffusivity are also interesting from the analytical point 
of view since they combine 
features of both parabolic diffusion equations in horizontal directions (represented by the term 
$-\Deltah$) and hyperbolic transport equations in the vertical direction (represented by the nonlinear term 
$w \partial_z v$), see \eqref{eq:primeqhorvisc} below. Roughly speaking, one thus expects that 
regularity is preserved in the vertical direction while it is smoothed in horizontal directions. 
Following this intuition, we may identify classes of initial data for which the problem 
is locally or even globally well-posed.

\subsection{Primitive equations with horizontal viscosity and diffusivity}
Let $t > 0$ and let $G \subset \R^2$ be a bounded and smooth domain. We consider the primitive equations on a cylindrical domain $M$ of depth $h> 0$, defined by $M = G \times (-h, 0)$.
The boundary $\partial M$ decomposes into a lateral, upper and bottom parts as
\begin{align*}
\Gamma_l:= \partial G\times (-h,0), \quad \Gamma_u:= \overline{G} \times \{0\}, \quad  \Gamma_b:= \overline{G} \times \{-h\}.
\end{align*}
The primitive equations  
describe the velocity $u=(v,w)\colon M \rightarrow \R^3$, the temperature $T: M \to \R$ and the pressure $p\colon M \rightarrow\R$ 
of a fluid, where $v = (v_1, v_2)$ denotes the horizontal components and $w$ stands for the vertical component of velocity. The primitive equations with horizontal viscosity and diffusivity are
\begin{equation}
\begin{alignedat}{1} \label{eq:primeqhorvisc}
\partial_t v - \nu_v \Deltah v  + k\times v+ \frac{1}{\rho_0}\nablah p & \\
+ v \cdot \nablah v + w \partial_z v
& = f_v+\sigma_1(v,\nablah v,T,\nablah T) \dot W_1, \\
\partial_t T - \nu_T \Deltah T 
+ v \cdot \nablah T + w \partial_z T
& = f_T+\sigma_2(v,\nablah v,T,\nablah T) \dot W_2, \\
\partial_z p & =-\rho g, \\
  \divh v + \partial_z w & =0, \\
  \rho & =\rho_0(1-\beta_T(T-T_r)),
\end{alignedat}
\end{equation}
in $M \times (0,t)$ with
\begin{equation}\label{eq:primeqhorviscboundw}
w  =0 \quad \text {on }\Gamma_u\cup \Gamma_b \times (0,t).
\end{equation}
For the prognostic variables $v,T$, we have the initial conditions
\begin{equation} \label{eq:ic}
  v(t=0)=v_0, \qquad  T(t=0)=T_0,
\end{equation}
and the boundary conditions
\begin{align}\label{eq:bc}
v=0  \quad \hbox{and} \quad \partial_{n_G}T=0 \quad \hbox{on }\Gamma_l \times (0,t),
\end{align}
where $n_G$ is the outer normal to $G$ on $\Gamma_l$ (since $M$ is cylindrical 
$n_G$ does not depend on the vertical coordinate).
The first boundary condition in \eqref{eq:bc} is a lateral no-slip boundary condition for $v$, the second one is a 
Neumann-type condition for $T$. The condition \eqref{eq:primeqhorviscboundw} on $w$
is considered to be part of the system \eqref{eq:primeqhorvisc} since $w$ is a diagnostic variable, see \eqref{eq:w}.

In what follows, we will denote the variables of the horizontal domain by $(x, y) \in G$ and the vertical coordinate by $z \in (-h,0)$. We define 
$\nablah =(\partial_x, \partial_y)^T$, $\divh = \nablah^{\ast}$ and $\Deltah=\partial_x^2+\partial_y^2$ 
to be the horizontal gradient, divergence and Laplacian, respectively. Also let 
$v \cdot \nablah=v_1\partial_x+v_2\partial_y$ and let $k\times v=k_0(-v_2,v_1)$ be the Coriolis force. The terms 
$\sigma_i \dot W_i$ model the stochastic forces. The constants $\nu_v,\nu_T>0$ are the horizontal viscosity and horizontal diffusivity, $k_0\geq 0$ is the Coriolis parameter, $\rho_0,\beta_T,g>0$ denote the reference density, the expansion coefficient and the gravity, respectively.
Note that for the primitive equations the nonlinear term $w\partial_z v$ is of worse order 
compared to the nonlinearity of the Navier-Stokes equations since $w=w(v)$ given by \eqref{eq:w} 
also involves a first order derivative. However, the pressure $p$ in \eqref{eq:primeqhorvisc} is
 essentially two-dimensional up to a linear shift, see \eqref{eq:hydrostatic_presssure} below.

The general anisotropic primitive equations with full viscosity and diffusivity are obtained from  \eqref{eq:primeqhorvisc} if one replaces
the term $\nu_v \Deltah$ by $\nu_v\Deltah + \nu_z\partial_{zz}$ for horizontal viscosity $\nu_v > 0$ 
and vertical viscosity $ \nu_z > 0$ in the equation for $v$ and similarly in the equation for $T$.

\subsection{Previous results - deterministic}
The mathematical 
analysis of the initial value problem for primitive equations with full viscosity and diffusivity was started by Lions, 
Temam and Wang \cite{Lionsetal1992, Lionsetal1992_b, Lionsetal1993} and launched significant activity in the field.
In comparison to the 3D Navier-Stokes equations, 
the primitive equations are globally well-posed 
for initial data in $H^1(M)$ by a breakthrough result of Cao and Titi \cite{CaoTiti2007}, see also Kobelkov \cite{Ko07}. The more realistic case of Dirichlet boundary
conditions, also called physical boundary conditions in this context, in non-cylindrical domains were handled by Kukavica and Ziane in \cite{Ziane2007}. These results were refined to global well-posedness for initial data with 
$v_0,\partial_z v_0\in L^2$, see \cite{Ju2017}, or $v_0\in L^1(-h,0; L^\infty(G))$, see 
\cite{GigaGriesHusseinHieberKashiwabara2017NN}.

The primitive equations with only horizontal viscosity and diffusivity are of particular interest in the field of numerical weather prediction \cite{LauritzenJablonowskiTaylorNair}. On the one hand, the horizontal 
viscosity $\nu_v$ is much larger than the vertical viscosity $\nu_z$ in the atmosphere and the limiting case $\nu_z=0$ is considered a good approximation.
On the other hand, numerical (hyper-)viscosity acting only in the horizontal directions is often used in the computer simulations for the hydrostatic Euler 
equations.

Cao, Li and Titi \cite{CaoLiTiti2016, CaoLiTiti2017} were the first to study the primitive 
equations with only horizontal viscosity and diffusivity analytically. 
They tackled this problem in a periodic setting by considering a vanishing vertical viscosity limit, i.e. 
\begin{align*}
-\Deltah - \varepsilon
\partial_z^2 \quad \hbox{for}\quad  \varepsilon\to 0.
\end{align*}
Using this strategy, they obtained a remarkable global strong well-posedness result for the initial value 
problem with initial data of regularity near $H^1$, and local well-posedness for initial data in $H^1$.  
Recently, the first author applied a more direct approach considering the system without 
vanishing viscosity limit \cite{Saal} where local well-posedness results even for less partial viscosities has 
been proven and, in the case of only horizontal viscosity, unnecessary boundary conditions on the bottom and top have 
been avoided. 
The construction of weak solutions, namely the compactness argument, is difficult in this case due to the lack of dissipation in the vertical direction. In \cite{HusseinSaalWrona}, the existence of $z$-weak solutions, i.e.\ weak solutions with additional 
regularity in the vertical direction,
is shown for initial data with $v_0,\partial_z v_0\in L^2(M)$. Additionally, 
the global existence of solutions is shown for less regular data than in \cite{Saal} and
time-periodic solutions are constructed there. 
For more results and further references 
on the deterministic primitive equations with horizontal and full viscosity, we refer the reader to the surveys  \cite{LiTiti2016} and \cite{HieberHussein2020}.

Results on local well-posedness of the Navier-Stokes equations with only horizontal viscosity can be found in \cite[Chapter 6]{Danchin}. 

The primitive equations without any viscosity and the equation for the temperature are called the hydrostatic Euler equations. One important reason to consider this system is 
the understanding of the behaviour of numerical weather and climate models 
because the time scale associated with viscous dissipation is beyond the current
computational capability \cite[Chapter 2]{LauritzenJablonowskiTaylorNair}. The primitive equations with only horizontal viscosity are closer to this situation than the ones with full viscosities.    

For the hydrostatic Euler equations blow-up results were established by Wong 
\cite{Wong2015}, see also \cite{Caoetal2015}, and there are ill-posedness results in Sobolev spaces 
by Han-Kwan and Nguyen \cite{HanNguyen2016}. Local well-posedness was proven only for analytical 
data by Kukavica et al.\ \cite{Kukavica2011}. 

For more information on previous results and the geophysical applications of the primitive
equations, we refer to the works of Washington and
Parkinson \cite{WashingtonParkinson1986}, Pedlosky
\cite{Pedlosky1987}, Majda \cite{Majda2003} and Vallis
\cite{Vallis2006}.

\subsection*{Previous results - stochastic}
There are several reasons to study stochastic versions of the primitive equations. 
The introduction of additive and multiplicative noise (often called state-dependent noise in the geophysical literature)
into models for geophysical flows can  be used on the one hand	to account for numerical
and empirical uncertainties. On the other hand, let us emphasize the role of stochastic modelling in meteorology and climatology. Current
models in these fields consist of a deterministic dynamical
core based on equations of continuum mechanics and thermodynamics,
complemented by stochastic elements at several levels: random initial conditions,
reflecting partial knowledge of the initial state, and random inputs
distributed in space-time, related for instance to sub-grid stochastic
parametrizations. Predictions are probabilistic in the sense that they aim to produce a range
of scenarios with associated probabilities, usually by the method called
ensemble forecasting system. For more information on this 
field, see the review articles \cite{Franzke14, Palmer19} and the references 
therein.
Of particular interest in fluid dynamics and also for geophysical flows is a noise of transport type which appears naturally when stochastic models are derived from Hamiltonian principles as 
proposed in \cite{Holm2015} (see also \cite{BiFla20} for a brief description)  and yield 
a physically relevant randomization \cite{Bauer} with energy conservation. Recently, the importance of transport noise was discussed in the connection with unresolved small scales, see \cite{FlandoliPappalettera2020,FlandoliPappalettera2021} and the references therein.

Stochastic primitive equations with full viscosities were studied by 
several authors. In two space dimensions, i.e.\ neglecting one of the horizontal directions, the so-called 
weak-strong solutions for multiplicative white noise in time were constructed by a Galerkin approach 
in \cite{GlattholtzZiane2008} by Glatt-Holtz and Ziane, where the weak-strong solutions are weak in the PDE sense and strong in the stochastic sense. In particular, these 
solutions can be interpreted in a pathwise sense. For initial data $v_0$ with $v_0,\partial_z v_0\in 
L^p(\Omega,L^2(M))$ where $\Omega$ is the probability space and $M$ the spatial domain, the authors show the 
existence and uniqueness of such solutions for $p\geq 4$, so these are $z$-weak solutions. They make use of a special cancellation related to the assumed periodic boundary conditions. The long term behaviour of weak solutions of 
the stochastic two-dimensional primitive equations is studied in \cite{Medjo2010}. The existence of global pathwise strong solutions is shown in 
\cite{GlattholtzTemam2011} for initial data in $v_0\in L^2(\Omega,H^1(M))$ also by means of Galerkin approximation. The authors use continuous martingale theory and stopping time arguments to treat 
the primitive equations with physical boundary conditions without the above-mentioned cancellation by establishing stronger convergence of
the Galerkin approximations. Furthermore, a large deviation principle\cite{GaoSun2012} and a central limit 
theorem \cite{GuoZhangZhou2018} are known to hold.

Regarding the 3D problem, Debussche, Glatt-Holtz, Temam and Ziane established 
a global well-posedness result for pathwise strong solutions 
for multiplicative white noise in time in \cite{DebusscheGlattholtzTemamZiane2012} 
and the related 
work \cite{Debussche2011} by a different method than in the 2D case.
They first show the existence of 
martingale solutions and pathwise uniqueness, which then leads to the 
existence of pathwise solutions by a Yamada-Watanabe type argument. The assumptions on the noise for global existence in \cite{DebusscheGlattholtzTemamZiane2012} require certain smoothing properties that fail for transport noise. The second author with Brze\'{z}niak \cite{Brzezniak2020} established global existence for noise allowing transport by the vertical average of the horizontal velocity $\overline{v}$.
In the case of additive noise, the existence 
of a random pull-back attractor is known \cite{GuoHuang2009}. 
Logarithmic moment bounds in $H^2(M)$ are obtained 
in \cite{GlattholtzKukavicaVicolZiane2014} and used to prove the existence of ergodic invariant measures supported 
in $H^1(M)$. A construction of weak-martingale solutions, i.e.\ martingale 
solutions whose regularity in space and time is the one of a weak solution, by an implicit Euler scheme is given in 
\cite{GlattholtzTemamWang2017}. Large deviation principles \cite{DongZhaiZhang2017} and moderate deviation principles \cite{Slavik2020} are 
known to hold for small multiplicative noise and short times \cite{DongZhang2018}. The existence 
of a Markov selection is proven in \cite{DongZhang2017} for additive noise.

All these above results are shown for the primitive equations with vertical (i.e.\ full) viscosity and diffusivity. To the best of our knowledge, there are no results on the stochastic system with only horizontal viscosity and diffusivity or stochastic hydrostatic Euler equations.

\subsection{Main results}

The notation used below is established in Section \ref{sec:pre}. The following theorems will be proved in Sections \ref{sect:maximalSolutions_proof} and \ref{sect:globalSolutions_proof}, respectively.

\begin{theorem}[Maximal existence]
\label{thm:maximalExistence}
Let $(\Omega, \cF, \bF, \PP)$ be a stochastic basis with filtration $\bF = \lbrace \cF_t \rbrace_{t \geq 0}$ satisfying the usual conditions. Let $\cU$ be a separable Hilbert space and let $\sigma$ satisfy \eqref{eq:sigmaGrowthL2}-\eqref{eq:sigma2boundary}. Let $(f_v, f_T)^T \in L^4(\Omega; L^2(0, t;  H^2_{N,z} L^2_{xy}\times H^2_{D,z} L^2_{xy}))$. Then for all initial data $U_0=(v_0,T_0) \in L^2(\Omega;  (L^2_zH^1_{D,xy}\times  L^2_zH^1_{xy}) \cap (H^2_{N,z} L^2_{xy}\times H^2_{D,z} L^2_{xy}))$ there exists a unique maximal pathwise solution $(U, \tau)$ of \eqref{eq:primeqhorvisc}-\eqref{eq:bc} in the sense of Definition \ref{def:pathwise_solution}. Moreover, the solution satisfies
\begin{equation}
	\label{eq:max_thm_additional_reg}
	\E \left[ \sup_{s \in [0, t \wedge \tau_N]} \| U \|_{H^2_z L^2_{xy}}^2 + \int_{0}^{t \wedge \tau_N} \| U \|_{H^2_z H^1_{xy}}^2 \ds \right] < \infty
\end{equation}
for all $N \in \N$ and $t>0$ and $U(s) \in H^2_{N,z} L^2_{xy}\times H^2_{D,z} L^2_{xy}$ for all $0<s<\tau$ $\PP$-almost surely.
\end{theorem}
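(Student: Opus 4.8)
The plan is to recast \eqref{eq:primeqhorvisc}--\eqref{eq:bc} as an abstract stochastic evolution equation in the anisotropic scale, to produce a \emph{local} pathwise solution via the Gy\"{o}ngy--Krylov principle, and then to patch the local solutions into a maximal one. First I would remove the two diagnostic quantities: the pressure is recovered up to a two-dimensional term from $\partial_z p=-\rho g$ and \eqref{eq:hydrostatic_presssure}, and $w$ is replaced by $w(v)=-\int_{-h}^{z}\divh v\,\dr$ using $\divh v+\partial_z w=0$ together with \eqref{eq:primeqhorviscboundw} and \eqref{eq:w}. Projecting appropriately, one obtains for $U=(v,T)$ an equation $\d U+\bigl(\Ac U+\mathcal B(U)+(\text{lower order})\bigr)\dt=f\,\dt+\sigma(U)\,\dW$ on $(L^2_zH^1_{D,xy}\times L^2_zH^1_{xy})\cap(H^2_{N,z}L^2_{xy}\times H^2_{D,z}L^2_{xy})$, with $\Ac$ the horizontal diffusion and $\mathcal B$ the quadratic terms (including $w(v)\partial_z v$). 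To tame $\mathcal B$ I would insert a cutoff $\theta_R(\|U\|_{H^1_zL^2_{xy}})$ in front of the nonlinearity ($\theta_R$ smooth, $\equiv1$ on $[0,R]$, vanishing outside $[0,2R]$), the $H^1_zL^2_{xy}$-norm being the one in which uniqueness holds; for the truncated problem one then expects estimates valid on all of $[0,t]$, the higher-order one still requiring the work described below.

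Next I would set up a Galerkin scheme adapted to the anisotropic spaces, with a basis made of horizontal eigenfunctions compatible with \eqref{eq:bc}, tensored with vertical modes respecting the $N$- and $D$-type behaviour encoded in $H^2_{N,z}$, $H^2_{D,z}$, so that each finite-dimensional subspace already lies in the right space; the projected SDEs are globally solvable, giving $U^n_R$. Then come the a priori bounds, uniform in $n$: the basic $L^2$ energy balance; the $H^1_zL^2_{xy}$ estimate, where the cutoff is used to control the nonlinear contributions; and, finally, the higher-order estimate controlling $\E\bigl[\sup_{s\in[0,t]}\|U^n_R\|_{H^2_zL^2_{xy}}^2+\int_0^t\|U^n_R\|_{H^2_zH^1_{xy}}^2\,\ds\bigr]$, which relies on the Burkholder--Davis--Gundy inequality, sharp anisotropic (Ladyzhenskaya/Agmon- and, where needed, logarithmic-Sobolev-type) estimates, a stochastic Gronwall argument, and the hypotheses $(f_v,f_T)\in L^4(\Omega;L^2(0,t;\dots))$, $U_0\in L^2(\Omega;\dots)$, $\sigma$ as in \eqref{eq:sigmaGrowthL2}--\eqref{eq:sigma2boundary}. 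These bounds, an Aubin--Lions-type compactness (with a uniform fractional-time estimate from the equation) and Prokhorov's theorem yield tightness of the laws of $\{U^n_R\}$ on a path space such as $C([0,t];H^{-\delta})\cap L^2(0,t;H^1_zL^2_{xy})$ intersected with $L^2(0,t;H^2_zH^1_{xy})$ in its weak topology.

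By the Skorokhod/Jakubowski representation theorem one obtains a.s.\ convergent copies on a new stochastic basis; since $\theta_R$ is continuous in the chosen topology, the limit is a martingale solution of the $R$-truncated equation, still satisfying the higher-order bound (with $t\wedge\tau_N$ replaced by $t$). I would then establish pathwise uniqueness for the truncated equation in $C([0,t];H^1_zL^2_{xy})$ via an $H^1_zL^2_{xy}$ energy estimate for the difference of two solutions, using the Lipschitz character of the truncated nonlinearity and of $\sigma$ on the cutoff support together with Gronwall. The Gy\"{o}ngy--Krylov lemma then upgrades tightness plus pathwise uniqueness into convergence in probability of the original Galerkin sequence on $(\Omega,\cF,\bF,\PP)$ to an adapted pathwise solution $U_R$ of the truncated problem. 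Finally, letting $\tau_R$ be the first time $\|U_R(s)\|_{H^1_zL^2_{xy}}$ reaches $R$, on $[0,\tau_R)$ $U_R$ solves \eqref{eq:primeqhorvisc}--\eqref{eq:bc}; by pathwise uniqueness the family $\{U_R\}_R$ is consistent on overlapping intervals, so $\tau:=\sup_R\tau_R$ and $U:=U_R$ on $[0,\tau_R)$ define a local pathwise solution in the sense of Definition \ref{def:pathwise_solution}, maximal because $\|U(s)\|_{H^1_zL^2_{xy}}\to\infty$ as $s\uparrow\tau$ on $\{\tau<\infty\}$. Estimate \eqref{eq:max_thm_additional_reg} with the $\tau_N$ follows by weak lower semicontinuity from the uniform Galerkin bounds, and $U(s)$ stays in $H^2_{N,z}L^2_{xy}\times H^2_{D,z}L^2_{xy}$ since every Galerkin subspace does.

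The main obstacle is the higher-order a priori estimate in the absence of vertical dissipation. Differentiating the $v$-equation twice in $z$ and testing with $\partial_z^2 v$, the worst term $\int_M\partial_z^2\bigl(w(v)\partial_z v\bigr)\,\partial_z^2 v$ yields, after integrating by parts in $z$ and using $\partial_z w=-\divh v$ with $w=0$ on $\Gamma_u\cup\Gamma_b$, a genuinely cubic contribution of the form $\int_M(\divh v)(\partial_z^2 v)^2$ (and similar ones) which the horizontal diffusion alone cannot absorb. Handling it forces one into sharp anisotropic estimates and the logarithmic Sobolev embedding, and, since everything must stay at the level of moments uniformly in the Galerkin index and in $R$, the deterministic bookkeeping has to be interleaved with Burkholder--Davis--Gundy and the stochastic Gronwall lemma together with the $L^4(\Omega)$-integrability of the data; making this closure rigorous is the technical heart of the argument.
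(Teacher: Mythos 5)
Your overall architecture (cut-off, Galerkin, a priori bounds, compactness plus Skorokhod, Gy\"{o}ngy--Krylov, then localization to a maximal solution) matches the paper's strategy, but the cut-off you propose is in the wrong norm, and this is not cosmetic. You truncate with $\theta_R(\|U\|_{H^1_zL^2_{xy}})$, but $H^1_zL^2_{xy}$ carries no horizontal regularity and therefore does not control $\|U\|_{L^\infty_zL^4_{xy}}$ or $\|U\|_{H^1_zL^4_{xy}}$, which are precisely the quantities appearing on the right-hand side of the key nonlinear estimates \eqref{eq:nonlineardz}, \eqref{eq:nonlinearDelta} and \eqref{eq:nonlineardzz}. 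With your cut-off the $\partial_z$, $\nablah$ and $\partial_{zz}$ energy estimates cannot be closed: e.g.\ the term $c\,\theta(U)\|U\|_{L^\infty_zL^4_{xy}}\|\nablah\partial_zU\|_{L^2}^{3/2}\|\partial_zU\|_{L^2}^{1/2}$ no longer becomes $c\rho\|\nablah\partial_zU\|_{L^2}^{3/2}\|\partial_zU\|_{L^2}^{1/2}$, and the analogous $\partial_{zz}$ term in \eqref{eq:nonlineardzz} is not bounded by $\mu$. The paper uses $\theta_\rho(\|U\|_{L^\infty_zL^4_{xy}})$ for the existence estimates (Lemma~\ref{lemma:galerkinestimate}) and then the strictly stronger $\theta_\mu(\|U\|_{H^1_zL^4_{xy}})$ for the $\partial_{zz}$ estimate and uniqueness (Lemma~\ref{lemma:galerkinestimatemoreregular}, Proposition~\ref{prop:uniqueness}); the interpolation \eqref{eq:cltinterpolation} and \eqref{eq:gginterpolation} then show these quantities are controllable by the dissipation, so the cut-off is eventually removable. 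Relatedly, your proposed blow-up criterion in $H^1_zL^2_{xy}$ does not match Definition~\ref{def:pathwise_solution}, whose criterion is stated in $H^1$ together with the $L^2$-in-time dissipation norm.

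A second misconception: you assert that the higher-order $H^2_zL^2_{xy}$ bound ``forces one into \dots the logarithmic Sobolev embedding''. For the \emph{maximal} (local) existence in Theorem~\ref{thm:maximalExistence} no logarithmic Sobolev inequality is used. The cubic term you correctly identify ($\int_M(\divh v)(\partial_{zz}v)^2$ and its cousins) is handled by \eqref{eq:nonlineardzz} together with the cut-off $\tilde\theta$: since the $H^1_zL^4_{xy}$-norm is truncated by $\mu$, one obtains $\varepsilon\|\partial_{zz}U^n\|_{L^2}^{q-2}\|(-\Deltah)^{1/2}\partial_{zz}U^n\|_{L^2}^2 + c_\varepsilon\mu^4\|\partial_{zz}U^n\|_{L^2}^q$ from Young's inequality, which Gronwall then absorbs. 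The logarithmic Sobolev inequality and the logarithmic Gronwall argument enter only in Section~\ref{sec:globalexistence} for Theorem~\ref{thm:globalExistence}, where the cut-off is gone and one must control $\|v\|_{L^\infty}$ genuinely.
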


The proof of Theorem \ref{thm:maximalExistence} follows the method from \cite{Debussche2011}. First, the global existence of martingale solutions is established for a modified problem with a cut-off. Compared to \cite{Debussche2011}, where the cut-off acts on the $H^1$-norm of the solution, we need to use a weaker cut-off acting on $L^\infty_z L^4_{xy}$-norm of the solution due to the lack of vertical smoothing. After a standard argument using the theorems by Prokhorov and Skorokhod (relying on compactness in the anisotropic spaces), the local existence of strong (in the stochastic sense) solutions is established by the Gy\"{o}ngy-Krylov theorem. However, to establish strong uniqueness in the required spaces, we need to assume initial data more regular in the vertical direction. The additional regularity of the initial data then leads to the additional regularity of the solution \eqref{eq:max_thm_additional_reg}.
The improved regularity \eqref{eq:max_thm_additional_reg} requires additional boundary conditions for $v$ and $T$ in the vertical direction which are not necessary in the deterministic case studied in \cite{HusseinSaalWrona}. There, the uniqueness of $z$-weak solutions was established in the space $L^2$. Similarly as in \cite{CaoLiTiti2016}, we chose the homogeneous Neumann boundary condition for the velocity field and the homogeneous Dirichlet boundary conditions for the temperature. These boundary conditions are preserved by the primitive equations, see Remark \ref{rem:bc} for more details.

\begin{theorem}[Global existence]
\label{thm:globalExistence}
Let $(\Omega, \cF, \bF, \PP)$, $\cU$, $U_0$ and $\sigma_1$ be as in Theorem \ref{thm:maximalExistence}. Additionally, let $\sigma$ satisfy \eqref{eq:noiseglobal}-\eqref{eq:noise_eta} and let
\[
	U_0=(v_0,T_0) \in L^{16/3}(\Omega; H^1) \cap L^{8/3}(\Omega; L^{132}), \quad \partial_z v_0 \in L^6(\Omega; L^6),
\]
and
\[
(f_v, f_T)^T \in L^4(\Omega; L^2_{\mathrm{loc}}(0, \infty;  H^2_{N,z} L^2_{xy}\cap L^2_z H^1_{D,xy})) \cap L^{8/3}(\Omega; L^2_{\mathrm{loc}}(0, \infty; L^{132})).
\]
Then there exists a unique global pathwise solution $(U, \xi, (\tau_n))$ of \eqref{eq:primeqhorvisc}-\eqref{eq:bc} in the sense of Definition \ref{def:pathwise_solution}. Moreover, the solution satisfies \eqref{eq:max_thm_additional_reg} for all $N \in \N$ and $t>0$.
\end{theorem}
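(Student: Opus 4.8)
The plan is to upgrade the maximal solution $(U,\tau)$ furnished by Theorem~\ref{thm:maximalExistence} to a global one; since uniqueness is already part of that statement, the only task is to show that the maximal existence time is almost surely infinite (equivalently, that the localizing stopping times of Definition~\ref{def:pathwise_solution} can be chosen to increase to $+\infty$). Because the cut-off used to construct $(U,\tau)$ acts on the $L^\infty_z L^4_{xy}$-norm, it suffices to prove, for every $T>0$, that $\sup_{s\in[0,T\wedge\tau)}\|U(s)\|_{L^\infty_z L^4_{xy}}<\infty$ almost surely; I will in fact establish the stronger a priori bound
\[
\E\Bigl[\sup_{s\in[0,T\wedge\tau)}\bigl(\|U(s)\|_{H^1}^2+\|\partial_z v(s)\|_{L^6}^6\bigr)+\int_0^{T\wedge\tau}\|U\|_{H^2_z H^1_{xy}}^2\,\ds\Bigr]<\infty,
\]
which simultaneously yields \eqref{eq:max_thm_additional_reg} and, through the blow-up alternative, forces $\tau>T$ on a set of full measure; letting $T\to\infty$ then gives the claim.

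The a priori estimates are organised in the hierarchy familiar from the deterministic analysis of \cite{CaoLiTiti2016, HusseinSaalWrona}, the new ingredient being the It\^o corrections and martingale terms generated by the transport noise, which are kept under control by the structural hypotheses \eqref{eq:noiseglobal}-\eqref{eq:noise_eta}. First I would apply It\^o's formula to $\|v\|_{L^2}^2+\|T\|_{L^2}^2$; the convective terms cancel using $\divh v+\partial_z w=0$ and the boundary conditions, the It\^o correction is dominated by the horizontal dissipation plus lower-order terms, and after Burkholder--Davis--Gundy and a stochastic Gronwall estimate one obtains moments of $\sup_{[0,T]}\|(v,T)\|_{L^2}^2$ and of $\int_0^T\|\nablah(v,T)\|_{L^2}^2\,\ds$. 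Next come $L^q$ estimates for $v$ and $T$ for the exponents demanded by the later interpolations — this is where the large indices such as $L^{132}$ enter, propagated by the assumption $U_0\in L^{8/3}(\Omega;L^{132})$. Differentiating the $v$-equation in $z$ removes the pressure gradient, since the hydrostatic pressure is essentially two-dimensional, and produces a transport--diffusion equation for $\theta:=\partial_z v$; using $\partial_z w=-\divh v$ to recast the vertical transport term, an $L^6$ estimate for $\theta$ (with data $\partial_z v_0\in L^6(\Omega;L^6)$) closes up to a coupling with $\|\nablah v\|$.

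The heart of the argument is the estimate for $\|\nablah v\|_{L^2}$ and $\|\nablah T\|_{L^2}$. Testing the equations with $-\Deltah v$ and $-\Deltah T$ produces the critical term $\int_M w\,\partial_z v\cdot\Deltah v$ (and its temperature analogue), for which no bound follows from the earlier steps; here one invokes the logarithmic Sobolev (Brezis--Gallouet type) embedding to estimate the $L^\infty$ factor by $\bigl(1+\log(1+\|U\|_{H^2})\bigr)^{1/2}$ times lower-order norms, which leads to a differential inequality of the form
\[
\d Y(s)\le C\,g(s)\,\bigl(1+\log(1+Y(s))\bigr)\,Y(s)\,\ds+\d M(s),
\]
with $g$ integrable in time by the preceding steps and $M$ a local martingale. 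The stochastic Gronwall lemma cannot be applied directly, since it would require exponential moments of $\int g$; this is where the iterated stopping-time argument enters. Setting $\tau_n:=\inf\{s:\|U(s)\|_{H^1}+\|\partial_z v(s)\|_{L^6}>n\}\wedge\tau$, on $[0,\tau_n]$ the logarithmic factor is bounded by the deterministic constant $\log n$, so the stochastic Gronwall lemma applies on that interval; peeling off the levels of the hierarchy one at a time and, on each step, splitting the time interval into random pieces on which the Gronwall coefficient is controlled, one obtains quantitative control of $\PP(\tau_n<T)$ (the logarithmic loss costs only a double exponential) and may pass to the limit $n\to\infty$ to conclude $\tau>T$ almost surely. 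Finally, the vertical $H^2_z$-regularity in \eqref{eq:max_thm_additional_reg} is propagated by differentiating twice in $z$ (still with no pressure contribution) and rerunning the scheme with the bounds just obtained as data; the homogeneous Neumann condition for $v$ and Dirichlet condition for $T$ in the vertical variable, preserved by \eqref{eq:primeqhorvisc}, make the integrations by parts in $z$ legitimate.

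I expect the main obstacle to be precisely this last mechanism: producing a differential inequality whose super-linearity is at most logarithmic — which demands delicate anisotropic estimates and careful use of the special structure of the transport noise \eqref{eq:noiseglobal}-\eqref{eq:noise_eta}, so that the It\^o corrections do not absorb the dissipation — and then marrying the logarithmic Sobolev embedding to the stochastic Gronwall lemma through the iterated stopping times so that the terminal bound stays finite. The remaining technicalities, namely the bookkeeping of the integrability exponents $16/3$, $8/3$, $132$, $6$ through the interpolation chain and the usual Burkholder--Davis--Gundy and martingale manipulations at each level, are routine by comparison.
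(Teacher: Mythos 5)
Your outline captures the right overall strategy — a hierarchy of a~priori bounds, with a logarithmic Sobolev inequality and stochastic Gronwall argument at the critical step, and a stopping-time localization to convert a~priori finiteness into $\xi=\infty$ — but it skips the mechanism that actually makes the $L^q$-estimates for $v$ possible, and that omission is a genuine gap. When you test the $v$-equation against $|v|^{q-2}v$, the hydrostatic pressure gradient does \emph{not} disappear (unlike when you differentiate in $z$), so you must control $\nablah p_s$ in $L^2$. This is exactly where the paper uses the barotropic/baroclinic decomposition \eqref{eq:bar}--\eqref{eq:tilde}: applying $(1-P_G)$ to the $\overline{v}$-equation yields a random (but noise-free) elliptic equation for $p_s$, because the structural assumption \eqref{eq:h_k_div_free} (equivalently \eqref{eq:sigma_leray}) forces $\Ac\sigma_1(v)$ to be divergence-free and hence invisible to $1-P_G$. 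The resulting pressure bound (Lemma~\ref{lemma:pressure}), in turn, requires the preparatory $H^1(\overline v)$ and $L^4(\widetilde v)$ estimates of Lemmata~\ref{lemma:vtildevbar}--\ref{lemma:vtilde_l4}. None of this is in your proposal — you move directly from the $L^2$-energy estimate to ``next come $L^q$ estimates,'' with no account of the pressure — and without it the $L^{132}$-bound on $v$, and hence the entire logarithmic step, cannot be closed. The paper flags this explicitly in the introduction as the key point of departure from \cite{DebusscheGlattholtzTemamZiane2012}.

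A secondary, smaller divergence: the paper does not work with the superlinear inequality $\d Y\le C g (1+\log(1+Y)) Y\,\ds+\d M$ and then cap the logarithm on $\{\|U\|_{H^1}+\|\partial_z v\|_{L^6}\le n\}$ as you propose. Instead it applies It\^o's formula \emph{directly} to $\log A(t)$ with $A=e+\|\nablah v\|_{L^2}^2+\|\partial_z v\|_{L^2}^2+\|\partial_z v\|_{L^6}^6$; the dissipation, It\^o corrections and martingale terms then all carry a prefactor $1/A$, and after inserting the bound $\|v\|_{L^\infty}^2\le c\bigl(1+\|v\|_{L^{132}}^{8/3}\bigr)(1+\log A)+\varepsilon B/A$ from \eqref{equation:sobolevlogarithmic}, the inequality is \emph{linear} in $\log A$, so the stochastic Gronwall lemma (Lemma~\ref{lemma:stoch_Gronwall}) applies at once, localized only on the stopping times coming from the $L^{132}$-estimate. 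Your version requires quantifying $\PP(\tau_n<T)$ through a double-exponential Gronwall constant whose dependence on the random quantity $\int g$ is left unexamined; you would still need to control $\int g$ by a second layer of stopping times before the Markov estimate closes. The paper's ``take the logarithm first'' device avoids that bookkeeping entirely, and is the reason the estimates then cascade cleanly into the $L^2$-bounds for $\partial_z v$, $\partial_z T$, $\nablah U$ and $\partial_{zz} U$ of Lemmata~\ref{lemma:dz_v_L2}--\ref{lemma:dzz_U_L2}.
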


Theorem \ref{thm:globalExistence} is established combining the (deterministic) estimates, the logarithmic Sobolev inequality and logarithmic Gronwall lemma from \cite{CaoLiTiti2017} and an iterated stopping time argument from \cite{DebusscheGlattholtzTemamZiane2012,Brzezniak2020}. In \cite{CaoLiTiti2017}, one of the key steps is an estimate on the asymptotic behaviour of $\| v \|_{L^q}/q^{1/2}$ w.r.t.\ $q \to \infty$ which later leads to a bound on $v$ in $L^\infty$ and $\partial_z v$ in $L^2$. However, it seems impossible to get a similar asymptotic bound in the stochastic setting with noise acting on $\nablah v$. The asymptotic estimate is thus substituted by bounds in $L^{132}$. Using an argument similar to the logarithmic Gronwall lemma, this is sufficient to establish $L^2$-integrability in time of the $L^\infty$-norm which then leads to the desired bound on $\partial_z v$.

Compared to previous results for the primitive equations with full viscosity and diffusivity \cite{DebusscheGlattholtzTemamZiane2012,Brzezniak2020}, our approach allows for full transport noise.
The key difference to \cite{DebusscheGlattholtzTemamZiane2012} are the pressure estimates in $L^q(M)$ necessary to deduce $L^q(M)$-estimates for the velocity field.
In \cite{DebusscheGlattholtzTemamZiane2012}, the authors consider the
corresponding Stokes problem with the noise term and then prove estimates for
the difference of the solution of the full non-linear problem and the solution of the
Stokes problem. The difference solves a random PDE and standard
analytic tools can be used to estimate the pressure term. The disadvantage of this
approach is that it requires the solution of the Stokes problem to be rather smooth
and transport noise cannot be included for this reason. We follow the approach of \cite{Brzezniak2020} where the conditions on the noise allow to obtain a random PDE for the pressure by using a hydrostatic Leray-Helmholtz projection. Using the linear structure of the transport part of the noise, we can go beyond the results from \cite{Brzezniak2020} and consider transport noise acting not only on the vertical average of the velocity $\overline v$ but also on the remainder $\widetilde v = v - \overline{v}$.

\subsection{Organization of the paper}

In Section \ref{sec:pre}, we define the function spaces used throughout the rest of the paper and reformulate the primitive equations as an abstract functional problem. In particular, the discussion on the assumptions on the noise term $\sigma$ and the definition of solution can be found in Sections \ref{sect:noise_assumptions} and \ref{sect:solutions}, respectively. In Section \ref{sect:maximal_solutions_main}, we prove the maximal existence theorem above. After defining the Galerkin approximations of a modified problem in Section \ref{sect:galerkin} and establishing bounds on the approximations in Section \ref{sect:gal_estimates}, we prove the tightness of the corresponding measures and the global existence of martingale solutions of the modified problem in Section \ref{sect:compactness}. Uniqueness is established in Section \ref{sect:uniqueness} for more regular initial data due to the use of the Gy\"{o}ngy-Krylov theorem. Finally, Section \ref{sec:globalexistence} contains the proof of the global existence of strong solutions using estimates on the barotropic and baroclinic modes of the velocity, logarithmic Sobolev embedding and logarithmic Gronwall inequality.

\section{Preliminaries}  \label{sec:pre}
\subsection{Function spaces and notations}\label{subsec:functionspaces} 

By $L^2(M)$, we denote the standard real Lebesgue space with scalar product
\begin{align*}
\left< f,g \right>_{M}:=\int_{M}f(x,y,z)g(x,y,z)\d(x,y,z),
\end{align*}
with $L^2(G)$ and $\left< f,g \right>_{G}$ defined analogously. We denote the induced norms by $\norm{f}_{L^2(M)}$ and $\norm{f}_{L^2(G)}$, respectively. If there is no ambiguity, we will
not specify the domain in the notation and write e.g.\ $<f, g>$ instead of $<f, g>_M$.

For $k\in\N$, the space $H^k(M)$ consists of $f\in L^2(M)$ such that 
$\nabla^\alpha f \in L^2(M)$ for all multi-indices $|\alpha|\leq k$ endowed with the norm
\begin{align*}
\norm{f}_{H^k(M)}=\sum_{|\alpha|\leq k} \norm{\nabla^\alpha f}_{L^2(M)},
\end{align*}
where
$\nabla^\alpha=\partial_x^{\alpha_1} 
\partial_y^{\alpha_2}\partial_z^{\alpha_3}$ 
for $\alpha\in\N_0^3$. The space $H^k(G)$ is defined analogously and we set $H_0^1(G):=\{f\in H^1(G) \mid f\vert_{\partial G} =0 \}$. Again, we will write
$\norm{f}_{H^k}$ if there is no 
ambiguity.

For non-integer $s\geq 0$ the Bessel potential spaces $H^s$
are defined by complex interpolation, see \cite{Triebel} for details.
Analogous definitions 
hold for spaces $H^{s,p}$ for $p\in [1,\infty]$ and for Banach space-valued function spaces such as the Bochner space $H^1(0,t; L^p(M))$ 
and the anisotropic space $H^{s,p}(-h,0; H^{r,q}(G))$ for which we will often use the notation $H^{s,p}_zH^{r,q}_{xy}$. We identify $H^s=H^{s,2}$.
Furthermore, for $s \geq 1$, we write
\begin{align}
	\label{eq:spacebddirichlet}
	L^p_zH^s_{D,xy} &:= L^2(-h,0;H^s(G)\cap H^1_0(G)),\\
	\label{eq:spacebdneumannhoriz}
	L^2_{z}H^2_{N,xy} &:= \{f\in L^2_zH^2_{xy} \mid \partial_{n_G}T=0 \text{ on } \Gamma_l\}
\end{align}
and
\begin{align}
	\label{eq:spacebddirichletvert}
	H^2_{D,z}L^q_{xy} & := \{f\in H^2(-h,0;L^q(G)):  f=0 \text{ on } \Gamma_b\cup \Gamma_u \},\\
	\label{eq:spacebdneumannvert}
	H^2_{N,z}L^q_{xy} & := \{f\in H^2(-h,0;L^q(G)): \partial_z f=0  \text{ on } \Gamma_b\cup \Gamma_u\}.
\end{align}
for the spaces encoding the boundary condition on the lateral and the vertical boundary.
The boundary condition $\partial_{n_G}T=0$ on 
$\Gamma_l$ has to be understood as a condition for the trace of $\nablah T$ on $\Gamma_l$ for 
almost every $z\in (-h,0)$.

For the sake of completeness, let us recall the definition of fractional Sobolev spaces $W^{\alpha, p}(0, t; X)$ 
for some Banach space $X$ and $\alpha\in (0,1]$.  
For $\alpha=1$, let
\begin{align*}
	W^{1, p}(0, t; X) &= \left\lbrace u \in L^p(0, t; X) \mid \tfrac{\d}{\dt} u \in L^p(0, t; X) \right\rbrace,\\
	\norm{U}_{W^{1, p}(0, t; X)}^p &= \int_0^t \norm{U}_X^p + \left\| \tfrac{\d}{\dt} u \right\|_X^p \ds.
\end{align*}
For $\alpha \in (0, 1)$ and $p \in (1, \infty)$, the fractional Sobolev spaces $W^{\alpha, p}(0, t; X)$ are defined by real interpolation.
By \cite{Simon1990}, we have $H^{\alpha, p}(0, t; X) \subset W^{\alpha, p}(0, t; X)$ 
for $\alpha \in (0, 1)$ and $p \in (1, \infty)$. We can now state the following compactness result. For proofs see \cite[Theorem 5]{Simon1987} and \cite[Theorem 2.1]{FlandoliGatarek}, respectively.

\begin{lemma} 
\label{lem:BochnerCompactness}
a) \emph{(the Aubin-Lions-Simon lemma)} Let $X_2 \subset X \subset X_1$ be Banach spaces such that the embedding $X_2 \hook \hook X$ is compact and the embedding $X \hook X_1$ is continuous. Let $p \in [1, \infty]$ and $\alpha \in (0, 1]$. Then the following embedding is compact:
\[
	L^p(0, t; X_2) \cap W^{\alpha, p}(0, t; X_1) \hook \hook L^p(0, t; X).
\]

b) Let $X_2 \subset X$ be Banach spaces such that $X_2$ is reflexive and the embedding $X_2 \hook \hook X$ is compact. Let $\alpha \in (0, 1)$ and $p \in (1, \infty)$ be such that $\alpha p > 1$. Then the following embedding is compact:
\[
	W^{\alpha, p}(0, t; X_2) \hook \hook C([0, t], X).	
\]
\end{lemma}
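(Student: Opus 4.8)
The plan is to treat the two statements separately, in each case reducing the claim to a classical compactness criterion applied in the appropriate Bochner space.

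For part a), I would first record an Ehrling-type interpolation inequality: since $X_2 \hook \hook X$ compactly and $X \hook X_1$ continuously, for every $\eta > 0$ there is a constant $C_\eta$ with $\norm{v}_X \leq \eta \norm{v}_{X_2} + C_\eta \norm{v}_{X_1}$ for all $v \in X_2$ (the standard contradiction argument using a minimizing sequence and the compact embedding). Next I would invoke Simon's characterization of relatively compact subsets of $L^p(0,t;X)$: a bounded family $F \subset L^p(0,t;X)$ is relatively compact if and only if $\sup_{u \in F} \norm{u(\cdot + \theta) - u}_{L^p(0, t-\theta; X)} \to 0$ as $\theta \to 0^+$, and for every $0 < t_1 < t_2 < t$ the set $\{ \int_{t_1}^{t_2} u(s) \ds : u \in F \}$ is relatively compact in $X$. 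Given a bounded sequence $(u_n)$ in $L^p(0,t;X_2) \cap W^{\alpha,p}(0,t;X_1)$, the second condition is immediate, since the time averages $\int_{t_1}^{t_2} u_n \ds$ stay in a bounded, hence precompact, subset of $X_2 \hook \hook X$. For the translation condition I would use the $W^{\alpha,p}(0,t;X_1)$ bound, which controls the $L^p$-modulus of continuity in $X_1$ via the Gagliardo seminorm (for $\alpha \in (0,1)$ one has $\norm{u(\cdot+\theta) - u}_{L^p(0,t-\theta;X_1)} \leq C \theta^\alpha \norm{u}_{W^{\alpha,p}(0,t;X_1)}$, and the bound is direct for $\alpha = 1$), combined with the Ehrling inequality applied pointwise in time: choosing $\eta$ small and then $\theta$ small makes $\sup_n \norm{u_n(\cdot+\theta) - u_n}_{L^p(0,t-\theta;X)}$ as small as desired.

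For part b), I would use the fractional Sobolev (Morrey) embedding in the time variable: since $\alpha p > 1$, one has $W^{\alpha,p}(0,t;X_2) \hook C^{0,\alpha - 1/p}([0,t];X_2)$ together with a norm estimate, the vector-valued case following from the same computation as the scalar one. Thus a bounded sequence $(u_n)$ in $W^{\alpha,p}(0,t;X_2)$ is uniformly bounded and uniformly $(\alpha - 1/p)$-Hölder continuous as a family of $X_2$-valued functions. Composing with the continuous embedding $X_2 \hook X$ makes $(u_n)$ equicontinuous in $C([0,t];X)$, and for each fixed $s \in [0,t]$ the set $\{ u_n(s) \}_n$ is bounded in $X_2$, hence relatively compact in $X$ by the compact embedding; here reflexivity of $X_2$ is used to guarantee that bounded sets are weakly sequentially compact, which combined with compactness of the embedding gives strong precompactness in $X$. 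The Arzel\`a--Ascoli theorem in $C([0,t];X)$ then yields a subsequence converging in $C([0,t];X)$.

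The main obstacle in both parts is the bookkeeping that upgrades the scalar statements to the Banach-space-valued setting: in part a), verifying Simon's translation criterion \emph{uniformly} over the sequence, where the interplay between the interpolation parameter $\eta$ and the translation length $\theta$ must be arranged carefully; and in part b), the vector-valued fractional Morrey embedding $W^{\alpha,p} \hook C^{0,\alpha-1/p}$. These technical points are exactly what is supplied by \cite[Theorem 5]{Simon1987} and \cite[Theorem 2.1]{FlandoliGatarek}, which I would cite for the details while retaining the argument above as the conceptual skeleton.
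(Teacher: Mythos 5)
The paper itself does not prove this lemma --- it simply cites Simon \cite[Theorem 5]{Simon1987} for part a) and Flandoli--Gatarek \cite[Theorem 2.1]{FlandoliGatarek} for part b), which is exactly what you acknowledge at the end of your proposal. Your reconstruction of both arguments follows the cited sources in substance: Simon's $L^p$-compactness criterion combined with the Ehrling/Lions interpolation inequality for a), and a vector-valued Morrey estimate followed by Arzel\`a--Ascoli for b). One correction is in order about where reflexivity actually enters in part b): it is not needed for pointwise precompactness of $\{u_n(s)\}_n$ in $X$ --- the compact embedding $X_2 \hook\hook X$ delivers that on its own, with no weak-compactness argument. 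Rather, reflexivity is used to make the vector-valued Morrey embedding $W^{\alpha,p}(0,t;X_2) \hook C^{0,\alpha - 1/p}([0,t];X_2)$ precise: elements of $W^{\alpha,p}(0,t;X_2)$ are only defined almost everywhere, and to show the continuous representative is $X_2$-valued at \emph{every} time with a uniform $X_2$-bound one mollifies to obtain smooth $X_2$-valued approximants that are bounded in $C^{0,\alpha-1/p}([0,t];X_2)$, passes to pointwise weak $X_2$-limits using reflexivity (weak sequential compactness of bounded balls), and identifies these with the strong $X$-limits via lower semicontinuity of the $X_2$-norm under weak convergence. Both versions of the Arzel\`a--Ascoli step then go through as you describe.
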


Let us now summarize some of the properties of the anisotropic spaces on a cylindric domain.

\begin{proposition}
	The anisotropic spaces have the following properties:
	\begin{enumerate}
		\item Embeddings in the spaces $H^{s,p}_zH^{r,q}_{xy}$ can be performed separately:
		\begin{align*}
			H^{s,p}_zH^{r,q}_{xy} &\hook H^{s',p'}_zH^{r,q}_{xy} & &\text{if} & H^{s,p}((-h, 0)) &\hook H^{s',p'}((-h, 0)),\\
			H^{s,p}_zH^{r,q}_{xy} &\hook H^{s,p}_zH^{r',q'}_{xy} & &\text{if} & H^{r,q}(G) &\hook H^{r',q'}(G).
		\end{align*}
		\item The following anisotropic H\"{o}lder inequality holds: Let $p,p_1,p_2,q,q_1,q_2 \in [1, \infty]$ with $\frac{1}{p_1}+\frac{1}{p_2}=\frac1p, 
		\frac{1}{q_1}+\frac{1}{q_2}=\frac1q$ and $f\in L^{p_1}_zL^{q_1}_{xy}, g\in L^{p_2}_zL^{q_2}_{xy}$.
		 Then $fg\in L^p_zL^q_{xy}$ and
		\begin{align}
		\label{eq:anisotropichoelder} 
		\norm{fg}_{L^p_zL^q_{xy}}\leq \norm{f}_{L^{p_1}_zL^{q_1}_{xy}} \norm{g}_{L^{p_2}_zL^{q_2}_{xy}}.
		\end{align}
		\item Let $v \in H^1_z L^p_{xy}$. Then $v \in L^\infty_z L^p_{xy}$ and
		\begin{equation}
			\label{eq:gginterpolation}
			\norm{v}_{L^\infty_z L^p_{xy}} \leq c \norm{v}_{L^2_z L^p_{xy}}^{1/2} \norm{v}_{H^1_z L^p_{xy}}^{1/2}.
		\end{equation}
		\item Let $v \in H^1$. Then $v \in L^\infty_z L^4_{xy}$ and
		\begin{equation}
			\label{eq:cltinterpolation}
			\norm{v}_{L^\infty_z L^4_{xy}} \leq c \norm{v}_{H^1_z L^2_{xy}}^{1/2} \norm{v}_{L^2_z H^1_{xy}}^{1/2}.
		\end{equation}
		\item Let $\alpha,\beta > 1/2$. The following embeddings are compact:
		\[
			L^2_z H^2_{xy} \cap H^1_z H^1_{xy} \hook \hook L^2_z H^1_{xy}, \qquad H^{\beta}_z H^{\alpha}_{xy} \hook \hook L^\infty_z L^4_{xy}.
		\]
	\end{enumerate}
\end{proposition}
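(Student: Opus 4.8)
The plan is to treat the five assertions separately, using three standard tools --- the Bochner/tensor-product structure of the anisotropic spaces, the one-dimensional Agmon inequality on $(-h,0)$, and the two-dimensional Ladyzhenskaya--Gagliardo--Nirenberg inequality on the bounded smooth domain $G$ --- together with Lemma~\ref{lem:BochnerCompactness} and Rellich's theorem for the compact embeddings. Assertion (1) is the vector-valued form of the scalar embeddings: writing $H^{s,p}_zH^{r,q}_{xy}=H^{s,p}((-h,0);H^{r,q}(G))$ and recalling that these are interpolation spaces built from the Bochner spaces $W^{k,p}((-h,0);H^{r,q}(G))$, the second embedding follows from the monotonicity $W^{k,p}((-h,0);X)\hook W^{k,p}((-h,0);Y)$ for $X\hook Y$ and interpolation, and the first one from the scalar interval embedding applied with values in the fixed space $H^{r,q}(G)$, since the interpolation functor preserves vector-valued embeddings; I would simply cite the standard references on anisotropic function spaces here. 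Assertion (2) is Hölder's inequality applied twice: for a.e.\ $z$, $\norm{fg(z,\cdot)}_{L^q(G)}\le\norm{f(z,\cdot)}_{L^{q_1}(G)}\norm{g(z,\cdot)}_{L^{q_2}(G)}$, and taking the $L^p((-h,0))$-norm in $z$ and applying Hölder once more in $z$ with exponents $p_1/p,p_2/p$ gives \eqref{eq:anisotropichoelder}. Assertion (3) is the Agmon inequality for $L^p(G)$-valued functions: for $u\in H^1((-h,0);L^p(G))$ and all $z,z_0$, $\norm{u(z)}_{L^p_{xy}}^2\le\norm{u(z_0)}_{L^p_{xy}}^2+2\int_{-h}^0\norm{u}_{L^p_{xy}}\norm{\partial_z u}_{L^p_{xy}}\,\dz$ (the map $z\mapsto\norm{u(z)}_{L^p_{xy}}$ is absolutely continuous with a.e.\ derivative dominated by $\norm{\partial_z u}_{L^p_{xy}}$); averaging in $z_0$ over $(-h,0)$ and taking the supremum in $z$ yields \eqref{eq:gginterpolation}.

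Assertion (4) is the crux. The obstruction is that $v\in H^1(M)$ has $\partial_z v$ only in $L^2(M)$, with no horizontal regularity, so any argument that applies the two-dimensional Ladyzhenskaya inequality to $\partial_z v$ (for instance a naive combination of (3) at $p=4$ with $H^1(G)\hook L^4(G)$) is doomed --- $\partial_z v$ must be kept in $L^2(M)$ throughout. I would write $\norm{v}_{L^\infty_z L^4_{xy}}^4=\esssup_z\int_G|v(z)|^4$ and, using $v\in L^6(M)$ (so $z\mapsto\int_G|v(z)|^4$ is absolutely continuous with derivative $4\int_G|v|^2v\,\partial_z v$), apply the fundamental theorem of calculus and average the reference point over $(-h,0)$ to obtain
\[
	\esssup_z\int_G|v(z)|^4\le\tfrac1h\norm{v}_{L^4(M)}^4+4\int_M|v|^3|\partial_z v|.
\]
For the first term, the pointwise-in-$z$ inequality $\norm{v(z,\cdot)}_{L^4(G)}^4\le c\norm{v(z,\cdot)}_{L^2(G)}^2\norm{v(z,\cdot)}_{H^1(G)}^2$, integration in $z$, and pulling out $\esssup_z\norm{v(z,\cdot)}_{L^2(G)}^2=\norm{v}_{L^\infty_z L^2_{xy}}^2\le c\norm{v}_{L^2(M)}\norm{v}_{H^1_z L^2_{xy}}$ (by (3) with $p=2$) give, using $\norm{v}_{L^2(M)}\le\norm{v}_{H^1_z L^2_{xy}}$, that $\norm{v}_{L^4(M)}^4\le c\norm{v}_{H^1_z L^2_{xy}}^2\norm{v}_{L^2_z H^1_{xy}}^2$. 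For the second term, Hölder in $M$ gives $\int_M|v|^3|\partial_z v|\le\norm{v}_{L^6(M)}^3\norm{\partial_z v}_{L^2(M)}$, and the essential ingredient is the anisotropic Ladyzhenskaya--Sobolev inequality on the cylinder,
\[
	\norm{v}_{L^6(M)}^3\le c\,\norm{v}_{L^2_z H^1_{xy}}^2\,\norm{v}_{H^1_z L^2_{xy}},
\]
which I would derive from the classical $\R^3$ inequality $\norm{w}_{L^6(\R^3)}^3\le c\prod_{i=1}^3\norm{\partial_i w}_{L^2(\R^3)}$ (a consequence of Gagliardo's inequality applied to $|w|^4$) via a Sobolev extension of $v$ from $M$ to $\R^3$ that controls horizontal and vertical first derivatives separately, up to lower-order terms. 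Since $\norm{\partial_z v}_{L^2(M)}\le\norm{v}_{H^1_z L^2_{xy}}$, this bounds the second term by $c\norm{v}_{H^1_z L^2_{xy}}^2\norm{v}_{L^2_z H^1_{xy}}^2$; combining the two terms and taking fourth roots gives \eqref{eq:cltinterpolation}.

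For assertion (5), the first compact embedding is Lemma~\ref{lem:BochnerCompactness}a) with $X_2=H^2(G)$, $X=H^1(G)$, $X_1=L^2(G)$, $p=2$, $\alpha=1$ (Rellich gives $H^2(G)\hook\hook H^1(G)$ since $G$ is bounded and smooth, and $H^1_z H^1_{xy}\hook H^1_z L^2_{xy}=W^{1,2}((-h,0);L^2(G))$ restricts the space without harming compactness). For the second, $H^\alpha(G)\hook\hook L^4(G)$ holds for every $\alpha>1/2$ by the two-dimensional Sobolev embedding with a strict loss, so Lemma~\ref{lem:BochnerCompactness}b) with $X_2=H^\alpha(G)$ (reflexive), $X=L^4(G)$, $p=2$ and any $\beta>1/2$ (whence $\beta p>1$), together with $H^\beta_z H^\alpha_{xy}\subset W^{\beta,2}((-h,0);H^\alpha(G))$, yields $H^\beta_z H^\alpha_{xy}\hook\hook C([-h,0];L^4(G))\hook L^\infty_z L^4_{xy}$. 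The only genuinely non-routine point in the whole proposition is (4) --- specifically, the use of the anisotropic Ladyzhenskaya--Sobolev inequality to absorb the vertically-differentiated factor while keeping $\partial_z v$ merely in $L^2(M)$; everything else is bookkeeping with Hölder's inequality, the one-dimensional Agmon inequality, and standard Bochner-space facts.
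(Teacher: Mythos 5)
Your argument is correct, and in parts it supplies details that the paper delegates to citation. For assertions (1)--(3) and the first compact embedding in (5), the route is essentially the paper's: (1) and (2) are described as straightforward, (3) is exactly the vector-valued Agmon/Gagliardo argument that the paper refers to \cite[Lemma 3.3(a)]{GuillenGonzalez2001}, and the first embedding in (5) is the Aubin--Lions--Simon lemma (Lemma~\ref{lem:BochnerCompactness}~a)) with the same triple of spaces.

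For (4) the paper simply cites \cite[Lemma 2.3]{CaoLiTiti2017}; you give a self-contained proof, and the key analytic input you use --- the anisotropic Ladyzhenskaya--Sobolev bound $\|v\|_{L^6(M)}^3\le c\|v\|_{L^2_zH^1_{xy}}^2\|v\|_{H^1_zL^2_{xy}}$, obtained from $\|w\|_{L^6(\R^3)}\le c\prod_i\|\partial_i w\|_{L^2}^{1/3}$ after extension --- is a clean way to absorb the $\int_M|v|^3|\partial_z v|$ term while keeping $\partial_z v$ only in $L^2$. This differs from (and is perhaps more transparent than) a direct adaptation of the Cao--Li--Titi slice-by-slice argument, which works with the 2D Gagliardo--Nirenberg inequality for $\|v(z)\|_{L^6(G)}$ and Minkowski/H\"older in $z$; your version makes the exponent bookkeeping trivial once the anisotropic $L^6$ inequality is accepted. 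The only place worth being slightly more careful is the extension step: one should do a reflection extension in $z$ across $\{-h,0\}$ first and a Stein extension in $(x,y)$ second, multiply by a cutoff, and then note that the inhomogeneous version of the anisotropic inequality with $\|v\|_{L^2(M)}$ added to both factors is what actually results; you already absorb these lower-order terms into the $L^2_zH^1_{xy}$ and $H^1_zL^2_{xy}$ norms, so this is cosmetic.

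For the second compact embedding in (5) your route is a genuine variant. The paper factors through the continuous embedding $H^\beta_zH^\alpha_{xy}\hook L^\infty_zH^\alpha_{xy}\cap H^{\gamma,\infty}_zL^4_{xy}$ for small $\gamma>0$ and then applies part a) of Lemma~\ref{lem:BochnerCompactness}. You instead go directly through $H^\beta_zH^\alpha_{xy}\subset W^{\beta,2}(-h,0;H^\alpha(G))$ with $\beta\in(1/2,1)$ and $\beta\cdot 2>1$ and apply part b), landing in $C([-h,0];L^4(G))\hook L^\infty_zL^4_{xy}$. Both work; your version is arguably more economical since it only needs the single compact embedding $H^\alpha(G)\hook\hook L^4(G)$ and one application of the Flandoli--Gatarek lemma, at the minor cost of first reducing to $\beta<1$ when $\beta\ge 1$.
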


\begin{proof}
Checking the embedding properties of the anisotropic spaces and the aniso\-tropic H\"{o}lder inequality is straightforward. The interpolation inequality \eqref{eq:gginterpolation} has been established in \cite[Lemma 3.3(a)]{GuillenGonzalez2001} for $p=2$. The more general case follows in the same manner. The interpolation inequality \eqref{eq:cltinterpolation} has been proven in \cite[Lemma 2.3]{CaoLiTiti2017}.

It remains to establish the compact embeddings. The first follows directly from the Aubin-Lions lemma, see Lemma \ref{lem:BochnerCompactness} a). The second embedding is a consequence of the Aubin-Lions lemma, the continuous embedding $H^{\beta}_z H^{\alpha}_{xy} \hook L^{\infty}_z H^\alpha_{xy}\cap H^{\gamma,\infty}_z L^4_{xy}$ holding for some $\gamma > 0$ sufficiently small and the compact embedding $H^\alpha(G) \hook \hook L^4(G)$.
\end{proof}

\subsection{Hydrostatic-solenoidal vector fields}\label{subsec:lso}
We reformulate the primitive equations \eqref{eq:primeqhorvisc} and \eqref{eq:primeqhorviscboundw}
as a system containing only two-dimensional surface pressure $p_s$ and  
prognostic variables $v$ and $T$. The divergence free 
condition $\partial_z w+\divh v=0$ and the 
boundary condition $w=0$ on  $\Gamma_b\cup \Gamma_u$ are equivalent to
\begin{gather}
	\label{eq:w}
	w(v)(t,x,y,z) = w(t,x,y,z) =-\divh \int_{-h}^{z}v(t,x,y,z') \d\! z',\\
	\label{eq:meanDivFree}
	\divh \int_{-h}^0 v(t,x,y,z')\d\!z'=0,
\end{gather}
for $v$ sufficiently smooth, e.g.\ with $\divh v \in L^1(M)$. The vertical velocity $w$ is thus a function of the horizontal velocity $v$. If we denote the vertical average and its complement by $\overline{v}$ and $\tilde{v}$, respectively, i.e.\
\begin{align}\label{eq:defvbar} 
\overline v(t,x,y):=\frac{1}{h}\int_{-h}^0 v(t,x,y,z')  \d\! z' 
\quad \hbox{and} \quad \tilde v:=v-\overline v,
\end{align}
then \eqref{eq:meanDivFree} implies that the vertical average $\overline{v}$ is divergence free, i.e.\ $\divh \overline{v}=0$.
To ease the notation from the typographical point of view, we sometimes write $\Ac v$ and $\Rc v$ instead of $\overline{v}$ and $\widetilde{v}$, respectively. 
Hence, one identifies a suitable hydrostatic-solenoidal space as
\begin{align*}
\lso(M):= \overline{\{v\in C_c^{\infty}(M)^2 \mid \divh \overline{v}=0  \}}^{||\cdot||_{L^2}},
\end{align*}
where $C_c^{\infty}$ stands for smooth compactly supported functions. Note that this space admits the 
decomposition
\begin{align}\label{eq:lso_split}
\lso(M) = \ls(G) \oplus \{v\in L^2(M)^2 \mid \overline{v}=0\},
\end{align}
where $\ls(G)= \overline{\{\varphi \in C_c^{\infty}(G)^2\colon \divh \varphi=0  \}}^{||\cdot||_{L^2}}$ is the space of solenoidal vector fields over $G$. The hydrostatic Helmholtz projection thereon is therefore defined by
\begin{align}
\Ps\colon L^2(M)^2 \rightarrow \lso(M), \quad \Ps v = \tilde{v} + P_G \overline{v},
\end{align}
where $P_G$ the classical Leray-Helmholtz projection on $L^2(G)$. 
More precisely, since due to the product structure $L^2(M)=\overline{L^2(G) \otimes L^2(-h,0)}$, one obtains 
by applying $\Ps$ that
\begin{align}\label{eq:lso_split2}
\lso(M) = \overline{\ls(G) \otimes \operatorname{span}\{1\}} \oplus \overline{L^2(G)^2 \otimes L_0^2(-h,0)},
\end{align}
where $L^2_0(-h,0)=\{v\in L^2(-h,0) \mid \int_{-h}^{0} v(z) \dz=0\}$ and $1\in L^2(-h,0)$ is a constant function. 

We now may reformulate the system (\ref{eq:primeqhorvisc},\ref{eq:primeqhorviscboundw}) as 
(see \cite[Section 2.1]{PetcuTemamZiane2009} for more details)
\begin{equation}
\begin{alignedat}{1} \label{eq:primeqreformulated}
\partial_t v - \nu_v \Deltah v + k\times v\\
+ \frac{1}{\rho_0}\nablah p_s -\beta_Tg\int_{z}^{0} \nablah T(x,y,z')\d\! z'\\
+ v \cdot \nablah v -w(v) \partial_z v
& = f_v+\sigma_1(v,\nablah v,T,\nablah T) \dot W_1,\\
\partial_t T - \nu_T \Deltah T 
+ v \cdot \nablah T + w(v) \partial_z T
& = f_T+\sigma_2(v,\nablah v,T,\nablah T) \dot W_2
\end{alignedat}
\end{equation}
in $M  \times (0,t)$ and
\begin{equation}\label{eq:primeqreformulateddivfree}
v(s)\in  \lso(M) \quad \text{for }  s \in (0,t)
\end{equation}
complemented by the initial conditions \eqref{eq:ic} and boundary conditions \eqref{eq:bc}. The above system is closed 
and we can reconstruct the full pressure $p$ from the surface 
pressure $p_s$ by
\begin{equation}
\label{eq:hydrostatic_presssure}
p(x,y,z)=p_s(x,y)+g \int_{z}^0 \rho(x,y,z') \d\! z',
\end{equation}
where $\rho=\rho_0(1-\beta_T(T-T_r))$ as in \eqref{eq:primeqhorvisc}. 
We emphasize that $p_s$ is independent of the vertical variable $z$.

\begin{remark}\label{rem:bc}
To show the existence of strong solutions, we assume $T_0=0$ on $\Gamma_b \cup 
\Gamma_u$. The homogeneous Dirichlet boundary condition can be relaxed to $T_0=c_b$ on $\Gamma_b$ and 
$T_0=c_u$ on $\Gamma_u$ for constants $c_b,c_u$. Applying the linear transformation
$\hat T=T-c_u \frac{z+h}{h}+c_b\frac{z}{h}$, one immediately observes that $\hat T_0=0$ 
on $\Gamma_b \cup \Gamma_u$ and that $(v,\hat T)$ satisfies 
\eqref{eq:primeqreformulated} with an additional term $w(v)\frac{c_u-c_b}{h}$ and 
$\sigma_2(v,\nablah v,\hat T,\nablah \hat T)$ replaced by $\sigma_2(v,\nablah v,
\hat T+c_u \frac{z+h}{h}-c_b\frac{z}{h},\nablah \hat T)$. The additional 
deterministic term can be handled in the same way as $-\beta_Tg\int_{z}^{0} \nablah T(x,y,z')\d\! z'$ 
in the equation for $v$ and it vanishes on $\Gamma_b\cup\Gamma_u$. Since it does not cause any additional difficulties, we omit it.
\end{remark}

\subsection{Functional formulation} \label{sec:functional} Let us now formulate the original 
equations \eqref{eq:primeqhorvisc}-\eqref{eq:bc} in an abstract functional 
form. We also provide estimates on the nonlinear term demonstrating the 
importance of the anisotropic spaces defined in the previous section.

We call the operator $-P_\sigma \Deltah$ the \emph{hydrostatic Stokes operator}. Let
\[
D(\Ah) = \left( L^2_zH^2_{D,xy} \cap \lso \right) \times L^2_zH^2_{N,xy}.
\]
For $U \in D(\Ah)$, we define the operator $\Ah$ by
\[
	\Ah\, U = \begin{pmatrix}
		-\Ps \Deltah v\\
		-\Deltah T
	\end{pmatrix}.
\]

The representation of $\Ps$ given in the previous subsection and 
\eqref{eq:lso_split2} allow us to introduce an orthonormal system 
of eigenfunctions of $\Ah$ 
using the known orthonormal systems associated with the 2D Stokes operator the 3D horizontal 
Laplacian. To this end, let $(\varphi_m)_m \subset C^{\infty}(\overline{G})^2
\cap H^1_0(G)^2$ and \ $(\hat{\varphi}_m)_m 
\subset C^{\infty}(\overline{G})^2\cap H^1_0(G)^2\cap \ls(G)$ be orthonormal bases of eigenfunctions of the Dirichlet Laplacian in $L^2(G)^2$ and the Stokes operator 
in $\ls (G)$, respectively, with corresponding increasing 
sequences of eigenvalues 
$(\mu_m)_m$ and  $(\hat{\mu}_m)_m$.
Moreover, recall that $\cos\left(\frac{k\pi}{h}(z+h)\right)$ for $k\in \N_0$ form a basis of eigenfunctions of the Neumann Laplacian on $L^2(-h,0)$ and, by the 
first representation theorem, a basis on $H^1(-h,0)$ and of $H^2_N(-h,0):=\{v\in H^2(-h,0)\mid \partial_z v(-h)=\partial_z v(0)=0\}$ as well,.

For $m\in \N$ and $k\in\N_0$, we define the functions $\Phi_{m,k}\in 
C^{\infty}(\overline{M})$ by
\begin{align}\label{eq:Phi}
\Phi_{m,k}(x,y,z):=\begin{cases}
  	\frac{2}{h}\varphi_m(x,y)
 	\cos\left(\frac{k\pi}{h}(z+h)\right)
 	& \hbox{for} \quad k>0,\\
  	\frac{1}{h}\hat{\varphi}_m(x,y) & \hbox{for} \quad k=0.
\end{cases}
\end{align}
Then $\operatorname{span} \{\Phi_{m,k} | m\in\N,k\in\N_0\}$ is dense 
in $\lso$, in particular $\divh\overline{\Phi}_{m,k}=0$, 
because for $k>0$ we have  $\overline{\Phi}_{m,k}=0$ and $\divh\overline{\Phi}_{m0}=
\divh \hat{\varphi}_m=0$. 

For the temperature, we use the functions $\sin\left(\frac{k\pi}{h}(z+h)\right)$ for $k\in \N$,
they form a basis of eigenfunctions of the Dirichlet Laplacian on $L^2(-h,0)$ and 
a basis on $H^1_0(-h,0)$ and of $H^2_D(-h,0):=H^2(-h,0)\cap H^1_0(-h,0)$. Let $(\psi_m)_m \subset C^{\infty}(\overline{G})^2$ be an
orthonormal basis of eigenfunctions of 
the Neumann Laplacian in $L^2(G)^2$ with an increasing 
sequence of eigenvalues $(\lambda_m)_m$ and define for $m,k\in \N$ 
the functions $\Psi_{m,k}\in C^{\infty}(\overline{M})$ by
\begin{align} \label{eq:Psi}
 \Psi_{m,k}(x,y,z):=
  	\frac{2}{h}\psi_m(x,y)
 	\sin\left(\frac{k\pi}{h}(z+h)\right).
\end{align}
Then $\operatorname{span} \{\Psi_{m,k} | m,k\in\N\}$ is dense 
in $L^2$. Hence, $\operatorname{span} \{\Phi_{m,k} | m\in\N,k\in\N_0\} 
\times \operatorname{span} \{\Psi_{m,k} | m,k\in\N\}$ is dense 
in $\lso\times L^2(M)$ and from the construction we observe that it is also
dense in $D(\Ah)$. Additionally,
\begin{equation}
\label{eq:eigenfunctions}
-\Deltah \Psi_{m,k}=\lambda_m \Psi_{m,k} \quad \text{and} \quad -\Ps \Deltah \Phi_{m,k}=\begin{cases} \mu_m \Phi_{m,k} & k>0,\\ \tilde{\mu}_m \Phi_{m,k}, & 
k = 0,\end{cases}
\end{equation}
therefore the sequences $(\Psi_{m, k})_{m, k}$ and $(\Phi_{m, k})_{m, k}$ are indeed eigenfunctions of $-\Deltah$ and $-P_\sigma \Deltah$, respectively.
Note, that the eigenspaces of all the eigenvalues have infinite dimension.

\begin{proposition}\label{prop:poincare}
The operator $\Ah$ is a self-adjoint unbounded operator.
In particular, $D(A_{\!H}) \subseteq L^2_{z} H^2_{xy}$, the domains of fractional powers $D(A_{\!H}^\alpha)$ are dense in 
$\lso \times L^2$ for $\alpha \geq 0$. Moreover, let
\[
	\tilde P_n: \lso \times L^2 \to \operatorname{span}\{\Phi_{m,k} \mid m\leq n, k\in\N_0 \}\times \operatorname{span}
\{\Psi_{m,k} \mid m\leq n, k\in\N\}
\]
where $\Phi_{m,k}$ 
and $\Psi_{m,k}$ are defined in \eqref{eq:Phi} and \eqref{eq:Psi}, be the orthogonal projection onto its range, $Q_n := I - \tilde P_n$ and $\| \cdot \|_{\alpha} := \| \Ah \cdot \|_{\ls \times L^2}$. Then the following Poincar\'e 
inequalities hold
\begin{align}
\label{eq:poincareAbstract}
\|\tilde P_n U \|_{\alpha_2} \leq \overline{\lambda}_n^{\alpha_2 - \alpha_1} \|\tilde P_n U \|_{\alpha_1},
\quad \| Q_n U \|_{\alpha_1} 
\leq \overline{\lambda}_n^{\alpha_1 - \alpha_2} \| Q_n U \|_{\alpha_1},
\end{align}
for $n\in\N$, $0 \leq \alpha_1 < \alpha_2$ and $\overline{\lambda}_n:=\max\{\mu_n, \hat{\mu}_n, \lambda_{n}\}$.
\end{proposition}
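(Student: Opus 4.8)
The plan is to obtain self-adjointness from the block-diagonal structure $\Ah=\operatorname{diag}(-\Ps\Deltah,\,-\Deltah)$ on $\lso\times L^2$ via the Friedrichs construction, and then to read off the remaining properties from the explicit eigenbasis $\{\Phi_{m,k}\}\cup\{\Psi_{m,k}\}$ produced above. For the velocity component I would use the closed, symmetric, non-negative form $a_v(u_1,u_2):=\nu_v\langle\nablah u_1,\nablah u_2\rangle_M$ with form domain $V_v:=\{u\in L^2_zH^1_{D,xy}\mid\divh\overline u=0\}$, a Hilbert space continuously and densely embedded in $\lso$; the Dirichlet--Poincar\'e inequality on $G$, integrated in $z$, makes $a_v$ coercive on $V_v$, so the associated Friedrichs operator is self-adjoint and strictly positive. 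For the temperature I would likewise use $a_T(S_1,S_2):=\nu_T\langle\nablah S_1,\nablah S_2\rangle_M$ on $L^2_zH^1_{xy}$, whose associated self-adjoint non-negative operator is $-\Deltah$ with the homogeneous Neumann condition on $\Gamma_l$ and no condition on $\Gamma_u\cup\Gamma_b$ (its kernel consists of the functions independent of $(x,y)$, which is why there is no compact resolvent). A direct sum of self-adjoint operators being self-adjoint, it then remains to check that the Friedrichs domains coincide with the prescribed $D(\Ah)$: this is an application of $H^2$-regularity for the $2$D Stokes operator and the Neumann Laplacian on the smooth bounded domain $G$, applied slicewise in $z$ --- uniformly, by the cylindrical geometry --- together with the compatibility of $\Ps$ with the splitting $\lso=\overline{\ls(G)\otimes\operatorname{span}\{1\}}\oplus\overline{L^2(G)^2\otimes L^2_0(-h,0)}$, under which $-\Ps\Deltah$ reduces to the $2$D Stokes operator on the first summand and to the $xy$-Dirichlet Laplacian on the second, consistently with \eqref{eq:eigenfunctions}. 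Alternatively one may define $\Ah$ abstractly as multiplication by the eigenvalues with respect to $\{\Phi_{m,k}\}\cup\{\Psi_{m,k}\}$ and verify that it agrees on the dense core $\operatorname{span}\{\Phi_{m,k},\Psi_{m,k}\}$ with the operator just described and has the same domain.

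Granting self-adjointness, unboundedness is immediate since $\mu_m,\hat\mu_m\to\infty$, and $D(\Ah)\subseteq L^2_zH^2_{xy}$ holds by construction of $D(\Ah)$ together with the inclusions $L^2_zH^2_{D,xy},\,L^2_zH^2_{N,xy}\subseteq L^2_zH^2_{xy}$. For the fractional powers I would invoke the spectral theorem: $\Ah$ being self-adjoint and non-negative, $\Ah^\alpha$ (interpreted through $(I+\Ah)^\alpha$ if $\Ah$ is not injective) is well defined, the moment inequality gives $D(\Ah^{\lceil\alpha\rceil})\subseteq D(\Ah^\alpha)$, and since each $\Phi_{m,k},\Psi_{m,k}$ lies in $\bigcap_{j\in\N}D(\Ah^j)$ while $\operatorname{span}\{\Phi_{m,k},\Psi_{m,k}\}$ is dense in $\lso\times L^2$ (as noted before the statement), density of $D(\Ah^\alpha)$ follows.

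For the Poincar\'e inequalities I would expand $U=\sum_{m,k}c_{m,k}\Phi_{m,k}+\sum_{m,k}d_{m,k}\Psi_{m,k}$, so that by Parseval $\|U\|_{\beta}^2=\sum_{m,k}\nu_{m,k}^{2\beta}|c_{m,k}|^2+\sum_{m,k}\lambda_m^{2\beta}|d_{m,k}|^2$, where by \eqref{eq:eigenfunctions} $\nu_{m,k}=\mu_m$ for $k>0$ and $\nu_{m,k}=\hat\mu_m$ for $k=0$. Since $\tilde P_nU$ keeps exactly the terms with index $m\le n$ and the sequences $(\mu_m)_m,(\hat\mu_m)_m,(\lambda_m)_m$ are increasing, every eigenvalue occurring in $\tilde P_nU$ is at most $\overline\lambda_n$, so for $0\le\alpha_1<\alpha_2$ the elementary bound $t^{\alpha_2-\alpha_1}\le\overline\lambda_n^{\alpha_2-\alpha_1}$ on $[0,\overline\lambda_n]$ gives $\|\tilde P_nU\|_{\alpha_2}^2\le\overline\lambda_n^{2(\alpha_2-\alpha_1)}\|\tilde P_nU\|_{\alpha_1}^2$, which is the first inequality after taking square roots. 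Symmetrically, $Q_nU=U-\tilde P_nU$ retains only the terms with $m>n$, on which the eigenvalues of $\Ah$ are bounded below by $\overline\lambda_n$, and the reverse elementary estimate yields $\|Q_nU\|_{\alpha_1}\le\overline\lambda_n^{-(\alpha_2-\alpha_1)}\|Q_nU\|_{\alpha_2}$.

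The main obstacle I expect is the identification of the variational (Friedrichs) domains with the explicitly prescribed $D(\Ah)$: this is the point where one genuinely needs $H^2$ elliptic regularity, uniform in the vertical variable, for the two-dimensional Stokes system and the Neumann Laplacian on the smooth domain $G$, and where one must ensure that the hydrostatic Helmholtz projection $\Ps$ does not destroy this regularity --- the subtlety that makes the $L^p$-theory of the hydrostatic Stokes operator delicate, but which in the Hilbert-space setting is resolved by the orthogonal decomposition \eqref{eq:lso_split2}. Everything else --- Parseval, monotonicity of the eigenvalues, and the spectral calculus --- is routine.
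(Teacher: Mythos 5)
Your proof is correct, and the Poincar\'e inequalities \eqref{eq:poincareAbstract} via Parseval with respect to the eigenbasis $\{\Phi_{m,k},\Psi_{m,k}\}$ is essentially the computation the paper performs, differing only in that the paper phrases it as an integral over $z$ of a slicewise 2D estimate (citing \cite[Lemma 2.1]{GlattHoltz2009}) rather than a single spectral series. Likewise your density argument for $D(\Ah^\alpha)$ via the spectral theorem and the dense span of eigenfunctions is a direct replacement for the paper's citation of \cite[Theorem 37.2]{SellYou}.

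The genuine difference is in the self-adjointness argument. The paper observes that $\Ah$ is symmetric and positive (hence has a self-adjoint extension) and then concludes it \emph{is} self-adjoint from the already-constructed complete orthonormal system of eigenfunctions $\{\Phi_{m,k},\Psi_{m,k}\}$ with non-accumulating real eigenvalues; no elliptic regularity is invoked because the explicit eigenbasis both pins down the closure and identifies the domain. Your primary route is the Friedrichs extension of the form $a_v,\ a_T$, which is perfectly sound but, as you yourself flag, transfers the entire burden onto identifying the Friedrichs domain with the prescribed $D(\Ah)$ — i.e.\ slicewise $H^2$-regularity for the 2D Stokes operator and the Neumann Laplacian, uniformly in $z$, together with commutativity of $\Ps$ with the decomposition \eqref{eq:lso_split2}. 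That step is correct on a cylinder over a smooth bounded $G$, but it is real work that the paper deliberately circumvents. Your alternative — defining $\Ah$ spectrally on the eigenbasis and matching it on the dense core — is precisely the paper's shortcut and is the cleaner route here, since the eigenfunctions have already been built for the Galerkin scheme anyway.
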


\begin{proof}
Density of the domains of fractional powers is well-known, see e.g.\ \cite[Theorem 37.2]{SellYou}. 
It is easy to see that the operator $\Ah$ is positive and symmetric, hence there 
exists a self-adjoint extension. Having already found an orthonormal system of 
eigenfunctions for which the corresponding eigenvalues don't have an accumulation 
point, we conclude that $\Ah$ itself is this extension. The inequalities \eqref{eq:poincareAbstract} follow from the estimate
\begin{align*}
 \|\tilde P_nU\|_{\alpha_2}^2
& =\int_{-h}^0\|\tilde P_n U(z)\|_{H^{\alpha_2}_{xy}}^2 \dz \\
&  \leq \int_{-h}^0\max\{\mu_n, \tilde{\mu}_n, \lambda_{n}\}^{2(\alpha_2 - \alpha_1)}\|\tilde P_n U(z)\|_{H^{\alpha_2-\alpha_1}_{xy}}^2 \dz \\
& = \max\{\mu_n, \tilde{\mu}_n, \lambda_{n}\}^{2(\alpha_2 - \alpha_1)}\|\tilde P_n U\|_{\alpha_2-\alpha_1}^2,
\end{align*}
where we used the inequality 
\[
\|\tilde P_n U(z)\|_{H^{\alpha_2}_{xy}}
 \leq \max\{\mu_n, \tilde{\mu}_n, \lambda_{n}\}^{2(\alpha_2 - \alpha_1)}\|\tilde P_n U(z)\|_{H^{\alpha_2-\alpha_1}_{xy}},
\]
which can be derived similarly as in \cite[Lemma 2.1]{GlattHoltz2009} by considering the operator $\Ah$ in $L^2(G)$.
\end{proof}

Let $B: H^1 \times H^1 \to H^{-2}$ be the bilinear operator
\[
B(U, U^\flat) = \begin{pmatrix}
\Ps \left[ (v \cdot \nablah )v^\flat+ w(v) \partial_z v^\flat \right]\\
(v \cdot \nablah) T^\flat + w(v) \partial_z T^\flat
\end{pmatrix}, \qquad U=(v,U), U^\flat=(v^\flat,T^\flat) \in H^1.
\]
The operator $B$ is continuous by e.g.\ \cite[Lemma 2.1]{PetcuTemamZiane2009}. In fact, by \eqref{eq:nonlinearNorm} in Proposition \ref{prop:nonlinearityEstimates},  the operator is well defined if $ U \in L^2_z H^2_{xy} \cap H^1_z H^1_{xy}$ are 
and we have the standard cancellation property 
\begin{align*}
	\left<B(U, U^\sharp), |U^\sharp|^q U^\sharp \right> & =0,
\end{align*}
for $q\geq0$, $U = (v, T)$ and $U^\sharp = (v^\sharp, T^\sharp)$ sufficiently regular.
Indeed, denoting $b(v^\sharp,v):=v^\sharp \cdot \nablah v+ w(v^\sharp) \partial_z v$ and
$b(v^\sharp,T):=v^\sharp \cdot \nablah T+ w(v^\sharp) \partial_z T$ with a slight 
abuse of notation, we have
\begin{align*}
	\left<b(v,v^\sharp), |v^\sharp|^q v^\sharp \right> & =\int_{M} (v \cdot \nablah v^\sharp) |v^\sharp|^q v^\sharp \dxyz\\
&\qquad-\int_{M} \left( \int_{-h}^{z}\divh v(x,y,z')\dz' \right) \partial_z v^\sharp  \;|v^\sharp|^q v^\sharp \dxyz\\
	&= - \frac{1}{q+2}\int_{M} (\divh v) |v^\sharp|^{q+2} - (\divh v)  |v^\sharp|^{q+2}  \dxyz\\
	&=0,
\end{align*}
where one uses the divergence-free vertical average condition \eqref{eq:meanDivFree}. The identity $\left<b(v,T^\sharp),|T^\sharp|^q T^\sharp \right> =0$ can be obtained similarly.

\begin{proposition}
\label{prop:nonlinearityEstimates}
We have the following estimates on the nonlinear term:
\begin{enumerate}
	\item For $U \in H^1$, $U^\flat \in H^1_zH^1_{xy}$ and $U^\sharp 
	\in L^2_z H^1_{xy}$, we have
	\begin{align}
		\label{eq:nonlinearFirst}
		\left| \left< B(U, U^\flat), U^\sharp \right> \right| 
		\leq c  \norm{U}_{H^1}\|  U^\flat \|_{H^1_zH^1_{xy}} \| U^\sharp \|_{L^2_z H^1_{xy}}.
	\end{align}
	\item For 
	$U \in L^2_z H^1_{xy}\cap L^\infty_z L^4_{xy}$, 
	$U^\flat \in L^2_z H^1_{xy} \cap L^\infty_z L^4_{xy}$, 
	$U^\sharp \in H^1_z L^4_{xy}$, we have
	\begin{multline}
		\label{eq:nonlinearSecond}
		\left| \left< B(U, U^\flat), U^\sharp \right> \right| \\
		\leq c (\norm{U}_{L^\infty_z L^4_{xy}}\| \nablah U^\flat \|_{L^2}+ \| \nablah U \|_{L^2} \| U^\flat \|_{L^\infty_z L^4_{xy}}) \| U^\sharp \|_{H^1_z L^4_{xy}}.
	\end{multline}
	\item For $U \in H^1_z H^1_{xy}$ with $v\in L^2_zH^1_{D,xy}$, we have
	\begin{multline}
		\label{eq:nonlineardz}
		\left| \left< \partial_z B(U, U), \partial_z U \right> \right|\\
		\leq c \norm{U}_{L^\infty_z L^4_{xy}} \left( \| \nablah \partial_z U \|_{L^2} 
		\|\partial_z U \|_{L^2} +\| \nablah \partial_z U \|_{L^2}^{3/2} \| \partial_z U \|_{L^2}^{1/2} \right).	
	\end{multline}
	\item For $U \in L^\infty_z L^4_{xy} \cap L^2_z H^2_{xy}$, $U^\flat \in L^2_z H^2_{xy} \cap H^1_z H^1_{xy}$, we have
	\begin{multline}
		\label{eq:nonlinearNorm}
		\| B(U, U^\flat) \|_{L^2}^2 \leq c \norm{U}_{L^\infty_z L^4_{xy}}^2 \| \nablah U^\flat \|_{L^2}
		\left( \| \nablah U^\flat \|_{L^2} + \| \Deltah U^\flat \|_{L^2} \right)\\
		+ c \| \nablah U \|_{L^2} \| \Deltah U \|_{L^2} \| \partial_z U^\flat \|_{L^2} 
		\left( \| \partial_z U^\flat \|_{L^2} + \| \nablah \partial_z U^\flat \|_{L^2} \right).
	\end{multline}
	\item For $U \in H^1_z H^1_{xy}$ with $(v,T) \in L^2_zH^2_{D,xy}\times L^2_zH^2_{N,xy}$, we have
	\begin{multline}
		\label{eq:nonlinearDelta}
		\left| \left<  B(U, U), \Deltah U\right> \right|
		\leq c \| U \|_{L^\infty_zL^4_{xy}} \| U \|_{L^2_zH^1_{xy}}^{1/2} \| U \|_{L^2_zH^2_{xy}}^{3/2}\\
		+c \| U \|_{L^\infty_zL^4_{xy}}\| U \|_{L^2_zH^1_{xy}}^{1/2} \| U \|_{L^2_zH^2_{xy}}^{1/2} \| U \|_{H^1_zH^1_{xy}}.
	\end{multline}
\end{enumerate}
\end{proposition}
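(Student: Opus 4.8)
The plan is to prove all five estimates by one common scheme. Since $\Ps$ is the orthogonal projection onto $\lso(M)$, it is bounded on $L^2$ and self-adjoint, so in \eqref{eq:nonlinearFirst}--\eqref{eq:nonlinearDelta} I would replace $\langle\Ps[\,\cdot\,],U^\sharp\rangle$ by $\langle\,\cdot\,,\Ps U^\sharp\rangle$ and in \eqref{eq:nonlinearNorm} simply drop $\Ps$. The remaining terms are products of two or three factors built from $v$, $T$, $w(v)$ and their first derivatives, and the idea is to estimate the corresponding space integrals by the anisotropic H\"older inequality \eqref{eq:anisotropichoelder}, distributing the integrability between the vertical and the horizontal directions. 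On the horizontal slices $G\subset\R^2$ the workhorses are the Ladyzhenskaya inequality $\|f\|_{L^4(G)}^2\le c\|f\|_{L^2(G)}\|f\|_{H^1(G)}$, Agmon's inequality $\|f\|_{L^\infty(G)}\le c\|f\|_{L^2(G)}^{1/2}\|f\|_{H^2(G)}^{1/2}$, the embeddings $H^1(G)\hook L^p(G)$ for $p<\infty$, and elliptic regularity on the smooth domain $G$ (so that $\|\Deltah\cdot\|_{L^2}$ controls the $L^2_zH^2_{xy}$-seminorm once a lateral boundary condition is in force); integrated in $z$ and combined with the interpolation inequalities \eqref{eq:gginterpolation}, \eqref{eq:cltinterpolation} already at our disposal, these reduce each factor to the norms on the right-hand sides. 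The quantity $w(v)$ is handled through \eqref{eq:w}, which gives $\|w(v)(\cdot,\cdot,z)\|_{L^q(G)}\le\int_{-h}^0\|\nablah v(\cdot,\cdot,z')\|_{L^q(G)}\dz'$, hence $\|w(v)\|_{L^\infty_zL^q_{xy}}\le c\|\nablah v\|_{L^2_zL^q_{xy}}$, together with $\partial_z w(v)=-\divh v$ and the facts $w(v)=0$ on $\Gamma_u\cup\Gamma_b$ and (using, as everywhere, $\divh\overline v=0$) $\nablah w(v)=0$ on $\Gamma_u\cup\Gamma_b$.

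For \eqref{eq:nonlinearFirst} and \eqref{eq:nonlinearSecond} I would treat the horizontal advection $(v\cdot\nablah)U^\flat$ and the vertical advection $w(v)\partial_z U^\flat$ separately. The horizontal part is split as $\|v\|_{L^\infty_zL^4_{xy}}\|\nablah U^\flat\|_{L^2}\|U^\sharp\|_{L^2_zL^4_{xy}}$, where for \eqref{eq:nonlinearFirst} one uses \eqref{eq:cltinterpolation} and $H^1(G)\hook L^4(G)$, and where $\|U^\sharp\|_{L^2_zL^4_{xy}}\le c\|U^\sharp\|_{H^1_zL^4_{xy}}$. For the vertical part in \eqref{eq:nonlinearFirst} one bounds $\int_{-h}^{z}|\nablah v|\dz'$ in $L^\infty_zL^2_{xy}$ and $\partial_z U^\flat$, $U^\sharp$ in $L^2_zL^4_{xy}$. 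For \eqref{eq:nonlinearSecond} one first integrates by parts in $z$: since $w(v)=0$ on $\Gamma_u\cup\Gamma_b$ and $\partial_z w(v)=-\divh v$,
\[
\langle w(v)\partial_z U^\flat,U^\sharp\rangle=\langle(\divh v)U^\flat,U^\sharp\rangle-\langle w(v)U^\flat,\partial_z U^\sharp\rangle,
\]
and one estimates the first summand with $\nablah v$ in $L^2$, $U^\flat$ in $L^\infty_zL^4_{xy}$, $U^\sharp$ in $L^2_zL^4_{xy}$, the second with $w(v)$ in $L^\infty_zL^2_{xy}$, $U^\flat$ in $L^\infty_zL^4_{xy}$, $\partial_z U^\sharp$ in $L^2_zL^4_{xy}$.

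For \eqref{eq:nonlineardz} one expands $\partial_z$ of the nonlinearity; the part in which $\partial_z U$ is transported by $v$ reduces, upon pairing with $\partial_z U$, to a multiple of $\langle(\divh v)\partial_z U,\partial_z U\rangle$ by the cancellation identity derived just before the proposition, so the surviving contributions are, up to constants, $\langle(\partial_z v\cdot\nablah)U,\partial_z U\rangle$ and $\langle(\divh v)\partial_z U,\partial_z U\rangle$. In each one integrates by parts in $(x,y)$ — the boundary integral on $\Gamma_l$ vanishing because $v$, hence $\partial_z v$, vanishes there — to move a horizontal derivative onto $v$ (respectively $T$); this leaves $v$ (or $T$) in $L^\infty_zL^4_{xy}$, $\partial_z U$ in $L^2_zL^4_{xy}$ and $\nablah\partial_z U$ in $L^2$, and interpolating $\|\partial_z U\|_{L^2_zL^4_{xy}}$ by Ladyzhenskaya between $\|\partial_z U\|_{L^2}$ and $\|\partial_z U\|_{L^2_zH^1_{xy}}$ produces exactly the two terms of \eqref{eq:nonlineardz}. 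For \eqref{eq:nonlinearNorm} one drops $\Ps$ and bounds $\|(v\cdot\nablah)v^\flat\|_{L^2}^2\le\|v\|_{L^\infty_zL^4_{xy}}^2\|\nablah v^\flat\|_{L^2_zL^4_{xy}}^2$ and $\|w(v)\partial_z v^\flat\|_{L^2}^2\le\|w(v)\|_{L^\infty_zL^4_{xy}}^2\|\partial_z v^\flat\|_{L^2_zL^4_{xy}}^2$ (and the temperature analogues), interpolating the $L^2_zL^4_{xy}$-norms by Ladyzhenskaya and using $\|w(v)\|_{L^\infty_zL^4_{xy}}\le c\|\nablah v\|_{L^2}^{1/2}\|\Deltah v\|_{L^2}^{1/2}$.

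Estimate \eqref{eq:nonlinearDelta} is the delicate one, and I expect it to be the main obstacle. The horizontal advection $\langle(v\cdot\nablah)v,\Ps\Deltah v\rangle$ is handled by Cauchy--Schwarz together with $\|(v\cdot\nablah)v\|_{L^2}\le\|v\|_{L^\infty_zL^4_{xy}}\|\nablah v\|_{L^2_zL^4_{xy}}\le c\|v\|_{L^\infty_zL^4_{xy}}\|\nablah v\|_{L^2}^{1/2}\|\Deltah v\|_{L^2}^{1/2}$, which already yields the first term of \eqref{eq:nonlinearDelta}; the temperature advection $\langle(v\cdot\nablah)T,\Deltah T\rangle$ is analogous. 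The vertical advection $\langle w(v)\partial_z v,\Ps\Deltah v\rangle$ is the real difficulty: the naive bound $\|w(v)\partial_z v\|_{L^2}\|\Deltah v\|_{L^2}$ is too crude, since it produces a power of $\|U\|_{H^1_zH^1_{xy}}$ higher than stated, so one must integrate by parts, and to make this work one has to decompose $v=\overline v+\widetilde v$ and exploit all available structure — $w(v)=w(\widetilde v)$ depends only on the baroclinic mode, $w(\widetilde v)=\nablah w(\widetilde v)=0$ on $\Gamma_u\cup\Gamma_b$, $\partial_z w=-\divh v$, the three-dimensional incompressibility $\divh v+\partial_z w=0$, the lateral conditions $v=0$ and $\partial_{n_G}T=0$ on $\Gamma_l$, and the vertical Poincar\'e inequality for baroclinic fields — so that after integrating by parts in $(x,y)$ and in $z$ the velocity $v$ never carries a derivative and can be placed in $L^\infty_zL^4_{xy}$, while the remaining derivative-bearing factors distribute over the $z$- and $(x,y)$-variables with H\"older exponents summing to one in each, the resulting pieces being closed by anisotropic H\"older, Ladyzhenskaya, Agmon and elliptic regularity. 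The hard point, where I expect most of the work to lie, is arranging these integrations by parts and mode decompositions so that the powers of $\|U\|_{L^2_zH^2_{xy}}$ and $\|U\|_{H^1_zH^1_{xy}}$ come out exactly as in \eqref{eq:nonlinearDelta} rather than in a weaker form; the temperature equation is treated the same way, now with the Neumann condition for $T$ on $\Gamma_l$ killing the lateral boundary integrals.
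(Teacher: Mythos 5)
Your treatment of items (1)--(4) is essentially the paper's: Cauchy--Schwarz and anisotropic H\"older, \eqref{eq:cltinterpolation} and \eqref{eq:gginterpolation}, the pointwise bound $\|w(v)\|_{L^\infty_z L^2_{xy}}\le\|\nablah v\|_{L^2}$, the $z$-integration by parts for \eqref{eq:nonlinearSecond}, the cancellation $\langle b(v,\partial_z U),\partial_z U\rangle=0$ followed by a horizontal integration by parts for \eqref{eq:nonlineardz}, and 2D Gagliardo--Nirenberg together with $\|w(v)\|_{L^\infty_z L^4_{xy}}\le c\|\nablah v\|_{L^2}^{1/2}\|\Deltah v\|_{L^2}^{1/2}$ for \eqref{eq:nonlinearNorm}. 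These steps agree with the paper, and the (Agmon, elliptic regularity) machinery you list is never actually invoked there.

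For \eqref{eq:nonlinearDelta} there is a genuine gap of the ``wrong route'' kind. You correctly recognise that the term $\langle w(v)\partial_z U,\Deltah U\rangle$ must be integrated by parts, but the remedy you propose --- decomposing $v=\overline v+\widetilde v$ and tracking the barotropic and baroclinic structure separately --- is not what the paper does, and it is not needed. The paper's argument is entirely elementary: integrate by parts once horizontally to get $-\langle\nablah b(U,U),\nablah U\rangle$, split $\nablah b(U,U)=b(U,\nablah U)+[\text{terms in which }\nablah\text{ falls on }v\text{ or }w(v)]$, and kill the first piece by the same cancellation identity you already used in \eqref{eq:nonlineardz}, i.e.\ $\langle b(v,\nablah T),\nablah T\rangle=0$ (which holds because $v=0$ on $\Gamma_l$, $w(v)=0$ on $\Gamma_u\cup\Gamma_b$ and $\divh v+\partial_z w=0$; no mode decomposition is required). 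A further integration by parts in $(x,y)$ and $z$ on the residual terms, and the interpolation $\|\nablah U\|_{L^2_z L^4_{xy}}\le c\|U\|_{L^2_z H^1_{xy}}^{1/2}\|U\|_{L^2_z H^2_{xy}}^{1/2}$, produce the two terms of \eqref{eq:nonlinearDelta}.

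Your explanation of \emph{why} the crude Cauchy--Schwarz bound on $\langle w(v)\partial_z U,\Deltah U\rangle$ is inadmissible is also off. It does not ``produce a power of $\|U\|_{H^1_z H^1_{xy}}$ higher than stated''; if you run $\|w(v)\partial_z T\|_{L^2}\|\Deltah T\|_{L^2}$ through \eqref{eq:nonlinearNorm} and interpolation, the $H^1_z H^1_{xy}$ power that comes out is $1/2$, which is \emph{smaller} than the power $1$ in the second term of \eqref{eq:nonlinearDelta}. The real defect is that the crude bound forces $\|\partial_z U\|_{L^2_z L^4_{xy}}$, which interpolates to $\|U\|_{H^1_z L^2_{xy}}^{1/2}\|U\|_{H^1_z H^1_{xy}}^{1/2}$, so no separable factor $\|U\|_{L^\infty_z L^4_{xy}}$ ever appears. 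Since the cut-off $\theta$ in Section~\ref{sect:galerkin} acts precisely on $\|U\|_{L^\infty_z L^4_{xy}}$, an estimate without such a factor is useless for absorbing the nonlinearity in Step 5 of Lemma~\ref{lemma:galerkinestimate}; that, not the power of $H^1_z H^1_{xy}$, is why the integration by parts is indispensable.
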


\begin{proof}
The estimates are established using the anisotropic H\"older inequality \eqref{eq:anisotropichoelder}. 
For simplicity 
of the presentation, we prove the estimates only for the temperature.

To prove \eqref{eq:nonlinearFirst}, we employ \eqref{eq:cltinterpolation} to obtain
\begin{align*}
	\left| \left< v \cdot \nablah T^\flat, T^\sharp \right> \right| 
	& \leq \norm{v}_{L^\infty_z L^4_{xy}} \| \nablah T^\flat \|_{L^2} \| T^\sharp \|_{L^2_z L^4_{xy}}\\
	& \leq \norm{v}_{H^1} \| T^\flat \|_{H^1} \| T^\sharp \|_{L^2_z H^1_{xy}}.
\end{align*}
Similarly, using $\| w(v) \|_{L^\infty_z L^2_{xy}}\leq \| \nablah v \|_{L^2}$ which follows from the definition of $w$ \eqref{eq:w}, we get
\begin{align*}
	\left| \left< w(v) \partial_z T^\flat, T^\sharp \right> \right| \leq c \| \nablah v \|_{L^2} \| \partial_z T^\flat \|_{L^2_z L^4_{xy}} \| T^\sharp \|_{L^2_z L^4_{xy}}.
\end{align*}

Continuing to \eqref{eq:nonlinearSecond}, we treat the first term as above by
\begin{align*}
	\left| \left< v \cdot \nablah T^\flat, T^\sharp \right> \right| 
	\leq \norm{v}_{L^\infty_z L^4_{xy}} \| \nablah T^\flat \|_{L^2} \| T^\sharp \|_{L^2_z L^4_{xy}}.
\end{align*}
Recalling that $w = 0$ on $\Gamma_u \cup \Gamma_b$, we use integration by parts and the definition of $w$ \eqref{eq:w} to get
\begin{align*}
	&\bigg| \left< w(v) \partial_z T^\flat, T^\sharp \right> \bigg| 
	= \left| \left< \divh v T^\flat, T^\sharp \right> + \left< w(v) T^\flat, \partial_z T^\sharp \right> \right|\\
	&\qquad \leq \| \divh v \|_{L^2} \| T^\flat \|_{L^\infty_z L^4_{xy}} \| T^\sharp \|_{L^2_z L^4_{xy}} + \| w(v) \|_{L^2} \| T^\flat \|_{L^\infty_z L^4_{xy}} \| \partial_z T^\sharp \|_{L^2_z L^4_{xy}}\\
	&\qquad \leq \| \nablah v \|_{L^2} \| T^\flat \|_{L^\infty_z L^4_{xy}} \| T^\sharp \|_{H^1_z L^4_{xy}}.
\end{align*}

For \eqref{eq:nonlineardz}, we use the cancellation property $\left<B(U, \partial_z U), \partial_z U\right> = 0$ and integration by parts with $v = 0$ on $\Gamma_l$ to obtain
\begin{align*}
	\left< \partial_z b(v, T), \partial_z T \right> 
	&= \left< \partial_z v \nablah T, \partial_z T \right> - \left< \div_H v \partial_z T, \partial_z T \right>\\
	&= -\left< \divh\partial_z v  T, \partial_z T \right>-\left< \partial_z v T, \nablah \partial_z T \right> - \left< \div_H v \partial_z T, \partial_z T \right>.
\end{align*}
Therefore, by the H\"{o}lder inequality and the Ladyzhenskaya inequality, we get
\begin{align*}
	&\left| \left< \partial_z b(v, T), \partial_z T \right> \right|
	\leq c \norm{U}_{L^\infty_z L^4_{xy}} \| \partial_z U \|_{L^2_z L^4_{xy}}  \| \nablah \partial_z U \|_{L^2}\\
	&\qquad \leq c \norm{U}_{L^\infty_z L^4_{xy}} \left( \| \nablah \partial_z U \|_{L^2} \| \partial_z U \|_{L^2} 
	+\| \nablah \partial_z U \|_{L^2}^{3/2} \| \partial_z U \|_{L^2}^{1/2} \right).
\end{align*}

To establish \eqref{eq:nonlinearNorm}, we use the 2D Gagliardo-Nirenberg inequality to deduce
\begin{align*}
	\| v \cdot \nablah T^\flat \|_{L^2}^2 
	&\leq  \norm{v}_{L^\infty_z L^4_{xy}}^2  \| \nablah T^\flat \|_{L^2_z L^4_{xy}}^2\\
	&\leq c \norm{v}_{L^\infty_z L^4_{xy}}^2 \| \nablah T^\flat \|_{L^2} 
	\left( \| \nablah T^\flat \|_{L^2} + \| \Deltah T^\flat \|_{L^2} \right).
\end{align*}
Similarly, from the definition of $w$ \eqref{eq:w} and the interpolation inequality \eqref{eq:cltinterpolation}, we get
\begin{align*}
	\| w(v) \partial_z T^\flat \|_{L^2}^2 
	&\leq  \| w(v) \|_{L^\infty_z L^4_{xy}}^2 
	  \| \partial_z T^\flat \|_{L^2_z L^4_{xy}}^2\\
	&\leq c \| \nablah v \|_{L^2} \| \Deltah v \|_{L^2} \| \partial_z T^\flat \|_{L^2} 
	\left( \| \partial_z T^\flat \|_{L^2} + \| \nablah \partial_z T^\flat \|_{L^2} \right).
\end{align*}

Finally, regarding \eqref{eq:nonlinearDelta}, we integrate by parts to get
%
\begin{align*}
\left< b(v, T), \Deltah T \right> 
& =	-\left< \nablah b(v, T), \nablah T \right>\\
& = -\left<  b(v, \nablah T), \nablah T \right>\\
&\hphantom{ = }\quad -\left< \nablah v_1 \partial_x T + \nablah v_2 \partial_y T +\nablah w(v) \partial_z T, \nablah T \right>\\
&=- \left< \nablah v_1 \partial_x T, \nablah T\right>-\left< \nablah v_2 \partial_y T , \nablah T\right>-\left<  \nablah w(v) \partial_z T,  \nablah T \right>\\
&= \left<  v_1 \nablah \partial_x T , \nablah T\right>+ \left<  v_1  \partial_x T , \Deltah T\right>\\
&\hphantom{ = }\quad +\left<  v_2 \nablah \partial_y T , \nablah T\right>+ \left<  v_2  \partial_y T , \Deltah T\right>\\
&\hphantom{ = }\quad -\left< \nablah \divh v T,  \nablah T \right>+\left< w(v) T,  \nablah \partial_z T \right>,
\end{align*}
where we used the boundary conditions for $v$, $T$ and $w$. Hence, we have
\begin{align*}
|\left< b(v, T), \Deltah T \right>| 
&\leq c \| T \|_{L^2_zH^2_{xy}} \| v \|_{L^\infty_zL^4_{xy}} \| \nablah T \|_{L^2_zL^4_{xy}}\\
&\quad +c\| w(v) \|_{L^2_zL^4_{xy}} \| T \|_{L^\infty_zL^4_{xy}} \| \nablah \partial_z T \|_{L^2}\\
&\leq c \| U \|_{L^\infty_zL^4_{xy}} \| U \|_{L^2_zH^1_{xy}}^{1/2} \| U \|_{L^2_zH^2_{xy}}^{3/2}\\
&\quad +c \| U \|_{L^\infty_zL^4_{xy}}\| U \|_{L^2_zH^1_{xy}}^{1/2} \| U \|_{L^2_zH^2_{xy}}^{1/2} \| U \|_{H^1_zH^1_{xy}}.
\end{align*}
\end{proof}

Next, we turn to a higher order estimate in the vertical direction. It is used later to establish the existence of solutions regular enough to prove uniqueness. 

\begin{proposition}
\label{prop:nonlinearityEstimateshigherorder}
For $U,\partial_z U \in H^1_z H^1_{xy}$ with  $(v,T) \in L^2_zH^1_{D,xy} \times L^2_zH^2_{N,xy}$, we have
	\begin{align}\label{eq:nonlineardzz}
		\left| \left< \partial_{zz} B(U, U), \partial_{zz} U\right> \right|
		  \leq & c \norm{U}_{H^1_zL^4_{xy}}\norm{\partial_{zz} U}_{L^2}^{1/2}
							\norm{ U}_{H^2_zH^1_{xy}}^{3/2}.
	\end{align}
\end{proposition}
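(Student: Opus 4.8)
The plan is to prove the stated bound for the temperature component, i.e.\ to estimate $\langle \partial_{zz}b(v,T),\partial_{zz}T\rangle$ with $b(v,T)=v\cdot\nablah T+w(v)\partial_z T$; the velocity component is handled in the same way, as in Proposition \ref{prop:nonlinearityEstimates}. All manipulations are first performed for smooth $U$ and then extended by density, the right-hand side being continuous in the relevant norms. The first step is to use the cancellation property: since $\langle b(v,\partial_{zz}T),\partial_{zz}T\rangle=0$, it is enough to bound $\langle\partial_{zz}b(v,T)-b(v,\partial_{zz}T),\partial_{zz}T\rangle$. Using $\partial_z w(v)=-\divh v$ and $\partial_{zz}w(v)=-\divh\partial_z v$, a direct computation yields
\[
\partial_{zz}b(v,T)-b(v,\partial_{zz}T)=\partial_{zz}v\cdot\nablah T+2\,\partial_z v\cdot\nablah\partial_z T-(\divh\partial_z v)\,\partial_z T-2\,(\divh v)\,\partial_{zz}T,
\]
so that the quantity splits into four terms $I_1,\dots,I_4$.

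The two benign terms are $I_1=\langle\partial_{zz}v\cdot\nablah T,\partial_{zz}T\rangle$ and $I_4=-2\langle(\divh v)\partial_{zz}T,\partial_{zz}T\rangle$, in which the temperature appears once undifferentiated in $z$. For $I_1$ I would integrate by parts in the horizontal variables, the boundary term on $\Gamma_l$ vanishing because $v=0$ on $\Gamma_l$ forces $\partial_{zz}v=0$ there, obtaining $I_1=-\langle(\divh\partial_{zz}v)\,T,\partial_{zz}T\rangle-\langle\partial_{zz}v\,T,\nablah\partial_{zz}T\rangle$; for $I_4$ the horizontal integration by parts (now using $v=0$ on $\Gamma_l$) gives $I_4=4\langle v\cdot\nablah\partial_{zz}T,\partial_{zz}T\rangle$. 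In both cases the anisotropic H\"older inequality \eqref{eq:anisotropichoelder} puts $T$ (resp.\ $v$) into $L^\infty_zL^4_{xy}$, controlled by $\norm{U}_{H^1_zL^4_{xy}}$ via \eqref{eq:gginterpolation}, one factor ($\nablah\partial_{zz}T$, resp.\ $\nablah\partial_{zz}v$) in $L^2$, and one factor ($\partial_{zz}T$, resp.\ $\partial_{zz}v$) in $L^2_zL^4_{xy}$, which the $2$D Ladyzhenskaya inequality applied per $z$-slice bounds by $\norm{\partial_{zz}\,\cdot\,}_{L^2}^{1/2}\norm{\partial_{zz}\,\cdot\,}_{L^2_zH^1_{xy}}^{1/2}\le\norm{\partial_{zz}U}_{L^2}^{1/2}\norm{U}_{H^2_zH^1_{xy}}^{1/2}$. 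This produces exactly the claimed bound for $I_1$ and $I_4$.

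The delicate terms are $I_2=2\langle\partial_z v\cdot\nablah\partial_z T,\partial_{zz}T\rangle$ and $I_3=-\langle(\divh\partial_z v)\partial_z T,\partial_{zz}T\rangle$, because now every factor carries a vertical derivative and a crude estimate gives too high a power of $\norm{U}_{H^2_zH^1_{xy}}$. I would first integrate $I_3$ by parts in the horizontal variables ($\partial_z v=0$ on $\Gamma_l$), which reduces $I_2+I_3$ to a multiple of $\langle\partial_z v\cdot\nablah\partial_z T,\partial_{zz}T\rangle$ together with $\langle\partial_z v\,\partial_z T,\nablah\partial_{zz}T\rangle$ (possibly after one further horizontal integration by parts on the surviving $\partial_z v$). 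The point is that the factor containing $\partial_z v$ must be charged only ``half'' of the top-order norm: \eqref{eq:gginterpolation} applied to $\partial_z v$ gives
\[
\norm{\partial_z v}_{L^\infty_zL^4_{xy}}\le c\,\norm{\partial_z v}_{L^2_zL^4_{xy}}^{1/2}\norm{\partial_z v}_{H^1_zL^4_{xy}}^{1/2}\le c\,\norm{U}_{H^1_zL^4_{xy}}^{1/2}\bigl(\norm{U}_{H^1_zL^4_{xy}}+\norm{\partial_{zz}v}_{L^2_zL^4_{xy}}\bigr)^{1/2},
\]
and since $\norm{\partial_{zz}v}_{L^2_zL^4_{xy}}\le c\,\norm{\partial_{zz}U}_{L^2}^{1/2}\norm{U}_{H^2_zH^1_{xy}}^{1/2}$ by Ladyzhenskaya, the $\partial_z v$-factor carries at most $\norm{U}_{H^2_zH^1_{xy}}^{1/4}$. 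Combining this with \eqref{eq:anisotropichoelder} and one more application of Ladyzhenskaya to the remaining $\partial_{zz}T$- and $\nablah\partial_z T$-factors keeps the exponent of $\norm{U}_{H^2_zH^1_{xy}}$ strictly below $2$, and after regrouping it with the benign terms one arrives at the exponent $3/2$ claimed in \eqref{eq:nonlineardzz}.

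I expect the main obstacle to be precisely this last point: controlling $\langle\partial_z v\cdot\nablah\partial_z T,\partial_{zz}T\rangle$ and its companion without spending more than $\tfrac32$ powers of $\norm{U}_{H^2_zH^1_{xy}}$. It is exactly the gain encoded in $\norm{\partial_z v}_{L^\infty_zL^4_{xy}}\lesssim\norm{U}_{H^1_zL^4_{xy}}^{1/2}\norm{\partial_{zz}v}_{L^2_zL^4_{xy}}^{1/2}$ — i.e.\ the benefit of the extra vertical regularity $\partial_z U\in H^1_zH^1_{xy}$ assumed in the hypotheses — that makes this estimate usable as a source term absorbable by the horizontal dissipation $\nu_v\norm{\nablah\partial_{zz}v}_{L^2}^2+\nu_T\norm{\nablah\partial_{zz}T}_{L^2}^2$ in the $\partial_{zz}$-energy identity, which is the role \eqref{eq:nonlineardzz} plays later in establishing solutions regular enough for the uniqueness argument.
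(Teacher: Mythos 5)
Your overall framework is the right one and matches the paper's: exploit the cancellation $\langle b(v,\partial_{zz}T),\partial_{zz}T\rangle=0$, expand $\partial_{zz}b(v,T)-b(v,\partial_{zz}T)$ into the four terms you list, and bound each with the anisotropic H\"older inequality \eqref{eq:anisotropichoelder}, the vertical $L^\infty_z$-interpolation \eqref{eq:gginterpolation} and the 2D Ladyzhenskaya inequality. Your treatment of the two benign terms $I_1$, $I_4$ (the ones with an undifferentiated $T$ or $\divh v$) is correct and delivers the claimed bound.

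There is, however, a genuine gap in the treatment of the delicate terms $I_2=2\langle\partial_z v\cdot\nablah\partial_z T,\partial_{zz}T\rangle$ and $I_3=-\langle(\divh\partial_z v)\,\partial_z T,\partial_{zz}T\rangle$. You place $\partial_z v$ in $L^\infty_z L^4_{xy}$, and this overspends the top-order norm: since
\[
\|\partial_z v\|_{L^\infty_zL^4_{xy}}\le c\,\|U\|_{H^1_zL^4_{xy}}^{1/2}\bigl(\|U\|_{H^1_zL^4_{xy}}+\|\partial_{zz}U\|_{L^2}^{1/2}\|U\|_{H^2_zH^1_{xy}}^{1/2}\bigr)^{1/2},
\]
this single factor already costs up to $\|U\|_{H^2_zH^1_{xy}}^{1/4}$; together with $\|\nablah\partial_z T\|_{L^2}\le\|U\|_{H^2_zH^1_{xy}}$ and the Ladyzhenskaya estimate $\|\partial_{zz}T\|_{L^2_zL^4_{xy}}\le c\,\|\partial_{zz}U\|_{L^2}^{1/2}\|U\|_{H^2_zH^1_{xy}}^{1/2}$ one arrives at $\|U\|_{H^2_zH^1_{xy}}^{7/4}$, not $3/2$. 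Moreover, the intermediate horizontal integration by parts on $I_3$ is counterproductive: it produces $\langle\partial_z v\,\partial_z T,\nablah\partial_{zz}T\rangle$, in which all three factors carry vertical derivatives and one carries $\nablah\partial_{zz}$; no admissible H\"older split of this term lands on the claimed exponent. Your closing remark that ``keeps the exponent strictly below $2$, and after regrouping one arrives at $3/2$'' does not hold: the estimate you get is $\|U\|_{H^1_zL^4_{xy}}^{1/2}\|\partial_{zz}U\|_{L^2}^{3/4}\|U\|_{H^2_zH^1_{xy}}^{7/4}$, and regrouping with the benign terms cannot lower the top-order exponent.

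The fix is to avoid $L^\infty_zL^4_{xy}$ for any factor carrying a $z$-derivative and instead leave $I_3$ unintegrated. Distribute the exponents so that the factor sent into $L^\infty_z$ only needs $L^2_{xy}$, so that only vertical interpolation \eqref{eq:gginterpolation} is spent on it, with no additional horizontal Ladyzhenskaya. Concretely, for $I_3$ take
\[
\divh\partial_z v\in L^\infty_zL^2_{xy},\quad \partial_z T\in L^2_zL^4_{xy},\quad \partial_{zz}T\in L^2_zL^4_{xy},
\]
and for $I_2$ take
\[
\partial_z v\in L^2_zL^4_{xy},\quad \nablah\partial_z T\in L^\infty_zL^2_{xy},\quad \partial_{zz}T\in L^2_zL^4_{xy}.
\]
In both cases the $L^\infty_zL^2_{xy}$-factor is estimated by $\|\cdot\|_{L^2}^{1/2}\|\cdot\|_{H^1_zL^2_{xy}}^{1/2}\le\|U\|_{H^2_zH^1_{xy}}$ (a full power but no more), the $L^2_zL^4_{xy}$-factor on $\partial_z T$ or $\partial_z v$ costs a single $\|U\|_{H^1_zL^4_{xy}}$, and the $L^2_zL^4_{xy}$-factor on $\partial_{zz}T$ gives $\|\partial_{zz}U\|_{L^2}^{1/2}\|U\|_{H^2_zH^1_{xy}}^{1/2}$ by Ladyzhenskaya, so the total is $\|U\|_{H^1_zL^4_{xy}}\|\partial_{zz}U\|_{L^2}^{1/2}\|U\|_{H^2_zH^1_{xy}}^{3/2}$ as claimed. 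This is the split the paper uses.
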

\begin{proof}
Similarly as above, we only show the estimates for the temperature to keep the presentation concise. We prove the claim by the anisotropic H\"{o}lder inequality \eqref{eq:anisotropichoelder}. Using the cancellation $\left<  B(v, \partial_{zz} T), \partial_{zz} T\right> =0$, we get
\begin{align*}
\left| \left< \partial_{zz} b(v, T), \partial_{zz} T\right> \right|\leq \left| \left< b(\partial_{zz}v, T), \partial_{zz} T\right> \right|+2\left| \left<b(\partial_z v,\partial_z T), \partial_{zz} T\right> \right|.
\end{align*}
Integration by parts and the Ladyzhenskaya inequality yield
		\begin{align*}
		\left| \left< \partial_{zz} v\cdot \nablah T,\partial_{zz} T \right> \right| 
		&= \left| \left< \partial_{zz} \divh v T,\partial_{zz} T \right>+ \left< \partial_{zz} v T,\nablah\partial_{zz} T \right> \right|\\
		& \leq c \norm{\partial_{zz} U}_{L^2_zL^4_{xy}}\norm{U}_{L^\infty_zL^4_{xy}}\norm{\nablah \partial_{zz} U}_{L^2}\\
		&  \leq c \norm{U}_{L^\infty_zL^4_{xy}}\norm{ U}_{H^2_zL^2_{xy}}^{1/2}\norm{U}_{H^2_zH^1_{xy}}^{3/2}
		\end{align*}		
		and
		\begin{align*}
		\left|\left< \divh v \partial_{zz} T,\partial_{zz} T\right>\right| 
		 \leq   \norm{U}_{L^\infty_zL^4_{xy}}\norm{\partial_{zz} U}_{L^2}^{1/2}\norm{\partial_{zz} U}_{L^2_zH^1_{xy}}^{3/2}.
		\end{align*}	
		From \eqref{eq:anisotropichoelder} and the interpolation inequality  \eqref{eq:gginterpolation} we deduce
		\begin{align*}	
		& \left| \left<\divh  \partial_z v \partial_z T,\partial_{zz} T\right> \right|\\
		& \qquad\leq c 	\norm{\divh  \partial_z v}_{L^{\infty}_zL^2_{xy}} 
						\norm{\partial_z T}_{L^2_zL^4_{xy}}
						\norm{\partial_{zz} T}_{L^2_zL^4_{xy}}\\
		& \qquad\leq   c 	\norm{\divh  \partial_z U}_{L^{2}}^{1/2}
							\norm{\divh  \partial_z U}_{H^1_zL^2_{xy}}^{1/2} 
							\norm{U}_{H^1_zL^4_{xy}}
							\norm{\partial_{zz} U}_{L^2}^{1/2}
							\norm{\partial_{zz} U}_{L^2_zH^1_{xy}}^{1/2}\\
		& \qquad\leq   c 	\norm{U}_{H^1_zL^4_{xy}}
							\norm{ U}_{H^2_zL^2_{xy}}^{1/2}
							\norm{ U}_{H^2_zH^1_{xy}}^{3/2}
		\end{align*}		
		and
		\begin{align*}	
		 \left| \left<   \partial_z v\cdot \partial_z\nablah T,\partial_{zz} T\right> \right|
		 \leq c 	\norm{U}_{H^1_zL^4_{xy}}
							\norm{ U}_{H^2_zL^2_{xy}}^{1/2}
							\norm{ U}_{H^2_zH^1_{xy}}^{3/2}.
		\end{align*}	
\end{proof}

We denote the hydrostatic contribution of the pressure by
\[
	\Apr U = \begin{pmatrix}
		-\Ps \beta_T g \int_z^0 \nablah T(x, y, z') \d\!z'\\
		0
	\end{pmatrix}, \qquad U \in L^2_z H^1_{xy},
\]
the Coriolis forcing by
\[
	EU = \begin{pmatrix}
		\Ps k \times v\\
		0
	\end{pmatrix}, \qquad U \in L^2,
\]
and the regular forcing by
\[
	F_U = \begin{pmatrix}
		\Ps f_v\\
		f_T
	\end{pmatrix} \in H^1 \quad \text{a.e.\ in} \ [0, t].
\]
To ease up the notation, we define
\[
	F(U) \equiv \begin{pmatrix} F_v(U) \\ F_T(U) \end{pmatrix} = \Apr U + EU - F_U, \qquad U \in L^2_z H^1_{xy}.
\]
Clearly, $F(U)$ satisfies a sublinear growth condition and is Lipschitz continuous, that is
\begin{align*}
	\| F(U) \|_{L^2} &\leq c\left( \| F_U \|_{L^2} + \norm{U}_{L^2_z H^1_{xy}} \right), & U &\in L^2_z H^1_{xy},\\
	\| F(U) - F(U^\sharp) \|_{L^2} &\leq c \| U - U^\sharp \|_{L^2_z H^1_{xy}}, & U, U^\sharp &\in L^2_z H^1_{xy}.
\end{align*}

Let $\cU$ be a separable Hilbert space. For another Hilbert space $X$ let $L_2(\cU, X)$ be the space of Hilbert-Schmidt operators $G: \cU \to X$. We define the noise term $\sigma: L^2_z H^1_{xy} \to L_2(\cU, L^2)$ by
\[
	\sigma(U) = \begin{pmatrix}
		\Ps \sigma_1(v, \nablah v, T, \nablah T)\\
		\sigma_2(v, \nablah v, T, \nablah T)
	\end{pmatrix}, \qquad U \in L^2_z H^1_{xy}.
\]
Assumptions on the noise term $\sigma$ are discussed in Section \ref{sect:noise_assumptions}.

We may now reformulate the equation \eqref{eq:primeqreformulated} as
\begin{equation}
	\label{eq:primeqFunct}
	\d\!U +[\Ah U + B(U) + F(U)] \dt = \sigma(U) \dW, \qquad U(t=0)=U_0.
\end{equation}

To establish bounds in better spaces required for global existence, we need to use the particular structure of the primitive equations, in particular the possibility of the decomposition into the barotropic and baroclinic modes.

Recalling the notation in \eqref{eq:defvbar} and below, we follow \cite{CaoTiti2007} and split the momentum equation in \eqref{eq:primeqreformulated} into equations for the barotropic mode $\overline{v}$
\begin{align}
	\label{eq:bar}
	\partial_t \overline{v} - \nu_v \Deltah \overline{v} + \frac{1}{\rho_0}\nablah p_s + \overline{v} \cdot \nablah \overline{v} &=-N(\widetilde{v})+ \Ac F_v(U)+\Ac \sigma_1(U) \dot W_1,\\
	\label{eq:v_bar_div}
	\divh \overline{v} &= 0,
\end{align}
where
\begin{align*}
	\Ac F_v(U) &= -k\times \overline{v}+\frac{\beta_Tg}{h}\int_{-h}^{0}\int_{z}^{0} \nablah T(x,y,z')\dz' \d\!z+\overline{f_v},\\
	N(\widetilde{v}) &= \frac{1}{h} \int_{-h}^{0} (\widetilde{v}\cdot\nabla_H\widetilde{v} + (\divh \widetilde{v})\,\widetilde{v}) \dz,
\end{align*}
and the baroclinic mode $\widetilde{v}$
\begin{equation} \label{eq:tilde}
	\partial_t \widetilde{v} - \nu_v \Deltah\widetilde{v}		+\widetilde{v}\cdot\nablah\widetilde{v} = - \bar v\cdot\nabla_H\widetilde{v}
	- \tilde v\cdot\nabla_H\overline{v}+N(\widetilde{v}) + \Rc F_v(U)+\Rc\sigma_1 \dot W_1,
\end{equation}
where
\begin{multline*}
	\Rc F_v(U) = F_v(U) - \Ac F_v(U) = -k \times \widetilde{v} + \widetilde{f}_v\\
	+\beta_T g \left[ \int_{z}^{0} \nablah T(x,y,z')\dz' - \frac{1}{h} \int_{-h}^{0}\int_{z}^{0} \nablah T(x,y,z')\dz'  \dz \right].
\end{multline*}

\subsection{Assumptions on noise}
\label{sect:noise_assumptions}

We assume that $\sigma$ satisfies the growth conditions
\begin{align}
	\label{eq:sigmaGrowthL2}
	\|\sigma(U)\|^2_{L_2(\cU,L^2)} &\leq c(1+\norm{U}_{L^{2}}^{2})+ \eta^2 \|(-\Deltah)^{1/2} U\|_{L^{2}}^{2},\\
	\label{eq:sigmaGrowthH1L2}
	\|\partial_z \sigma(U)\|^2_{L_2(\cU,L^2)} &\leq c(1+\| U\|_{H^1_zL^{2}_{xy}}^{2}) + \eta^2 \|(-\Deltah)^{1/2}\partial_z   U\|_{L^{2}}^{2},\\
	\label{eq:sigmaGrowthL2H1}
	\|(-\Deltah)^{1/2} \sigma(U)\|^2_{L_2(\cU,L^2)}& \leq c(1 + \| U\|_{L^{2}_zH^1_{xy}}^{2}) + \eta^2 \|\Deltah  U\|_{L^{2}}^{2},\\
	\label{eq:sigmaGrowthH2L2}
	\|\partial_{zz}  \sigma(U)\|^2_{L_2(\cU,L^2)}& \leq c(1 + \| U\|_{H^{2}_zL^2_{xy}}^{2}) + \eta^2 \|\partial_{zz} (-\Deltah)^{1/2}  U^n\|_{L^{2}}^{2},
\end{align}
for $U \in L^2_z H^1_{xy}$, $H^1_z H^1_{xy}$, $L^2_z H^2_{xy}$ and $H^2_z H^{1}_{xy}$, respectively, with $\eta > 0$, and is Lipschitz continuous 
\begin{align}
	\label{eq:sigmaLipL2}
	\|\sigma(U) - \sigma(U^\sharp)\|_{L_2(\cU,L^2)} &\leq \gamma \|U - U^\sharp\|_{L^2_z H^1_{xy}}, & U, U^\sharp &\in L^2_z H^1_{xy},\\
	\label{eq:sigmaLipH1L2}
	\|\partial_z \left[ \sigma(U) - \sigma(U^\sharp) \right] \|_{L_2(\cU,L^2)} &\leq  \gamma\|U - U^\sharp\|_{H^1_z H^1_{xy}}, & U, U^\sharp &\in H^1_z H^1_{xy},\\
	\label{eq:sigmaLipL2H1}
	\|(-\Deltah)^{1/2} \left[ \sigma(U) - \sigma(U^\sharp) \right] \|_{L_2(\cU,L^2)}& \leq \gamma \| U - U^\sharp \|_{L^2_z H^2_{xy}}, & U, U^\sharp &\in L^2_z H^2_{xy},	\\
	\label{eq:sigmaLipH2L2}
	\|\partial_{zz} \left[ \sigma(U) - \sigma(U^\sharp) \right] \|_{L_2(\cU,L^2)}& \leq \gamma \| U - U^\sharp \|_{H^2_z H^1_{xy}}, & U, U^\sharp &\in H^2_z H^1_{xy}
\end{align}
for some $\gamma > 0$. An example of a noise term $\sigma$ satisfying the above can be constructed similarly as in \cite[Section 2.5]{Brzezniak2020}. Furthermore, let on $\Gamma_b\cup \Gamma_u$,
\begin{align}
	\label{eq:sigma1boundary}
	\|\partial_z \sigma_1(U)\|^2_{L_2(\cU,L^2(G))} &\leq c(\|\partial_z v \|_{L^{2}(G)}^{2}+\| T\|_{L^{2}(G)}^{2}) + \eta^2 \|(-\Deltah)^{1/2}\partial_z   v\|_{L^{2}(G)}^{2},\\
	\label{eq:sigma2boundary}
	\|\sigma_2(U)\|^2_{L_2(\cU,L^2(G))} &\leq c(\|\partial_z v \|_{L^{2}(G)}^{2}+\| T\|_{L^{2}(G)}^{2}) + \eta^2 \|(-\Deltah)^{1/2}T\|_{L^{2}}^{2}.
\end{align}

Under the above conditions, we show the existence of a maximal solution from Theorem \ref{thm:maximalExistence} in Section \ref{sect:maximal_solutions_main}. However, to obtain global solutions from Theorem \ref{thm:globalExistence}, we need stronger assumptions using the split into \eqref{eq:bar} and \eqref{eq:tilde}. Thus, in Section \ref{sec:globalexistence}, we will consider noise of the form
\begin{equation}
	\label{eq:noiseglobal}
	\begin{split}
		\sigma_1(v)e_k&=\Psi_k \cdot \nablah \widetilde{v}+\Phi_k \cdot 
		\nablah \overline{v}+h_k(v),\\
		\sigma_2(U)e_k&=\Psi_k^T \cdot \nablah T+g_k(T,v),
	\end{split}
\end{equation}
where $(e_k)_k$ is a basis of the underlying Hilbert space $\cU$, the functions $\Psi_k: G \to \R^2$, $\Phi_k: (-h,0) \to \R^2$, $\Psi_k^T: M \to \R^2$ satisfy
\[
\Psi_k \in W^{1, \infty}(G), \quad \Phi_k \in W^{2, \infty}\left(-h, 0\right), \quad \Psi_k^T \in W^{2, \infty}_z W^{1, \infty}_{xy},
\]
 $\partial_z \Phi_k=0$ on $\Gamma_b\cup \Gamma_u$ 
and $h_k(v), g_k(v, T): M\to \R^2$ are such that
\begin{gather}
	\label{eq:h_k_div_free}
	\divh \Ac h_k(v) = 0 \ \text{for} \ \divh \overline{v} = 0,\\
	\label{eq:h_k_growth}
	\sum_{k=1}^\infty \| D h_k(v)\|_{L^q}^2 \leq c\left(1 + \| v\|_{L^q}^2 + \| D v\|_{L^q}^2 \right), \quad D \in \lbrace 1, \nablah, \partial_z \rbrace,\\
	\label{eq:g_k_growth}
	\sum_{k=1}^\infty \|g_k(v, T)\|_{L^q}^2 \leq c\left(1+\|T\|_{L^q}^2+\|\nablah \overline{v}\|_{L^q}^2+\|v\|_{L^q}^2\right),\\
	\label{eq:noise_eta}
	\sum_{k=1}^\infty \| \Psi_k^{T} \|_{L^\infty}^2 \leq \eta^2, \quad  \sum_{k=1}^\infty \| \Ac \Phi_k \|_{L^\infty}^2 \leq \eta^2, \quad  \sum_{k=1}^\infty \| \Rc \Psi_k \|_{L^\infty}^2 \leq \eta^2.
\end{gather}
Moreover, let on $\Gamma_b\cup \Gamma_u$ 
\begin{align*}
\|\partial_z h_k(v)\|_{L^2(G)}  \leq   c \|\partial_z v \|_{L^{2}(G)}, \quad
\|g_k(v)\|_{L^2(G)}  \leq   c(\|\partial_z v \|_{L^{2}(G)}+\| T\|_{L^{2}(G)})
\end{align*}
The above assumptions imply
\begin{align}
	\label{eq:sigma1_dz}
	\partial_z \sigma_1(v)e_k &= \Psi_k\cdot\nablah \partial_z v+ \partial_z \phi_k \cdot \nablah \overline{v} + \partial_z h_k(v),\\
	\label{eq:sigma1_a}
	\Ac \sigma_1(v)e_k &= (\Ac \Phi_k) \cdot \nablah \overline{v} + \Ac  h_k(v),\\
	\label{eq:sigma1_r}
	\Rc \sigma_1(v) e_k &= \Psi_k \cdot \nablah \widetilde{v} + (\Rc \Phi_k) \cdot \nablah \overline{v} + \Rc h_k(v).
\end{align}
Moreover, since $(\Ac \Phi_k)$ is constant, \eqref{eq:h_k_div_free} yields $\divh \Ac \sigma_1(v)=0$, in other words
\begin{equation}
	\label{eq:sigma_leray}
	(1 - P_G) \Ac \sigma_1(v) = 0 \ \text{in} \ L_2(\cU, L^2),
\end{equation}
where $P_G$ is the standard Leray-Helmholtz projection on the 2D domain $G$. By \eqref{eq:h_k_growth}, \eqref{eq:noise_eta}, \eqref{eq:sigma1_a} and the boundary conditions for $v$, we get
\begin{equation}
	\label{eq:sigma_bar_growth}
	\|(-\Deltah)^{1/2} \Ac \sigma_1(U)\|^2_{L_2(\cU,L^2)}
	\leq \eta^2 \| \Deltah \overline{v}\|_{L^2}^2 + c\left(1+\| (-\Deltah)^{1/2} v\|_{L^2}^{2}\right).
\end{equation}

\begin{example}
	The additional assumptions \eqref{eq:h_k_div_free}-\eqref{eq:noise_eta} are satisfied for
	\begin{align*}
		h_k(v) &= \zeta_k \overline{v} + \nu_k \widetilde{v} + \chi_k,\\
		g_k(v) &= \gamma_k T+ \Theta_k\cdot \nablah \overline{v} +\hat \zeta_k \overline{v} + \hat\nu_k \widetilde{v} + \hat\chi_k
	\end{align*}
	for sufficiently regular functions $\zeta_k$, $\nu_k$, $\chi_k$, $\gamma_k$, $\Theta_k$, $\hat\zeta_k$, $\hat \nu_k$ and $\hat\chi_k$ such that $\partial_z \zeta_k$, $\partial_z\nu_k$, $\partial_z \chi_k$, $\Theta_k$, $\hat \zeta_k$, $\hat\nu_k $ and $\hat\chi_k$ vanish on $\Gamma_b\cup \Gamma_u$.
\end{example}

\subsection{Stochastic preliminaries}

Let $(\Omega, \cF, \bF, \PP)$ be a stochastic basis with filtration $\bF = \lbrace \cF_t \rbrace_{t \geq 0}$ satisfying the usual conditions, in particular we assume the filtration $\bF$ to be right-continuous. Let $\cU$ be a separable Hilbert space and let $W$ be an $\bF$-cylindrical Wiener proceess with reproducing kernel Hilbert space $\cU$. It is well-known that if $\cU_0$ is a Hilbert space such that the embedding $\cU \hook \cU_0$ is Hilbert-Schmidt, the trajectories of $W$ are continuous in time in $\cU_0$.

Let $X$ be a Hilbert space. For a predictable process $\Phi: (0, t) \times \Omega \to L_2(\cU, X)$ satisfying $\| \Phi \|_{L^2(0, t; L_2(\cU, X)} < \infty$ $\PP$-a.s.\ the stochastic integral $\int_0^\cdot G \dW$ is well defined, see e.g.\ \cite[Section 4]{DaPratoZabczyk}. We will often use the following two variants of the Burkholder-Davis-Gundy inequality. Let $\Phi \in L^p(\Omega; L^2(0, t; L_2(\cU, X)))$ be predictable for some $p \geq 1$. Then
\begin{equation}
	\label{eq:bdg}
	\E \sup_{s \in [0, t]} \left\| \int_0^s \Phi \dW \right\|_X^{p} \leq c_{BDG} \E \left( \int_0^t \| \Phi \|_{L_2(\cU, X)}^2 \, ds \right)^{p/2}.
\end{equation}
For proof see e.g.\ \cite[Theorem 3.28, p.\ 166]{KaratzasShreve}. We note that the constant $c_{BDG}$ depends on $p$, even though we will tacitly omit the dependence  since it does not significantly affect the results in this paper. A fractional variant of the Burkholder-Davis-Gundy inequality has been established in \cite[Lemma 2.1]{FlandoliGatarek}. Let $p \geq 2$, $\Phi \in L^p(\Omega; L^p(0, t; L_2(\cU, X)))$ be predictable and let $\alpha \in [0, 1/2)$. Then
\begin{equation}
	\label{eq:bdgFrac}
	\E \left\| \int_0^t \Phi \dW \right\|_{W^{\alpha, p}(0, t; X)}^p \leq c_{BDG} \E \int_0^t \| \Phi \|_{L_2(\cU, X)}^p \ds.
\end{equation}

\subsection{Notion of solution}
\label{sect:solutions}
We adapt the definitions from \cite{Debussche2011}. All the solutions considered here are strong in the PDE sense. The definitions can be changed in a straightforward way to cover modified variants of the equation \eqref{eq:primeqFunct} which we will study in Section \ref{sect:maximal_solutions_main}.

\begin{definition}
Let $\mu_0$ be a probability measure on $H^1$ such that
\[
	\int_{H^1} \| U \|_{H^1}^2 \d\! U < \infty.
\]
\begin{enumerate}
	\item A triple $(\cS, U, \tau)$ is a \emph{local martingale solution} of \eqref{eq:primeqFunct} if $\cS = (\Omega, \cF, \bF, \PP)$ be a stochastic basis, $\tau$ is a strictly positive $\bF$-stopping time and $U = U(\cdot \wedge \tau): \Omega \times [0, \infty) \to H^1$ is an $\bF$-adapted stochastic process satisfying
	\begin{equation}
		\label{eq:mart_sol_regularity}
		\begin{aligned}
		U(\cdot \wedge \tau) &\in L^2\left( \Omega; C([0, \infty), H^1) \right),\\
		\one_{[0, \tau]} U &\in L^2\left( \Omega; L^2_{\rm{loc}}(0, \infty; H^1_z H^1_{xy} \cap D(\Ah)) \right),
		\end{aligned}
	\end{equation}
	the law of $U(0)$ is $\mu_0$ and the process $U$ satisfies
	\begin{equation}
		\label{eq:mart_sol_equation}
		U(s \wedge \tau) + \int_0^s \Ah U + B(U, U) + F(U) \dr = U(0) + \int_0^s \sigma(U) \dW
	\end{equation}
	for all $s \geq 0$ in $H$.
	\item The martingale solution $(\cS, U, \tau)$ is \emph{global} if $\tau = \infty$ $\PP$-almost surely.
\end{enumerate}
\end{definition}

\begin{definition}
\label{def:pathwise_solution}
Let $U_0 \in L^2(\Omega; H^1)$ be an $\cF_0$-measurable random variable and let $\cS = (\Omega, \cF, \bF, \PP)$ be a stochastic basis.
\begin{enumerate}
	\item A pair $(U, \tau)$ is a \emph{local pathwise solution} $\tau$ is a strictly positive $\bF$-stopping time and $U = U(\cdot \wedge \tau): \Omega \times [0, \infty) \to H^1$ is an $\bF$-adapted stochastic process such that \eqref{eq:mart_sol_regularity} and \eqref{eq:mart_sol_equation} hold w.r.t.\ the stochastic basis $\cS$.
	\item Let $(\tau_n)$ be an increasing sequence of $\bF$-stopping times converging $\PP$-a.s.\ to an $\bF$-stopping time $\xi$. The triple $(U, \xi, (\tau_n))$ is called a \emph{maximal pathwise solution} if $(U, \tau_n)$ is a local pathwise solution for all $n \in \N$ and
	\begin{equation}
		\label{eq:blowup}
		\sup_{s \in [0, \xi)} \| U \|_{H^1}^2 + \int_0^\xi \| \Ah U \|_{L^2}^2 +\| U \|_{H^1_z H^1_{xy}}^2 = \infty
	\end{equation}
	for a.a.\ $\omega \in \lbrace \xi < \infty \rbrace$.
	\item The maximal pathwise solution $(U, \xi, (\tau_n))$ is called \emph{global} if $\xi = \infty$ $\PP$-almost surely.
\end{enumerate} 
\end{definition}

\section{Existence of maximal solutions}
\label{sect:maximal_solutions_main}

To establish local existence of strong solutions of the system \eqref{eq:primeqreformulated}, we study a modified equation with a cut-off to make all the nonlinear transport term globally Lipschitz in suitable spaces. Local existence will then follow by a localization argument.

A similar approximation has been done in \cite{Debussche2011} for primitive equations with full diffusion. The estimates here are more involved due to the absence of vertical diffusion and a weaker cut-off function that does not act on $\partial_z U$.

Let $\theta \in C^{\infty}(\R)$ be a fixed function such that
\begin{equation*}
	\label{eq:theta}
	\one_{[-1/2,1/2]} \leq \theta \leq \one_{[-1,1]},
\end{equation*}
i.e.\ $\theta(r)=1$ for $r \in [-1/2,1/2]$ and $\theta(r)=0$ 
for $|r|\geq 1$. Let $\theta_\lambda(\cdot) = \theta(\cdot/\lambda)$ for $\lambda > 0$. For $\rho > 0$ fixed, we define
\begin{equation}
	\label{eq:cutoff}
	\theta(U(t)) =  \theta_\rho\left( \| U(t) \|_{L^\infty_z L^4_{xy}}  \right).
\end{equation}
We are looking for a solution $U=(v,T)$ to the modified system
\begin{equation}
	\label{eq:primeqcutoff}
	\d\! U + [\Ah U + \theta(U) B(U, U) + F(U)] \dt = \sigma(U) \dW, \quad U(0) = U_0.
\end{equation}

As we have already described in the introduction, the original system \eqref{eq:primeqreformulated}
doesn't provide any direct control over vertical derivatives $\partial_z v,
\partial_z T$. 
To work around this issue, we use the basis defined in \eqref{eq:Phi} and \eqref{eq:Psi} and consider
the spaces
\begin{align*}
	\; H:=\lso \times L^2 \text{ and } 	V:=L^2_zH^1_{D,xy}\cap \lso\times L^2_zH^1_{xy}
\end{align*}
with the corresponding inner products $\left< \cdot ,\cdot \right>_{H}$ 
and $\left< \cdot ,\cdot \right>_{V}$.By $V_1',V_2'$ we 
denote the dual spaces 
\begin{align*}
V':=V_1'\times V_2'=L^2_z H^{-1}_{xy}\times L^2_z H^{-1}_{xy},
\end{align*}
where, with slight abuse of notation, $H^{-1}_{xy}$ denotes the duals of $H^1_0(G)$ and $H^1(G)$, respectively.
We denote the dual pairing in $V\times V'$ by 
$\left< \cdot ,\cdot\right>_{H}$ to keep the notation simple.

\subsection{Galerkin scheme}
\label{sect:galerkin}
To define a suitable basis of $H_{\overline{\sigma}}$ for a Galerkin scheme, 
one can take advantage of the direct sum \eqref{eq:lso_split2}. In Section \ref{sec:functional} we have defined for $m,l\in \N, k\in\N_0$ the functions $\Phi_{m,k}$ and $\Psi_{m,k}$ in \eqref{eq:Phi} and \eqref{eq:Psi} to be  eigenfunctions of the hydrostatic Stokes operator and the horizontal Laplacian, see \eqref{eq:eigenfunctions}. 
Recall that these functions are dense in $H_{\overline{\sigma}}$ and $H_{2}$, respectively.
We set 
\begin{align*}
H_{\overline{\sigma},n}:=\operatorname{span}\{\Phi_{m,k} | m,k\leq n \}, \, H_{2,n}:=\operatorname{span}\{\Psi_{m,l} | m,l\leq n \}, \, H_{n}:=H_{\overline{\sigma},n}\times H_{2,n}
\end{align*} 
and define $P_n: H\to H_{n}$ to be the orthogonal projection onto $H_n$, see Proposition \ref{prop:poincare}. We remark that this step is only possible because 
we consider a cylindrical domain. In the case of a more realistic topography one has 
to transform the domain into a cylindrical one which leads to 
additional lower order terms in both equations. The next step 
is also a consequence of having a cylindrical domain.

The projector $P_n$ has the following important property:
For $k,m\in \N$ and for any function 
$g\in C^{\infty}(\overline{M})$, it holds
\begin{align*}
\left<g,\Phi_{m,k}\right>=- \frac{h^2}{k^2\pi^2}\left<g,-\partial_{zz}\Phi_{m,k}\right> =\frac{h^2}{k^2\pi^2}\left<\partial_{z}g,\partial_{z}\Phi_{m,k}\right>.
\end{align*}
Similarly, we have $\left<g,\Phi_{m,k}\right> =\frac{h^4}{k^4\pi^4}\left<\partial_{zz}g,\partial_{zz}\Phi_{m,k}\right>$ if additionally $\partial_z g=0$ on $\Gamma_b\cup\Gamma_u$ and we obtain the same equalities for $\Psi_{m,k}$ and $g\in C^{\infty}(\overline{M})$ with $g=0$ on $\Gamma_b\cup\Gamma_u$. Hence, when projecting the system 
\eqref{eq:primeqreformulated} onto $H_n$, the projected equations do not only 
hold for $U$ but also for $\partial_zU$ and $\partial_{zz}U$ because of \eqref{eq:sigma1boundary}, \eqref{eq:sigma2boundary} and $w=0$ on $\Gamma_b\cup\Gamma_u$. 

We now project the primitive equation onto the finite-dimensional space $H_{n}$ and 
we look for a
solution $U^n=(v^n,T^n): [0,T] \to V_n$ of the system of stochastic differential equations
\begin{equation}
	\label{eq:primeqgalerkin}
	\d\! U^n + [\Ah U^n + \theta(U^n) B^n(U^n, U^n) + F^n(U^n)] \dt = \sigma^n(U^n) \dW
\end{equation}
with the initial condition
\begin{equation} \label{eq:icgalerkin}
	U^n(t=0) = U_0^n := (P_n v_0, P_n T_0),
\end{equation}
where
\[
	B^n(\cdot, \cdot) = P_n B(\cdot, \cdot), \qquad F^n(\cdot) = P_n F(\cdot) 
	\quad \text{and} \quad \sigma^n(\cdot)(\cdot) = P_n[\sigma(\cdot)(\cdot)].
\]

Existence of solutions $U^n$ of the finite-dimensional system \eqref{eq:primeqgalerkin} 
follows by a standard fixed point argument if $U_0\in H$ since the nonlinear term $\theta(U^n)B^n(U^n, U^n)$ 
is globally Lipschitz and 
satisfies a sublinear growth condition thanks the cut-off function $\theta$ and the 
form of the eigenvectors $\lbrace \Phi_{m, k}, \Psi_{m, l} \mid m,l \in \N, k \in \N_0 \rbrace$.
Note that such a solution with enough regularity to have the trace of $\nablah T$ defined on
$\Gamma_l$ satisfies the Neumann boundary condition for $T$.

Note that $v_n(t) \in H_{\overline{\sigma},n}$ encodes 
a divergence free 
condition and determines the pressures $p_s^{n}$ and that the projected equations hold 
also for the vertical derivatives. Thus, we have for
$\partial_z U^n$ the system
\begin{multline}
	\label{eq:primeqgalerkinz}
	\d\! \partial_z U^n + [\Ah  \partial_z U^n + \theta(U^n) \partial_z  B^n(U^n, U^n) + \partial_z F(U^n)] \dt\\
	= \partial_z F^n_U \dt + \partial_z \sigma^n(U^n) \dW,
\end{multline}
with the initial condition $ \partial_z U^n(t=0) = \partial_z U^n_0$
and for $\partial_{zz} U^n$ the system
\begin{multline}
	\label{eq:primeqgalerkinzz}
	\d\! \partial_{zz} U^n + [\Ah  \partial_{zz} U^n + \theta(U^n) \partial_{zz}  B^n(U^n, U^n) + \partial_{zz} F(U^n)] \dt\\
	= \partial_{zz} F^n_U \dt + \partial_{zz} \sigma^n(U^n) \dW,
\end{multline}
with the initial condition $ \partial_{zz} U^n(t=0) = \partial_{zz}  U^n_0$.

\subsection{Estimates}
\label{sect:gal_estimates}
In this section, we establish the main estimates needed to pass to the limit in the Galerkin approximations.

\begin{lemma}\label{lemma:galerkinestimate}
Let $t>0$, $q\geq 2$ and let $U_0=(v_0,T_0) \in L^q(\Omega; H^1)$ be an $\cF_0$-measurable random variable. Let $\sigma$ satisfy \eqref{eq:sigmaGrowthL2}-\eqref{eq:sigmaGrowthL2H1}, \eqref{eq:sigmaLipL2}-\eqref{eq:sigmaLipL2H1} and let
\[
	F_U \in L^q\left( \Omega; L^q(0, t; H^1) \right).
\]
Assume $\nu > \eta^2\left(\frac{q-1}2 + qc^2_B \right)$. Then the following holds:
\begin{enumerate}
	\item The sequence $U^n$ is bounded in
	\begin{gather*}
		L^q\left( \Omega; L^\infty(0, t; H^1) \right), \quad L^q\left( \Omega; L^2(0, t; H^1_z H^1_{xy} \cap L^2_z H^2_{xy}) \right).
	\end{gather*}
		\item Let  $\alpha\in [0,1/2)$. Then $\int_0^\cdot \sigma^n(U^n) \dW$ is bounded in
	\[
		L^q\left( \Omega; W^{\alpha, q}(0, t; L^2) \right).	
	\]
	\item Let $q \geq 4$ and $p\geq q/2$. Then $U^n - \int_0^\cdot \sigma^n(U^n) \dW$ is bounded in 
	\[
		L^p\left( \Omega; W^{1, 2}(0, t; L^2)\right) 
	\]
\end{enumerate}
\end{lemma}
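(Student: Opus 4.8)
The plan is to derive everything from item (1), since (2) and (3) then follow from the Burkholder--Davis--Gundy inequalities \eqref{eq:bdg}, \eqref{eq:bdgFrac} and the equation \eqref{eq:primeqgalerkin} itself. For (1), I would use that, as recorded in Section \ref{sect:galerkin}, the projected equations hold not only for $U^n$ but also for $\partial_z U^n$ (namely \eqref{eq:primeqgalerkinz}), and that, since the basis \eqref{eq:Phi}, \eqref{eq:Psi} diagonalises $\Ah$, \eqref{eq:primeqgalerkin} may be tested against $\Ah U^n$. I would then apply It\^o's formula to
\[
	\phi_n(s) := 1 + \norm{U^n(s)}_{L^2}^2 + \norm{\partial_z U^n(s)}_{L^2}^2 + \norm{\nablah U^n(s)}_{L^2}^2 \simeq 1 + \norm{U^n(s)}_{H^1}^2 ,
\]
whose dissipation yields the good term $2\nu\bigl(\norm{\nablah U^n}_{L^2}^2 + \norm{\nablah\partial_z U^n}_{L^2}^2 + \norm{\Deltah U^n}_{L^2}^2\bigr)$, which together with the zeroth-order contributions already in $\phi_n$ controls $\norm{U^n}_{H^1_zH^1_{xy}\cap L^2_zH^2_{xy}}^2$. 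The $L^2$-level nonlinear term drops by the cancellation $\left<B(U^n,U^n),U^n\right>=0$, and the $\partial_z$- and $\Deltah$-level nonlinear terms I would estimate by \eqref{eq:nonlineardz} and \eqref{eq:nonlinearDelta}: since each of these carries a factor $\norm{U^n}_{L^\infty_zL^4_{xy}}$ and on $\{\theta(U^n)\neq 0\}$ one has $\norm{U^n}_{L^\infty_zL^4_{xy}}\leq\rho$, multiplying by $\theta(U^n)$ lets Young's inequality absorb them into $\tfrac{\nu}{2}\bigl(\norm{\nablah\partial_z U^n}_{L^2}^2 + \norm{\Deltah U^n}_{L^2}^2\bigr) + C(\rho)\phi_n$. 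The It\^o correction from the noise I would bound, via \eqref{eq:sigmaGrowthL2}--\eqref{eq:sigmaGrowthL2H1}, by $c(1+\phi_n) + \eta^2\bigl(\norm{\nablah U^n}_{L^2}^2 + \norm{\nablah\partial_z U^n}_{L^2}^2 + \norm{\Deltah U^n}_{L^2}^2\bigr)$, and the forcing by the sublinear growth of $F$.

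These estimates would give, with $M_n$ the local martingale built from the stochastic integrals,
\[
	\d\!\phi_n + c_0\,\norm{U^n}_{H^1_zH^1_{xy}\cap L^2_zH^2_{xy}}^2\,\dt \leq C(\rho)\bigl(1 + \norm{F_U}_{H^1}^2 + \phi_n\bigr)\dt + \d\!M_n ,
\]
with $c_0>0$ because $\nu$ dominates the $\eta^2$-part of the noise. I would then raise to the $q/2$-th power by applying It\^o's formula to $\phi_n^{q/2}$ (which multiplies the $\eta^2$-terms arising from the quadratic variation by $\tfrac{q-2}{2}$), take $\sup_{[0,t]}$ and expectations, and control the supremum of the resulting martingale term by \eqref{eq:bdg} — which brings in a further $\eta^2$-multiple, with an extra factor $q\,c_B^2$, of $\E\int_0^t\norm{U^n}_{H^1_zH^1_{xy}\cap L^2_zH^2_{xy}}^2\,\ds$ that must be reabsorbed on the left; this is exactly where the assumption $\nu>\eta^2(\tfrac{q-1}2 + q\,c_B^2)$ enters. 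Since $\E\norm{U_0^n}_{H^1}^q\leq c\,\E\norm{U_0}_{H^1}^q<\infty$ (the basis is orthogonal for $\norm{\cdot}_{L^2}$, $\norm{\nablah\cdot}_{L^2}$ and $\norm{\partial_z\cdot}_{L^2}$, so $P_n$ is bounded on $H^1$) and $F_U\in L^q(\Omega;L^q(0,t;H^1))$, a standard Gronwall argument would close item (1). I expect this to be the main obstacle: one has to track carefully the constants in front of the highest-order terms $\norm{\Deltah U^n}_{L^2}^2$ and $\norm{\nablah\partial_z U^n}_{L^2}^2$ produced both by the noise in the It\^o correction and, after raising to the $q/2$-th power, by the quadratic variation in the Burkholder--Davis--Gundy step, so that $\nu$ — under $\nu>\eta^2(\tfrac{q-1}2 + q\,c_B^2)$ — still leaves a positive amount of dissipation for Gronwall; the cut-off $\theta$ on $\norm{\cdot}_{L^\infty_zL^4_{xy}}$ is what keeps the nonlinearity controllable at each differentiation level.

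For (2), I would apply the fractional Burkholder--Davis--Gundy inequality \eqref{eq:bdgFrac} with $p=q\geq 2$ and $\alpha\in[0,1/2)$,
\[
	\E\norm{\int_0^\cdot\sigma^n(U^n)\dW}_{W^{\alpha,q}(0,t;L^2)}^q \leq c_{BDG}\,\E\int_0^t\norm{\sigma^n(U^n)}_{L_2(\cU,L^2)}^q\,\ds ,
\]
and bound $\norm{\sigma^n(U^n)}_{L_2(\cU,L^2)}\leq\norm{\sigma(U^n)}_{L_2(\cU,L^2)}\leq c(1+\norm{U^n}_{H^1})$ using \eqref{eq:sigmaGrowthL2}, so that the right-hand side is finite by item (1). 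For (3), I would note that by \eqref{eq:primeqgalerkin} the process $Y^n:=U^n-\int_0^\cdot\sigma^n(U^n)\dW$ satisfies $Y^n(s)=U_0^n-\int_0^s\bigl[\Ah U^n+\theta(U^n)B^n(U^n,U^n)+F^n(U^n)\bigr]\,\dr$, hence $\tfrac{\d}{\ds}Y^n=-\bigl[\Ah U^n+\theta(U^n)B^n(U^n,U^n)+F^n(U^n)\bigr]$ lies in $L^2$ for a.e.\ $s$ (using $U^n\in L^2_zH^2_{xy}$, the bound \eqref{eq:nonlinearNorm} with the cut-off, and the growth of $F$), so that
\begin{multline*}
	\norm{Y^n}_{W^{1,2}(0,t;L^2)}^2 \leq c\int_0^t\Bigl(\norm{U^n}_{L^2}^2 + \norm{\int_0^s\sigma^n(U^n)\dW}_{L^2}^2 + \norm{\Ah U^n}_{L^2}^2\\
	{}+ \theta(U^n)\norm{B^n(U^n,U^n)}_{L^2}^2 + \norm{F^n(U^n)}_{L^2}^2\Bigr)\ds ,
\end{multline*}
and taking $\E(\cdot)^{p/2}$ and combining \eqref{eq:bdg}, \eqref{eq:nonlinearNorm}, item (1) and H\"older's inequality in $\Omega$ to match the integrability exponents would finish item (3).
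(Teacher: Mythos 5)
Your proposal is correct and well-founded, but it is organised quite differently from the paper's proof. The paper proceeds sequentially in eight steps: for item (1) it applies It\^o to $\|U^n\|_{L^2}^q$, then (with $q=2$) extracts $\bigl(\int\|\nablah U^n\|^2\bigr)^{q/2}$, then repeats for $\|\partial_z U^n\|_{L^2}^q$ using \eqref{eq:nonlineardz}, then for $\|(-\Deltah)^{1/2}U^n\|_{L^2}^q$ using \eqref{eq:nonlinearDelta}, each step feeding the next (Steps 1--6); items (2) and (3) are Steps 7--8. The ordering matters: the $(-\Deltah)^{1/2}$-level estimate of the paper produces, via \eqref{eq:nonlinearDelta}, a stray $\|\nablah\partial_z U^n\|_{L^2}^2\|\nablah U^n\|_{L^2}^{q-2}$ which the paper handles by invoking the already-established bound on $\bigl(\int\|\nablah\partial_z U^n\|^2\bigr)^{q/2}$ from the preceding $\partial_z$-level step. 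You instead bundle $\|U^n\|_{L^2}^2+\|\partial_z U^n\|_{L^2}^2+\|\nablah U^n\|_{L^2}^2$ into a single functional $\phi_n$ and apply It\^o to $\phi_n^{q/2}$; the point of this is that the dissipation $2\nu(\|\nablah U^n\|^2+\|\nablah\partial_z U^n\|^2+\|\Deltah U^n\|^2)$ is then visible all at once, so the $\|U^n\|_{H^1_zH^1_{xy}}$ factor from \eqref{eq:nonlinearDelta} can be split off by Young's inequality and absorbed directly, with no need for the sequential ordering.

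Both routes yield the same constant: in your version, the It\^o correction for $\phi_n$ contributes $\tfrac{q}{2}\eta^2 D$ (with $D$ the total dissipation), the second-order term $\tfrac{q(q-2)}{8}\phi_n^{(q-4)/2}\,\d\langle\phi_n\rangle$ contributes, after Cauchy--Schwarz over the three martingale components, at most $\tfrac{q(q-2)}{2}\eta^2\phi_n^{(q-2)/2}D$, and BDG adds $q^2c_{BDG}^2\eta^2$, reproducing the condition $\nu>\eta^2(\tfrac{q-1}{2}+qc_B^2)$. The one place where your argument needs an explicit remark that the paper supplies implicitly: the quadratic variation $\d\langle\phi_n\rangle$ involves cross-terms between the three stochastic integrals; the clean way to handle them, and to stay compatible with the paper's constant, is the double Cauchy--Schwarz bound $\d\langle M_n\rangle\leq 4\phi_n\bigl(\|\sigma^n\|^2_{L_2}+\|\partial_z\sigma^n\|^2_{L_2}+\|(-\Deltah)^{1/2}\sigma^n\|^2_{L_2}\bigr)\dt$. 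The paper's stepwise approach avoids this issue entirely because each It\^o computation involves only a single quadratic form, at the cost of a bootstrapping chain. Your uses of \eqref{eq:primeqgalerkinz} and of the $H^1$-boundedness of $P_n$ (via the joint orthogonality of the basis for the $L^2$, $\nablah$ and $\partial_z$ inner products) are both correct and both needed. Items (2) and (3) are argued essentially as in the paper (the paper leaves (3) to one sentence, you supply the decomposition $\tfrac{\d}{\ds}Y^n=-[\Ah U^n+\theta B^n+F^n]$ and \eqref{eq:nonlinearNorm}); the matching of exponents in (3), which you acknowledge as requiring a careful H\"older split in $\Omega$, is where the restriction on $p$ and $q\geq 4$ actually enters, so you should spell out that step before relying on it.
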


\begin{proof}
We divide the proof into eight steps. Due to the length of the estimates we do not try make the estimates as sharp as possible and rather aim for readability. In particular, some of the constants below do depend on $\eta$ even though it is not always necessary.

\textbf{Step 1}: $U^n$ is bounded in $L^q(\Omega; L^\infty(0, t; L^2))$.\\
Applying the finite-dimensional It\^{o} formula to \eqref{eq:primeqgalerkin} and estimating the trace term, we have
\begin{align}
\begin{alignedat}{1} \label{eq:galerkin_bound_L2}
	\d\! &\|U^n\|_{L^2}^q + q \nu \|U^n\|_{L^2}^{q-2} \|(-\Deltah)^{1/2} U^n\|_{L^2}^{2} \dt\\
&\leq -q \|U^n\|_{L^2}^{q-2} \left< U^n, F^n(U^n) \right> \dt 
+\frac{q(q-1)}{2}  \|U^n\|_{L^2}^{q-2} \|\sigma^n (U^n)\|^2_{L_2(U,L^2)} \dt\\
&\quad  +q \|U^n\|_{L^2}^{q-2} \left< U^n,\sigma^n(U^n) \d\! W\right>\\
&= \sum_{i = 1}^2 I^n_i \dt + I^n_3 \dW 
\end{alignedat}
\end{align}
where we already used the cancellation property $\left<B(U^n,U^n),U^n\right>=0$. 
The linear part is straightforward. For any $\varepsilon>0$, we have 
\begin{align*}
	\int_0^t \left| I_1^n \right| \ds \leq & q\varepsilon \int_0^t\|U^n\|_{L^2}^{q-2}\|(-\Deltah)^{1/2} U^n\|_{L^2}^{2} \ds + c_\varepsilon\|U^n\|_{L^2}^{q}\\
	&+\frac14  \sup_{s\in[0,t]}\|U^n\|_{L^2}^{q}+ \left(\int_0^t \|F_U^n\|_{L^2}^{2}\ds \right)^{q/2}.
\end{align*}
By the growth estimate \eqref{eq:sigmaGrowthL2} of $\sigma$ in $L_2(\cU, L^2)$, we obtain 
\begin{align*}
I_2^n &\leq \frac{q(q-1)}{2} \|U^n\|_{L^2}^{q-2} \|\sigma^n (U^n)\|^2_{L_2(\cU,L^2)}\\
 & \leq c\left( 1 + \|U^n\|_{L^2}^{q} \right) + \frac{q(q-1)}{2} \eta^2\|U^n\|_{L^2}^{q-2} \|(-\Deltah)^{1/2} U^n\|_{L^{2}}^{2}.
\end{align*}
The stochastic integral is estimated by the Burkholder-Davis-Gundy inequality \eqref{eq:bdg} by
\begin{align*}
	\E \sup_{s\in[0,t]} &\left| \int_0^s q \|U^n\|_{L^2}^{q-2} \left< U^n,\sigma^n(U^n) \d\! W\right> \right| \\
	&\leq qc_{BDG} \E \left( \int_0^t \|U^n\|_{L^2}^{2q-2} \|\sigma^n (U^n)\|^2_{L_2(U,L^2)} \ds \right)^{1/2}\\
& \leq qc_{BDG}\E \left(\int_0^t c\left( 1 + \|U^n\|_{L^2}^{2q} \right) +  \eta^2\|U^n\|_{L^2}^{2q-2}\|(-\Deltah)^{1/2} U^n\|_{L^{2}}^{2} \ds\right)^{1/2}\\
	& \leq \frac14 \E \sup_{s\in[0,t]}\|U^n\|_{L^2}^{q} 
			+q^2c_{BDG}^2\eta^2 \E \int_0^t \|U^n\|_{L^2}^{q-2}\|(-\Deltah)^{1/2} U^n\|_{L^{2}}^{2}\ds\\
	&\hphantom{\leq}+c\E\int_0^t 1 + \|U^n\|_{L^2}^{q} \ds.
\end{align*}
Collecting the estimates above yields
\begin{multline*}
 \frac14 \E \sup_{s\in[0,t]}\|U^n\|_{L^2}^{q}
+ c(q, \nu, \varepsilon, \eta) \E\int_0^t \|U^n\|_{L^2}^{q-2} \|(-\Deltah)^{1/2} U^n\|_{L^2}^{2} \ds\\
\leq c_\varepsilon \E\left[\|U^n(0)\|_{L^2}^{q} + 1+\int_0^t  \|U^n\|_{L^2}^{q} \ds+ \left(\int_0^t \|F_U^n\|_{L^2}^{2}\ds \right)^{q/2}\right],
\end{multline*}
where $c(q, \nu, \varepsilon, \eta) = q[\nu -\varepsilon- \eta^2(\tfrac{q-1}{2} + qc^2_B)]$. Choosing $\varepsilon$ sufficiently small, we employ Gronwall's lemma to get
\begin{multline}\label{eq:boundL2}
 \E \sup_{s\in[0,t]}\|U^n\|_{L^2}^{q}
 +\E\int_0^t \|U^n\|_{L^2}^{q-2} \|(-\Deltah)^{1/2}U^n\|_{L^2}^{2} \ds\\
\leq c\E\left[\|U^n(0)\|_{L^2}^{q} + 1+ \left(\int_0^t \|F_U^n\|_{L^2}^{2}\ds \right)^{q/2} \right].
\end{multline}

\textbf{Step 2}: $U^n$ is bounded in $L^q(\Omega; L^2(0, t; L^2_z H^1_{xy}))$.\\
Assuming $\eta$ sufficiently small so that the gradient terms in the estimates correction terms can be moved to the left-hand side, we use \eqref{eq:galerkin_bound_L2}, the estimates from Step 1 with $q = 2$ and the growth estimate \eqref{eq:sigmaGrowthL2} on $\sigma$ in $L_2(\cU, L^2)$ to get
\begin{multline*}
	\left( \int_0^t \| (-\Deltah)^{1/2} U^n \|_{L^2}^2 \ds \right)^{q/2} \leq c \| U^n(0) \|_{L^2}^q+\sup_{s\in[0,t]}\|U^n\|_{L^2}^{q}+c\\
+ c \left(\int_0^t \|F_U^n\|_{L^2}^{2}\ds \right)^{q/2} + c \left( \int_0^t \left< U^n, \sigma^n(U^n) \dW \right> \right)^{q/2}.
\end{multline*}
By the Burkholder-Davis-Gundy inequality \eqref{eq:bdg} we have
\begin{align*}
	\E \sup_{s \in [0, t]} &\left| \int_0^s \left< U^n,\sigma^n(U^n) \d\! W\right> \right|^{q/2} \leq c_{BDG} \E\left( \int_0^t \| U^n \|_{L^2}^2 \| \sigma^n(U^n) \|_{L_2(\cU, L^2)}^2 \ds \right)^{q/4}\\
	&\leq c \E\left( \int_0^t c\left(1 + \| U^n \|_{L^2}^4 \right) + \eta^2 \| U^n \|_{L^2}^2  \| (-\Deltah)^{1/2} U^n \|_{L^2}^2 \ds \right)^{q/4}\\
	&\leq \varepsilon \E \left( \int_0^t \| (-\Deltah)^{1/2} U^n \|_{L^2}^2 \ds \right)^{q/2} + c_\varepsilon \E\left[ 1 + \sup_{s \in [0, t]} \| U^n \|^q \right].
\end{align*}
Choosing $\varepsilon$ sufficiently small, we use the bound \eqref{eq:boundL2} from Step 1 to get
\begin{equation}
\label{eq:boundLpL2H1}
\begin{split}
	\E &\left( \int_0^t \| (-\Deltah)^{1/2} U^n \|_{L^2}^2 \ds \right)^{q/2}\\
	&\leq c\E \left[ \| U^n(0) \|_{L^2}^q + \sup_{s \in [0, t]} \| U^n \|_{L^2}^q +1+\left(\int_0^t \|F_U^n\|_{L^2}^{2}\ds \right)^{q/2} \right]\\
	&\leq c\E \left[ \| U^n(0) \|_{L^2}^q+1+\left(\int_0^t \|F_U^n\|_{L^2}^{2}\ds \right)^{q/2} \right].
\end{split}
\end{equation}

\textbf{Step 3}: $U^n$ is bounded in $L^q(\Omega; L^\infty(0, t; H^1_zL^2_x))$.\\
To obtain the estimates for vertical derivative, we use the properties 
of the projector $P$, in particular \eqref{eq:primeqgalerkinz}. 
Employing the It\^{o} formula and estimates for the trace term, we get
\begin{equation*}
\begin{split}
\d\! &\|\partial_z U^n\|_{L^2}^q +q \nu \|\partial_z U^n\|_{L^2}^{q-2} \|(-\Deltah)^{1/2} \partial_z U^n\|_{L^2}^{2} \dt\\
&\leq -q \|\partial_z U^n\|_{L^2}^{q-2} \left< \partial_z U^n, \partial_z F^n(U^n) + \theta(U^n) P_n \partial_z B( U^n,U^n)\right> \dt\\
&\quad +\frac{q(q-1)}{2}  \|\partial_z U^n\|_{L^2}^{q-2} \|\partial_z \sigma^n (U^n)\|^2_{L_2(U,L^2)} \dt\\
&\quad +q \|\partial_z U^n\|_{L^2}^{q-2} \left< \partial_z U^n,\partial_z \sigma^n(U^n) \dW \right>\\
&= \sum_{i=1}^3 I^n_i \dt + I^n_4 \dW.
\end{split}
\end{equation*}
Similarly as in Step 1, we estimate the lower order linear term as
\begin{multline*}
	\int_0^t \left| I_1^n \right| \ds \leq \frac14 \sup_{s \in [0, t]} \| \partial_z U^n\|_{L^2}^q + c \int_0^t \| \partial_z U^n\|_{L^2}^q \ds\\
	+ c \left( \int_0^t \| \nablah U^n \|_{L^2}^2 \ds \right)^{q/2} + c \left( \int_0^t \| \partial_z F_U \|_{L^2}^2 \ds \right)^{q/2},
\end{multline*}
the correction terms as
\[
	I^n_3 \leq c\left( 1 + \| \partial_z U^n \|_{L^2}^q \right) + \frac{q(q-1)}{2} \eta^2 \| U^n\|_{H^1_zL^2_{xy}}^{q-2} \|(-\Deltah)^{1/2} \partial_z U^n\|_{L^2}^{2},
\]
and the stochastic term by the Burkholder-Davis-Gundy inequality \eqref{eq:bdg}
\begin{multline*}
	\E \sup_{s \in [0, t]} \left| \int_0^s I^n_4 \dW \right| \leq \frac14 \E \sup_{s\in[0,t]}\| \partial_z U^n\|_{L^2}^{q} + c\E\int_0^t 1 + \| U^n\|_{H^1_zL^2{xy}}^{q} \ds
\\
	+q^2c_{BDG}^2\eta^2 \E \int_0^t \|\partial_{z}U^n\|_{L^2}^{q-2}\|(-\Deltah)^{1/2} \partial_z U^n\|_{L^{2}}^{2}\ds.
\end{multline*}
The nonlinear term is dealt with by the estimate \eqref{eq:nonlineardz}. Recalling the form of the cut-off $\theta$ \eqref{eq:cutoff}, for $\varepsilon>0$, we have 
\begin{align*}
	\left| I_2^n \right| &\leq cq \theta(U^n)\norm{U}_{L^\infty_z L^4_{xy}} \| \nablah \partial_z U \|_{L^2}^{3/2} \| \partial_z U \|_{L^2}^{q-3/2} \\
	&\leq cq\rho \| \nablah \partial_z U \|_{L^2}^{3/2} \| \partial_z U \|_{L^2}^{q-3/2} \\
	&\leq q\varepsilon \| \nablah \partial_z U^n \|_{L^2}^{2} \| \partial_z U^n \|_{L^2}^{q-2} +c_\varepsilon q \rho^{4}\| \partial_z U^n \|_{L^2}^{q}.
\end{align*}

Collecting the above estimates, we obtain
\begin{multline*}
	\E\left[ \frac12 \sup_{s\in[0,t]}\|\partial_z U^n\|_{L^2}^{q} + c(q, \nu, \varepsilon, \eta) \int_0^t \|\partial_z U^n\|_{L^2}^{q-2} \|(-\Deltah)^{1/2} \partial_z U^n\|_{L^2}^{2} \ds\right]\\
	\leq c \E\left[\|\partial_z U^n(0)\|_{L^2}^{q} +1+ c \left( \int_0^t \| \partial_z F_U \|_{L^2}^2 \ds \right)^{q/2}+ \left( \int_0^t \| \nablah U^n \|_{L^2}^2 \ds \right)^{q/2} \right]\\
	+ c \E \int_0^t \| U^n\|_{L^2}^{q} + \|\partial_z U^n\|_{L^2}^{q}\ds,
\end{multline*}
where again $c(q, \nu,\varepsilon, \eta) = q[\nu - \varepsilon -\eta^2(q c_{BDG}^2 + \frac{q-1}{2})]$.
With $\varepsilon$ sufficiently small, Gronwall's lemma and the bound \eqref{eq:boundLpL2H1} yield
\begin{multline}
	\label{eq:boundH1L2}
	\E\left[ \sup_{s\in[0,t]}\|\partial_z U^n\|_{L^2}^{q} + \int_0^t \|\partial_z U^n\|_{L^2}^{q-2} (\|(-\Deltah)^{1/2} \partial_z U^n\|_{L^2}^{2}) \ds\right]\\
	\leq c \E \left[  \|U^n(0)\|_{H^1_zL^2_{xy}}^{q} + 1+ \left( \int_0^t \| F_U \|_{H^1_zL^2_{xy}}^2 \ds \right)^{q/2} \right].
\end{multline}

\textbf{Step 4}: $U^n$ is bounded in $L^q(\Omega; L^2(0, t; H^1_z H^1_{xy}))$.\\
The desired estimate 
\begin{multline}
	\label{eq:boundLpH1H1}
	\E \left( \int_0^t \| \nablah \partial_z U^n \|_{L^2}^2 \ds \right)^{q/2}\\
	\leq c \E \left[  \|U^n(0)\|_{H^1_zL^2_{xy}}^{q} + 1+ \left( \int_0^t \| F_U \|_{H^1_zL^2_{xy}}^2 \ds \right)^{q/2} \right]
\end{multline}
can be obtained from \eqref{eq:boundLpL2H1} and \eqref{eq:boundH1L2} with $q=2$ similarly as in Step 2.

\textbf{Step 5}: $U^n$ is bounded in $L^q(\Omega; L^\infty(0, t; L^2_z H^1_{xy}))$.\\
Since our basis consists of eigenvectors of the linear operator $\Ah$, we may use the It\^{o} formula on \eqref{eq:primeqgalerkin} to obtain 
\begin{equation*}
\begin{split}
	\d &\|(-\Deltah)^{1/2} U^n\|_{L^2}^q + q \nu \|(-\Deltah)^{1/2} U^n\|_{L^2}^{q-2}\|\Deltah U^n\|_{L^2}^{2} \dt\\
	&\leq -q \|(-\Deltah)^{1/2} U^n\|_{L^2}^{q-2} \left< (-\Deltah)^{1/2} U^n, (-\Deltah)^{1/2} F^n(U^n) \right> \dt\\
	&\hphantom{= \ } -q \|(-\Deltah)^{1/2} U^n\|_{L^2}^{q-2} \left< (-\Deltah)^{1/2} U^n,\theta(U^n)P_n(-\Deltah)^{1/2} B( U^n,U^n) \right> \dt\\
	&\hphantom{= \ } +\frac{q(q-1)}{2}\|(-\Deltah)^{1/2} U^n\|_{L^2}^{q-2} \|(-\Deltah)^{1/2} \sigma^n (U^n)\|^2_{L_2(\cU,L^2)} \dt\\
	&\hphantom{= \ } +q \|(-\Deltah)^{1/2} U^n\|_{L^2}^{q-2} \left< (-\Deltah)^{1/2} U^n,(-\Deltah)^{1/2} \sigma^n(U^n) \dW \right>\\
	&= \sum_{i = 1}^3 I_i^n \dt +  I_4^n \dW.
\end{split}
\end{equation*}
Similarly as above, for $\varepsilon>0$, we estimate the lower order term and the correction term by
\begin{align*}
	\int_0^t I_1^n \ds &\leq \frac{\varepsilon}{4}\int_0^t \| \Deltah U^n \|_{L^2}^2 \| \nablah U^n \|_{L^2}^{q-2}\ds+ c_\varepsilon \int_0^t  \| \nablah U^n \|_{L^2}^{q}\ds\\
	& \phantom{\leq} + \frac14 \sup_{s\in[0,t]}  \| \nablah U^n \|_{L^2}^{q} +c  \left( \int_0^t \| F_U \|_{L^2}^2 \ds \right)^{q/2},\\
	I_3^n &\leq  \frac{q(q-1)}{2} \eta^2 \| \Deltah U^n \|_{L^2}^2 \| \nablah U^n \|_{L^2}^{q-2} + c \left( 1 + \|  U^n \|_{L^2_zH^1_{xy}}^{q}\right).
\end{align*}
From the Burkholder-Davis-Gundy inequality \eqref{eq:bdg}, we deduce
\begin{multline*}
	\E \sup_{s \in [0, t]} \left| \int_0^s I_4^n \dW \right| \leq \frac14 \E \sup_{s\in[0,t]}\| \nablah U^n\|_{L^2}^{q} + c\E\int_0^t 1 + \| U^n\|_{L^2_zH^1_{xy}}^{q} \ds
\\
	+q^2c_{BDG}^2\eta^2 \E \int_0^t \|\nablah U^n\|_{L^2}^{q-2}\|\Deltah U^n\|_{L^{2}}^{2}\ds.
\end{multline*}
To deal with the nonlinear term $I_2^n$, we use \eqref{eq:nonlinearDelta}. We estimate the first term on the right-hand side of \eqref{eq:nonlinearDelta} multiplied by $\| \nablah U^n \|_{L^2}^{q-2}$ using the form of the cut-off $\theta$ \eqref{eq:cutoff} by
\begin{multline*}
	\theta(U^n) \| U \|_{L^\infty_zL^4_{xy}} \| U \|_{L^2_zH^1_{xy}}^{1/2} \| U \|_{L^2_zH^2_{xy}}^{3/2}\| \nablah U^n \|_{L^2}^{q-2}\\
	\leq \frac{\varepsilon}{4} \| \Deltah U^n \|_{L^2}^{2} \| \nablah U^n \|_{L^2}^{q-2} + c_\varepsilon  \rho^{4}\left(1 + \| U^n \|_{L^2}^q + \| \nablah U^n \|_{L^2}^q \right).
\end{multline*}
For the second term on the right-hand side of \eqref{eq:nonlinearDelta}, we have
\begin{align*}
	\theta(U^n)& \| U \|_{L^\infty_zL^4_{xy}}\| U \|_{L^2_zH^1_{xy}}^{1/2} \| U \|_{L^2_zH^2_{xy}}^{1/2} \| U \|_{H^1_zH^1_{xy}}\| \nablah U^n \|_{L^2}^{q-2}\\
	&\leq q\frac{\varepsilon}{4} \| \Deltah U^n \|_{L^2}^{2} \| \nablah U^n \|_{L^2}^{q-2}\\
	&\hphantom{\leq } + c_\varepsilon \rho^{4/3} \left(1 + \| U^n \|_{H^1_zL^2_{xy}}^q + \| \nablah U^n \|_{L^2}^q+ \| \nablah \partial_z U^n \|_{L^2}^2 \| \nablah U^n \|_{L^2}^{q-2}\right).
\end{align*}

Collecting the above and using the estimate
\begin{multline*}
\int_0^t \| \nablah \partial_z U^n \|_{L^2}^2 \| \nablah U^n \|_{L^2}^{q-2}\ds\\
\leq \frac14 \sup_{s\in[0,t]} \| \nablah U^n\|_{L^2}^{q}+c\left(\int_0^t \| \nablah \partial_z U^n \|_{L^2}^2 \ds\right)^{q/2},
\end{multline*}
we deduce
\begin{multline*}
	\E\left[ \frac14 \sup_{s\in[0,t]} \| \nablah U^n\|_{L^2}^{q} + c(p, \nu, \varepsilon, \eta) \int_0^t \|\nablah U^n\|_{L^2}^{q-2} \|\Deltah U^n\|_{L^2}^{2} \ds\right]\\
	\leq c  \E \left[ \|\nablah U^n(0)\|_{L^2}^{q} +1+ \left( \int_0^t \| F_U \|_{L^2}^2 \ds \right)^{q/2}+\left(\int_0^t \| \nablah \partial_z U^n \|_{L^2}^2 \ds\right)^{q/2}\right]\\
	+ c \E \int_0^t \| U^n \|_{H^1_zL^2_{xy}}^q + \| U^n\|_{L^2_zH^1_{xy}}^{q}  \ds,
\end{multline*}
where $c(q,\nu,\varepsilon, \eta) = q[ \nu -\varepsilon- \eta^2( \frac{q-1}2 + qc^2_B)]$. Choosing $\varepsilon$ sufficiently small, we may employ Gronwall's lemma and \eqref{eq:boundH1L2} to get
\begin{multline}
	\label{eq:boundL2H1}
	\E\left[ \sup_{s\in[0,t]} \| \nablah U^n\|_{L^2}^{q} + \int_0^t \|\nablah U^n\|_{L^2}^{q-2} \|\Deltah U^n\|_{L^2}^{2} \ds \right]\\
	\leq c \E \left[ \|U^n(0)\|_{H^1}^{q} +1+  \left( \int_0^t \| F_U \|_{H^1_zL^2_{xy}}^2 \ds \right)^{q/2}\right].
\end{multline}

\textbf{Step 6}: $U^n$ is bounded in $L^q(\Omega; L^2(0, t ;L^2_z H^2_{xy}))$.\\
The estimate
\begin{multline}
	\label{eq:boundLpL2H2}
	\E \left( \int_0^t \| \Deltah U^n \|_{L^2}^2 \ds \right)^{q/2}\\
	\leq c \E \left[  \| U^n(0) \|_{H^1}^q +1+ \left( \int_0^t \| F_U \|_{H^1_zL^2_{xy}}^2 \ds \right)^{q/2} \right]
\end{multline}
follows from the bounds  \eqref{eq:boundH1L2} and \eqref{eq:boundL2H1} with $q=2$ similarly as in Step 2.

\textbf{Step 7}: $\int_0^\cdot \sigma^n(U^n) \dW$ is bounded in $L^q(\Omega; W^{\alpha, q}(0,t;L^2))$ for $\alpha \in [0, 1/2)$.\\
Using the fractional version of the Burkholder-Davis-Gundy inequality \eqref{eq:bdgFrac} and the bound \eqref{eq:boundL2H1}, we get
\begin{multline}
	\label{eq:boundSigmaWap}
	\E \left\| \int_0^\cdot \sigma^n(U^n) \dW \right\|_{W^{\alpha, q}(0, t; L^2)}^q  \leq c \E \int_0^t \| \sigma^n(U^n) \|_{L_2(\cU, L^2)}^q \ds\\
	 \leq c \E \left[ \|U^n(0)\|_{H^1}^{q} +1+ \left( \int_0^t \| F_U \|_{H^1_zL^2_{xy}}^2 \ds \right)^{q/2} \right].
\end{multline}

\textbf{Step 8}: $U^n(\cdot) - \int_0^\cdot \sigma^n(U^n) \dW$ is bounded in $L^p(\Omega; W^{1, 2}(0,t; L^2))$.\\
The boundedness follows from the definition of the norm and the estimate \eqref{eq:nonlinearNorm} and the bounds from previous steps thanks to the assumption $q\geq 4$.
\end{proof}

\begin{remark}
By interpolation inequality \eqref{eq:cltinterpolation} we immediately observe that $U^n$ is bounded in the space $L^q(\Omega;L^\infty(0,t;L^\infty_zL^4_{xy}))$.
\end{remark}

\subsection{Convergence of finite-dimensional approximations}
\label{sect:compactness}

The convergence of Galerkin approximations is established in a similar manner as in \cite{Debussche2011}. We will thus only briefly summarize the argument and provide details only in the parts where the absence of vertical dissipation plays an important role.

We first establish the existence of martingale solutions.

\begin{proposition}
\label{prop:martingale_existence}
Let $U_0\in L^q(\Omega; H^1)$ with $v_0\in L^q(\Omega; L^2_zH^1_{D,xy}\cap \lso)$ for some $q \geq 4$. Then there exists a global martingale solution of the modified equation \eqref{eq:primeqcutoff}.
\end{proposition}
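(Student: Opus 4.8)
The plan is to follow the classical martingale-solution scheme based on Galerkin approximations, a priori bounds, tightness, and the Skorokhod representation theorem, in the spirit of \cite{Debussche2011,DebusscheGlattholtzTemamZiane2012}, the new feature being that all the needed compactness must come from the anisotropic spaces. First I would invoke the Galerkin approximations $U^n$ of \eqref{eq:primeqgalerkin}, which exist globally in time because the cut-off $\theta(U^n)$ makes the drift globally Lipschitz with sublinear growth on the finite-dimensional space $H_n$. Applying Lemma~\ref{lemma:galerkinestimate} with the hypothesis $q\ge 4$ (the Dirichlet condition on $v_0$ being built into the basis $\{\Phi_{m,k}\}$) yields uniform-in-$n$ bounds: $U^n$ bounded in $L^q(\Omega;L^\infty(0,t;H^1))$ and in $L^q(\Omega;L^2(0,t;H^1_zH^1_{xy}\cap L^2_zH^2_{xy}))$, $\int_0^\cdot\sigma^n(U^n)\dW$ bounded in $L^q(\Omega;W^{\alpha,q}(0,t;L^2))$ for $\alpha\in[0,1/2)$, and, writing $U^n=(U^n-\int_0^\cdot\sigma^n(U^n)\dW)+\int_0^\cdot\sigma^n(U^n)\dW$ and using item~(3), $U^n$ bounded in $L^2(\Omega;W^{\alpha,2}(0,t;L^2))$. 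All these bounds are uniform on every bounded interval $[0,t]$, which is what eventually gives a \emph{global} solution.

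Next I would prove tightness of the laws of the pairs $(U^n,W)$ on $\mathcal{X}_U\times\mathcal{X}_W$, with $\mathcal{X}_U:=L^2(0,t;L^2_zH^1_{xy})\cap C([0,t];L^2)$ and $\mathcal{X}_W:=C([0,t];\cU_0)$ for a Hilbert space $\cU_0$ with $\cU\hook\cU_0$ Hilbert--Schmidt. Relative compactness in $L^2(0,t;L^2_zH^1_{xy})$ follows from the Aubin--Lions--Simon lemma (Lemma~\ref{lem:BochnerCompactness}a)) applied with $X_2=L^2_zH^2_{xy}\cap H^1_zH^1_{xy}$, $X=L^2_zH^1_{xy}$, $X_1=L^2$, using the compact embedding $X_2\hook\hook X$ from the preceding proposition and the $W^{\alpha,2}(0,t;L^2)$ time-regularity; relative compactness in $C([0,t];L^2)$ follows from the vector-valued Arzel\`a--Ascoli theorem, since $\{U^n(s)\}$ is bounded in $H^1(M)\hook\hook L^2(M)$ uniformly in $s$ while equicontinuity in $C([0,t];L^2)$ comes from $U^n-\int_0^\cdot\sigma^n(U^n)\dW\in W^{1,2}(0,t;L^2)\hook C^{0,1/2}([0,t];L^2)$ and $\int_0^\cdot\sigma^n(U^n)\dW\in W^{\alpha,q}(0,t;L^2)\hook C^{0,\alpha-1/q}([0,t];L^2)$ with $\alpha q>1$, admissible since $q\ge4$. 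Combined with the $L^q(\Omega;\cdot)$-bounds and Chebyshev's inequality this gives tightness. (We note that since the eigenspaces of $\Ah$ are infinite-dimensional, $\Ah$ has no compact resolvent, so unlike in \cite{Debussche2011} one cannot use compactness of $D(\Ah^\beta)\hook H$; the anisotropic compact embedding above is what replaces it.) Prokhorov's theorem and Skorokhod's representation theorem then provide a probability space $(\tilde{\Omega},\tilde{\cF},\tilde{\PP})$ carrying $(\tilde U^n,\tilde W^n)$, with the same laws as $(U^n,W)$, converging $\tilde{\PP}$-a.s.\ in $\mathcal{X}_U\times\mathcal{X}_W$ to some $(\tilde U,\tilde W)$; endowing $\tilde{\Omega}$ with the augmented filtration generated by $\tilde U$ and $\tilde W$, one checks in the standard way that $\tilde W$ and $\tilde W^n$ are cylindrical Wiener processes and that each $\tilde U^n$ solves the $n$-th Galerkin system driven by $\tilde W^n$.

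It then remains to pass to the limit $n\to\infty$ in the Galerkin equations on $\tilde{\Omega}$. The linear terms converge by weak convergence (the uniform bounds give $\tilde U^n\rightharpoonup\tilde U$ in $L^2(0,t;L^2_zH^2_{xy})$, whence $\Ah\tilde U^n\rightharpoonup\Ah\tilde U$, and $F^n(\tilde U^n)\to F(\tilde U)$ by the Lipschitz/sublinear growth of $F$ and $P_n\to I$); the stochastic integral is handled by the standard convergence lemma for stochastic integrals (see \cite{Debussche2011}), using $\sigma^n(\tilde U^n)\to\sigma(\tilde U)$ in $L^2(0,t;L_2(\cU,L^2))$, which follows from \eqref{eq:sigmaLipL2} and $\tilde U^n\to\tilde U$ in $L^2(0,t;L^2_zH^1_{xy})$, together with $\tilde W^n\to\tilde W$. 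For the nonlinearity, I would first note that by \eqref{eq:cltinterpolation}, the strong convergence $\tilde U^n\to\tilde U$ in $L^2(0,t;L^2_zH^1_{xy})$ and the uniform $L^\infty(0,t;H^1)$-bound, $\|\tilde U^n\|_{L^\infty_zL^4_{xy}}\to\|\tilde U\|_{L^\infty_zL^4_{xy}}$ for a.e.\ $(s,\tilde\omega)$, so $\theta(\tilde U^n)\to\theta(\tilde U)$ by continuity of $\theta$; combined with the uniform $L^2(0,t;L^2)$-bound on $B^n(\tilde U^n,\tilde U^n)$ from \eqref{eq:nonlinearNorm} and the bilinear estimate \eqref{eq:nonlinearFirst} applied to differences, this gives $\theta(\tilde U^n)B^n(\tilde U^n,\tilde U^n)\rightharpoonup\theta(\tilde U)B(\tilde U,\tilde U)$ weakly in $L^2(0,t;L^2)$. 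Passing to the limit in the weak formulation tested against the basis functions $\Phi_{m,k},\Psi_{m,k}$ and using density of their span in $H$ shows that $\tilde U$ satisfies \eqref{eq:primeqcutoff}. Finally, weak lower semicontinuity of norms transfers the uniform bounds to $\tilde U$, giving \eqref{eq:mart_sol_regularity} with $\tau=\infty$; weak-in-$H^1$ continuity in time follows from the equation and is upgraded to strong continuity $\tilde U\in C([0,t];H^1)$ via the energy identity obtained by applying the It\^{o} formula to $\|\tilde U\|_{H^1}^2$, which is legitimate since $\tilde U\in L^2(0,t;L^2_zH^2_{xy})$ supplies the required duality pairing with $\Ah\tilde U$. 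As $t>0$ is arbitrary and the bounds are uniform on bounded intervals, $\tilde U$ extends to $[0,\infty)$ and the constructed martingale solution is global.

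The main obstacle is the passage to the limit in the nonlinear term. Because the cut-off $\theta$ only controls $\|U^n\|_{L^\infty_zL^4_{xy}}$ rather than $\|U^n\|_{H^1}$ — which would be available in the presence of vertical viscosity, as in \cite{Debussche2011} — one cannot bound $B^n(U^n,U^n)$ through the cut-off alone, and must instead combine the $\tilde{\PP}$-a.s.\ strong convergence in $L^2(0,t;L^2_zH^1_{xy})$ (obtained through the \emph{anisotropic} Aubin--Lions--Simon compactness, since $\Ah$ has no compact resolvent) with the higher-order Galerkin bounds of Lemma~\ref{lemma:galerkinestimate} and the a.e.\ convergence of the cut-off factor $\theta(\tilde U^n)$. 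A further technical point, again stemming from the absence of vertical smoothing, is upgrading weak-in-$H^1$ time continuity to strong time continuity, which relies on the $L^2_zH^2_{xy}$-regularity furnished by Step~6 of Lemma~\ref{lemma:galerkinestimate}.
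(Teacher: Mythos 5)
Your proof follows the same strategy as the paper (Galerkin approximations, tightness via anisotropic Aubin--Lions--Simon compactness, Prokhorov/Skorokhod and the Bensoussan argument, passage to the limit in the equation), and you correctly identify the key structural point that $\Ah$ has infinite-dimensional eigenspaces and hence no compact resolvent, so that the compactness inherited from the 3D Laplacian in \cite{Debussche2011} must be replaced by the anisotropic compact embeddings. There is, however, a gap in your passage to the limit in the nonlinear term: you take $\cX_U = L^2(0,t;L^2_zH^1_{xy})\cap C([0,t];L^2)$ and then argue that the cut-off converges, $\theta(\tilde{U}^n)\to\theta(\tilde{U})$, by combining the a.s.\ strong convergence in $L^2(0,t;L^2_zH^1_{xy})$ with the interpolation inequality \eqref{eq:cltinterpolation} and ``the uniform $L^\infty(0,t;H^1)$-bound''. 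As stated this does not go through: the bound from Lemma~\ref{lemma:galerkinestimate} is $L^q(\Omega;L^\infty(0,t;H^1))$, which does not give a uniform-in-$n$, $\hPP$-a.s.\ pointwise bound on $\sup_s\|\tilde U^n(s)\|_{H^1_zL^2_{xy}}$, and that is what a pointwise application of \eqref{eq:cltinterpolation} would require to deduce a.e.\ convergence of $\|\tilde{U}^n\|_{L^\infty_zL^4_{xy}}$.

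The fix is not difficult, and there are two natural routes. The route the paper takes is to include $L^\infty_zL^4_{xy}$ in the path space from the start, i.e.\ set $\cX_U = L^2(0,t;L^2_zH^1_{xy}\cap L^\infty_zL^4_{xy})\cap C([0,t];H^{-1})$; the anisotropic compact embedding $L^2_z H^2_{xy}\cap H^1_zH^1_{xy}\hook\hook L^\infty_zL^4_{xy}$ covers this, and then Skorokhod convergence directly gives $\hPP$-a.e.\ convergence of $\|\tilde{U}^n\|_{L^\infty_zL^4_{xy}}$ and hence of $\theta(\tilde{U}^n)$. Alternatively, your interpolation idea can be made rigorous in an integrated form: first upgrade the $\hPP$-a.s.\ convergence in $L^2(0,t;L^2_zH^1_{xy})$ to convergence in $L^2(\Omega;L^2(0,t;L^2_zH^1_{xy}))$ via the Vitali convergence theorem (this is where $q\ge 4$ is used to give uniform integrability, exactly as in \eqref{eq:UnkConvergence}), then apply \eqref{eq:cltinterpolation} pointwise to $\tilde{U}^n-\tilde{U}$ and Cauchy--Schwarz in $(s,\omega)$ so that
\[
\E\!\int_0^t\|\tilde{U}^n-\tilde{U}\|_{L^\infty_zL^4_{xy}}^2\ds\le c\Big(\E\!\int_0^t\|\tilde{U}^n-\tilde{U}\|_{H^1_zL^2_{xy}}^2\ds\Big)^{1/2}\Big(\E\!\int_0^t\|\tilde{U}^n-\tilde{U}\|_{L^2_zH^1_{xy}}^2\ds\Big)^{1/2}\to 0,
\]
and extract a further subsequence converging a.e.\ in $(s,\omega)$. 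Either way the rest of your argument (weak convergence of $\Ah \tilde{U}^n$, $B^n$, and the stochastic integral, density of the test functions, lower semicontinuity transferring the bounds to the limit) then closes the proof in essentially the same way as the paper.
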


\begin{proof}
Let $\cU_0$ be a Hilbert space such that the embedding $\cU \hook \cU_0$ is Hilbert-Schmidt. Let
\begin{align*}
	\cX_U &= L^2(0, t; L^2_z H^1_{xy} \cap L^\infty_z L^4_{xy}) \cap C([0, t], H^{-1}(M)),\\
	\cX_W &= C([0, t], \cU_0), \qquad \cX = \cX_U \times \cX_W.
\end{align*}
Let $\mu_U^n$, $\mu_W^n$ and $\mu^n$ be the probability measures on $\cX_U$, $\cX_W$ and $\cX$, respectively, defined by
\[
	\mu_U^n(\cdot) = \PP(U^n \in \cdot), \qquad \mu_W^n(\cdot) = \PP( W \in \cdot), \qquad \mu = \mu_U^n \times \mu_W^n.
\]
Recall that the embeddings
\begin{gather*}
	L^2(0, t; L^2_z H^2_{xy} \cap H^1_z H^1_{xy}) \cap W^{1/4, 2}(0, t; L^2) \hook \hook L^2(0, t; L^2_z H^1_{xy}\cap L^\infty_z L^4_{xy}),
\end{gather*}
and
\begin{gather*}
	W^{\alpha, p}(0, t; L^2) \hook \hook C([0, t]; H^{-1})
\end{gather*}
are compact by Lemma \ref{lem:BochnerCompactness} a) and b) if $\alpha p>1$, respectively. We may follow the argument of \cite[Lemma 4.1]{Debussche2011} and use the Prokhorov theorem to establish that the sequence of measures $\mu^n$ on $\cX$ is tight and therefore weakly compact.

By the Skorokhod theorem, there exists a probability space $(\hOmega, \hat{\cF}, \hPP)$, an increasing sequence $(n_k)_k$ and $\cX$-valued random variables $(\Unk, \Wnk)$ and $(\hU, \hW)$ such that $(\Unk, \Wnk) \to (\hU, \hW)$ $\hPP$-a.s.\ in the space $\cX$. The processes $\Wnk$ are cylindrical Wiener processes w.r.t.\ the filtration $\hat{\bF}^{n_k} = \lbrace \hat{\cF}_t^{n_k} \rbrace_{t \geq 0}$, where $\hat{\cF}_t^{n_k}$ is the completion of $\sigma(\lbrace \Unk(s), \Wnk(s) \mid 0 \leq s \leq t \rbrace)$. Moreover, by the Bensoussan argument from \cite[Section 4.3.4]{Bensoussan1995}, the couple $(\Unk, \Wnk)$ solves the equation
\[
	\d\! \Unk + [\Ah \Unk + \theta(\Unk)B^{n_k}(\Unk, \Unk) + F^{n_k}(\Unk)] \dt = \sigma^{n_k}(\Unk) \d\!\Wnk
\]
with the initial condition
\[
	\Unk(0) = U^{n_k}_0.
\]

Similarly as in \cite[Section 7.1]{Debussche2011}, we can establish
\begin{equation}
	\label{eq:UnkBound}
	\begin{gathered}
	\Unk \in L^2\left( \hOmega; L^2(0, t; L^2_z H^2_{xy} \cap H^1_z H^1_{xy}) \cap L^{\infty}(0, t; H^1) \right),\\
	\Unk \harp \hat{U} \ \text{in} \ L^2\left( \hOmega; L^2(0, t; L^2_z H^2_{xy} \cap H^1_z H^1_{xy}) \right), 
	\end{gathered}
\end{equation}
and by the Vitali convergence theorem and the convergence $\hPP$-a.s.\ in $\cX$
\begin{equation}
	\label{eq:UnkConvergence}
	\Unk \to \hU \ \text{in} \ L^2\left(\hOmega; L^2(0, t; L^2_{z} H^1_{xy}\cap L^\infty_z L^4_{xy}) \right).
\end{equation}
In particular, by further thinning the sequence we may assume that
\begin{equation}
	\label{eq:UnkConvergenceAS}
	\| \Unk - \hU \|_{L^2_z H^1_{xy}}, 
	 \| \Unk - \hU \|_{L^\infty_z L^4_{xy}} \to 0 \quad \text{a.s.\ in} \ [0, t] \times \hOmega.
\end{equation}

The limiting process of \cite[Section 7.2]{Debussche2011} can be divided into four steps: First, one needs to establish convergence of the deterministic terms in the equation a.s.\ in $[0, t] \times \hOmega$. This is followed by showing that the deterministic terms converge in $L^p([0, t] \times \hOmega)$ is shown for $1 \leq p < 2$. Next, convergence of the stochastic term $\int_0^\cdot \sigma^n(\Unk) \d\!\Wnk$ in $L^2(\hOmega; L^2(0, t; L_2(\cU, L^2)))$ is established. Finally, one combines the previous steps and shows that one can take the limit of the variational formulation of the equation provided the set of test functions is sufficiently smooth and dense in $\ls \times L^2$.

The above argument from \cite{Debussche2011} carries over to this case almost completely with the following two exceptions - the a.s.\ convergence of the deterministic nonlinear term and convergence of gradient-dependent stochastic term. The latter has been established in \cite[Sectiom 3.4]{Brzezniak2020} and therefore it remains to show only the a.s.\ convergence.

Let $s \in [0, t]$ be fixed and let $U^\sharp \in D(\Ah \cap H^1_z L^4_{xy})$. Then
\begin{align*}
	&\left| \int_0^s \left< \theta(\Unk) B^{n_k}(\Unk, \Unk) - \theta(\hU)B(\hU, \hU), U^\sharp \right> \d\!r \right|\\
	&\quad \leq \left| \int_0^s \theta(\Unk) \left< B(\Unk, \Unk) - B(\hU, \hU), P_n U^\sharp \right> \d\!r \right|\\
	&\ \hphantom{\leq \ } + \left| \int_0^s \theta(\Unk)\left<B(\hU, \hU), Q_n U^\sharp \right> \d\!r \right| + \left| \int_0^s \left( \theta(\Unk) - \theta(\hU) \right) \left<B(\hU, \hU), U^\sharp \right> \d\!r \right|\\
	&\quad = I_1^n + I_2^n + I_3^n.
\end{align*}
Since $B$ is bilinear, we have
\[
	I_1^n \leq \int_0^s \left| \left< B(\Unk - \hU, \hU), P_n U^\sharp \right> \right| + \left| \left< B(\hU, \Unk - \hU),P_n U^\sharp \right> \right| \d\!r.
\]
Hence, by \eqref{eq:nonlinearSecond}, we deduce
\begin{align*}
	I_{1}^n &\leq c \int_0^s \left( \| \Unk - \hU \|_{L^\infty_z L^4_{xy}} + \| \nablah (\Unk - \hU) \|_{L^2} \right) \| \Unk \|_{H^1} \| U^\sharp \|_{H^1_z L^4{xy}} \d\!r\\
	&\leq c \left( \| \Unk - \hU \|_{L^2(0, s; L^\infty_z L^4_{xy})} + \| \Unk - \hU \|_{L^2(0, s; L^2_z H^1_{xy})} \right)\\
	&\hphantom{\leq \ } \cdot  \| \Unk \|_{L^2(0, s; H^1)}  \| U^\sharp \|_{H^1_z L^4_{xy}} \to 0
\end{align*}
We treat the term $I_2^n$ using the Poincar\'e inequality \eqref{eq:poincareAbstract} and the bound \eqref{eq:nonlinearFirst} as
\begin{align*}
	I_2^n &\leq c \| Q_n U^\sharp \|_{L^2_z H^1_{xy}} \int_0^s \| \hU \|_{H^1} \| \hU \|_{H^1_zH^1_{xy}} \ds\\
	&\leq c \| Q_n U^\sharp \|_{D(\Ah^{1/2})} \leq \frac{c}{\overline{\lambda}_n^{1/2}} \| U^\sharp \|_{D(\Ah)} \to 0,
\end{align*}
where $\overline{\lambda}_n$ and $Q_n$ are as in Proposition \ref{prop:poincare}. Regarding $I_3^n$, \eqref{eq:nonlinearSecond} and \eqref{eq:gginterpolation} yield
\[
	\E \int_0^t I_3^n(s) \ds \leq c \| U^\sharp \|_{H^1_z L^4_{xy}} \E \int_0^t \| \hU \|_{H^1}^2 \ds < \infty.
\]
By the Dominated convergence theorem with the convergence \eqref{eq:UnkConvergenceAS} and Lipschitz continuity of $\theta$, the estimate above implies $I_3^n \to 0$ a.s.\ in $[0, t] \times \hOmega$.
\end{proof}

\subsection{Uniqueness}
\label{sect:uniqueness}

To establish strong uniqueness of martingale solutions, we need additional regularity in the vertical direction which can be obtained provided the initial data is additionally in $H^2_{N,z}L^2_{xy} \times H^2_{D,z}L^2_{xy}$, which implies by the particular structure of the primitive equations that also the solution takes values in  $H^2_{N,z}L^2_{xy} \times H^2_{D,z}L^2_{xy}$.

The requirement of higher regularity introduces a minor technical drawback as we need a stronger cut-off to show the desired estimate. Let $\theta \in C^{\infty}(\R)$ be as in \eqref{eq:theta}. For $\mu>0$ fixed, we define
\begin{equation}
	\label{eq:cutoffmoreregular}
	\tilde{\theta}(U(s)) = \theta_\mu\left( \| U(s) \|_{H^1_zL^4_{xy}} \right).
\end{equation}
We are looking for a solution $U=(v,T)$ of the modified system
\begin{equation}
	\label{eq:primeqcutoffmoreregular}
	\d\! U + [\Ah U + \tilde{\theta}(U) B(U, U) + F(U)] \dt = \sigma(U) \dW, \quad U(0) = U_0.
\end{equation}

Approximated solutions $U^n$ for this system are constructed as in Section \ref{sect:galerkin}. Moreover, the estimates from Lemma \ref{lemma:galerkinestimate}
also hold for these approximations since the cut-off is stronger than the one considered in Lemma \ref{lemma:galerkinestimate}.

\begin{lemma}\label{lemma:galerkinestimatemoreregular}
Let the assumptions of Lemma \ref{lemma:galerkinestimate} hold. Additionally, let $\partial_{zz} U_0 \in L^q(\Omega; L^2)$, $\partial_{zz}  F_U \in L^q\left( \Omega; L^q(0, t; L^2) \right)$ and let $\sigma$ satisfy \eqref{eq:sigmaGrowthL2}-\eqref{eq:sigmaLipH2L2} and consider the sequence of approximating solutions $(U^n)_n$ of \eqref{eq:primeqcutoffmoreregular}. Then $\partial_{zz} U^n$ is bounded in
	\begin{equation}
		\label{eq:U_additional_regularity_in_z}
		L^q\left( \Omega; C([0, t], L^2) \right) \cap L^q\left( \Omega; L^2(0, t; L^2_z H^1_{xy}) \right).
	\end{equation}
\end{lemma}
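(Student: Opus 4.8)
The plan is to mimic Steps 3 and 5 of the proof of Lemma \ref{lemma:galerkinestimate}, now applying the finite-dimensional It\^o formula to $\|\partial_{zz} U^n\|_{L^2}^q$ and using the projected system \eqref{eq:primeqgalerkinzz} for $\partial_{zz} U^n$ (with the stronger cut-off $\tilde\theta$ from \eqref{eq:cutoffmoreregular} in place of $\theta$). The linear dissipative term produces $q\nu\|\partial_{zz} U^n\|_{L^2}^{q-2}\|(-\Deltah)^{1/2}\partial_{zz} U^n\|_{L^2}^2$ with no boundary contribution in the vertical direction, since $-\Deltah$ acts only horizontally and the eigenfunctions $\Phi_{m,k},\Psi_{m,k}$ are chosen so that $\partial_{zz} U^n$ still vanishes (resp.\ has vanishing conormal derivative) on $\Gamma_l$. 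Among the lower-order terms coming from $\partial_{zz} F(U^n)$, the Coriolis part is dominated by $\|\partial_{zz} U^n\|_{L^2}^2$, the forcing part is handled using $\partial_{zz} F_U \in L^q(\Omega; L^q(0,t;L^2))$, and the hydrostatic pressure part $\partial_{zz}\Apr U^n$ yields a factor $\|\nablah\partial_z T^n\|_{L^2}$ which, together with the weight $\|\partial_{zz} U^n\|_{L^2}^{q-2}$, is absorbed after extracting $\sup_s\|\partial_{zz} U^n\|_{L^2}^q$ and invoking \eqref{eq:boundLpH1H1}. The It\^o correction term and the stochastic integral are estimated via the growth bound \eqref{eq:sigmaGrowthH2L2} and the Burkholder-Davis-Gundy inequality \eqref{eq:bdg} exactly as in Lemma \ref{lemma:galerkinestimate}; the $\eta^2$-corrections $\eta^2\|\partial_{zz} U^n\|_{L^2}^{q-2}\|(-\Deltah)^{1/2}\partial_{zz} U^n\|_{L^2}^2$ and the analogous BDG contribution are moved into the dissipative term using $\nu > \eta^2(\tfrac{q-1}{2}+qc_B^2)$ (possibly with a marginally larger lower bound on $\nu$), while the remaining $\|U^n\|_{H^2_z L^2_{xy}}^2$ pieces are controlled by Steps 1 and 3.

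The crucial term is the nonlinear one. Using the cancellation $\langle B(U^n,\partial_{zz}U^n),\partial_{zz}U^n\rangle=0$, the higher-order estimate \eqref{eq:nonlineardzz}, and the defining property $\tilde\theta(U^n)\|U^n\|_{H^1_z L^4_{xy}}\le\mu$ of the cut-off, we get
\[
\tilde\theta(U^n)\,\bigl|\langle \partial_{zz} B(U^n,U^n),\partial_{zz} U^n\rangle\bigr|\le c\mu\,\|\partial_{zz} U^n\|_{L^2}^{1/2}\,\|U^n\|_{H^2_z H^1_{xy}}^{3/2}.
\]
We split $\|U^n\|_{H^2_z H^1_{xy}}\le c\bigl(\|(-\Deltah)^{1/2}\partial_{zz} U^n\|_{L^2}+\|\partial_{zz} U^n\|_{L^2}+\|U^n\|_{H^1_z H^1_{xy}}\bigr)$. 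After multiplying by the It\^o weight $\|\partial_{zz} U^n\|_{L^2}^{q-2}$, the principal part is handled by Young's inequality,
\[
c\mu\,\|\partial_{zz} U^n\|_{L^2}^{q-3/2}\|(-\Deltah)^{1/2}\partial_{zz} U^n\|_{L^2}^{3/2}\le\varepsilon\,\|\partial_{zz} U^n\|_{L^2}^{q-2}\|(-\Deltah)^{1/2}\partial_{zz} U^n\|_{L^2}^{2}+c_\varepsilon\mu^4\,\|\partial_{zz} U^n\|_{L^2}^{q},
\]
the contribution of $\|\partial_{zz} U^n\|_{L^2}^{3/2}$ gives $\|\partial_{zz} U^n\|_{L^2}^q$ (Gronwall), and the contribution of $\|U^n\|_{H^1_z H^1_{xy}}^{3/2}$ is controlled by extracting $\sup_s\|\partial_{zz} U^n\|_{L^2}^q$ and using, via H\"older in time, that $(\int_0^t\|U^n\|_{H^1_z H^1_{xy}}^2\ds)^{q/2}\in L^1(\Omega)$, which follows from \eqref{eq:boundL2}, \eqref{eq:boundH1L2}, \eqref{eq:boundLpL2H1} and \eqref{eq:boundLpH1H1}.

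Collecting all terms, choosing $\varepsilon$ small enough to absorb the dissipative contributions, taking expectations and applying \eqref{eq:bdg} to the stochastic term, Gronwall's lemma yields that $\partial_{zz} U^n$ is bounded in $L^q(\Omega; C([0,t],L^2))$. The bound in $L^q(\Omega; L^2(0,t;L^2_z H^1_{xy}))$ then follows by combining this with a bootstrap of the It\^o identity for $\|\partial_{zz} U^n\|_{L^2}^2$ — solving for $(\int_0^t\|(-\Deltah)^{1/2}\partial_{zz} U^n\|_{L^2}^2\ds)^{q/2}$ and estimating the right-hand side by the bounds already obtained — exactly as in Steps 2, 4 and 6 of Lemma \ref{lemma:galerkinestimate}, noting that $\int_0^t\|\partial_{zz} U^n\|_{L^2}^2\ds\le t\sup_{s\in[0,t]}\|\partial_{zz} U^n\|_{L^2}^2$ is already controlled. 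I expect the main obstacle to be this nonlinear term: one must arrange the Young and H\"older exponents so that the non-principal parts of $\|U^n\|_{H^2_z H^1_{xy}}^{3/2}$ appearing in \eqref{eq:nonlineardzz}, once paired with the It\^o weight $\|\partial_{zz} U^n\|_{L^2}^{q-2}$, stay within the range of the previously established bounds in weaker norms.
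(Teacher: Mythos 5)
Your proposal follows exactly the paper's own argument: apply It\^o's formula to $\|\partial_{zz}U^n\|_{L^2}^q$ via the projected system \eqref{eq:primeqgalerkinzz}, estimate the nonlinear term with the cancellation, \eqref{eq:nonlineardzz}, and the cut-off bound $\tilde\theta(U^n)\|U^n\|_{H^1_zL^4_{xy}}\le\mu$, absorb the principal piece by Young's inequality into the dissipative $\|(-\Deltah)^{1/2}\partial_{zz}U^n\|_{L^2}^2$ term, handle the It\^o correction with \eqref{eq:sigmaGrowthH2L2} and the martingale with BDG, and close with Gronwall and the earlier bounds from Lemma \ref{lemma:galerkinestimate}. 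The only difference is cosmetic: you spell out the splitting of $\|U^n\|_{H^2_zH^1_{xy}}$ into principal and lower-order parts that the paper leaves implicit, and you describe the bootstrap for the $L^q(\Omega;L^2(0,t;L^2_zH^1_{xy}))$ bound which the paper summarizes as ``the other conclusion follows as above.''
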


\begin{remark}
Assuming the solution $U$ has the regularity from Definition \ref{def:pathwise_solution} and \eqref{eq:U_additional_regularity_in_z}, in particular the continuity in $H^2_z L^2_{xy}$ and $L^2_z H^1_{xy}$, one may use a variant of the mixed derivative theorem \cite[Proposition 3.2]{Meyries2012} to establish $U \in C([0, t], H^1_z H^{1/2}_{xy}) \subseteq C([0, t], H^1_z L^4_{xy})$. The argument of the cut-off function $\tilde{\theta}$ is therefore bounded on $[0, t]$.
\end{remark}

\begin{proof}
We only show $\partial_{zz} U^n\in L^q\left( \Omega; L^\infty(0, t; L^2) \right)$, the other conclusion follows as above.

We proceed similarly as in Step 3 of Lemma \ref{lemma:galerkinestimate}. To this end we apply It\^{o}'s formula and get because of \eqref{eq:primeqgalerkinzz}
\begin{equation*}
\begin{split}
\d\! &\|\partial_{zz}U^n\|_{L^2}^q +q \nu \|\partial_{zz} U^n\|_{L^2}^{q-2} \|(-\Deltah)^{1/2}\partial_{zz}  U^n\|_{L^2}^{2} \dt\\
&\leq -q \|\partial_{zz} U^n\|_{L^2}^{q-2} \left< \partial_{zz} U^n,\partial_{zz} F^n(U^n) + \tilde{\theta}(U^n) P_n \partial_{zz} B( U^n,U^n)\right> \dt\\
&\quad +\frac{q(q-1)}{2}  \|\partial_{zz} U^n\|_{L^2}^{q-2} \|\partial_{zz}\sigma^n (U^n)\|^2_{L_2(U,L^2)} \dt\\
&\quad +q \|\partial_{zz} U^n\|_{L^2}^{q-2} \left<\partial_{zz} U^n,\partial_{zz} \sigma^n(U^n) \dW \right>\\
&= \sum_{i=1}^3 I^n_i \dt + I^n_4 \dW.
\end{split}
\end{equation*}
We estimate the lower order linear term as
\begin{multline*}
	\int_{0}^{t} \left| I_1^n \right| \ds \leq \frac14 \sup_{s \in [0,t]} \| \partial_{zz} U^n\|_{L^2}^q + c \int_{0}^{t} \| \partial_{zz} U^n\|_{L^2}^q\ds\\
	+ c \left( \int_{0}^{t} \| \partial_{z} \nablah U^n \|_{L^2}^2 \ds \right)^{q/2}+ c \left( \int_{0}^{t} \|  \partial_{zz}F^n_U\|_{L^2}^2 \ds \right)^{q/2}
\end{multline*}
the correction terms as
\[
	I^n_3 \leq c\left( 1 + \| U^n \|_{H^2_zL^2_{xy}}^q \right) + \frac{q(q-1)}{2} \eta^2 \|\partial_{zz} U^n\|_{L^2}^{q-2} \|(-\Deltah)^{1/2}\partial_{zz} U^n\|_{L^2}^{2},
\]
and the stochastic term by the Burkholder-Davis-Gundy inequality \eqref{eq:bdg}
\begin{multline*}
	\E \sup_{s \in [0,t]} \left| \int_0^s I^n_4 \dW \right| \leq \frac14 \E \sup_{s\in[0,t]}\|\partial_{zz} U^n\|_{L^2}^{q} + c\E\int_{0}^{t} 1 + \|U^n\|_{H^2_zL^2_{xy}}^{q} \ds
\\
	+q^2c_{BDG}^2\eta^2 \E \int_{0}^{t} \|\partial_{zz} U^n\|_{L^2}^{q-2}\|(-\Deltah)^{1/2} \partial_{zz} U^n\|_{L^{2}}^{2}\ds.
\end{multline*}
The nonlinear term is dealt with by the estimate \eqref{eq:nonlineardzz}. For $\varepsilon>0$, we have 
\begin{align*}
	\left| I_2^n \right|
	&\leq c \tilde{\theta}(U^n)\|\partial_{zz}U^n\|_{L^2}^{q-3/2}
	\norm{U}_{H^1_zL^4_{xy}}\norm{U^n}_{H^2_zH^1_{xy}}^{3/2}\\
	&\leq \varepsilon \|\partial_{zz} U^n\|_{L^2}^{q-2}\norm{(-\Deltah)^{1/2} \partial_{zz} U^n}_{L^2}^2
	+c_\varepsilon\mu^{4}\|\partial_{zz} U^n\|_{L^2}^{q}.
\end{align*}
The estimates above lead to
\begin{multline*}
	\E\left[ \frac12 \sup_{s\in[0,t]}\|\partial_{zz} U^n\|_{L^2}^{q} + c(q, \nu, \varepsilon, \eta) \int_{0}^{t} \|\partial_{zz} U^n\|_{L^2}^{q-2} \|(-\Deltah)^{1/2} \partial_{zz} U^n\|_{L^2}^{2} \ds\right]\\
	\leq c \E\left[\|\partial_{zz} U^n(0)\|_{L^2}^{q} +1+\left( \int_{0}^{t} \|  \partial_{zz}F^n_U\|_{L^2}^2 \ds \right)^{q/2} +\left( \int_{0}^{r} \|  U^n \|_{H^1_zH^1_{xy}}^2 \ds \right)^{q/2} \right]\\
	+ c_\varepsilon  \E \int_{0}^{t}\|U^n\|_{H^2_zL^2_{xy}}^{q}+\mu^{4}\|\partial_{zz} U^n\|_{L^2}^{q}\ds
\end{multline*}
where $c(q, \nu, \varepsilon, \eta) = q[\nu_v - \varepsilon -\eta^2(q c_{BDG}^2 + \frac{q-1}{2})]$.
For $\varepsilon$ small enough, from Gronwall's inequality and the bound \eqref{eq:boundLpH1H1}, it follows
\begin{multline}\label{eq:estimategalerkinzz}
	\E\left[ \sup_{s\in[0,t]}\|\partial_{zz} U^n\|_{L^2}^{q} + \int_0^t \|\partial_{zz} U^n\|_{L^2}^{q-2} (\|(-\Deltah)^{1/2} \partial_{zz} U^n\|_{L^2}^{2}) \ds\right]\\
	\leq c \E \left[ \|\partial_{zz} U^n(0)\|_{L^2}^{q}+\|U^n(0)\|_{H^1}^{q} +1+\left( \int_{0}^{t} \| F^n_U\|_{H^2_zL^2_{xy}}^2 \ds \right)^{q/2} \right].
\end{multline}
\end{proof}

\begin{remark}
The existence of martingale solutions of the modified system \eqref{eq:primeqcutoffmoreregular} follows similarly as in Proposition \ref{prop:martingale_existence} using the compact embedding $H^2_z H^1_{xy} \hook \hook H^1_zL^4_{xy}$.
\end{remark}

We may now state the result on strong uniqueness of martingale solutions of the modified problem \eqref{eq:primeqcutoffmoreregular}.

\begin{proposition}
\label{prop:uniqueness}
Let $\partial_{zz}  F_U \in L^q\left( \Omega; L^q(0, t; L^2) \right)$. Let $(\cS, W, U_1)$ and $(\cS, W, U_2)$ be two global martingale solutions of the modified problem \eqref{eq:primeqcutoffmoreregular} over the same stochastic basis $\cS$ with the same cylindrical Wiener process $W$. Let $\Omega_0 = \lbrace U_1(0) = U_2(0) \rbrace$. Let also $\partial_{zz} U_1(0), \partial_{zz} U_2(0) \in L^2(\Omega; L^2)$. Then $U_1$ and $U_2$ are indistinguishable on $\Omega_0$ in the sense
\[
	\PP\left( \lbrace \mathds{1}_{\Omega_0} \left( U_1(t) - U_2(t) \right) = 0 \ \text{for all} \ t \geq 0 \rbrace \right) = 1.
\]
\end{proposition}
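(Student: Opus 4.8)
The plan is to follow the standard pathwise-uniqueness scheme for the cut-off equation \eqref{eq:primeqcutoffmoreregular}, adapted to the weak cut-off $\tilde{\theta}$ and the absence of vertical dissipation. First I would set $U := U_1 - U_2$; on $\Omega_0$ we have $U(0)=0$. Both $U_i$ solve \eqref{eq:primeqcutoffmoreregular} over the common basis $\cS$ with the common Wiener process $W$, have the regularity \eqref{eq:mart_sol_regularity}, and — by Lemma \ref{lemma:galerkinestimatemoreregular} together with the hypotheses $\partial_{zz}U_i(0)\in L^2(\Omega;L^2)$ and $\partial_{zz}F_U\in L^q(\Omega;L^q(0,t;L^2))$ — also the additional regularity \eqref{eq:U_additional_regularity_in_z}, hence by the mixed-derivative remark $U_i\in C([0,\infty);H^1_zL^4_{xy})$, so that $\tilde{\theta}(U_i)$ is well defined with bounded argument on bounded time intervals. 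Fixing $t>0$, I would introduce for $R>0$ the stopping time
\[
	\tau_R := \inf\Big\{ s\ge 0 : \sum_{i=1,2}\big(\| U_i(s) \|_{H^1}^2 + \| \partial_{zz}U_i(s) \|_{L^2}^2\big) + \int_0^s \Lambda(r)\,\dr \ge R \Big\},
\]
with $\Lambda := \sum_{i=1,2}\big( \| U_i \|_{H^1_zH^1_{xy}}^2 + \| \Deltah U_i \|_{L^2}^2 + \| \partial_{zz}U_i \|_{L^2_zH^1_{xy}}^2 \big)$; by the above regularity $\tau_R\uparrow\infty$ $\PP$-a.s., so it suffices to show $\one_{\Omega_0}U(\cdot\wedge\tau_R)\equiv 0$ for each $R$.

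Since $w=0$ on $\Gamma_b\cup\Gamma_u$, $\partial_{n_G}T=0$ on $\Gamma_l$ and $\sigma$ obeys \eqref{eq:sigma1boundary}--\eqref{eq:sigma2boundary}, the $\partial_z$-differentiated equation \eqref{eq:primeqgalerkinz} holds for each $\partial_z U_i$ (cf.\ Section \ref{sect:galerkin}), so $(U,\partial_z U)$ jointly solves an It\^o equation and I would apply the It\^o formula to $\Phi(s):=\| U(s) \|_{H^1_zL^2_{xy}}^2 = \| U(s) \|_{L^2}^2 + \| \partial_z U(s) \|_{L^2}^2$. Using $B(U_1,U_1)-B(U_2,U_2)=B(U,U_1)+B(U_2,U)$, the cut-off splitting
\[
	\tilde{\theta}(U_1)B(U_1,U_1) - \tilde{\theta}(U_2)B(U_2,U_2) = \tilde{\theta}(U_1)\big[B(U,U_1)+B(U_2,U)\big] + \big(\tilde{\theta}(U_1)-\tilde{\theta}(U_2)\big)B(U_2,U_2),
\]
the cancellations $\langle B(U_i,\psi),\psi\rangle=0$ and $\langle B(U_i,\partial_z U),\partial_z U\rangle=0$, and the pointwise identity $\partial_z B(U_i,U)-B(U_i,\partial_z U) = \partial_z v_i\cdot\nablah U - (\divh v_i)\,\partial_z U$ (componentwise), I obtain a differential inequality for $\Phi$ with the viscous term $2\nu\big(\| \nablah U \|_{L^2}^2 + \| \nablah\partial_z U \|_{L^2}^2\big)$ on the left and, on the right, nonlinear terms, the lower-order linear terms from $F$ (the forcing $F_U$ dropping out of the difference), the It\^o corrections $\| \sigma(U_1)-\sigma(U_2) \|_{L_2(\cU,L^2)}^2 + \| \partial_z(\sigma(U_1)-\sigma(U_2)) \|_{L_2(\cU,L^2)}^2$, and a stochastic integral.

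Then I would estimate each term. The linear $F$-contributions and the It\^o corrections are bounded via the Lipschitz continuity of $F$ and $\partial_z F$ and via \eqref{eq:sigmaLipL2}, \eqref{eq:sigmaLipH1L2}, producing $c\,\Phi$ plus a small multiple of $\| \nablah U \|_{L^2}^2 + \| \nablah\partial_z U \|_{L^2}^2$ absorbed on the left (using the standing smallness of $\gamma,\eta$ relative to $\nu$). For the nonlinear part, the $L^2$-tested pieces $\tilde{\theta}(U_1)\langle B(U,U_1),U\rangle$ and $(\tilde{\theta}(U_1)-\tilde{\theta}(U_2))\langle B(U_2,U_2),U\rangle$ are handled with \eqref{eq:nonlinearSecond}, \eqref{eq:cltinterpolation} and the anisotropic interpolation $\| U \|_{H^1_zL^4_{xy}}\le c\| U \|_{H^1_zL^2_{xy}}^{1/2}\| U \|_{H^1_zH^1_{xy}}^{1/2}$, using that $\tilde{\theta}(U_1)\ne 0\Rightarrow\| U_1 \|_{L^\infty_zL^4_{xy}}\le c\mu$ and that $|\tilde{\theta}(U_1)-\tilde{\theta}(U_2)|\le c\mu^{-1}\| U \|_{H^1_zL^4_{xy}}$ by Lipschitz continuity of $\theta$; the $\partial_z$-tested pieces are treated analogously, additionally invoking the identity above, \eqref{eq:gginterpolation}, the 2D Ladyzhenskaya inequality, and the bound for $\| \partial_z v_i \|_{L^\infty_zL^4_{xy}}$ coming from $\partial_z U_i\in H^1$. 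After Young's inequality (with parameters allowed to depend on $R$), every contribution splits into a small multiple of $\| \nablah U \|_{L^2}^2 + \| \nablah\partial_z U \|_{L^2}^2$, absorbed on the left, plus $K(r)\,\Phi(r)$ with $\int_0^{t\wedge\tau_R}K(r)\,\dr\le C(R)$ $\PP$-a.s.\ by the choice of $\tau_R$. To conclude I would multiply $\one_{\Omega_0}\Phi(s\wedge\tau_R)$ by the bounded factor $\exp\!\big(-\!\int_0^{s\wedge\tau_R}K(r)\,\dr\big)$ and apply the It\^o formula once more: the drift becomes non-positive, the localized stochastic integral is a genuine martingale, and since $\one_{\Omega_0}\Phi(0)=0$, taking expectations forces $\one_{\Omega_0}\Phi(s\wedge\tau_R)=0$ $\PP$-a.s.\ for every $s$; continuity in time and $\tau_R\uparrow\infty$ then yield $\one_{\Omega_0}(U_1(t)-U_2(t))=0$ for all $t\ge 0$, $\PP$-almost surely. (Alternatively the stochastic Gronwall lemma could replace the multiplicative-factor device.)

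The hardest part will be the $\partial_z$-tested nonlinear term $\langle\partial_z[\tilde{\theta}(U_1)B(U_1,U_1)-\tilde{\theta}(U_2)B(U_2,U_2)],\partial_z U\rangle$: because $w=w(v)$ costs a horizontal derivative and there is no vertical dissipation to lean on, one must fully exploit the cancellation structure and the a priori $H^2_zL^2_{xy}$-regularity of $U_1,U_2$ — exactly the extra regularity (and the accompanying vertical boundary conditions) built into the setup — so that every factor not controlled by the cut-off is paired with a horizontal-gradient norm of $U$ that the dissipation absorbs. The cut-off mismatch term $(\tilde{\theta}(U_1)-\tilde{\theta}(U_2))B(U_2,U_2)$ is the secondary difficulty and is the reason the localization by $\tau_R$ cannot be dispensed with, since $B(U_2,U_2)$ is not tamed by the cut-off acting on $U_1$.
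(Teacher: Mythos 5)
Your proposal follows the same route as the paper: localize with a stopping time built from the $H^2_zL^2_{xy}$– and $L^2_zH^2_{xy}$–type norms of $U_1,U_2$ that come from Lemma~\ref{lemma:galerkinestimatemoreregular}, apply It\^{o} to $\| R \|_{H^1_z L^2_{xy}}^2$ with $R=U_1-U_2$, split the cut-off difference so that the bilinear cancellation kills one piece and the remaining pieces are estimated via anisotropic H\"older, Ladyzhenskaya and the interpolation inequalities of Section~\ref{sec:pre}, and close with the stochastic Gronwall lemma (your exponential-weight device is a standard equivalent). The only cosmetic deviation is that you anchor the cut-off mismatch on $B(U_2,U_2)$ with $\tilde\theta(U_1)$ carrying the transported difference, whereas the paper does the mirror split with $\tilde\theta(U_2)$ and $B(U_1,U_1)$; both exploit the same cancellation and lead to the same class of stray terms.
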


\begin{proof}
The following proposition is established similarly as \cite[Proposition 5.1]{Debussche2011} using stopping times
\begin{align*}
	\tau_n = \lbrace t \geq 0 \mid  \int_0^t 
	&\| U_1 \|_{H^2_z L^2_{xy}}^{2/3} \|  U_1\|_{L^2_zH^1_{xy}}^{2/3} \| U_1 \|_{H^2_z H^1_{xy}}^{2/3} \| U_1\|_{L^2_zH^2_{xy}}^{2/3}\\
	&+\|U_1 \|_{L^2}^{2/3}\|U_1 \|_{L^2_z H^2_{xy}}^{2/3} \| U_1 \|_{H^2_zH^1_{xy}}^{4/3} + \| U_1 \|_{H^1_z L^2_{xy}}^2 \| U_1 \|_{H^1_z H^1_{xy}}^2\\
	&+ \|U_2\|_{L^2_zH^2_{xy}}^{4/3}+\|U_2 \|_{H^2_zH^1_{xy}}^2 + \|  U_2 \|_{H^2_zL^2_{xy}}^{2}  \| U_2 \|_{H^2_zH^1_{xy}}^{2}
  \ds \geq n \rbrace,
\end{align*}
and the stochastic Gronwall lemma, see Lemma \ref{lemma:stoch_Gronwall}. From the the estimates \eqref{eq:nonlineardz}, \eqref{eq:nonlinearNorm} and \eqref{eq:estimategalerkinzz}, we observe $\tau_n\to\infty$ almost surely. To prove an estimate for the the difference of two solutions in the $H^1_z L^2_{xy}$-norm we need an estimate on $U_{zz}$ to control the stray terms since we cannot employ the cancellation property for differences of solutions and we don't have any vertical dissipation at hand to absorb the terms.
Defining $R=(v_R,T_R)=U_1-U_2$ and $\hat{R} = \one_{\Omega_0} R$, we get
\begin{multline*}
\d\! R + [\Ah R + \tilde{\theta}(U_1) B(U_1, U_1) + F(U_1)- \tilde{\theta}(U_2) B(U_2, U_2) -F(U_2)] \dt\\
= (\sigma(U_1)-\sigma(U_2)) \dW
\end{multline*}
For fixed $n \in \N$, we apply the It\^{o} formula to $\|R\|_{H^1_zL^2_{xy}}^2$ and estimate the trace term, which yields
\begin{align*}
&\d\!\|R\|_{H^1_zL^2_{xy}}^2 +2\nu \|(-\Deltah)^{1/2} R\|_{H^1_zL^2_{xy}}^{2} \dt\\
&\quad \leq  2\left< R, F(U_1)-F(U_2) \right>+2\left< \partial_z R, \partial_z (F(U_1)-F(U_2)) \right> \dt\\
&\qquad + 2\left<  R,\tilde{\theta}(U_1) B(U_1, U_1)-\tilde{\theta}(U_2) B(U_2, U_2) \right> \dt\\
&\qquad + 2\left< \partial_z R,\partial_z(\tilde{\theta}(U_1) B(U_1, U_1)-\tilde{\theta}(U_2) B(U_2, U_2)) \right> \dt\\
&\qquad + \|\sigma (U_1)-\sigma (U_2)\|^2_{L_2(U,H^1_zL^2_{xy})} \dt\\
&\qquad +2 \left< R,\sigma(U_1)-\sigma(U_2) \d\! W\right>+2 \left<\partial_z R,\partial_z(\sigma(U_1)-\sigma(U_2)) \d\! W\right>.
\end{align*}
For arbitrary $\Upsilon>0$ fixed and arbitrary stopping times $0\leq \tau_a\leq \tau_b\leq \tau_n \wedge \Upsilon$, we have
\begin{align*}
	\E \int_{\tau_a}^{\tau_b} &\one_{\Omega_0} \left|\left< R, F(U_1)-F(U_2) \right>+\left< \partial_z R, \partial_z (F(U_1)-F(U_2)) \right>  \right|\dt \\
	&\leq c \E \int_{\tau_a}^{\tau_b}\| \hat{R}\|_{H^1_zL^2_{xy}} \| \hat{R}\|_{H^1_zH^1_{xy}} \dt\\
	&\leq \frac{\varepsilon}{6} \E \int_{\tau_a}^{\tau_b}\| (-\Deltah)^{1/2} \hat{R}\|_{H^1_zL^2_{xy}}^2 \dt+c_\varepsilon\E \int_{\tau_a}^{\tau_b}\| \hat{R} \|_{H^1_zL^2_{xy}}^2\dt
	\end{align*}
	for all $\varepsilon>0$ and  
\begin{align*}
	\E \int_{\tau_a}^{\tau_b} &\one_{\Omega_0} \|\sigma (U_1)-\sigma (U_2)\|^2_{L_2(U,H^1_zL^2_{xy})} \dt \\
	&\leq c \E \int_{\tau_a}^{\tau_b} \|\hat{R}\|_{H^1_zL^2_{xy}}^2 \dt + \eta^2 \E \int_{\tau_a}^{\tau_b} \one_{\Omega_0} \|(-\Deltah)^{\frac12}\hat{R}\|_{H^1_zL^2_{xy}}^2 \dt
\end{align*}	
By the Burkholder-Davis-Gundy inequality \eqref{eq:bdg} and estimates similar to the ones in Lemma \ref{lemma:galerkinestimate}, we obtain
\begin{align*}
	&\E \sup_{t\in [\tau_a,\tau_b]} \left| \int_{\tau_a}^{t} \left<\hat{R},\sigma(U_1)-\sigma(U_2) \d\! W\right>+ \int_{\tau_a}^{t} \left<\partial_z \hat{R},\partial_z(\sigma(U_1)-\sigma(U_2)) \d\! W\right>  \right| \\
	&\quad \leq c\E  \left( \int_{\tau_a}^{\tau_b} \|\hat{R}\|_{H^1_zL^2_{xy}}^2 \|\sigma(U_1)-\sigma(U_2)\|_{H^1_zL^2_{xy}}^2 \dt\right)^{1/2} \\
	&\quad \leq \frac{1}{4}\sup_{t\in [\tau_a,\tau_b]} \|\hat{R}\|_{H^1_zL^2_{xy}}^2
	+c \gamma^2\E  \int_{\tau_a}^{\tau_b} \|(-\Deltah)^{\frac12}\overline R\|_{H^1_zL^2_{xy}}^2 \dt
	+c\E \int_{\tau_a}^{\tau_b} \|\hat{R}\|_{H^1_zL^2_{xy}}^2 \dt. 
\end{align*}
For the nonlinear part, we use the cancellation property to get
\begin{align*}
 &\left<  R,\tilde{\theta}(U_1) B(U_1, U_1)-\tilde{\theta}(U_2) B(U_2, U_2) \right>\\
 &\quad = \left<  R,(\tilde{\theta}(U_1)-\tilde{\theta}(U_2) )B(U_1, U_1) \right>+ \left<  R,\tilde{\theta}(U_2) (B(U_1, U_1)-B(U_2, U_2)) \right>\\
 &\quad = \left<  R,(\tilde{\theta}(U_1)-\tilde{\theta}(U_2) )B(U_1, U_1)  \right>+ \left<  R,\tilde{\theta}(U_2) B(U_1, R) \right> + \left<  R,\tilde{\theta}(U_2) B(R, U_2)\right>\\
 &\quad =(\tilde{\theta}(U_1)-\tilde{\theta}(U_2)) \left<  R, B(U_1, U_1) \right>+ \tilde{\theta}(U_2) \left<  R,B(R, U_2)\right>
\end{align*}
and similarly
\begin{align*}
	&\left< \partial_z R,\partial_z(\tilde{\theta}(U_1) B(U_1, U_1)-\tilde{\theta}(U_2) B(U_2, U_2)) \right>\\
	&\hspace{-0.5em}\quad = \left<  \partial_z R,(\tilde{\theta}(U_1)-\tilde{\theta}(U_2) )\partial_z B(U_1, U_1) \right>+ \left<  \partial_z R,\tilde{\theta}(U_2) \partial_z (B(U_1, U_1)-B(U_2, U_2)) \right>\\
	&\hspace{-0.5em}\quad = (\tilde{\theta}(U_1)-\tilde{\theta}(U_2) ) \left<  \partial_z R, \partial_zB(U_1, U_1)  \right>+ \tilde{\theta}(U_2) \left<  \partial_z R, \partial_z B(U_1, R) \right>\\
 	&\hspace{-0.5em}\quad \quad + \tilde{\theta}(U_2) \left< \partial_z R, \partial_z B(R, U_2)\right>\\
 	&\hspace{-0.5em}\quad = (\tilde{\theta}(U_1)-\tilde{\theta}(U_2) ) \left<  \partial_z R, \partial_zB(U_1, U_1)  \right> + \tilde{\theta}(U_2) \left< \partial_z R, B(\partial_z U_1, R) \right>\\
 	&\hspace{-0.5em}\quad \quad + \tilde{\theta}(U_2) \left< \partial_z R, \partial_z B(R, U_2)\right>.
\end{align*}
We concentrate on the estimates for the part of nonlinearity with vertical derivatives since the ones without vertical derivatives are easier to obtain. As above, we provide details only for the estimates for the temperature for simplicity of presentation. Using the anisotropic H\"{o}lder inequality \eqref{eq:anisotropichoelder} and the Ladyzhenskaya inequality, we get
\begin{align*}
&\left\| \partial_z B(v_R, T_2) \right\|_{L^2_zL^{4/3}_{xy}}\\
&\quad =\left\|\partial_z v_R\cdot \nablah T_2-\divh v_R \partial_z T_2+v_R \cdot \nablah \partial_z T_2+w(R)\partial_{zz} T_2 \right\|_{L^2_zL^{4/3}_{xy}}\\
&\quad \leq c\big( 
 \|\partial_z R \|_{L^2_z L^2_{xy}} \| \nablah U_2\|_{L^2_zL^4_{xy}}
+ \| \divh R\|_{L^\infty_z L^2_{xy}} \|\partial_z U_2 \|_{L^2_z L^4_{xy}}\\
&\quad\qquad +\|R \|_{L^2_z L^4_{xy}} \| \nablah \partial_z U_2|_{L^\infty_zL^2_{xy}}
+\|w(R)\|_{L^\infty_z L^2_{xy}} \| \partial_{zz} U_2 \|_{L^2_zL^4_{xy}}
\big)\\
&\quad  \leq c\big( 
\|R \|_{H^1_zL^2_{xy}}\| U_2\|_{L^2_zH^2_{xy}}
+ \|R\|_{L^\infty_z H^1_{xy}} \| \partial_z U_2 \|_{L^2_z L^4_{xy}}\\
&\quad\qquad +\|R \|_{L^2_z L^4_{xy}} \| U_2 \|_{H^2_zH^1_{xy}}
+\|\divh R\|_{L^2_z L^2_{xy}} \|U_2 \|_{H^2_zL^4_{xy}}
\big)\\
&\quad\leq c\big( 
\|R \|_{H^1_zL^2_{xy}}\|U_2\|_{L^2_zH^2_{xy}}
+ \|R\|_{H^1_z H^1_{xy}} \| U_2 \|_{H^1_zL^2_{xy}}^{1/2}\|U_2 \|_{H^1_zH^1_{xy}}^{1/2}\\
&\quad\qquad+ \|R\|_{L^2}^{1/2}\| R \|_{L^2_z H^1_{xy}}^{1/2} \|U_2 \|_{H^2_zH^1_{xy}}
+ \|R\|_{L^2_z H^1_{xy}} \| v \|_{H^2_zL^2_{xy}}^{1/2}\|U_2 \|_{H^2_zH^1_{xy}}^{1/2}
\big)\\
&\quad\leq c\big( 
\|R \|_{H^1_zL^2_{xy}}\|U_2\|_{L^2_zH^2_{xy}}
+ \|R\|_{H^1_z H^1_{xy}} \|v \|_{H^2_zL^2_{xy}}^{1/2}\|U_2 \|_{H^2_zH^1_{xy}}^{1/2}\\
&\quad\qquad+ \|R\|_{L^2}^{1/2}\| R \|_{L^2_z H^1_{xy}}^{1/2} \|U_2 \|_{H^2_zH^1_{xy}}
\big)
\end{align*}
which yields
\begin{align*}
&\left| \tilde{\theta}(U_2) \left< \partial_z  R,\partial_z B(R, U_2)\right> \right| \leq \|\partial_z R \|_{L^2_zL^4_{xy}}\left\| \partial_z B(R, U_2) \right\|_{L^2_zL^{4/3}_{xy}}\\
&\quad\leq \|R\|_{H^1_zL^2_{xy}}^{1/2} \|R\|_{H^1_z H^1_{xy}}^{1/2}\left\| \partial_z B(R, U_2) \right\|_{L^2_zL^{4/3}_{xy}}
\\
&\quad\leq 
c \|R\|_{H^1_zL^2_{xy}}^{3/2} \|R\|_{H^1_z H^1_{xy}}^{1/2} \|U_2\|_{L^2_zH^2_{xy}}
+c \|R\|_{H^1_zL^2_{xy}} \| R \|_{H^1_z H^1_{xy}} \|U_2 \|_{H^2_zH^1_{xy}}\\
&\quad\qquad+c \|R\|_{H^1_zL^2_{xy}}^{1/2} \|R\|_{H^1_z H^1_{xy}}^{3/2} \| U_2 \|_{H^2_zL^2_{xy}}^{1/2}  \| U_2 \|_{H^2_zH^1_{xy}}^{1/2}\\
&\quad\leq \frac{\varepsilon}{6}  \|(-\Deltah)^{1/2}R\|_{H^1_zL^2_{xy}}^{2}\\
&\quad\qquad+c_\varepsilon\big(\|U_2\|_{L^2_zH^2_{xy}}^{4/3}+\|U_2 \|_{H^2_zH^1_{xy}}^2 + \|  U_2 \|_{H^2_zL^2_{xy}}^{2}  \| U_2 \|_{H^2_zH^1_{xy}}^{2}\big) \| R \|_{H^1_zL^2_{xy}}^{2}.
\end{align*}
Similarly as above, we deduce
\begin{align*}
	&\left\| (\partial_z v_1, T_R) \right\|_{L^2_zL^{4/3}_{xy}} = \left\| \partial_z v_1 \cdot \nablah T_R + \divh v_1 \partial_z T_R \right\|_{L^2_z L^{4/3}_{xy}}\\
	&\quad \leq \| \partial_z v_1 \|_{L^2_z L^4_{xy}} \| \nablah T_R \|_{L^\infty_z L^2_{xy}} + \| \nablah v_1 \|_{L^\infty_z L^2_x} \| \partial_z T_R \|_{L^2_z L^{4/3}_{xy}}\\
	&\quad \leq c \big( \| U_1 \|_{H^1_z L^2_{xy}}^{1/2} \| U_1 \|_{H^1_z H^1_{xy}}^{1/2} \| R \|_{H^1_z H^1_{xy}}\\
	&\quad\quad + \| U_1 \|_{L^2_z H^1_{xy}}^{1/2} \| U_1 \|_{H^1_z H^1_{xy}}^{1/2} \| R \|_{H^1_z L^2_{xy}}^{1/2} \| R \|_{H^1_z H^1_{xy}}^{1/2} \big)
\end{align*}
which leads to
\begin{align*}
	&\left| \tilde{\theta}(U_2) \left< \partial_z  R, B(\partial_z U_1, T)\right> \right| \leq \|\partial_z R \|_{L^2_zL^4_{xy}}\left\| \partial_z B(\partial_z U_1, R) \right\|_{L^2_zL^{4/3}_{xy}}\\
	&\quad \leq \frac{\varepsilon}{6} \| R \|_{H^1_z H^1_{xy}}^2 + c_\varepsilon \| R \|_{H^1_z L^2_{xy}}^2 \| U_1 \|_{H^1_z L^2_{xy}}^2 \| U_1 \|_{H^1_z H^1_{xy}}^2.
\end{align*}
Furthermore,  
\begin{align*}
&\left\| \partial_z B(v_1, T_1) \right\|_{L^2}
 =\left\|\partial_z v_1\cdot \nablah T_1-\divh v_1 \partial_z T_1+v_1 \cdot \nablah \partial_z T_1+w(U_1)\partial_{zz} T_1 \right\|_{L^2}\\
&\hspace{-0.55em}\quad \leq c\big( \|\partial_z U_1 \|_{L^\infty_z L^4_{xy}} \| \nablah U_1\|_{L^2_zL^4_{xy}}
+ \| \divh U_1 \|_{L^2_zL^4_{xy}}\|\partial_z U_1 \|_{L^\infty_z L^4_{xy}} \\
&\hspace{-0.55em}\quad\qquad  +\|U_1 \|_{L^2_z L^\infty_{xy}} \| \nablah \partial_z U_1 \|_{L^\infty_zL^2_{xy}}
+\|w(U_1)\|_{L^\infty_z L^4_{xy}} \| \partial_{zz} U_1 \|_{L^2_zL^4_{xy}}\big)\\
&\hspace{-0.55em}\quad\leq c\big( 
\|\partial_z U_1 \|_{L^\infty_z L^2_{xy}}^{1/2}\|\partial_z U_1 \|_{L^\infty_z H^1_{xy}}^{1/2} \| \nablah U_1\|_{L^2_zL^2_{xy}}^{1/2} \| \nablah U_1\|_{L^2_zH^1_{xy}}^{1/2}\\
&\hspace{-0.55em}\quad\qquad+ \|U_1 \|_{L^2_zH^2_{xy}}^{1/2}\|U_1\|_{L^2_zH^1_{xy}}^{1/2} \|\partial_z U_1 \|_{L^\infty_z L^2_{xy}}^{1/2}\|\partial_z U_1 \|_{L^\infty_z H^1_{xy}}^{1/2}\\
&\hspace{-0.55em}\quad\qquad+\|U_1 \|_{L^2_z L^2_{xy}}^{1/2}\|U_1 \|_{L^2_z H^2_{xy}}^{1/2} \| \partial_z U_1 \|_{L^\infty_zH^1_{xy}}\\
&\quad\qquad+\|w(U_1)\|_{L^\infty_z L^2_{xy}}^{1/2}\|w(U_1)\|_{L^\infty_z H^1_{xy}}^{1/2}  \| \partial_{zz} U_1 \|_{L^2_zL^2_{xy}}^{1/2}\| \partial_{zz} U_1 \|_{L^2_zH^1_{xy}}^{1/2}\big)\\
&\hspace{-0.55em}\quad\leq c\big( 
\| U_1 \|_{H^2_z L^2_{xy}}^{1/2}  \|  U_1\|_{L^2_zH^1_{xy}}^{1/2}  \| U_1 \|_{H^2_z H^1_{xy}}^{1/2}\| U_1\|_{L^2_zH^2_{xy}}^{1/2}
+\|U_1 \|_{L^2}^{1/2}\|U_1 \|_{L^2_z H^2_{xy}}^{1/2} \| U_1 \|_{H^2_zH^1_{xy}}\big).
\end{align*}
Hence, by the Lipschitz continuity of the cut-off $\tilde{\theta}$, we have
\begin{align*}
&|(\tilde{\theta}(U_1)-\tilde{\theta}(U_2)) \left< \partial_z R, \partial_z B(U_1, U_1) \right>| \leq c \|R\|_{H^1_z L^4_{xy}} \|\partial_z  R \|_{L^2} \| \partial_z B(U_1, U_1) \|_{L^2}\\
&\quad\leq c \|R\|_{H^1_z L^2_{xy}}^{3/2}\|R\|_{H^1_z H^1_{xy}}^{1/2}\big( 
\| U_1 \|_{H^2_z L^2_{xy}}^{1/2}\| U_1 \|_{H^2_z H^1_{xy}}^{1/2} \|  U_1\|_{L^2_zH^1_{xy}}^{1/2} \| U_1\|_{L^2_zH^2_{xy}}^{1/2}\\
&\quad\qquad+\|U_1 \|_{L^2}^{1/2}\|U_1 \|_{L^2_z H^2_{xy}}^{1/2} \| U_1 \|_{H^2_zH^1_{xy}}\big)\\
&\quad\leq \frac{\varepsilon}{6} \|(-\Deltah R)^{1/2}\|_{H^1_zL^2_{xy}}^2 +c_\varepsilon\big( 
\| U_1 \|_{H^2_z L^2_{xy}}^{2/3} \|  U_1\|_{L^2_zH^1_{xy}}^{2/3} \| U_1 \|_{H^2_z H^1_{xy}}^{2/3} \| U_1\|_{L^2_zH^2_{xy}}^{2/3}\\
&\quad\qquad +\|U_1 \|_{L^2}^{2/3}\|U_1 \|_{L^2_z H^2_{xy}}^{2/3} \| U_1 \|_{H^2_zH^1_{xy}}^{4/3}\big) \| R \|_{H^1_zL^2_{xy}}^2.
\end{align*}
We conclude
\begin{align*}
	&\E\left[ \frac12 \sup_{s\in[\tau_a,\tau_b]}\| \hat{R} \|_{H^1_zL^2_{xy}}^{2} 
	+ c(\nu, \varepsilon, \gamma) \int_{\tau_a}^{\tau_b}\|(-\Deltah)^{1/2}  \hat{R}\|_{H^1_zL^2_{xy}}^{2} \ds\right]
	\leq c \E \|  \hat{R}(\tau_a)\|_{H^1_zL^2_{xy}}^{2}\\
	&\quad+c_\varepsilon \E\bigg[\int_{\tau_a}^{\tau_b}\big( 
	1+\| U_1 \|_{H^2_z L^2_{xy}}^{2/3} \|  U_1\|_{L^2_zH^1_{xy}}^{2/3} \| U_1 \|_{H^2_z H^1_{xy}}^{2/3} \| U_1\|_{L^2_zH^2_{xy}}^{2/3}\\
 &\quad\qquad\qquad+\|U_1 \|_{L^2}^{2/3}\|U_1 \|_{L^2_z H^2_{xy}}^{2/3} \| U_1 \|_{H^2_zH^1_{xy}}^{4/3} +\|U_2\|_{L^2_zH^2_{xy}}^{4/3}+\|U_2 \|_{H^2_zH^1_{xy}}^2 \\
&\quad\qquad\qquad+ \| U_1 \|_{H^1_z L^2_{xy}}^2 \| U_1 \|_{H^1_z H^1_{xy}}^2 + \|  U_2 \|_{H^2_zL^2_{xy}}^{2}  \| U_2 \|_{H^2_zH^1_{xy}}^{2}\big) \| \hat{R} \|_{H^1_zL^2_{xy}}^2 \dt\bigg],
\end{align*}
where $c(\nu, \varepsilon, \gamma) = 2[\nu - \varepsilon -\gamma^2(2 c_{BDG}^2 + \frac{1}{2})]$.
From this, the claim follows by applying the stochastic Gronwall lemma, see Lemma \ref{lemma:stoch_Gronwall}, and recalling that $\Upsilon$ was arbitrary.
\end{proof}

\begin{remark}
\label{remark:uniqueness}
The last estimate in the above proof also identifies $U_1 = U_2$ in $L^2(0, \Upsilon; H^1_z H^1_{xy})$ for all $\Upsilon > 0$ $\PP$-almost surely. The identification therefore holds also in the space $\cX_U = L^2(0, \Upsilon; L^2_z H^1_{xy} \cap L^\infty_z L^4_{xy})$ from Proposition \ref{prop:martingale_existence} which justifies the use of the Gy\"{o}ngy-Krylov theorem in the proof of Theorem \ref{thm:maximalExistence} below.
\end{remark}

\subsection{Proof of Theorem \ref{thm:maximalExistence}}
\label{sect:maximalSolutions_proof}

The existence of global strong solutions of the modified problem \eqref{eq:primeqcutoffmoreregular} for initial data $U_0 \in L^q(\Omega; H^1 \cap H^2_z L^2_{xy})$ for some $q \geq 4$ follows similarly as in \cite[Section 5.2]{Debussche2011} from the Gy\"{o}ngy-Krylov theorem and the pathwise uniqueness established established in Proposition \ref{prop:uniqueness}, see also Remark \ref{remark:uniqueness}. Thus, we omit the proof.

In contrast to the approximations in \cite{Debussche2011}, the cut-off function in \eqref{eq:primeqcutoffmoreregular} does not start at $0$ and the cut-off is active from the initial time $t=0$. A localization procedure that also establishes existence of local solutions of the original equation \eqref{eq:primeqFunct} for initial data in $L^2(\Omega; H^1 \cap H^2_z L^2_{xy})$ is therefore required. For $n \in \N$, let
\[
	\Omega_n := \lbrace n-1 \leq \| U_0 \|_{H^1} + \| U_0 \|_{H^2_z L^2_{xy}} < n \rbrace, \qquad U_{0, n} = U_0 \mathds{1}_{\Omega_n},
\]
and let $U_n$ be the global pathwise solution of the equation \eqref{eq:primeqcutoffmoreregular} with cut-off $\tilde{\theta}$ with $\mu = 2c_{e}n$, where $c_e>0$ a fixed number such that $\| f \|_{H^1_z L^4_{xy}} \leq c_e( \| f \|_{H^2_z L^2_{xy}} + \| f \|_{L^2_z H^1_{xy}})$ for all $f \in H^2_z L^2_{xy} \cap L^2_z H^1_{xy}$, and initial data $U_{0, n} \in L^\infty(\Omega; H^1 \cap H^2_z L^2_{xy})$. Let $U = \sum_{n=1}^\infty \mathds{1}_{\Omega_n} U_n$.
Let us fix $M > 0$ and define the stopping times
\begin{gather*}
	\tau_n^{\mu} = \inf \left\lbrace t \geq 0 \mid \| U_n \|_{H^1} + \| U_n \|_{H^2_z L^2_{xy}} \geq 2n \right\rbrace,\\
	\tau_n^M = \inf \left\lbrace t \geq 0 \mid \sup_{s \in [0, t]}\| U_n \|_{H^1}^2 + \int_0^t \| (-\Deltah) U_n \|_{L^2}^2 \ds \geq M + \| U_{0, n} \|_{H^1}^2 \right\rbrace.
\end{gather*}
Let $\tau = \sum_{i=1}^\infty \mathds{1}_{\Omega_n} \left( \tau_n^\kappa \wedge \tau_n^M \right)$. Clearly, $\tau > 0$ $\PP$-almost surely. Since we assume that the filtration $\bF$ is right-continuous, $\tau$ is a stopping time, see \cite[pp.\ 6 and 7]{KaratzasShreve}. It is now straightforward to check that $(U, \tau)$ has the desired integrability and is indeed a local strong solution.

Existence of maximal solutions follows similarly as in \cite[Section 3.4]{Brzezniak2020}. This concludes the proof of Theorem \ref{thm:maximalExistence}.

\section{Global existence}\label{sec:globalexistence}

In the whole section, let $(U, \xi)$ be the maximal solution established in Theorem \ref{thm:maximalExistence} and let $U_0$, $\sigma$ and $F_v$ satisfy the assumptions of Theorem \ref{thm:globalExistence}. 

The proof of global existence combines the technique from \cite{DebusscheGlattholtzTemamZiane2012} with the use of the logarithmic Sobolev inequality from \cite{CaoLiTiti2017}. In particular, our goal is to find a sequence of stopping times $\rho_K$ satisfying $\rho_K \to \infty$ and $\rho_K \leq \xi$ for all $K \in \N$. In the process, we obtain an inequality of the form
\[
	f(t_1) \leq f(t_0) + \int_{t_0}^{t_1} \Vert v\Vert _{L^\infty} ^2 f(s) \ds,
\]
where $f$ contains certain Sobolev norms of the solution, see Lemma \ref{lemma:linfty}. To control $\Vert v\Vert_\infty$, we need the logarithmic Sobolev inequality. After that, we employ an argument similar to the logarithmic Gronwall inequality from \cite[Lemma 2.5]{CaoLiTiti2017}, see also the references therein.

\begin{proposition}[Logarithmic Sobolev inequality]
\label{prop:logarithmic_Sobolev}
Let $p_1, p_2 \in (1, \infty)$ satisfy $\frac{2}{p_1}+\frac{1}{p_2}<1$. Then there exist $r_1, r_2 \in [2, \infty)$ such that for all $F \in L^{p_1}_z H^{1, p_1}_{0, xy}$ with $\partial_z F \in L^{p_2}$ we have
\begin{multline}
	\label{eq:log_Sobolev}
	\Vert F\Vert _{L^\infty }\le C_{p,\lambda} \max \left \{ 1,\max_{i = 1, 2}\frac{\Vert F\Vert _{L^{r_i}}}{{r_i}^\lambda}\right \}\\
	\cdot \log^\lambda \left ( e+\Vert F\Vert _{L^{p_1}}+\Vert \nablah F\Vert _{L^{p_1}}+\Vert F\Vert _{L^{p_2}}+\Vert \partial_z F\Vert _{L^{p_2}}\right ),
	\end{multline}
	for any $\lambda >0$ provided all the norms are finite.
\end{proposition}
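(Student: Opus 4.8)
The plan is to reduce the three-dimensional anisotropic estimate to the classical Brézis–Gallouët–Wainger inequality in two dimensions by treating the $z$-variable separately. First I would fix $(x,y)\in G$ and apply a one-dimensional Sobolev/Agmon-type embedding in $z$: since $F(x,y,\cdot)\in H^{1,p_2}(-h,0)$ vanishes (it lies in $L^{p_1}_z H^{1,p_1}_{0,xy}$, so we may as well first handle the $xy$-vanishing and then the $z$-direction by the fundamental theorem of calculus), one gets a pointwise bound $\|F(x,y,\cdot)\|_{L^\infty_z}\le C(\|F(x,y,\cdot)\|_{L^{p_2}_z}+\|\partial_z F(x,y,\cdot)\|_{L^{p_2}_z})$ with a constant independent of $(x,y)$. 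Integrating the $xy$-consequences of this over $G$ shows that $\|F\|_{L^\infty_z L^q_{xy}}$ is controlled by $\|F\|_{L^{p_2}_z L^q_{xy}}+\|\partial_z F\|_{L^{p_2}_z L^q_{xy}}$ for the relevant exponents $q$; in particular $G(x,y):=\|F(x,y,\cdot)\|_{L^\infty_z}$ is a function on $G$ lying in $W^{1,p_1}(G)$ with $\|G\|_{W^{1,p_1}(G)}\lesssim \|F\|_{L^{p_1}_z H^{1,p_1}_{0,xy}}+\|\partial_z F\|_{L^{p_2}}+\|F\|_{L^{p_2}}$, using the condition $\tfrac{2}{p_1}+\tfrac1{p_2}<1$ to make the Hölder/Sobolev bookkeeping in the mixed norms close. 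This is the step I expect to cost the most care: one must keep the dependence on $p_1,p_2$ explicit and verify that $W^{1,p_1}(G)\hookrightarrow C(\overline G)$ (which needs $p_1>2$, guaranteed by $\tfrac2{p_1}<1$) while simultaneously routing the $L^{r_i}$ norms that will appear below through the correct Lebesgue scale.

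Next I would apply the two-dimensional Brézis–Gallouët–Wainger logarithmic inequality to $G\in W^{1,p_1}(G)$: for $p_1>2$ one has, for every $r\in[2,\infty)$,
\[
	\|G\|_{L^\infty(G)}\le C_{p_1}\, \|G\|_{W^{1,p_1}(G)}\,\Bigl(\log\bigl(e+\|G\|_{L^{r}(G)}/\|G\|_{W^{1,p_1}(G)}\bigr)\Bigr)^{1-1/p_1},
\]
or, in the normalized form actually needed here, $\|G\|_{L^\infty}\le C\max\{1,\|G\|_{L^{r}(G)}/r^{\lambda}\}\log^\lambda(e+\|G\|_{W^{1,p_1}(G)})$ after choosing $\lambda$ and optimizing $r$; the equivalence of the two formulations is the standard trick of taking $r=r(\lambda)$ large and absorbing. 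Since $\|F\|_{L^\infty(M)}=\|G\|_{L^\infty(G)}$, and since $\|G\|_{L^{r}(G)}$ is comparable (by the $z$-embedding from the first step) to a mixed norm $\|F\|_{L^\infty_z L^{r}_{xy}}\lesssim \|F\|_{L^{p_2}_z L^{r}_{xy}}+\|\partial_z F\|_{L^{p_2}_z L^{r}_{xy}}$, interpolation in $G$ between this and the $L^{p_1}$ control lets me bound $\|G\|_{L^{r}(G)}$ by $\max_i \|F\|_{L^{r_i}(M)}$ for two exponents $r_1,r_2\in[2,\infty)$ chosen according to $p_1,p_2$. Substituting into the logarithmic inequality and using $\|G\|_{W^{1,p_1}(G)}\lesssim \|F\|_{L^{p_1}}+\|\nablah F\|_{L^{p_1}}+\|F\|_{L^{p_2}}+\|\partial_z F\|_{L^{p_2}}$ from the first step yields exactly \eqref{eq:log_Sobolev}.

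The only genuine obstacle is organizing the exponent arithmetic so that the mixed-norm Sobolev embeddings in the $(x,y)$ and $z$ directions compose correctly and produce a \emph{single} pair $r_1,r_2$ independent of $\lambda$; everything else is the classical two-dimensional logarithmic inequality applied to the auxiliary function $G$. An alternative, essentially equivalent route — which may be cleaner to write — is to follow the proof in \cite[Lemma 2.2]{CaoLiTiti2017} verbatim with the full Laplacian replaced by $-\Deltah$ plus a $\partial_z$ term, using the anisotropic Gagliardo–Nirenberg inequalities \eqref{eq:gginterpolation}, \eqref{eq:cltinterpolation} already recorded in this paper; I would mention that reduction and then carry out the $z$-separation argument above as the self-contained justification.
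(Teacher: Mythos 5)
The main route you propose---form $G(x,y):=\|F(x,y,\cdot)\|_{L^\infty_z}$, show $G\in W^{1,p_1}(G)$, and apply the two-dimensional Br\'ezis--Gallou\"et--Wainger inequality---has a genuine gap at the step you yourself flag as the most delicate. To place $G$ in $W^{1,p_1}(G)$ you need $\nablah G\in L^{p_1}(G)$, and since $|\nablah G(x,y)|\le \esssup_z|\nablah F(x,y,z)|$ a.e., this would require $\nablah F\in L^{p_1}_{xy}L^{\infty}_z$. But the hypotheses only give $\nablah F\in L^{p_1}_zL^{p_1}_{xy}=L^{p_1}(M)$ together with $F,\partial_z F\in L^{p_2}(M)$; upgrading $\nablah F$ from $L^{p_1}_z$ to $L^\infty_z$ would require a bound on $\partial_z\nablah F$, which is not available (indeed, $F\in L^2_z H^2_{xy}\cap H^1_z H^1_{xy}$ is precisely the regularity the paper works with, and $\partial_z\nablah F$ is of the same order as what one is trying to estimate). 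A separable example $F=g(x,y)h(z)$ with $h$ a tall thin bump and $g$ highly oscillatory shows that $\|\nablah G\|_{L^{p_1}}$ cannot be bounded by $\|F\|_{L^{p_1}_zH^{1,p_1}_{xy}}+\|F\|_{L^{p_2}}+\|\partial_z F\|_{L^{p_2}}$. The algebraic condition $\tfrac{2}{p_1}+\tfrac{1}{p_2}<1$ plays no role in closing this; it is needed for the $q$-growth bookkeeping, not to supply a missing mixed derivative.

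This is why the paper does not reduce to a two-dimensional auxiliary function at all. The proof is simply the one in \cite[Lemma A.1]{CaoLiTiti2017}, rerun verbatim: one combines one-dimensional (in $z$) and two-dimensional (in $x,y$) interpolation inequalities with explicit, polynomially growing constants in an auxiliary exponent $q$, expresses $\|F\|_{L^\infty}$ via $\|F\|_{L^q}$ raised to a power approaching one, and then optimizes $q$ against the resulting $\log$ factor. The only modification made in the paper is cosmetic: replacing $\sup_{p\in[2,\infty)}\|F\|_{L^p}/p^\lambda$ by $\max_{i=1,2}\|F\|_{L^{r_i}}/r_i^\lambda$ for the two specific exponents $r_1,r_2$ that actually arise. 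Your closing remark---that one could ``follow the proof in Cao--Li--Titi verbatim''---is in fact the entire proof here, and it does not pass through the auxiliary function $G$; you should drop the $G=\|F\|_{L^\infty_z}$ reduction rather than present it as the primary argument.
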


Proposition \ref{prop:logarithmic_Sobolev} can be proved in exactly the same manner as \cite[Lemma A.1]{CaoLiTiti2017}, the only exception being using the maximum of several $L^p$ norms rather than $\sup_{p \in [2, \infty)} \| f \|_{L^p}$. The numbers $r_i$ can be computed explicitly. From the proof of \cite[Lemma A.1]{CaoLiTiti2017}, we observe $r_i = (q-1) \kappa_i$ for some $q \geq 3$, where
\[
	\alpha_i = \frac{1}{p_i}, \quad \kappa_i = \frac{p_i\left( 1 + \sum_{j=1}^2 \alpha_j \right)}{1 - \sum_{j=1}^2 \alpha_j}.
\]
Below, we apply the inequality with $p_1 = 6$ and $p_2 = 2$, choosing $q = 3$ we get
\[
	r_1 = (3-1)\kappa_1 = 132, \qquad r_2 = (3-1) \frac{22}{5} = \frac{44}{5}.
\]
Moreover, since $r_1>r_2$, we observe
\[
	\max \left\{ 1, \max_{i = 1, 2}\frac{\Vert F\Vert _{L^{r_i}}}{{r_i}^\lambda} \right\} \leq 1+\max_{i = 1, 2} \frac{\Vert F\Vert_{L^{r_i}}}{{r_i}^{\lambda}}\leq c \left(1+\frac{\Vert F\Vert_{L^{r_1}}}{{r_1}^{\lambda}} \right),
\]
and therefore \eqref{eq:log_Sobolev} reads as
\begin{align}\label{equation:sobolevlogarithmic}
	\Vert F\Vert_{L^\infty }\le &\, c\left(1+\Vert F\Vert_{L^{r_1}}\right) \log^{\lambda} (e+\Vert \nabla_H F \Vert_6+\Vert F \Vert _6+\Vert \partial_z F \Vert _{L^2}+\Vert F \Vert_{L^2}).
\end{align}

Let us start with the standard energy estimate. 

\begin{lemma}[$L^2$ bounds]
\label{lemma:global_L2}
Let $q \geq 2$, $U_0\in L^q(\Omega;L^2(M))$ and assume that
 $F_U\in L^q(\Omega;L^2_{loc}(0,\infty;L^2(M))$.
Then the stopping times $\tau^{w, q}_K$ defined for $K \in \N$ by
	\begin{align*}
		\tau_K^{w,q} = \inf \Bigg\{ s \geq 0 \mid  &\sup_{s \in [0, t \wedge \xi)} \Vert U \Vert_{L^2}^q + \int_0^{t \wedge \xi} \norm{U}_{L^2}^{q-2} \norm{U}_{L^2_z H^1_{xy}}^2 \ds\\
		&+ \left( \int_0^{t \wedge \xi} \Vert U \Vert_{L^2_z H^1_{xy}}^2 \ds \right)^{q/2} \geq K \Bigg\} 	
	\end{align*}
	satisfy $\tau_K^{w,q} \to \infty$ $\PP$-a.s.\ as $K \to \infty$.
\end{lemma}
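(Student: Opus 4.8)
The goal is to show the stopping times $\tau_K^{w,q}$ increase to $\infty$ almost surely, which — because $(\tau_K^{w,q})_K$ is visibly nondecreasing — amounts to ruling out that $\tau_K^{w,q}$ stays bounded on a set of positive probability. The natural strategy is to derive a global-in-time (up to $\xi$) a priori bound in $L^q(\Omega)$ on the quantity appearing in the definition of $\tau_K^{w,q}$, namely
\[
	\sup_{s\in[0,t\wedge\xi)}\|U\|_{L^2}^q + \int_0^{t\wedge\xi}\|U\|_{L^2}^{q-2}\|U\|_{L^2_zH^1_{xy}}^2\ds + \Big(\int_0^{t\wedge\xi}\|U\|_{L^2_zH^1_{xy}}^2\ds\Big)^{q/2}.
\]
Once this expectation is finite for every fixed $t$, Chebyshev gives $\PP(\tau_K^{w,q}\le t)\to 0$ as $K\to\infty$, and since $t$ is arbitrary this yields $\tau_K^{w,q}\to\infty$ a.s.

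\textbf{Main steps.} First I would apply the It\^o formula to $\|U\|_{L^2}^q$ for the true equation \eqref{eq:primeqFunct} (equivalently, work with the Galerkin approximations and pass to the limit, exactly as in Step 1 and Step 2 of Lemma \ref{lemma:galerkinestimate}), using the cancellation $\langle B(U,U),U\rangle = 0$ to kill the nonlinear term. This produces, as in \eqref{eq:galerkin_bound_L2},
\[
	\d\|U\|_{L^2}^q + q\nu\|U\|_{L^2}^{q-2}\|(-\Deltah)^{1/2}U\|_{L^2}^2\dt \le q\|U\|_{L^2}^{q-2}|\langle U,F(U)\rangle|\dt + \tfrac{q(q-1)}{2}\|U\|_{L^2}^{q-2}\|\sigma(U)\|_{L_2}^2\dt + q\|U\|_{L^2}^{q-2}\langle U,\sigma(U)\dW\rangle.
\]
The linear forcing term $F(U) = \Apr U + EU - F_U$ has sublinear growth controlled by $\|F_U\|_{L^2} + \|U\|_{L^2_zH^1_{xy}}$, so it is absorbed after a Young inequality (part into the $\nu$-dissipation, part into $\|U\|_{L^2}^q$); note $\Apr U$ and $EU$ are bounded by $\|(-\Deltah)^{1/2}U\|_{L^2}$ so this works for $\eta$, $\varepsilon$ small. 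The noise correction term is handled by \eqref{eq:sigmaGrowthL2}, contributing $c(1+\|U\|_{L^2}^q) + \tfrac{q(q-1)}{2}\eta^2\|U\|_{L^2}^{q-2}\|(-\Deltah)^{1/2}U\|_{L^2}^2$, again partly absorbed into dissipation under the smallness hypothesis $\nu>\eta^2(\tfrac{q-1}2 + qc_B^2)$ inherited from Lemma \ref{lemma:galerkinestimate}. The stochastic integral is estimated by the Burkholder--Davis--Gundy inequality \eqref{eq:bdg} exactly as in Step 1: $\E\sup_{s\le t\wedge\xi}|\int_0^s q\|U\|_{L^2}^{q-2}\langle U,\sigma(U)\dW\rangle|$ is bounded by $\tfrac14\E\sup_{s}\|U\|_{L^2}^q + q^2c_{BDG}^2\eta^2\E\int_0^{t\wedge\xi}\|U\|_{L^2}^{q-2}\|(-\Deltah)^{1/2}U\|_{L^2}^2\ds + c\E\int_0^{t\wedge\xi}(1+\|U\|_{L^2}^q)\ds$. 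Collecting terms and invoking Gronwall (deterministic, after taking expectations, since the remaining $\int_0^t\E\sup_{s\le r}\|U\|_{L^2}^q\dr$ is monotone in $r$) yields, as in \eqref{eq:boundL2},
\[
	\E\sup_{s\in[0,t\wedge\xi)}\|U\|_{L^2}^q + \E\int_0^{t\wedge\xi}\|U\|_{L^2}^{q-2}\|(-\Deltah)^{1/2}U\|_{L^2}^2\ds \le c(t)\,\E\Big[\|U_0\|_{L^2}^q + 1 + \Big(\int_0^t\|F_U\|_{L^2}^2\ds\Big)^{q/2}\Big] < \infty.
\]
Since $\|U\|_{L^2_zH^1_{xy}}^2 \le c(\|U\|_{L^2}^2 + \|(-\Deltah)^{1/2}U\|_{L^2}^2)$ (Poincar\'e on $G$ via the lateral boundary conditions), the middle term in the definition of $\tau_K^{w,q}$ is controlled. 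For the last term $\big(\int_0^{t\wedge\xi}\|U\|_{L^2_zH^1_{xy}}^2\big)^{q/2}$, I would repeat the argument of Step 2 of Lemma \ref{lemma:galerkinestimate}: integrate the differential inequality in time, raise to the power $q/2$, and estimate the resulting stochastic term by BDG \eqref{eq:bdg} with exponent $q/4$, absorbing $\varepsilon\E(\int\|(-\Deltah)^{1/2}U\|_{L^2}^2)^{q/2}$ into the left side and using the bound just obtained; this gives finiteness of $\E\big(\int_0^{t\wedge\xi}\|U\|_{L^2_zH^1_{xy}}^2\big)^{q/2}$.

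\textbf{Conclusion and the main obstacle.} With $\E[\text{(the $\tau_K^{w,q}$-quantity evaluated on $[0,t\wedge\xi)$)}] =: C(t) < \infty$ for every fixed $t>0$, Chebyshev yields $\PP(\tau_K^{w,q} \le t) \le C(t)/K \to 0$ as $K\to\infty$, and since $t$ is arbitrary, $\PP(\tau_K^{w,q} \to \infty) = 1$. The only genuine subtlety — and the point worth stating carefully — is that all these estimates must be performed up to the (possibly finite) blow-up time $\xi$; one cannot assume global existence here. This is handled by first stopping at $\xi_m \wedge \tau_m^{w,q}$ (or at the stopping times $\tau_m$ from the maximal-solution construction, on which $U$ has all the needed integrability, cf.\ \eqref{eq:max_thm_additional_reg} and Definition \ref{def:pathwise_solution}), running the above argument with $\xi$ replaced by $\xi_m\wedge\tau_m^{w,q}$ to get a bound uniform in $m$, and then letting $m\to\infty$ by monotone convergence — exactly the passage-to-the-limit used for the Galerkin approximations. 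The rest is bookkeeping identical to Lemma \ref{lemma:galerkinestimate}; no new analytic difficulty arises because the $L^2$-energy estimate enjoys the full cancellation of the nonlinearity and therefore does not see the absence of vertical viscosity at all.
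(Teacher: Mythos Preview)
Your proposal is correct and follows essentially the same route as the paper: the It\^o--cancellation--BDG--Gronwall argument of Step~1 of Lemma~\ref{lemma:galerkinestimate}, carried out on $[0,t\wedge\tau_N]$ and then pushed to $[0,t\wedge\xi)$ by monotone convergence. The only cosmetic difference is in the endgame: for the third term $\big(\int_0^{t\wedge\xi}\|U\|_{L^2_zH^1_{xy}}^2\big)^{q/2}$ you redo Step~2 to obtain a finite expectation, while the paper instead observes that $s\mapsto\int_0^{s\wedge\xi}\|U\|_{L^2_zH^1_{xy}}^2$ is already a.s.\ finite for each $t$ (from the $q=2$ case of the first bound) and invokes the auxiliary Lemma~\ref{lemma:auxiliary} with $\Psi(s)=s^{q/2}$, and likewise replaces your Chebyshev step by the first claim of that lemma---slightly more economical packaging, but not a different idea.
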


Note that the assumptions of Theorem \ref{thm:globalExistence} are such that the above Lemma can be applied for $q\leq 16/3$.

\begin{proof}
Similarly as in Step 1 of Lemma \ref{lemma:galerkinestimate}, we deduce
\begin{multline*}
	\E \left[ \sup_{s \in [0, t \wedge \tau_N]} \Vert U \Vert_{L^2}^q + \int_0^{t \wedge \tau_N} \norm{U}_{L^2}^{q-2} \norm{U}_{L^2_z H^1_{xy}}^2 \ds \right]\\
	\leq C_t \E\left[ \Vert U_0 \Vert_{L^2}^q +1+ \left( \int_0^{t \wedge \tau_N} \norm{F_U}_{L^2}^2 \ds\right)^{q/2} \right].
\end{multline*}
By the monotone convergence theorem, we get
\begin{multline*}
	\E \left[ \sup_{s \in [0, t \wedge \xi)} \Vert U \Vert_{L^2}^q + \int_0^{t \wedge \xi} \norm{U}_{L^2}^{q-2} \norm{U}_{L^2_z H^1_{xy}}^2 \ds \right]\\
	\leq C_t \E\left[ \Vert U_0 \Vert_{L^2}^q +1+ \left(  \int_0^{t \wedge \xi} \norm{F_U}_{L^2}^2 \ds\right)^{q/2} \right].
\end{multline*}
The a.s.\ convergence to infinity of the auxiliary stopping time $\tau_K^1$ defined by
\[
	\tau_K^1 = \inf \left\{ s \geq 0 \mid \sup_{r \in [0, s \wedge \xi)} \Vert U \Vert_{L^2}^q + \int_0^{s \wedge \xi} \norm{U}_{L^2}^{q-2} \norm{U}_{L^2_z H^1_{xy}}^2 \dr \geq K \right\}
\]
follows by the first claim in Lemma \ref{lemma:auxiliary}. Hence, the second claim in Lemma \ref{lemma:auxiliary} with $\Psi(s) = s^{q/2}$ leads to the convergence of the auxiliary stopping time $\tau_K^2$ defined by
\[
	\tau_K^2 = \inf \left\{ s \geq 0 \mid \left( \int_0^{s \wedge \xi} \Vert U \Vert_{L^2_z H^1_{xy}}^2 \dr \right)^{q/2} \geq K \right\}.
\]
The proof is concluded by setting $\tau_K^{w, q} = \tau_K^1 \wedge \tau_K^2$.
\end{proof}

\subsection{Estimates by splitting}
Here we use the decomposition of the momentum equation into the barotropic and baroclinic modes $\overline{v}$ and $\widetilde{v}$ in \eqref{eq:bar} and \eqref{eq:tilde}, respectively, to establish estimates later leading to control of the $L^p$ norm of the solution.
Note that the baroclinic mode is mean-value free in the vertical direction and $\partial_z \widetilde{v} =\partial_z v$. Thus, the vertical Poincar\'e inequality
\begin{equation*}
	\int_{-h}^0 |\widetilde{v}|^q \dz \leq c_q \| \partial_z \widetilde{v} \|_{L^q(-h, 0)}^q  = c_q \| \partial_z v \|_{L^q(-h, 0)}^q
\end{equation*}
holds. This implies
\begin{equation}
	\label{eq:v_bound_tildebar}
	\| v\|_{L^{6}}	\leq \|\overline{v}\|_{L^{6}}+\| \widetilde{v}\|_{L^{6}}
	\leq c\left( \|\nablah \overline{v}\|_{L^{2}}+\|\partial_z v\|_{L^{6}}\right).
\end{equation}
It is straightforward to check that $\| \Ac v \|_{L^q(G)} \leq h^{-1/q} \| v \|_{L^q(M)}$ and $\| \Rc v \|_{L^q} \leq (1 + h^{-1/q}) \| v \|_{L^q}$.

We begin with an open estimate for $\overline{v}$ similarly as in \cite{HieberKashiwabara2016}. In the next step, we will combine the estimate with an estimate for $\widetilde{v}$ to obtain a useful bound.

\begin{lemma}
For any $N \in \N$, $\varepsilon > 0$ and any two stopping times $0\leq\tau_a\leq \tau_b\leq t \wedge \tau_N$, it holds
\begin{align}\label{eq:restimatevbarH2}
	\begin{split}
	\E & \left[ \frac12 \sup_{s\in[\tau_a, \tau_b]} \| (-\Deltah)^{1/2}  \overline{v}\|_{L^2}^{2} + c(\nu, \varepsilon,\eta) \int_{\tau_a}^{\tau_b}   \|\Deltah \overline{v}\|_{L^2}^{2} \ds\right]\\
	&\quad \leq  c\left( \E\|(-\Deltah)^{1/2}  \overline{v}(\tau_a)\|_{L^2}^{2} +1 \right)\\
	&\quad \hphantom{\leq \ } + c\E	\int_{\tau_a}^{\tau_b} (1+\norm{U}_{L^2}^2\norm{U}_{L^2_zH^1_{xy}}^2)
	\| (-\Deltah)^{1/2} \overline v \|^2_{L^2} \ds\\
	&\quad \hphantom{\leq \ } +c_\varepsilon \E \int_{\tau_a}^{\tau_b} \norm{U}^2_{L^2_zH^1_{xy}}+\norm{\overline{f_v}}^2_{L^2}+ \norm{|\widetilde{v}|\nablah \widetilde{v} }^2_{L^2}\ds,
	\end{split}
\end{align}
where $c(\nu,\varepsilon, \eta) = 2[\nu -\varepsilon- \eta^2(\frac{1}{2} + 2c_{BDG}^2)]$ and the constants $c, c_\varepsilon$ are independent of $N$, $\tau_a$ and $\tau_b$.
\end{lemma}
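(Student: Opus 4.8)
The plan is to carry out the natural $H^1(G)$ energy estimate on the barotropic equation \eqref{eq:bar}. First I would apply the two–dimensional Leray--Helmholtz projection $P_G$ to \eqref{eq:bar}: since $\divh\overline v=0$ this leaves $\partial_t\overline v$ unchanged and removes the surface pressure $\nablah p_s$, and by \eqref{eq:sigma_leray} it also leaves the noise coefficient unchanged, $P_G\Ac\sigma_1(U)=\Ac\sigma_1(U)$. On $[0,t\wedge\tau_N]$ the maximal solution has, by \eqref{eq:max_thm_additional_reg} and the regularity in Definition \ref{def:pathwise_solution}, $v\in L^2(0,t\wedge\tau_N;L^2_zH^2_{xy})\cap C([0,t\wedge\tau_N];H^1)$, hence $\overline v=\Ac v\in L^2(0,t\wedge\tau_N;H^2(G))\cap C([0,t\wedge\tau_N];H^1_0(G)\cap\ls(G))$ (the Dirichlet condition on $\partial G$ coming from $v=0$ on $\Gamma_l$). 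Thus, exactly as in Step 5 of Lemma \ref{lemma:galerkinestimate}, I may apply It\^o's formula to $\|(-\Deltah)^{1/2}\overline v\|_{L^2}^2=\|\nablah\overline v\|_{L^2}^2$, testing the $P_G$–projected equation with $-P_G\Deltah\overline v$ and using that, by Stokes regularity on the bounded smooth domain $G$, the $L^2$–norm of the Stokes image controls $\|\Deltah\overline v\|_{L^2}$: after estimating the trace term this yields a differential inequality whose left–hand side carries $2\nu_v\|\Deltah\overline v\|_{L^2}^2\,\dt$ and whose drift is bounded by $2|\langle\Deltah\overline v,\overline v\cdot\nablah\overline v\rangle|+2|\langle\Deltah\overline v,N(\widetilde v)\rangle|+2|\langle\Deltah\overline v,\Ac F_v(U)\rangle|+\|(-\Deltah)^{1/2}\Ac\sigma_1(U)\|_{L_2(\cU,L^2)}^2$, with stochastic part $2\langle-\Deltah\overline v,\Ac\sigma_1(U)\,\d W_1\rangle$.

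The nonlinear term is the one dictating the form of the right–hand side of \eqref{eq:restimatevbarH2}. By the two–dimensional Ladyzhenskaya and Gagliardo--Nirenberg inequalities on $G$, $|\langle\Deltah\overline v,\overline v\cdot\nablah\overline v\rangle|\le\|\Deltah\overline v\|_{L^2}\|\overline v\|_{L^4(G)}\|\nablah\overline v\|_{L^4(G)}\le c\,\|\Deltah\overline v\|_{L^2}^{3/2}\|\overline v\|_{L^2}^{1/2}\|\nablah\overline v\|_{L^2}$, so Young's inequality gives $|\langle\Deltah\overline v,\overline v\cdot\nablah\overline v\rangle|\le\varepsilon\|\Deltah\overline v\|_{L^2}^2+c_\varepsilon\|\overline v\|_{L^2}^2\|\nablah\overline v\|_{L^2}^2\,\|(-\Deltah)^{1/2}\overline v\|_{L^2}^2$; using $\|\Ac v\|_{L^q(G)}\le h^{-1/q}\|v\|_{L^q(M)}$ this is $\le\varepsilon\|\Deltah\overline v\|_{L^2}^2+c_\varepsilon\|U\|_{L^2}^2\|U\|_{L^2_zH^1_{xy}}^2\|(-\Deltah)^{1/2}\overline v\|_{L^2}^2$. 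This is the ``open'' term of \eqref{eq:restimatevbarH2}: its prefactor $\|U\|_{L^2}^2\|U\|_{L^2_zH^1_{xy}}^2$ is integrable by Lemma \ref{lemma:global_L2}, which is what will make the estimate closeable through the stochastic Gronwall lemma.

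For the remaining drift terms: from the definition of $N$ one has $|N(\widetilde v)(x,y)|\le\tfrac2h\int_{-h}^0|\widetilde v|\,|\nablah\widetilde v|\,\dz$, hence by Cauchy--Schwarz in $z$, $\|N(\widetilde v)\|_{L^2(G)}^2\le c\,\| |\widetilde v|\nablah\widetilde v \|_{L^2(M)}^2$, so $2|\langle\Deltah\overline v,N(\widetilde v)\rangle|\le\varepsilon\|\Deltah\overline v\|_{L^2}^2+c_\varepsilon\| |\widetilde v|\nablah\widetilde v \|_{L^2}^2$ --- the term being deliberately left in this form to be matched later against the companion estimate for the baroclinic mode $\widetilde v$. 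For $\Ac F_v(U)=-k\times\overline v+\tfrac{\beta_T g}{h}\int_{-h}^0\int_z^0\nablah T\,\dz'\,\dz+\overline{f_v}$ one has $\|\Ac F_v(U)\|_{L^2(G)}\le c(\|\overline v\|_{L^2}+\|\nablah T\|_{L^2(M)}+\|\overline{f_v}\|_{L^2})$, which, together with the Poincar\'e inequality on $G$ for the Coriolis piece, gives $2|\langle\Deltah\overline v,\Ac F_v(U)\rangle|\le\varepsilon\|\Deltah\overline v\|_{L^2}^2+c_\varepsilon(\|U\|_{L^2_zH^1_{xy}}^2+\|\overline{f_v}\|_{L^2}^2)+c\|(-\Deltah)^{1/2}\overline v\|_{L^2}^2$, the last summand being the $1\cdot\|(-\Deltah)^{1/2}\overline v\|_{L^2}^2$ contribution in \eqref{eq:restimatevbarH2}. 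Finally, the It\^o trace term is handled by \eqref{eq:sigma_bar_growth}: $\|(-\Deltah)^{1/2}\Ac\sigma_1(U)\|_{L_2(\cU,L^2)}^2\le\eta^2\|\Deltah\overline v\|_{L^2}^2+c(1+\|U\|_{L^2_zH^1_{xy}}^2)$, the first summand being moved to the left.

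It remains to treat the stochastic term and collect everything. Integrating from $\tau_a$ to $t$, taking $\sup_{t\in[\tau_a,\tau_b]}$ and then $\E$, I would bound the martingale part by the Burkholder--Davis--Gundy inequality \eqref{eq:bdg} (with $p=1$) after distributing one horizontal derivative onto $\Ac\sigma_1$ (legitimate since $\overline v\in H^2(G)$): the quadratic variation of $\int2\langle-\Deltah\overline v,\Ac\sigma_1\,\d W_1\rangle$ is at most $4\int\|(-\Deltah)^{1/2}\overline v\|_{L^2}^2\|(-\Deltah)^{1/2}\Ac\sigma_1\|_{L_2(\cU,L^2)}^2\,\ds$, so pulling out $\sup_s\|(-\Deltah)^{1/2}\overline v\|_{L^2}^2$, using \eqref{eq:sigma_bar_growth} and Young's inequality, this term is $\le\tfrac14\E\sup_s\|(-\Deltah)^{1/2}\overline v\|_{L^2}^2+4c_{BDG}^2\eta^2\,\E\int_{\tau_a}^{\tau_b}\|\Deltah\overline v\|_{L^2}^2\,\ds+c\,\E\int_{\tau_a}^{\tau_b}(1+\|U\|_{L^2_zH^1_{xy}}^2)\,\ds$. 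Moving the $\tfrac14\E\sup$ term to the left and collecting all $\|\Deltah\overline v\|_{L^2}^2$–contributions --- the Young terms, $\eta^2$ from the trace and $4c_{BDG}^2\eta^2$ from BDG --- to the left produces exactly the coefficient $c(\nu,\varepsilon,\eta)=2[\nu-\varepsilon-\eta^2(\tfrac12+2c_{BDG}^2)]$ (after a harmless relabelling of $\varepsilon$); the $+1$ comes from the constants in the growth conditions integrated over the bounded interval $[\tau_a,\tau_b]$, and $\E\|(-\Deltah)^{1/2}\overline v(\tau_a)\|_{L^2}^2$ is the boundary term. All constants are independent of $N$, $\tau_a$, $\tau_b$ because every estimate above is pointwise in $(\omega,s)$, $\tau_N$ serving only to guarantee finiteness of the quantities. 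I expect the main difficulty to be bookkeeping rather than analysis: one must (i) eliminate the pressure cleanly via $P_G$ and \eqref{eq:sigma_leray} so that the noise stays divergence free, (ii) distribute the derivative in the martingale term so that only $\|(-\Deltah)^{1/2}\overline v\|_{L^2}^2$, not $\|\Deltah\overline v\|_{L^2}^2$, multiplies the supremum (enabling the absorption), and (iii) steer the nonlinearity into precisely the product form $\|U\|_{L^2}^2\|U\|_{L^2_zH^1_{xy}}^2\|(-\Deltah)^{1/2}\overline v\|_{L^2}^2$, which is what renders this ``open'' estimate useful for the subsequent Gronwall argument.
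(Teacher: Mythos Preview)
Your proposal is correct and follows essentially the same route as the paper: apply It\^o's formula to $\|(-\Deltah)^{1/2}P_G\,\cdot\,\|_{L^2}^2$, use $P_G\overline v=\overline v$ and \eqref{eq:sigma_leray} to drop the pressure and keep the noise unchanged, bound the It\^o trace via \eqref{eq:sigma_bar_growth}, and treat the martingale by BDG after moving one $(-\Deltah)^{1/2}$ onto $\Ac\sigma_1$. The only cosmetic difference is that the paper quotes the two nonlinear estimates (for $\overline v\cdot\nablah\overline v$ and $N(\widetilde v)$) from \cite[Lemma~5.3]{HusseinSaalWrona}, whereas you derive them inline via Ladyzhenskaya/Gagliardo--Nirenberg and the pointwise bound $|N(\widetilde v)|\le\tfrac{2}{h}\int_{-h}^0|\widetilde v|\,|\nablah\widetilde v|\,\dz$; also, the paper uses the splitting $\delta=\tfrac12$ in the BDG step (matching the $\tfrac12\sup$ and the explicit constant $c(\nu,\varepsilon,\eta)$ in the statement) rather than your $\delta=\tfrac14$, but as you note this is just a relabelling of~$\varepsilon$.
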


\begin{proof}
	 We apply the It\^{o} formula to $\norm{(-\Deltah)^{-1/2} P_G \cdot}^2_{L^2}$ and obtain
	\begin{equation*}
		\begin{split}
			\d & \|(-\Deltah)^{1/2} \overline{v}\|_{L^2}^2 + 2 \nu  \|\Deltah \overline{v}\|_{L^2}^{2} \dt\\
			&\leq 2\|(-\Deltah)^{1/2} \left< (-\Deltah)^{1/2}\overline{v}, (-\Deltah)^{1/2} P_G \left[\Ac F_v(U) -(\overline{v} \cdot \nablah \overline{v}) - N(\widetilde{v}) \right] \right> \dt \\
			&\hphantom{\leq \ } +2 \|(-\Deltah)^{1/2} \Ac \sigma_1(U)\|^2_{L_2(\cU,L^2)} \dt\\
			&\hphantom{\leq \ } +2\left< (-\Deltah)^{1/2} \overline{v},(-\Deltah)^{1/2} \Ac \sigma_1(U) \dW \right>,
		\end{split}
	\end{equation*}
	where we already used $P_G \overline{v} = \overline{v}$ and $P_G \Ac \sigma_1(U) = \Ac \sigma_1(U)$ from \eqref{eq:v_bar_div} and \eqref{eq:sigma_leray}, respectively. In \cite[Lemma 5.3]{HusseinSaalWrona}, it was shown that, for $\varepsilon>0$, we have
	\begin{multline*}
		\left| \left< (-\Deltah)^{1/2} \overline{v},(-\Deltah)^{1/2}(\overline{v} \cdot \nablah \overline{v})
		+ (-\Deltah)^{1/2}N(\widetilde{v}) \right> \right|
		\\
		\leq
		C_\varepsilon \left( \norm{\overline{v}}^2_{L^2}\norm{v}^2_{L^2_zH^1_{xy}} \|(-\Deltah)^{1/2} \overline{v}\|_{L^2}^2 + \norm{|\widetilde{v}|\nablah \widetilde{v} }^2_{L^2} \right)
		+\frac{\varepsilon}{2} \norm{\Deltah\overline{v}}^2_{L^2}
	\end{multline*}
	and
	\begin{multline*}
		\left| \left< (-\Deltah)^{1/2}\overline{v}, (-\Deltah)^{1/2} \Ac F_v(U)\right> \right|\\
		\leq C_\varepsilon \left( \norm{\nablah T}^2_{L^2}+\norm{v}_{L^2}^2+ \norm{\overline{f_v}}^2_{L^2} \right) + \frac{\varepsilon}{2}  \norm{\Deltah\overline{v}}^2_{L^2}.
	\end{multline*}
	By the sub-linear growth of $\Ac \sigma_1$ \eqref{eq:sigma_bar_growth} and the Burkholder-Davis-Gundy inequality \eqref{eq:bdg}, we deduce
	\begin{align*}
		&2\E \sup_{s \in [\tau_a, \tau_b]} \left|  \int_0^s \left< (-\Deltah)^{1/2} \overline{v},(-\Deltah)^{1/2} \Ac \sigma_1(U) \right> \dW \right|\\
		&\quad\leq \frac{1}{2} \E \sup_{s\in[\tau_a, \tau_b]}\| (-\Deltah)^{1/2}\overline{v}\|_{L^2}^{2}	+2c_{BDG}^2\eta^2 \E \int_{\tau_a}^{\tau_b}\|\Deltah \overline{v}\|_{L^{2}}^{2}\ds \\
		&\qquad + C \E\int_0^t 1 + \|(-\Deltah)^{1/2}  v\|_{L^2}^{2} \ds.
	\end{align*}
	Using \eqref{eq:sigma_bar_growth} to deal with the correction term and recalling \eqref{lemma:global_L2} and $\norm{\nablah \overline{v}}_{L^2}\leq c \norm{\nablah v}_{L^2} \leq c\norm{U}_{L^2_zH^1_{xy}}$, the claim follows by collecting the above estimates.
\end{proof}

\begin{lemma}[$H^1$ bound for $\overline{v}$ and $L^4$ bound for $\widetilde{v}$]
\label{lemma:vtildevbar}
The stopping times $\tau_K^{\widetilde{v}, 4}$ and $\tau_K^{\overline{v}, 2}$ defined for $K \in \N$  by
\begin{align*}
	\tau_K^{\widetilde{v},4} &= \inf \left\{ s \geq 0 \mid \sup_{r\in[0, s \wedge \xi)} \Vert \widetilde{v}\Vert _{L^4}^4+ \int_{0}^{s \wedge \xi}  \norm{|\widetilde{v}| \nablah \widetilde{v}}_{L^2}^2 \dr \geq K \right\},\\
	\tau_K^{\overline{v}, 2} &= \inf \left\lbrace s \geq 0 \mid \sup_{r\in[0, s \wedge \xi)} \Vert (-\Deltah)^{1/2} \overline{v}\Vert _{L^2}^2 + \int_{0}^{s \wedge \xi}  \norm{\Deltah \overline{v}}_{L^2}^2 \ds \geq K \right\rbrace,
\end{align*}
satisfy $\tau_K^{\widetilde{v}} \to \infty$ $\PP$-a.s.\ as $K \to \infty$.
\end{lemma}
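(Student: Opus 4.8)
The plan is to complement the barotropic estimate \eqref{eq:restimatevbarH2} with an analogous \emph{open} estimate for the baroclinic mode $\widetilde{v}$ measured in $L^4$, to add the two inequalities, and to close the resulting system by the stochastic Gronwall lemma. First I would apply the It\^{o} formula to $\norm{\widetilde{v}}_{L^4}^4$ along the baroclinic equation \eqref{eq:tilde}. Since $v = 0$ on $\Gamma_l$ forces $\overline{v} = 0$ and hence $\widetilde{v} = 0$ on $\Gamma_l$, the viscous term produces, after integration by parts, a quantity bounded below by $c \norm{\,|\widetilde{v}|\, \nablah \widetilde{v}}_{L^2}^2$, which serves as dissipation. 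Tested against $|\widetilde{v}|^2 \widetilde{v}$, the transport terms reduce to $-\tfrac14 \int_M (\divh \widetilde{v}) |\widetilde{v}|^4$ and $-\tfrac14 \int_M (\divh \overline{v}) |\widetilde{v}|^4 = 0$, and the first, together with the relevant part of $N(\widetilde{v})$, is controlled by an anisotropic H\"{o}lder/Ladyzhenskaya estimate and absorbed into the dissipation up to an $\varepsilon$-fraction.

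The remaining deterministic terms I would treat as in \cite[Section~5]{HusseinSaalWrona}: the coupling term $\widetilde{v} \cdot \nablah \overline{v}$ and the rest of $N(\widetilde{v})$ give, after the anisotropic H\"{o}lder inequality \eqref{eq:anisotropichoelder}, the Ladyzhenskaya inequality and Young's inequality, a contribution of the form $\varepsilon(\norm{\Deltah \overline{v}}_{L^2}^2 + \norm{\,|\widetilde{v}|\, \nablah \widetilde{v}}_{L^2}^2)$ plus $c_\varepsilon\, g(s)\, (1 + \norm{\widetilde{v}}_{L^4}^4)$ with a coefficient $g(s)$ built from $L^2$-type norms of $U$ and from $\norm{\nablah \overline{v}}_{L^2}$, while the forcing $\Rc F_v(U)$ is handled exactly as in \eqref{eq:restimatevbarH2} using $\norm{\nablah \overline{v}}_{L^2} \leq c \norm{U}_{L^2_z H^1_{xy}}$ and the assumptions on $f_v$. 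For the noise, the It\^{o} correction $\sum_k \int_M |\widetilde{v}|^2 |\Rc \sigma_1(v) e_k|^2$ splits via \eqref{eq:sigma1_r} into a part bounded by $\eta^2 \norm{\,|\widetilde{v}|\, \nablah \widetilde{v}}_{L^2}^2$, absorbed thanks to the smallness assumption \eqref{eq:noise_eta} on the transport coefficients, a part absorbed into $\varepsilon \norm{\Deltah \overline{v}}_{L^2}^2 + c_\varepsilon\, g(s)(1 + \norm{\widetilde{v}}_{L^4}^4)$ using the coefficients $\Rc \Phi_k$, and a sublinear-in-$L^4$ part from $\Rc h_k(v)$ via \eqref{eq:h_k_growth}; the stochastic integral is estimated by the Burkholder--Davis--Gundy inequality \eqref{eq:bdg} and Young's inequality, absorbing $\tfrac14 \sup_{[\tau_a,\tau_b]} \norm{\widetilde{v}}_{L^4}^4$ and an $\eta^2$- (resp.\ $\gamma^2$-) multiple of the gradient terms. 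Altogether, for $\varepsilon, \eta$ small and for all stopping times $0 \leq \tau_a \leq \tau_b \leq t \wedge \tau_N$, this yields an inequality of the same shape as \eqref{eq:restimatevbarH2} with left-hand side $\tfrac12 \E \sup_{[\tau_a,\tau_b]} \norm{\widetilde{v}}_{L^4}^4 + c(\nu,\varepsilon,\eta)\, \E \int_{\tau_a}^{\tau_b} \norm{\,|\widetilde{v}|\, \nablah \widetilde{v}}_{L^2}^2 \ds$.

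Adding this to \eqref{eq:restimatevbarH2} and choosing $\varepsilon$ small so that the term $c_\varepsilon \E \int_{\tau_a}^{\tau_b} \norm{\,|\widetilde{v}|\, \nablah \widetilde{v}}_{L^2}^2 \ds$ on the right of \eqref{eq:restimatevbarH2} is absorbed by the new dissipation, one obtains, writing $X(s) = \norm{(-\Deltah)^{1/2} \overline{v}(s)}_{L^2}^2 + \norm{\widetilde{v}(s)}_{L^4}^4$,
\begin{align*}
	\E \Big[ \sup_{s \in [\tau_a, \tau_b]} X(s) + \int_{\tau_a}^{\tau_b} \big( \norm{\Deltah \overline{v}}_{L^2}^2 + \norm{\,|\widetilde{v}|\, \nablah \widetilde{v}}_{L^2}^2 \big) \ds \Big]
	&\leq c\, \E X(\tau_a) + c\, \E \int_{\tau_a}^{\tau_b} G(s) \ds\\
	&\quad + c\, \E \int_{\tau_a}^{\tau_b} g(s)\, X(s) \ds,
\end{align*}
with $g(s) = 1 + \norm{\nablah \overline{v}}_{L^2}^2 + \norm{U}_{L^2}^2 \norm{U}_{L^2_z H^1_{xy}}^2$ and $G(s)$ collecting $1 + \norm{U}_{L^2_z H^1_{xy}}^2 + \norm{\overline{f_v}}_{L^2}^2 + \norm{\nablah T}_{L^2}^2$ and similar integrable terms. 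Running this on $[0, t \wedge \tau_n \wedge \tau_K^{w,2}]$, where $(\tau_n)$ is the localizing sequence of the maximal solution and $\tau_K^{w,2}$ is the stopping time from Lemma \ref{lemma:global_L2} with $q=2$, makes $\int_0^{t} g \ds$ and $\int_0^t G \ds$ bounded by deterministic constants, so the stochastic Gronwall lemma (Lemma \ref{lemma:stoch_Gronwall}) applies. Letting first $K \to \infty$ (using $\tau_K^{w,2} \to \infty$) and then $n \to \infty$ (using $\tau_n \to \xi$), we get a.s.\ finiteness of $\sup_{s \in [0, t \wedge \xi)} X(s)$ and of $\int_0^{t \wedge \xi}(\norm{\Deltah \overline{v}}_{L^2}^2 + \norm{\,|\widetilde{v}|\, \nablah \widetilde{v}}_{L^2}^2)\ds$ for every $t > 0$; the convergences $\tau_K^{\widetilde{v},4} \to \infty$ and $\tau_K^{\overline{v},2} \to \infty$ $\PP$-a.s.\ then follow from the auxiliary Lemma \ref{lemma:auxiliary}.

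I expect the baroclinic $L^4$ estimate to be the main obstacle: the difficulty is to organize all terms coming from $N(\widetilde{v})$, from $\widetilde{v} \cdot \nablah \overline{v}$ and from the transport noise so that whatever multiplies $\norm{\widetilde{v}}_{L^4}^4$ is time-integrable after freezing by $\tau_n$ and $\tau_K^{w,2}$, while every genuinely higher-order term appears only as an $\varepsilon$- or $\eta^2$-multiple of $\norm{\,|\widetilde{v}|\, \nablah \widetilde{v}}_{L^2}^2$ or $\norm{\Deltah \overline{v}}_{L^2}^2$, so that it can be absorbed on the left-hand side; this is precisely where the smallness condition \eqref{eq:noise_eta} on the transport part of the noise is used, and where the coupling to the already-established barotropic estimate \eqref{eq:restimatevbarH2} is essential.
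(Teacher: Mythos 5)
Your plan is essentially the paper's: It\^{o} formula for $\|\widetilde{v}\|_{L^4}^4$ along \eqref{eq:tilde}, cancellation of the main transport term, coupling with the open barotropic estimate \eqref{eq:restimatevbarH2}, stochastic Gronwall, and then Lemma \ref{lemma:auxiliary} to identify the stopping times. But there are two places where the argument as written does not close.

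First, the calibration of the linear combination. You add the baroclinic $L^4$ estimate to \eqref{eq:restimatevbarH2} with coefficient one each and claim absorption of $c_\varepsilon \E\int \norm{|\widetilde{v}|\nablah\widetilde{v}}_{L^2}^2$ by ``choosing $\varepsilon$ small''. This cannot work: as $\varepsilon\to 0$ the constant $c_\varepsilon$ in \eqref{eq:restimatevbarH2} grows without bound, so the term does not become small. The coupling must be arranged with an extra free multiplier. The paper multiplies \eqref{eq:restimatevbarH2} by $b>0$ before adding; then the term on the right becomes $b\,c_{\widetilde\varepsilon}\norm{|\widetilde{v}|\nablah\widetilde{v}}_{L^2}^2$, which can be absorbed by the baroclinic dissipation $4\nu\norm{|\widetilde{v}|\nablah\widetilde{v}}_{L^2}^2$ by making $b$ small; simultaneously the stray $\overline{\varepsilon}\norm{\Deltah\overline{v}}_{L^2}^2$ produced by the noise in the baroclinic step must fit under the now-shrunk barotropic dissipation $2b\nu\norm{\Deltah\overline{v}}_{L^2}^2$, which is why the paper then chooses $\overline{\varepsilon}=2\nu b\widetilde{\varepsilon}$ after fixing $b=4\nu\varepsilon/c_{\widetilde{\varepsilon}}$. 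Both free parameters are needed; a single $\varepsilon$ is not enough.

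Second, the order of the limits at the end is reversed. You propose ``first $K\to\infty$, then $n\to\infty$''. But every constant coming out of the stochastic Gronwall lemma depends on $K$ through $\kappa = \kappa(K)$, so one cannot send $K\to\infty$ in the expectation estimate; and for fixed $n$, the pointwise observation $\tau_K^{w}\to\infty$ only reproduces the tautology that the solution is bounded on $[0,\tau_n]$, giving nothing uniform in $n$. The correct sequence, as in the paper, is to fix $K$, note that the right-hand side of the Gronwall estimate is bounded uniformly in $N$, pass $N\to\infty$ in expectation (obtaining a.s.\ finiteness on $[0, t\wedge\tau_K^{w}\wedge\xi)$), and only then use the a.s.\ pointwise eventuality $\tau_K^{w}\to\infty$ to remove the truncation by $\tau_K^{w}$ for each fixed $t$. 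A small additional discrepancy: you freeze by $\tau_K^{w,2}$ while the paper uses $\tau_K^{w,4}$ from Lemma \ref{lemma:global_L2}; this is needed to control $\int \norm{U}_{L^2}^2\norm{U}_{L^2_zH^1_{xy}}^2$ and the forcing term $\big(\int\norm{\widetilde{f_v}}_{L^4}^2\big)^2$ directly, though nothing dramatic seems to go wrong with $q=2$ in the weight so long as one separates the product into sup times integral.
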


\begin{proof}
	Recalling the cancellation property of the nonlinear term, we apply the It\^{o} formula to $\Vert\widetilde{v}\Vert _{L^4}^4 $ and obtain
	\begin{equation*}
		\begin{split}
			\d\! \|\widetilde{v}\|_{L^4}^4 & +4\nu \left( \| |\widetilde{v}|\nablah \widetilde{v}\|_{L^2}^{2}+2\| |\widetilde{v}|\nablah |\widetilde{v}|\|_{L^2}^{2} \right)\dt\\
			&\leq -4\left< \vert \widetilde{v}\vert^2 \widetilde{v}, \Rc F_v(U) 
			- \widetilde{v}\cdot\nabla_H\overline{v}+N(\widetilde{v}) \right> \dt
			 +6 \sum_{k=1}^{\infty}\left< |\widetilde{v}|^2, (\Rc \sigma_1(U)e_k)^2 \right> \dt\\
			 & +4 \left< \vert \widetilde{v}\vert^2 \widetilde{v}, \Rc \sigma_1(U) \dW \right>
		\end{split}
	\end{equation*}
	almost surely. Integrating by parts, we get
	\begin{align*}
		4\left|\left< \vert \widetilde{v}\vert^2 \widetilde{v},\beta_T g \int_{\cdot}^{0}\nablah T(x,y,z')\dz'\right> \right|
		& \leq 
		c \norm{\int_{\cdot}^{0} T(x,y,z')\dz'}_{L^4}
		\norm{|\widetilde{v}|\nablah \widetilde{v}}_{L^2}
		\norm{\widetilde{v}}_{L^4}  \\
		& \leq c_\varepsilon\norm{T}_{L^2_zL^4_{xy}}^2
		\norm{\widetilde{v}}_{L^4}^2 +\varepsilon \norm{|\widetilde{v}|\nablah \widetilde{v}}_{L^2}^2\\
		& \leq c_\varepsilon \norm{T}_{L^2_zH^1_{xy}}^2
		\norm{\widetilde{v}}_{L^4}^2 + \varepsilon \norm{|\widetilde{v}|\nablah \widetilde{v}}_{L^2}^2
	\end{align*}
	for some $\varepsilon>0$. A similar estimate for the term with double vertical integral can be established in a similar manner. Hence, by the Young inequality, we have
	\begin{align*}
		4 &\int_{\tau_a}^{\tau_b} \left|\left< |\widetilde{v}|^2 \widetilde{v},\Rc F_v(U) \right>\right| \ds \\
		&\quad \leq \int_{\tau_a}^{\tau_b} c_\varepsilon \left( \norm{T}_{L^2_z H^1_{xy}}^2 \|\widetilde{v}\|_{L^4}^{2} + \norm{\widetilde{f_v}}_{L^4}\|\widetilde{v}\|_{L^{4}}^{3}+\|\widetilde{v}\|_{L^{4}}^{4} \right) +\varepsilon \norm{|\widetilde{v}|\nablah \widetilde{v}}_{L^2}^2\ds \\
		&\quad \leq \varepsilon \int_{\tau_a}^{\tau_b} \norm{|\widetilde{v}|\nablah \widetilde{v}}_{L^2}^2\ds + c_\varepsilon \int_{\tau_a}^{\tau_b} \left(1+\norm{T}_{L^2_zH^1_{xy}}^2\right)(1+\|\widetilde{v}\|_{L^4}^{4})\ds \\
		&\qquad + \frac14 \sup_{s\in[\tau_a,\tau_b]}\|\widetilde{v}\|_{L^4}^{4}  + c \left(\int_{\tau_a}^{\tau_b} \norm{\widetilde{f_v}}_{L^4}^2\ds\right)^2
	\end{align*}
	for any stopping times $0\leq \tau_a\leq \tau_b$. Following \cite[Proof of Theorem 1.1 in Section 6, Step 3, integrals $I_7$ and $I_8$]{HieberKashiwabara2016}, we obtain
	\begin{align*}
		4\left| \left< \vert \tilde{v}\vert^2 \tilde{v}, 
		- \tilde v\cdot\nabla_H\bar v+N(\tilde v) \right> \right| 
		\leq 
		c_\varepsilon\|\nablah\bar v\|_{L^2}^2 \|\tilde v\|_{L^4}^4 
		+\varepsilon\||\tilde{v}|\nablah \tilde{v}\|_{L^2}^{2}.
	\end{align*}
	Using \eqref{eq:sigma1_r}, \eqref{eq:h_k_growth} and \eqref{eq:noise_eta}, the boundedness of the operator $\Rc$ and the H\"older and Young inequalities, we get
	\begin{equation}
	\label{eq:l4_est_sum1}
	\begin{split}
		&6\sum_{k=1}^{\infty}\left< |\widetilde{v}|^2, (\Rc \sigma_1(U)e_k)^2 \right>\\
		&\quad = 6\sum_{k=1}^{\infty}\left< |\widetilde{v}|^2, (\Psi_k\cdot \nablah \widetilde{v}+ (\Rc \Phi_k) \cdot \nablah \overline{v}+\Rc h_k(v) )^2 \right>\\
		&\quad \leq 6\sum_{k=1}^{\infty}\left< |\widetilde{v}|^2, (1+\frac{\varepsilon}{3}) |\Psi_k\cdot \nablah \widetilde{v}|^2+c_\varepsilon |(\Rc \Phi_k) \cdot \nablah \overline{v}|^2+c_\varepsilon|\Rc h_k(v)|^2 \right>\\
		&\quad \leq (6\eta^2+2\varepsilon) \| |\widetilde{v}| \nablah \widetilde{v}\|_{L^2}^2
		+c_\varepsilon \left( | \widetilde{v}\|_{L^4}^2  \| \nablah \overline{v}\|_{L^4}^2
		+ \| \widetilde{v}\|_{L^4}^2  \| v \|_{L^4}^2+ \| \widetilde{v}\|_{L^4}^2 \right)\\
		&\quad \leq (6\eta^2+2\varepsilon) \| |\widetilde{v}| \nablah \widetilde{v}\|_{L^2}^2
		+c_\varepsilon \| \widetilde{v}\|_{L^4}^2  \left(  \| \nablah \overline{v}\|_{L^4}^2
		+ (\| \widetilde{v}\|_{L^4}+ \|  \overline{v}\|_{L^4})^2
		+ 1 \right) \\
		&\quad \leq (6\eta^2+2\varepsilon) \| |\widetilde{v}| \nablah \widetilde{v}\|_{L^2}^2
		+c_\varepsilon\| \widetilde{v}\|_{L^4}^2  \| \nablah \overline{v}\|_{L^2}\| \Deltah \overline{v}\|_{L^2}\\
		&\quad \quad +c_\varepsilon \| \widetilde{v}\|_{L^4}^2  \left(\| \widetilde{v}\|_{L^4}^2 + \|\nablah  \overline{v}\|_{L^2}^2 + 1\right)\\
		&\quad \leq (6\eta^2+2\varepsilon) \| |\widetilde{v}| \nablah \widetilde{v}\|_{L^2}^2
		+\frac{\overline{\varepsilon}}{2} \| \Deltah \overline{v}\|_{L^2}^{2}
		+c_{\varepsilon, \overline{\varepsilon}}(1+\| \nablah \overline{v}\|_{L^2}^2)(1+\| \widetilde{v}\|_{L^4}^4).
		\end{split}
	\end{equation}
	Similarly, we deduce
	\begin{equation}
	\label{eq:l4_est_sum2}
	\begin{split}
		&\sum_{k=1}^{\infty}|\left< |\widetilde{v}|^2\widetilde{v}, \Rc \sigma_1(U)e_k\right>|\\
		&\quad =\sum_{k=1}^{\infty}|\left< |\widetilde{v}|^2\widetilde{v}, \Psi_k\cdot \nablah \widetilde{v}+ (\Rc \Phi_k) \cdot \nablah \overline{v}+\Rc h_k(v) \right>| \\
		&\quad \leq \eta \| |\widetilde{v}| \nablah \widetilde{v}\|_{L^2} \| \widetilde {v}\|_{L^4}^2
		+c (\| \widetilde {v}\|_{L^4}^3\|\nablah \overline{v}\|_{L^4}+\|\widetilde{v}\|_{L^4}^3 \|v\|_{L^4} + \| \widetilde{v}\|_{L^4}^2)\\
		&\quad \leq \eta \| |\widetilde{v}| \nablah \widetilde{v}\|_{L^2} \| \widetilde {v}\|_{L^4}^2
		+c \| \widetilde {v}\|_{L^4}^3\|\nablah \overline{v}\|_{L^2}^{1/2}\|\Deltah \overline{v}\|_{L^2}^{1/2}\\
		&\quad \hphantom{\leq \ } +c \left( \|\widetilde{v}\|_{L^4}^4+\|\widetilde{v}\|_{L^4}^3 \|\nablah \overline v\|_{L^2}+1 \right).
	\end{split}
	\end{equation}
	Let $K, N \in \N$ be fixed and let $0 \leq \tau_a \leq \tau_b \leq t \wedge \tau_N \wedge \tau^{w, 4}_K$. By the Burkholder-Davis-Gundy inequality \eqref{eq:bdg} and \eqref{eq:l4_est_sum2}, we obtain
	\begin{align*}
		4&\E \sup_{s \in [\tau_a, \tau_b]}
		\left| \int_{\tau_a}^s \left<  \vert \widetilde{v}\vert^2 \widetilde{v}, \Rc \sigma_1(U) \right> \dW \right|\\
		&\leq 4 c_{BDG} \E \left( \int_{\tau_a}^{\tau_b} \left( \sum_{k=1}^{\infty}|\left< |\widetilde{v}|^2\widetilde{v}, \Rc \sigma_1(U)e_k\right>| \right)^2 \ds \right)^{1/2}\\
		&\leq 4 c_{BDG}\E \bigg(  \int_{\tau_a}^{\tau_b} \sqrt 2\eta^2 \| |\widetilde{v}| \nablah \widetilde{v}\|_{L^2}^2 \| \widetilde {v}\|_{L^4}^4
		+c \| \widetilde {v}\|_{L^4}^6\|\nablah \overline{v}\|_{L^2} \|\Deltah \overline{v}\|_{L^2}\\
		&\hphantom{\leq 4 c_{BDG} \E \bigg( \int_{\tau_a}^{\tau_b} \sqrt 2} + \|\widetilde{v}\|_{L^4}^8 + \|\widetilde{v}\|_{L^4}^6 \|\nablah \overline v\|_{L^2}^2 + 1 \ds \bigg)^{1/2}\\
		&\leq \frac12 \E \sup_{s\in[\tau_a, \tau_b]} \Vert \widetilde{v}\Vert_{L^4}^4 + 16c_{BDG}^2\eta^2 \E \int_{\tau_a}^{\tau_b} \| |\widetilde{v}| \nablah \widetilde{v}\|_{L^2}^2\ds\\
		&\quad +c \E \int_{\tau_a}^{\tau_b}  \| \widetilde {v}\|_{L^4}^2\|\nablah \overline{v}\|_{L^2}\|\Deltah \overline{v}\|_{L^2}+\| \widetilde{v}\|_{L^4}^4+\|\widetilde{v}\|_{L^4}^2 \|\nablah \overline v\|_{L^2}^2+1 \ds\\
		&\leq \frac12 \E \sup_{s\in[\tau_a, \tau_b]} \Vert \widetilde{v}\Vert_{L^4}^4 + 16c_{BDG}^2\eta^2\E  \int_{\tau_a}^{\tau_b} \| |\widetilde{v}| \nablah \widetilde{v}\|_{L^2}^2\ds+\frac{\overline{\varepsilon}}{2}\E  \int_{\tau_a}^{\tau_b} \|\Deltah \overline{v}\|_{L^2}^2\ds\\
		& \quad
		+c_{\overline{\varepsilon}} \E \int_{\tau_a}^{\tau_b}  (1+\|\nablah \overline{v}\|_{L^2}^2)(1+\|\widetilde{v}\|_{L^4}^4)\ds.
	\end{align*}
	Collecting the above, we deduce
	\begin{align*}
		\E &\left[ \sup_{s\in[\tau_a, \tau_b]} \Vert \widetilde{v}\Vert_{L^4}^4 + 4\left( \nu - \varepsilon-\eta^2\left(4c_{BDG}^2+\frac32 \right)\right) \int_{\tau_a}^{\tau_b} \norm{|\widetilde{v}| \nablah \widetilde{v}}_{L^2}^2\ds \right]\\
		&\leq \E \left[ c \|\widetilde{v}(\tau_a)\|_{L^4}^{4} +\overline{\varepsilon} \int_{\tau_a}^{\tau_b} \|\Deltah \overline{v}\|_{L^2}^2\ds \right]\\
		&\hphantom{\leq \ } \quad +c_{\varepsilon,\overline{\varepsilon}} \E \int_{\tau_a}^{\tau_b}  (1+\|U\|_{L^2_zH^1_{xy}}^2)(1+\|\widetilde{v}\|_{L^4}^4)\ds +c\left(\int_{\tau_a}^{\tau_b}\norm{\widetilde{f_v}}_{L^4}^2\ds\right)^2.
	\end{align*}
	Next, multiply \eqref{eq:restimatevbarH2} by $b>0$ precisely determinetd later and add it to the estimate above to obtain 
	\begin{align*}
		&\E  \left[ \frac{b}{2} \sup_{s\in[\tau_a, \tau_b]} \| (-\Deltah)^{1/2}  \overline{v}\|_{L^2}^{2} + 2b\left(\nu - \widetilde{\varepsilon} - \eta^2\left( \frac12 + 2 c_{BDG}^{2} \right) \right) \int_{\tau_a}^{\tau_b}   \|\Deltah \overline{v}\|_{L^2}^{2} \ds\right]\\
		&\quad +\E \left[ \sup_{s\in[\tau_a, \tau_b]} \Vert \widetilde{v}\Vert_{L^4}^4 + 4\left(\nu - \varepsilon-\eta^2\left(4c_{BDG}^2+\frac32\right)\right) \int_{\tau_a}^{\tau_b} \norm{|\widetilde{v}| \nablah \widetilde{v}}_{L^2}^2\ds \right]\\
		&\leq  c b\E\left[\|(-\Deltah)^{1/2}  \overline{v}(\tau_a)\|_{L^2}^{2}
		+ \int_{\tau_a}^{\tau_b} (1+\norm{U}_{L^2}^2\norm{U}_{L^2_zH^1_{xy}}^2)
		\| (-\Deltah)^{1/2} \overline v \|^2_{L^2} \d\! s  \right]\\
		&\quad +c_{\widetilde{\varepsilon}} b \E \left[\int_{\tau_a}^{\tau_b} 1 +\norm{U}^2_{L^2_zH^1_{xy}}+\norm{\overline{f_v}}^2_{L^2}+ \norm{|\widetilde{v}|\nablah \widetilde{v} }^2_{L^2}\ds\right] 
		+\overline{\varepsilon}\E  \int_{\tau_a}^{\tau_b} \|\Deltah \overline{v}\|_{L^2}^2\ds \\
		&\quad + c \E \|\widetilde{v}(\tau_a)\|_{L^4}^{4}
		+c \E \int_{\tau_a}^{\tau_b}  (1+\|U\|_{L^2_zH^1_{xy}}^2)(1+\|\widetilde{v}\|_{L^4}^4)\ds +c\left(\int_{\tau_a}^{\tau_b}\norm{\widetilde{f_v}}_{L^4}^2\ds\right)^2.
	\end{align*}
	With $b=\frac{4\nu\varepsilon}{c_{\widetilde{\varepsilon}}}$ and $\overline{\varepsilon}=2\nu b \widetilde{\varepsilon}$, the above reads as
	\begin{align*}
		&\E  \left[ \sup_{s\in[\tau_a, \tau_b]} \| (-\Deltah)^{1/2}  \overline{v}\|_{L^2}^{2} + 2b\left(\nu - 2\widetilde{\varepsilon} - \eta^2\left( \frac12 + 2 c_{BDG}^{2} \right) \right) \int_{\tau_a}^{\tau_b}   \|\Deltah \overline{v}\|_{L^2}^{2} \ds\right]\\
		&\quad +\E \left[ \sup_{s\in[\tau_a, \tau_b]} \Vert \widetilde{v}\Vert_{L^4}^4 + 4\left(\nu - 2\varepsilon-\eta^2\left(4c_{BDG}^2-\frac32 \right)\right) \int_{\tau_a}^{\tau_b} \norm{|\widetilde{v}| \nablah \widetilde{v}}_{L^2}^2\ds \right]\\
		&  
		\leq c \E\left[\|(-\Deltah)^{1/2} \overline{v}(\tau_a)\|_{L^2}^{2}
		+\|\widetilde{v}(\tau_a)\|_{L^4}^{4} + \left(\int_{\tau_a}^{\tau_b}\norm{\widetilde{f_v}}_{L^4}^2\ds\right)^2+\int_{\tau_a}^{\tau_b} \| \overline{f_v} \|_{L^2}^2 \ds \right]\\
		&\quad + c \E
		\int_{\tau_a}^{\tau_b} 1+(1+\norm{U}_{L^2}^2\norm{U}_{L^2_zH^1_{xy}}^2)
		\| (-\Deltah)^{1/2} \overline v \|^2_{L^2} \ds\\
		&\quad +c \E \int_{\tau_a}^{\tau_b}\|U\|_{L^2_zH^1_{xy}}^2 + (1+\|U\|_{L^2_zH^1_{xy}}^2) \|\widetilde{v}\|_{L^4}^4 \ds,
	\end{align*}
	Choosing $\varepsilon$ and $\tilde{\varepsilon}$ sufficiently small, we may apply the stochastic Gronwall lemma \ref{lemma:stoch_Gronwall} to get
	\begin{align*}
		\E &\sup_{s\in[0, t \wedge \tau_K^{w, 4} \wedge \tau_N]} \Vert \widetilde{v}\Vert_{L^4}^4 + \E \sup_{s\in[0, t \wedge \tau_K^{w, 4} \wedge \tau_N]} \| (-\Deltah)^{1/2}  \overline{v}\|_{L^2}^{2}\\
		 &\quad + \E \int_{0}^{t \wedge \tau_K^{w, 4} \wedge \tau_N}
		 \norm{|\widetilde{v}| \nablah \widetilde{v}}_{L^2}^2+\|\Deltah \overline{v}\|_{L^2}^{2} \ds\\
		&\leq C_{t, K}\E \left[ \|\widetilde v(0)\|_{L^4}^{4}+\|(-\Deltah)^{1/2} \overline{v}(0)\|_{L^2}^{2} +1 + \int_{0}^{t \wedge \tau_K^{w, 4} \wedge \tau_N} \norm{U}_{L^2_zH^1_{xy}}^2\ds\right]\\
		&\quad + C_{t, K}\E \left[ \left(\int_{0}^{t \wedge \tau_K^{w, 4} \wedge \tau_N}\norm{\widetilde{f_v}}_{L^4}^2\ds\right)^2 + \int_0^{t \wedge \tau_K^{w, 4} \wedge \tau_N} \| \overline{f_v} \|_{L^2}^2 \ds \right].
	\end{align*}
	Clearly, the right-hand side of the above estimate is uniformly bounded w.r.t.\ $N$. Passing to the limit w.r.t.\ $N\to\infty$, we observe
	\begin{multline}
		\label{eq:vtilde_finiteness}
		 \sup_{s\in[0, t \wedge \tau_K^{w, 4} \wedge \xi)} \Vert \widetilde{v}\Vert_{L^4}^4 + \sup_{s\in[0, t \wedge \tau_K^{w, 4} \wedge \xi)} \| (-\Deltah)^{1/2}  \overline{v}\|_{L^2}^{2}\\
		+ \int_{0}^{t \wedge \tau_K^{w, 4} \wedge \xi}
		\norm{|\widetilde{v}| \nablah \widetilde{v}}_{L^2}^2+\|\Deltah \overline{v}\|_{L^2}^{2} \ds < \infty
	\end{multline}
	almost surely. Recalling $\tau_K^{w, 4} \to \infty$ a.s.\ as $K \to \infty$, from \eqref{eq:vtilde_finiteness} we deduce
	\begin{multline*}
	\sup_{s\in[0, t \wedge \xi)} \Vert \widetilde{v}\Vert _{L^4}^4 + \sup_{s\in[0, t \wedge \xi)} \| (-\Deltah)^{1/2}  \overline{v}\|_{L^2}^{2}\\
	+ \int_{0}^{t \wedge \xi}   \norm{|\widetilde{v}| \nablah \widetilde{v}}_{L^2}^2+\| (-\Deltah)^{1/2}  \overline{v}\|_{L^2}^{2}\ds < \infty
	\end{multline*}
	almost surely. The proof is concluded by Lemma \ref{lemma:auxiliary} similarly as in the proof of Lemma \ref{lemma:global_L2}.
\end{proof}

\begin{lemma}[improved $L^4$ bound for $\widetilde{v}$]
\label{lemma:vtilde_l4}
Let $q\geq 4$, $\widetilde{v_0}\in L^q(\Omega;L^4(M))$ and $\widetilde{f_v}\in L^q(\Omega;L^2_{loc}(0,\infty;L^4(M))$.
Then the stopping time $\tau_K^{\widetilde{v}, q}$ defined for $K \in \N$ and $q \in [4, \infty)$ by
\begin{align*}
	\tau_K^{\widetilde{v},q} = \inf \Bigg\{ s \geq 0 \mid &\sup_{r\in[0, s \wedge \xi)} \Vert \widetilde{v}\Vert _{L^4}^q + \int_{0}^{s \wedge \xi} \Vert \widetilde{v}\Vert _{L^4}^{q-4}  \norm{|\widetilde{v}| \nablah \widetilde{v}}_{L^2}^2 \dr\\
	&+ \left( \int_{0}^{s \wedge \xi} \norm{|\widetilde{v}| \nablah \widetilde{v}}_{L^2}^2\ds\right)^{q/2} \geq K \Bigg\}
\end{align*}
satisfies $\tau_K^{\widetilde{v},q} \to \infty$ $\PP$-a.s.\ as $K \to \infty$.
\end{lemma}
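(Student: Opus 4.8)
The plan is to follow the scheme of the proof of Lemma~\ref{lemma:vtildevbar}, now applying the It\^o formula to $\norm{\widetilde{v}}_{L^4}^q$ rather than to $\norm{\widetilde{v}}_{L^4}^4$. Fix $N, K \in \N$ and set $\sigma_K := \tau_K^{w, 2} \wedge \tau_K^{\widetilde{v}, 4} \wedge \tau_K^{\overline{v}, 2}$, using the stopping times from Lemmas~\ref{lemma:global_L2} and~\ref{lemma:vtildevbar}; on the random interval $[0, t \wedge \tau_N \wedge \sigma_K]$ the quantities $\int \norm{U}_{L^2_z H^1_{xy}}^2 \ds$, $\sup \norm{\widetilde{v}}_{L^4}^4$, $\int \norm{|\widetilde{v}| \nablah \widetilde{v}}_{L^2}^2 \ds$, $\sup \norm{(-\Deltah)^{1/2} \overline{v}}_{L^2}^2$ and $\int \norm{\Deltah \overline{v}}_{L^2}^2 \ds$ are all bounded by a constant $c(K)$. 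Applying It\^o to $\norm{\widetilde{v}}_{L^4}^q$ along the baroclinic equation~\eqref{eq:tilde}, and using the cancellation of the self-interaction of the nonlinear term and the skew-symmetry of the Coriolis force exactly as in Lemma~\ref{lemma:vtildevbar}, one obtains on the left-hand side the dissipative contribution $q\nu \norm{\widetilde{v}}_{L^4}^{q-4}\big(\norm{|\widetilde{v}| \nablah \widetilde{v}}_{L^2}^2 + 2\norm{|\widetilde{v}| \nablah |\widetilde{v}|}_{L^2}^2\big)\dt$, and on the right-hand side the pressure/forcing term involving $\Rc F_v(U)$, the interaction terms $-\widetilde{v}\cdot\nablah\overline{v} + N(\widetilde{v})$, the It\^o correction $\tfrac{q(q-1)}{2}\norm{\widetilde{v}}_{L^4}^{q-4}\sum_k\langle|\widetilde{v}|^2,(\Rc \sigma_1(U)e_k)^2\rangle\dt$ plus a further lower-order correction from differentiating the weight $\norm{\widetilde{v}}_{L^4}^{q-4}$, and the martingale term $q\norm{\widetilde{v}}_{L^4}^{q-4}\langle|\widetilde{v}|^2\widetilde{v},\Rc \sigma_1(U)\dW\rangle$.

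Each of these terms is estimated as in Lemma~\ref{lemma:vtildevbar}, carrying the factor $\norm{\widetilde{v}}_{L^4}^{q-4}$ throughout. Integration by parts, H\"older and Young bound the $\Rc F_v(U)$ contribution by $\varepsilon\norm{\widetilde{v}}_{L^4}^{q-4}\norm{|\widetilde{v}|\nablah\widetilde{v}}_{L^2}^2 + c_\varepsilon(1+\norm{U}_{L^2_z H^1_{xy}}^2)(1+\norm{\widetilde{v}}_{L^4}^q) + \tfrac14\sup\norm{\widetilde{v}}_{L^4}^q + c_t\big(\int\norm{\widetilde{f_v}}_{L^4}^2\ds\big)^{q/2}$, the last term arising by Cauchy--Schwarz in time (which is why $\widetilde{f_v}$ needs only $L^2$ integrability in time). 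The estimate borrowed from \cite{HieberKashiwabara2016} bounds the $-\widetilde{v}\cdot\nablah\overline{v}+N(\widetilde{v})$ contribution by $c_\varepsilon\norm{\nablah\overline{v}}_{L^2}^2\norm{\widetilde{v}}_{L^4}^q + \varepsilon\norm{\widetilde{v}}_{L^4}^{q-4}\norm{|\widetilde{v}|\nablah\widetilde{v}}_{L^2}^2$. For the It\^o correction and the Burkholder--Davis--Gundy bound~\eqref{eq:bdg} of the martingale term, the transport structure~\eqref{eq:sigma1_r} together with \eqref{eq:h_k_growth}, \eqref{eq:noise_eta}, \eqref{eq:v_bound_tildebar} and the Ladyzhenskaya inequality $\norm{\nablah\overline{v}}_{L^4}^2\le c\norm{\nablah\overline{v}}_{L^2}\norm{\Deltah\overline{v}}_{L^2}$ --- i.e.\ the computations \eqref{eq:l4_est_sum1} and \eqref{eq:l4_est_sum2} weighted by $\norm{\widetilde{v}}_{L^4}^{q-4}$ --- yield a term $\big(c(q)\eta^2+\varepsilon\big)\norm{\widetilde{v}}_{L^4}^{q-4}\norm{|\widetilde{v}|\nablah\widetilde{v}}_{L^2}^2$, plus $\tfrac14\sup\norm{\widetilde{v}}_{L^4}^q$, plus $c_\varepsilon(1+\norm{\nablah\overline{v}}_{L^2}^2)(1+\norm{\widetilde{v}}_{L^4}^q)$, plus a term that on $[0, t\wedge\tau_N\wedge\sigma_K]$ is bounded by $c(K)(1+\norm{\Deltah\overline{v}}_{L^2}^2)$ (using that $\norm{\widetilde{v}}_{L^4}$ is bounded there) and hence integrable. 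Under the hypothesis $\nu > \eta^2\big(\tfrac{q-1}2 + q c_B^2\big)$ in the form of Lemma~\ref{lemma:galerkinestimate} (the present $q$-dependent threshold being of the same type), choosing $\varepsilon$ small absorbs the $\norm{\widetilde{v}}_{L^4}^{q-4}\norm{|\widetilde{v}|\nablah\widetilde{v}}_{L^2}^2$ terms into the dissipation and the $\sup$-terms into the left-hand side.

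The resulting inequality, valid between arbitrary stopping times $0\le\tau_a\le\tau_b\le t\wedge\tau_N\wedge\sigma_K$, is of the form required by the stochastic Gronwall lemma~\ref{lemma:stoch_Gronwall} with the process $G := c\big(1+\norm{U}_{L^2_z H^1_{xy}}^2+\norm{\nablah\overline{v}}_{L^2}^2\big)$, whose integral over $[0, t\wedge\tau_N\wedge\sigma_K]$ is at most $c(K,t)$. This yields a bound on $\E\big[\sup_{[0,t\wedge\tau_N\wedge\sigma_K]}\norm{\widetilde{v}}_{L^4}^q + \int_0^{t\wedge\tau_N\wedge\sigma_K}\norm{\widetilde{v}}_{L^4}^{q-4}\norm{|\widetilde{v}|\nablah\widetilde{v}}_{L^2}^2\ds\big]$ in terms of $t$, $K$ and the data, uniformly in $N$. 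Letting $N\to\infty$ by monotone convergence gives the corresponding bound up to $t\wedge\xi\wedge\sigma_K$, hence almost sure finiteness of $\sup\norm{\widetilde{v}}_{L^4}^q + \int\norm{\widetilde{v}}_{L^4}^{q-4}\norm{|\widetilde{v}|\nablah\widetilde{v}}_{L^2}^2\ds$ on $[0,t\wedge\xi\wedge\sigma_K)$; since $\sigma_K\to\infty$ a.s.\ by Lemmas~\ref{lemma:global_L2} and~\ref{lemma:vtildevbar}, it holds on $[0,t\wedge\xi)$ a.s. Together with the a.s.\ finiteness of $\int\norm{|\widetilde{v}|\nablah\widetilde{v}}_{L^2}^2\ds$ already contained in Lemma~\ref{lemma:vtildevbar}, the convergence $\tau_K^{\widetilde{v}, q}\to\infty$ then follows exactly as in the proof of Lemma~\ref{lemma:global_L2}: the first claim of Lemma~\ref{lemma:auxiliary} takes care of the $\sup$-term together with the first integral, while its second claim with $\Psi(s)=s^{q/2}$ handles the remaining $\big(\int\norm{|\widetilde{v}|\nablah\widetilde{v}}_{L^2}^2\ds\big)^{q/2}$ term.

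The main obstacle is the bookkeeping in the second step: one must verify that every $\nablah\widetilde{v}$ produced by the transport-noise correction and, especially, by its Burkholder--Davis--Gundy bound appears precisely with the weight $\norm{\widetilde{v}}_{L^4}^{q-4}$ in front of $\norm{|\widetilde{v}|\nablah\widetilde{v}}_{L^2}^2$, so that it is absorbable by the dissipation once $\nu$ is large relative to $\eta^2$, while the $\overline{v}$-gradients it produces are dominated either by the already finite $\int\norm{\Deltah\overline{v}}_{L^2}^2\ds$ on the stopping interval or by a bounded power of $\norm{\widetilde{v}}_{L^4}$ times a process integrable over $[0,t\wedge\xi\wedge\sigma_K)$.
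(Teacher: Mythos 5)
Your proof is correct and follows essentially the same route as the paper: apply It\^o to $\|\widetilde v\|_{L^4}^q$, reuse the estimates of Lemma~\ref{lemma:vtildevbar} with the extra weight $\|\widetilde v\|_{L^4}^{q-4}$, exploit the stopping times from Lemmata~\ref{lemma:global_L2} and~\ref{lemma:vtildevbar} (in particular the now-finite $\int\|\Deltah\overline v\|_{L^2}^2\ds$) to tame the mixed $\|\Deltah\overline v\|_{L^2}^2\|\widetilde v\|_{L^4}^q$ correction term, and close with the stochastic Gronwall lemma and Lemma~\ref{lemma:auxiliary}. The only cosmetic difference is in the bookkeeping for the stochastic Gronwall lemma: the paper folds $\|\Deltah\overline v\|_{L^2}^2$ into the multiplicative process $R$, whereas you absorb $\|\widetilde v\|_{L^4}^q$ into a constant $c(K)$ on the stopping interval and treat $c(K)(1+\|\Deltah\overline v\|_{L^2}^2)$ as a forcing term $Z$; both are valid because the stopping time makes $\int\|\Deltah\overline v\|_{L^2}^2\ds$ uniformly bounded there.
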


\begin{proof}
Recalling the cancellation property of the nonlinear term, the It\^{o} formula applied to $\Vert\widetilde{v}\Vert _{L^4}^q $ yields
\begin{equation*}
	\begin{split}
		\d\! \|\widetilde{v}\|_{L^4}^q & +q\nu \|\widetilde{v}\|_{L^4}^{q-4} \left( \| |\widetilde{v}|\nablah \widetilde{v}\|_{L^2}^{2}+2\| |\widetilde{v}|\nablah |\widetilde{v}|\|_{L^2}^{2} \right)\dt\\
		&\leq -q \|\widetilde{v}\|_{L^4}^{q-4}  \left< \vert \widetilde{v}\vert^2 \widetilde{v}, \Rc F_v(U) 
		- \widetilde{v}\cdot\nabla_H\overline{v}+N(\widetilde{v}) \right> \dt,\\
		&\quad +\frac{q(q-4)}{2} \|\widetilde{v}\|_{L^4}^{q-8} \left(\sum_{k=1}^{\infty}\left< |\widetilde{v}|^2\widetilde{v}, \Rc \sigma_1(U)e_k\right>\right)^2 \dt\\
		&\quad +\frac{3q}{2} \|\widetilde{v}\|_{L^4}^{q-4} \sum_{k=1}^{\infty}\left< |\widetilde{v}|^2, (\Rc \sigma_1(U)e_k)^2 \right> \dt +q \|\widetilde{v}\|_{L^4}^{q-4} \left< \vert \widetilde{v}\vert^2 \widetilde{v}, \Rc \sigma_1(U) \dW \right>
	\end{split}
\end{equation*}
almost surely. Nearly all the terms can be handled as in the previous proof, only the correction term requires further treatment. Due to Lemma \ref{lemma:vtildevbar}, we can now handle the mixed term $\|\Deltah \overline{v}\|_{L^2}^2 \|\widetilde{v}\|_{L^4}^{q}$.
Recalling \eqref{eq:l4_est_sum1}, we have
\begin{equation*}
\sum_{k=1}^{\infty}\left< |\widetilde{v}|^2, (\Rc \sigma_1(U)e_k)^2 \right> 
\leq (\eta^2+\varepsilon) \| |\widetilde{v}| \nablah \widetilde{v}\|_{L^2}^2
+c(1+\| \Deltah \overline{v}\|_{L^2}^2)(1+\| \widetilde{v}\|_{L^4}^4),
\end{equation*}
and, from \eqref{eq:sigma1_r}, \eqref{eq:h_k_growth} and \eqref{eq:noise_eta}, we deduce
\begin{align*}
	&\sum_{k=1}^{\infty}|\left< |\widetilde{v}|^2\widetilde{v}, \Rc \sigma_1(U)e_k\right>|\\
	&\quad \leq \eta \| |\widetilde{v}| \nablah \widetilde{v}\|_{L^2} \| \widetilde {v}\|_{L^4}^2\\
	&\hphantom{\quad \leq \ } +c (\| \widetilde {v}\|_{L^4}^3\|\nablah \overline{v}\|_{L^2}^{1/2}\|\Deltah \overline{v}\|_{L^2}^{1/2}+\|\widetilde{v}\|_{L^4}^4+\|\widetilde{v}\|_{L^4}^3 \|\nablah \overline v\|_{L^2}+1)\\
	&\quad \leq \eta \| |\widetilde{v}| \nablah \widetilde{v}\|_{L^2} \| \widetilde {v}\|_{L^4}^2
	+c (1+\|\Deltah \overline{v}\|_{L^2})(1+\|\widetilde{v}\|_{L^4}^4).
\end{align*}
Hence,
\begin{align*}
	&\frac{q(q-4)}{2} \|\widetilde{v}\|_{L^4}^{q-8} \left(\sum_{k=1}^{\infty}\left< |\widetilde{v}|^2\widetilde{v}, \Rc \sigma_1(U)e_k\right>\right)^2
	+\frac{3q}{2} \|\widetilde{v}\|_{L^4}^{q-4} \sum_{k=1}^{\infty}\left< |\widetilde{v}|^2, (\Rc \sigma_1(U)e_k)^2 \right>\\
	& \leq \frac{q(q-1)}{2}\eta^2\|\widetilde{v}\|_{L^4}^{q-4} \| |\widetilde{v}| \nablah \widetilde{v}\|_{L^2}^2+c (1+\|\Deltah \overline{v}\|_{L^2}^2)(1+\|\widetilde{v}\|_{L^4}^q).
\end{align*}
Dealing with the remaining terms as in Lemma \ref{lemma:vtildevbar} and using the stochastic Gronwall lemma \ref{lemma:stoch_Gronwall}, we obtain
\[
	\sup_{s\in[0, t \wedge \xi)} \Vert \widetilde{v}\Vert _{L^4}^q + \int_{0}^{t \wedge \xi} \Vert \widetilde{v}\Vert_{L^4}^{q-4}  \norm{|\widetilde{v}| \nablah \widetilde{v}}_{L^2}^2\ds < \infty \quad \text{a.s.}
\]
The proof is concluded by Lemma \ref{lemma:auxiliary} similarly as in the proof of Lemma \ref{lemma:global_L2}.
\end{proof}

Analogously, adapting the estimates of the stochastic terms in Lemma \ref{lemma:vtildevbar}, we may also deduce higher integrability in the probability space for $\overline{v}$. The proof is omitted.

\begin{lemma}[improved $H^1$ bound for $\overline{v}$]
	Let $q\geq 2$, $\overline{v_0}\in L^q(\Omega;H^1_0(G))$ and $\overline{f_v}\in L^q(\Omega;L^2_{loc}(0,\infty;L^2(G))$.
Then the stopping time $\tau_K^{\nablah \overline{v}, q}$ defined for $K \in \N$ by
\begin{align*}
	\tau_K^{\nablah \overline{v}, q} = \inf \Bigg\lbrace s \geq 0 \mid &\sup_{r \in [0, s \wedge \xi)} \norm{\overline{v}}_{H^1}^q + \int_0^{s \wedge \xi} \norm{\nablah \overline{v}}_{L^2}^{q-2} \norm{\Deltah \overline{v}}^2_{L^2} \dr\\
	&+\left( \int_{\tau_a}^{\tau_b}\|\Deltah \overline{v}\|_{L^2}^{2}\ds\right)^{q/2} \geq K \Bigg\rbrace
\end{align*}
satisfies $\tau_K^{\nablah \overline{v}, q} \to \infty$ $\PP$-a.s.\ as $K \to \infty$.
\end{lemma}

\begin{remark}
In the above proofs, the choice $\varepsilon$ is not uniform w.r.t.\ $q \in [2, \infty)$. However, since we will use the above result only for a single value of $q=132$ (see the discussion below Proposition \ref{prop:logarithmic_Sobolev}), the argument goes through for $\eta^2$ sufficiently small.
\end{remark}

\subsection{Estimate for $\|U\|_{L^q}^p$}

First, we establish the following estimate on the pressure. We emphasize that the estimate is made possible by \eqref{eq:sigma_leray} as then there is no noise present in the equation for the pressure and the bound can be obtained using a standard (and essentially deterministic) argument.

\begin{lemma}[Pressure bounds]\label{lemma:pressure}
Let $\tau_K^{\nablah p}$ be the stopping time defined for $K \in \N$ by
\[
	\tau_K^{\nablah p} = \inf \left\{ s \geq 0 \mid \int_0^{s \wedge \xi} \norm{\nablah p_s}_{L^2}^2 \dr \geq K \right\}
\]
satisfies $\tau_K^{\nablah p} \to \infty$ $\PP$-a.s.\ as $K \to \infty$.
\end{lemma}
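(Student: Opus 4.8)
The plan is to recover $\nablah p_s$ from the barotropic equation \eqref{eq:bar} by applying the two-dimensional Leray--Helmholtz projection $P_G$; thanks to \eqref{eq:sigma_leray} this yields a \emph{pathwise} identity in which no stochastic integral occurs, so that the bound on $\int_0^{t\wedge\xi}\norm{\nablah p_s}_{L^2}^2\ds$ can be deduced by an essentially deterministic argument from the estimates of the previous subsection. Writing \eqref{eq:bar} in integrated form on $[0,t]$ and applying the bounded operator $I-P_G$: since $\overline{v}(s)\in\ls(G)$ for all $s$ by \eqref{eq:v_bar_div}, the increment $\overline{v}(t)-\overline{v}(0)$ belongs to $\ls(G)$ and is annihilated; since $\nablah p_s$ is a gradient, $(I-P_G)\nablah p_s=\nablah p_s$; and since $I-P_G$ commutes with the stochastic integral, $(I-P_G)\int_0^t\Ac\sigma_1(U)\,\d\! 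W_1=0$ by \eqref{eq:sigma_leray}. As the resulting identity holds for every $t\ge0$, its integrand vanishes for a.e.\ $s<\xi$, that is,
\begin{equation}
	\label{eq:pressure_repr}
	\tfrac{1}{\rho_0}\nablah p_s=(I-P_G)\left[\nu_v\Deltah\overline{v}-\overline{v}\cdot\nablah\overline{v}-N(\widetilde{v})+\Ac F_v(U)\right]\quad\text{in }L^2(G)^2,
\end{equation}
where, by Lemmas \ref{lemma:global_L2} and \ref{lemma:vtildevbar}, every term on the right-hand side lies in $L^2(G)^2$ for a.e.\ $s<\xi$ (in particular $\overline{v}(s)\in H^2(G)$ and $\norm{|\widetilde{v}|\nablah\widetilde{v}}_{L^2}<\infty$ for a.e.\ $s$).

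Since $I-P_G$ is bounded on $L^2(G)$, it suffices to estimate each term in \eqref{eq:pressure_repr} in $L^2(G)$. The linear term obeys $\norm{(I-P_G)\nu_v\Deltah\overline{v}}_{L^2}\le2\nu_v\norm{\Deltah\overline{v}}_{L^2}$. For the convective term, the $2$D Ladyzhenskaya inequality together with the elliptic (Stokes) estimate $\norm{\overline{v}}_{H^2(G)}\le c(\norm{\Deltah\overline{v}}_{L^2}+\norm{\overline{v}}_{L^2})$ for $\overline{v}\in\ls(G)\cap H^2(G)$ and the Poincar\'e inequality $\norm{\nablah\overline{v}}_{L^2}\le c\norm{\Deltah\overline{v}}_{L^2}$ give
\begin{equation*}
	\norm{\overline{v}\cdot\nablah\overline{v}}_{L^2(G)}^2\le c\norm{\overline{v}}_{L^4}^2\norm{\nablah\overline{v}}_{L^4}^2\le c\norm{\overline{v}}_{L^2}\norm{\nablah\overline{v}}_{L^2}^2\norm{\Deltah\overline{v}}_{L^2}.
\end{equation*}
Jensen's inequality yields $\norm{N(\widetilde{v})}_{L^2(G)}\le c\norm{|\widetilde{v}|\nablah\widetilde{v}}_{L^2(M)}$, and the definition of $\Ac F_v(U)$ together with $\norm{\Ac v}_{L^2(G)}\le h^{-1/2}\norm{v}_{L^2(M)}$ and Cauchy--Schwarz in the vertical variable give $\norm{\Ac F_v(U)}_{L^2(G)}\le c(\norm{U}_{L^2}+\norm{U}_{L^2_zH^1_{xy}}+\norm{\overline{f_v}}_{L^2})$. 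Combining these bounds, integrating over $[0,t\wedge\xi]$ and using H\"older's inequality in time,
\begin{multline*}
	\int_0^{t\wedge\xi}\norm{\nablah p_s}_{L^2}^2\ds\le c\,t^{1/2}\sup_{s\in[0,t\wedge\xi)}\left(\norm{U}_{L^2}\norm{\nablah\overline{v}}_{L^2}^2\right)\left(\int_0^{t\wedge\xi}\norm{\Deltah\overline{v}}_{L^2}^2\ds\right)^{1/2}\\
	+c\int_0^{t\wedge\xi}\norm{\Deltah\overline{v}}_{L^2}^2+\norm{|\widetilde{v}|\nablah\widetilde{v}}_{L^2}^2+\norm{U}_{L^2}^2+\norm{U}_{L^2_zH^1_{xy}}^2+\norm{\overline{f_v}}_{L^2}^2\ds.
\end{multline*}

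By Lemma \ref{lemma:global_L2}, $\sup_{s\in[0,t\wedge\xi)}\norm{U}_{L^2}$ and $\int_0^{t\wedge\xi}\norm{U}_{L^2_zH^1_{xy}}^2\ds$ are finite $\PP$-a.s.\ for every $t>0$; by Lemma \ref{lemma:vtildevbar}, so are $\sup_{s\in[0,t\wedge\xi)}\norm{\nablah\overline{v}}_{L^2}^2$, $\int_0^{t\wedge\xi}\norm{\Deltah\overline{v}}_{L^2}^2\ds$ and $\int_0^{t\wedge\xi}\norm{|\widetilde{v}|\nablah\widetilde{v}}_{L^2}^2\ds$; and $\int_0^{t\wedge\xi}\norm{\overline{f_v}}_{L^2}^2\ds<\infty$ $\PP$-a.s.\ because $f_v\in L^2_{\mathrm{loc}}(0,\infty;L^2(M))$ $\PP$-a.s.\ under the hypotheses of Theorem \ref{thm:globalExistence}. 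Consequently $\int_0^{t\wedge\xi}\norm{\nablah p_s}_{L^2}^2\ds<\infty$ $\PP$-a.s.\ for every $t$, and the convergence $\tau_K^{\nablah p}\to\infty$ $\PP$-a.s.\ then follows from Lemma \ref{lemma:auxiliary}.

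The main obstacle is the passage to the pathwise representation \eqref{eq:pressure_repr}: one has to justify testing \eqref{eq:bar} against gradients and solenoidal fields so that $I-P_G$ may legitimately be applied, and---the decisive structural point---that the transport part of the noise makes no contribution to the pressure, which is precisely the content of \eqref{eq:sigma_leray}. It is this cancellation that permits the estimate to be carried out pathwise, without any Burkholder--Davis--Gundy argument; everything else reduces to the $2$D Ladyzhenskaya inequality and the a priori bounds already established in Lemmas \ref{lemma:global_L2} and \ref{lemma:vtildevbar}.
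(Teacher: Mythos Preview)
Your proposal is correct and follows essentially the same approach as the paper: apply $I-P_G$ to the barotropic equation \eqref{eq:bar}, use \eqref{eq:sigma_leray} to eliminate the stochastic integral, obtain the pathwise representation \eqref{eq:pressure_repr}, estimate each term in $L^2(G)$, and conclude via the a~priori bounds of Lemmas \ref{lemma:global_L2} and \ref{lemma:vtildevbar} together with Lemma \ref{lemma:auxiliary}. The only cosmetic difference is that the paper uses the slightly simpler bound $\norm{\overline{v}\cdot\nablah\overline{v}}_{L^2}^2\le c\norm{\nablah\overline{v}}_{L^2}^2\norm{\Deltah\overline{v}}_{L^2}^2$ for the convective term and introduces auxiliary stopping times before passing to the limit, whereas you interpolate one step further and argue directly with the pathwise finiteness already established.
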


\begin{proof}
Let $K, N \in \N$. Applying $1-P_G$ to \eqref{eq:bar} and recalling \eqref{eq:sigma_leray}, we obtain
\begin{align*}
	 \frac{1}{\rho_0}\nablah p_s
	=(1-P_G) (\nu_v \Deltah \overline{v}-\overline{v} \cdot \nablah \overline{v} -N(\widetilde{v})+ \Ac F_v(U)).
\end{align*}
Using the bounds
\begin{gather*}
	\| \overline{v} \cdot \nablah \overline{v} \|_{L^2}^2 \leq c \| \Deltah \overline{v} \|_{L^2}^2 \| \nablah \overline{v} \|_{L^2}^2, \quad \|N(\widetilde{v})\|_{L^2}^2 \leq c \| |\widetilde{v}| \nablah \widetilde{v}\|_{L^2}^2,\\
	\| \Ac F_v(U) \|^2 \leq c\left( \| f \|_{L^2}^2 + \| U \|_{L^2_z H^1_{xy}}^2 \right),
\end{gather*}
integrating from $0$ to $s \wedge \tau_N \wedge \tau_K^{w, 2} \wedge \tau_K^{\widetilde{v}, 2} \wedge \tau_K^{\nablah \overline{v}, 2} \wedge \tau_K^{\nablah \overline{v}, 4}$, passing to the limit w.r.t.\ $N \to \infty$ and recalling the convergences of the stopping times established above, we get $	\int_0^{s \wedge \xi} \norm{\nablah p_s}_{L^2}^2 \dr < \infty$ for a.a.\ $s \geq 0$. The claim follows by Lemma \ref{lemma:auxiliary}.
\end{proof}

We are now in the position to prove the main estimate of this section.

\begin{proposition}[$L^q$ bound for $v$]
\label{prop:Lq_estimates}
Let $2\leq p \leq q < \infty$, $U_0 \in L^{2p}\left( \Omega; L^2_z H^1_{xy} \right) \cap L^p\left( \Omega; L^q \right)$ and $f \in L^{p}\left( \Omega; L^2(0, t; L^q) \right) $.
Then the stopping time $\tau_K^{v, q,p}$ defined for $K \in \N$ by
\[
	\tau_K^{v, q,p} = \inf \left\lbrace s \geq 0 \mid \sup_{r \in [0, s \wedge \xi)} \norm{v}_{L^q}^p \geq K \right\rbrace
\]
satisfies $\tau_K^{v, q,p} \to \infty$ $\PP$-a.s.\ as $K \to \infty$.
\end{proposition}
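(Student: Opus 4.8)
The plan is to derive an It\^{o} inequality for $\|v\|_{L^q}^p$ by applying the It\^{o} formula first to $\|v\|_{L^q}^q$ (using the chain rule for $r\mapsto r^{p/q}$ afterwards, or working directly with $\|v\|_{L^q}^p$ via the finite-dimensional Galerkin approximations and passing to the limit as in the earlier sections). Testing the momentum equation in \eqref{eq:primeqreformulated} against $|v|^{q-2}v$, the crucial cancellation $\left\langle b(v,v),|v|^{q-2}v\right\rangle=0$ from the computation after Proposition~\ref{prop:nonlinearityEstimates} removes the nonlinear transport term. What remains on the right-hand side are: the pressure term $\left\langle \nablah p_s,|v|^{q-2}v\right\rangle$, the hydrostatic pressure contribution $\beta_Tg\int_z^0\nablah T$, the Coriolis term, the forcing $f_v$, the viscous correction/It\^{o} correction from the transport noise, and the stochastic integral. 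The viscous term $-\nu_v\left\langle\Deltah v,|v|^{q-2}v\right\rangle$ is non-negative (it controls $\||v|^{(q-2)/2}\nablah v\|_{L^2}^2$) and is kept on the left.

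The key steps, in order: (i) Using H\"{o}lder with exponents $q$ and $q/(q-1)$, bound $\left|\left\langle \nablah p_s,|v|^{q-2}v\right\rangle\right|\le \|\nablah p_s\|_{L^2}\||v|^{q-1}\|_{L^2}\cdot(\text{const})$ — more carefully, since $p_s$ is two-dimensional one integrates in $z$ first; the point is that $\|\nablah p_s\|_{L^2}$ is controlled by Lemma~\ref{lemma:pressure}, so this term is integrable in time on $[0,s\wedge\xi)$. Similarly the hydrostatic pressure term is bounded using $\|T\|_{L^2_zH^1_{xy}}$ which is controlled by Lemma~\ref{lemma:global_L2}. (ii) The It\^{o} correction term $\sum_k\left\langle |v|^{q-2},(\sigma_1(v)e_k)^2\right\rangle$ plus the quadratic-variation term from applying $r\mapsto r^{p/q}$ is estimated via \eqref{eq:noiseglobal}: the transport part $\Psi_k\cdot\nablah v$ gives $\eta^2$ times the gradient term $\||v|^{(q-2)/2}\nablah v\|_{L^2}^2$ which, for $\eta$ small enough (as noted in the remark above), can be absorbed into the left-hand side; the remaining $\Phi_k\cdot\nablah\overline v$ and $h_k(v)$ contributions are bounded by $\|\nablah\overline v\|_{L^q}$ (controlled via $\tau_K^{\nablah\overline v,q}$ and interpolation/Lemma~\ref{lemma:vtildevbar}) and by $c(1+\|v\|_{L^q}^q)$ respectively. (iii) The stochastic integral is handled by the Burkholder-Davis-Gundy inequality \eqref{eq:bdg}, splitting off $\frac14\sup$ of the running norm and absorbing the gradient contribution for small $\eta$. (iv) Collect everything into
\[
	\E\Big[\sup_{r\in[\tau_a,\tau_b]}\|v\|_{L^q}^p\Big]\le c\,\E\|v(\tau_a)\|_{L^q}^p + c\,\E\int_{\tau_a}^{\tau_b}G(s)\big(1+\|v\|_{L^q}^p\big)\ds + (\text{data terms}),
\]
where $G(s)$ is a sum of the quantities already known to be a.s.\ integrable on $[0,s\wedge\xi)$ by Lemmas~\ref{lemma:global_L2}, \ref{lemma:vtildevbar}, \ref{lemma:vtilde_l4}, the improved $H^1$ bound for $\overline v$ and Lemma~\ref{lemma:pressure}. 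Then apply the stochastic Gronwall lemma (Lemma~\ref{lemma:stoch_Gronwall}) up to $t\wedge\tau_N\wedge(\text{the relevant auxiliary stopping times})$, let $N\to\infty$ using monotone convergence and the $N$-uniformity of the bounds, conclude $\sup_{r\in[0,t\wedge\xi)}\|v\|_{L^q}^p<\infty$ a.s., and finally invoke Lemma~\ref{lemma:auxiliary} (as in the proof of Lemma~\ref{lemma:global_L2}) to obtain $\tau_K^{v,q,p}\to\infty$ a.s.

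The main obstacle I expect is the pressure term: one needs that $\left|\left\langle\nablah p_s,|v|^{q-2}v\right\rangle\right|$ is genuinely controlled in terms of $\|\nablah p_s\|_{L^2}$ and powers of $\|v\|_{L^q}$ without losing integrability, which forces the somewhat delicate bookkeeping of exponents (this is exactly the point flagged in the introduction as the key difference from \cite{DebusscheGlattholtzTemamZiane2012}, and why $q$ must be taken finite, e.g.\ $q=132$, rather than $q\to\infty$). A secondary subtlety is ensuring the $\eta$-dependent correction terms coming from the transport noise $\Psi_k\cdot\nablah v$ can be absorbed uniformly in $q$ — but since only $q=132$ is ultimately needed, it suffices that $\eta^2$ is small depending on that fixed $q$, which is consistent with the standing smallness assumption on $\eta$.
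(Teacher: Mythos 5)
Your overall plan — It\^{o} formula on a regularized power of $\|v\|_{L^q}^q$, the cancellation $\left\langle b(v,v),|v|^{q-2}v\right\rangle=0$, estimates on the pressure, hydrostatic, Coriolis and noise correction terms, Burkholder--Davis--Gundy, stochastic Gronwall up to the auxiliary stopping times, and Lemma~\ref{lemma:auxiliary} — matches the paper's proof. You also correctly identify the two places that need care: the pressure term, and the non-uniformity in $q$ of the $\eta$-dependence. However, two gaps remain, and the first is genuinely blocking.

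First, the pressure estimate as you sketch it does not close. Writing H\"{o}lder directly on $\langle |v|^{q-2}v,\nablah p_s\rangle$ produces $\|\nablah p_s\|_{L^2(G)}\,\||v|^{q-1}\|_{L^2}=\|\nablah p_s\|_{L^2(G)}\,\|v\|_{L^{2q-2}}^{q-1}$, and for $q>2$ there is no way to bound $\|v\|_{L^{2q-2}}$ by $\|v\|_{L^q}$ alone, nor to absorb the mismatch with the dissipation $\||v|^{(q-2)/2}\nablah v\|_{L^2}^2$ without an additional step. The paper resolves this by (a) normalizing $p_s$ to have zero mean over $G$, (b) integrating by parts so that $\nablah$ falls on $|v|^{q-2}v$, giving a term of the form $(q-1)\langle |v|^{q-2}|\nablah v|,p_s\rangle$, (c) bounding $\|p_s\|_{L^q(G)}$ by $\|\nablah p_s\|_{L^2(G)}$ via the 2D Sobolev--Poincar\'e inequality, and (d) splitting $|v|^{q-2}\nablah v = |v|^{(q-2)/2}\nablah v\cdot|v|^{(q-2)/2}$ so that the gradient factor is absorbed by the dissipation and what remains is $\|\nablah p_s\|_{L^2}^2\,\|v\|_{L^q}^{q-2}$, which is of the correct form for Gronwall up to $\tau_K^{\nablah p}$. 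Without these steps the Gronwall inequality does not close, so this is more than delicate bookkeeping.

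Second, the It\^{o}/stochastic-Gronwall argument applied to the actual solution $U$ requires the a priori integrability $\E\sup_{s\in[0,t\wedge\tau_N]}\|U\|_{L^q}^p<\infty$, which is not automatic: the local existence theory only gives $H^1$-type regularity. The paper devotes the second half of the proof to establishing this separately, working at the Galerkin level (where $v^n\in L^\infty(0,t;L^q)$ automatically because the eigenfunctions are smooth), using the mixed derivative theorem $H^1_zH^1_{xy}\cap L^2_zH^2_{xy}\hookrightarrow L^\infty$ to get an $L^\infty$ bound for $v^n$, hence a bound on $N(\widetilde v^n)$ and on $\nablah p_s^n$, and then combining with the $H^1$-Galerkin bounds from Lemma~\ref{lemma:galerkinestimate} and the deterministic Gronwall lemma. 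You gesture at ``working directly with Galerkin and passing to the limit'' but the key point is that the pressure needs its own Galerkin-level estimate (not just a limiting argument), and that this a priori boundedness has to be established before the stochastic Gronwall step is justified. This is the step encoded in \eqref{eq:q_boundedness} in the paper's proof and it is not optional.
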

\begin{proof}
Assume for the moment that
\begin{equation}
	\label{eq:q_boundedness}
	\E \sup_{s \in [0, t \wedge \tau_N]} \| U \|_{L^q}^p < \infty
\end{equation} 
holds for all $N \in \N$ and let $A(v) = 1+\| v \| _{L^q}^q$. The It\^{o} formula applied to $A(\cdot)^{p/q}$ and the cancellation property of the nonlinear term yields
\begin{align*}
	\begin{split}
		& \d\! A(v)^{p/q} +p\nu A(v)^{(p-q)/q} \left[\| |v|^{(q-2)/2} \nablah v\|_{L^2}^{2}+(q-2)\| |v|^{(q-2)/2} \nablah |v|\|_{L^2}^{2} \right] \dt\\
		&\leq -p A(v)^{(p-q)/q} \left< |v|^{q-2} v, F_v(U)- \nablah p_s \right> \dt\\
		&\quad + \frac{p}{2} (q-1)\sum_{k=1}^{\infty}A(v)^{(p-q)/q} \left< | v|^{q-2}, (\sigma_1 (U)e_k)^2 \right> \dt\\
		&\quad + \frac{p}{2}(p-q) \sum_{k=1}^\infty A(v)^{(p-2q)/q} \left< | v|^{q-2}v, \sigma_1 (U)e_k \right>^2\dt\\
		&\quad +p  A(v)^{(p-q)/q}\left<|v|^{q-2} v, \sigma_1 (U) \dW \right>.
	\end{split}
\end{align*}
Integrating by parts, we obtain
\begin{align*}
	 & \left|\left< |v|^{q-2} v,\beta_T g \int_{\cdot}^{0}\nablah T(x,y,z')\dz'\right> \right| \\
	&\qquad \leq c\norm{|v|^{(q-2)/2} \int_{\cdot}^{0} T(x,y,z')\dz}_{L^2}^2+ \varepsilon \| |v|^{(q-2)/2}\nablah v \|_{L^2}^2\\
	 &\qquad \leq c\norm{\int_{\cdot}^{0} T(x,y,z')\dz}_{L^{2q/(q+2)}}^2\norm{v}_{L^q}^{q-2} + \varepsilon \| |v|^{(q-2)/2}\nablah v \|_{L^2}^2\\ 
	 &\qquad \leq c\norm{U}_{L^2_zH^1_{xy}}^2(1+\norm{v}_{L^q}^{q}) +\nu \varepsilon \| |v|^{(q-2)/2}\nablah v \|_{L^2}^2.
\end{align*}
 Using $\norm{v}_{L^q}^{q'}\leq A(v)^{q'/q}$ for $q'\geq 0$, we get
 \begin{align*}
 A(v)^{(p-q)/q}|\left< |v|^{q-2} v,f_v\right>|
 & \leq A(v)^{(p-q)/q}\norm{v}_{L^q}^{q-1}\norm{f_v}_{L^q}\\
 & \leq A(v)^{(p-1)/q} \norm{f_v}_{L^q}.
 \end{align*}
Hence, for $0\leq \tau_a\leq \tau_b$ stopping times specified later, we obtain
\begin{align*}
	&\int_{\tau_a}^{\tau_b} A(v)^{(p-q)/q} |\left< |v|^{q-2} v,F_v(U) \right>| \ds\\
	& \leq \nu\varepsilon \int_{\tau_a}^{\tau_b} A(v)^{(p-q)/q}\| |v|^{(q-2)/2}\nablah v \|_{L^2}^2\ds+\frac14\sup_{s\in[\tau_a,\tau_b]}A(v)^{p/q}\\
	& \quad +c_\varepsilon \int_{\tau_a}^{\tau_b}(1+\norm{U}_{L^2_zH^1_{xy}}^2)	(1+A(v)^{p/q})\ds +c \left(\int_{\tau_a}^{\tau_b}\norm{f_v}_{L^q}^2\ds\right)^{p/2}.
\end{align*}
Before we estimate the pressure term, we recall that the surface pressure $p_s$ is independent of $z$ and that we may shift it by a constant so that $\int_G p_s(t,x,y) \d(x,y)=0$. Therefore, we get
\begin{align*}
	|\left< |v|^{q-2} v, \nablah p_s \right>| 
	& \leq (q-1) |\left< |v|^{q-2}  |\nablah v|, p_s \right>|\\
	& \leq (q-1) \| |v|^{(q-2)/2}\nablah v \|_{L^2 }\|v^{(q-2)/2}\|_{L^2_z L^{2q/(q-2)}_{xy} }\|p_s \|_{L^\infty_z L^{q}_{xy}}\\
	& \leq \nu(q-1) \| |v|^{(q-2)/2}\nablah v \|_{L^2 }^2+c_q\|\nablah p_s \|_{L^2}^2 \|v\|_{L^q }^{q-2},
\end{align*}
with $c_q \approx q^{3/2}$.
The second part of the correction term is non-positive and thus it can be dropped in the estimates. For the first part, we use \eqref{eq:noiseglobal}, \eqref{eq:h_k_growth} and \eqref{eq:noise_eta} to get
\begin{align*}
	&\sum_{k=1}^{\infty}\left< | v|^{q-2}, (\sigma_1 (U)e_k)^2 \right>\\
	&\quad=\sum_{k=1}^{\infty}\left< | v|^{q-2}, (\Psi_k\cdot \nablah v+(\Phi_k-\Psi_k)\cdot \nablah \overline{v}+h_k(v))^2 \right>\\
	&\quad \leq \sum_{k=1}^{\infty}
	\left< | v|^{q-2},(1+\frac{\varepsilon}{q-2}) |\Psi_k\cdot \nablah v|^2+c|(\Phi_k-\Psi_k)\cdot \nablah \overline{v}|^2+c|h_k(v)|^2 \right>\\
	&\quad \leq (1+\frac{\varepsilon}{q-1}) \eta^2 
	\| | v|^{(q-2)/2} \nablah v \|_{L^2}^2
	+c  \| | v|^{q-2} \|_{L^{q/(q-2)}} \| |\nablah  \overline{v}|^2| \|_{L^{q/2}}\\
	&\hphantom{\quad \leq \ } +c\| | v|^{q-2} \|_{L^{q/(q-2)}}(1+\||v|^2\|_{L^{q/2}})\\
	&\quad \leq (1+\frac{\varepsilon}{q-1}) \eta^2 
	\| | v|^{(q-2)/2} \nablah v \|_{L^2}^2
	+c (1+\| v \|_{L^{q}}^q)+c\| \Deltah  \overline{v}\|_{L^{2}}^2\| v \|_{L^{q}}^{q-2}.
\end{align*}
Hence, we obtain
\begin{multline*}
	 \frac{p}{2}(q-1) A(v)^{(p-q)/q} \sum_{k=1}^{\infty}\left< | v|^{q-2}, (\sigma_1 (U)e_k)^2 \right>\\
	\hspace{-0.3em}\leq  \frac{p}{2}(q-1+\varepsilon) \eta^2  A(v)^{(p-q)/q} 
	\| | v|^{(q-2)/2} \nablah  v \|_{L^2}^2
	+c  (A(v)^{p/q}+\| \Deltah  \overline{v}\|_{L^{2}}^2A(v)^{(p-2)/q}). 
\end{multline*}
For $K \in \N$, let $\Upsilon_K = \tau_K^{w, 2} \wedge \tau_K^{\nablah \overline{v}, 2} \wedge \tau_K^{\nablah p,2}$. Let $N \in \N$ and let $0 \leq \tau_a \leq \tau_b \leq t \wedge \tau_N \wedge \Upsilon_K$ be stopping times. With \eqref{eq:noiseglobal}, \eqref{eq:h_k_growth} and \eqref{eq:noise_eta} and the Young inequality, we deduce
\begin{align*}
	\sum_{k=1}^{\infty}& \left<  |v|^{q-1} |\sigma_1(U)e_k | \right>^2\\
	&\leq \left(  \eta \| | v|^{(q-2)/2} \nablah v \|_{L^2}  \|v \|_{L^{q}}^{q/2} + c \| \nablah \overline{v}\|_{L^{q}} \|v \|_{L^{q}}^{q-1} +c(1+\| v \|_{L^{q}}^{q})\right)^2\\
	&\leq (1+\frac{\varepsilon}{q-1}) \eta^2 \| | v|^{(q-2)/2} \nablah  v \|_{L^2}^2A(v) + c_q (A(v)^2+\| \Deltah  \overline{v}\|_{L^{2}}^2A(v)^{2-2/q}).
\end{align*}
Hence, employing the Burkholder-Davis-Gundy inequality \eqref{eq:bdg}, we get
\begin{align*}
	\E &\sup_{s \in [\tau_a,\tau_b]}
	\left| \int_{\tau_a}^s A(v)^{(p-q)/q} \left< |v|^{q-2} v, \sigma_1(U) \right> \dW \right|\\
	&\leq c_{BDG}\E \left( \int_{\tau_a}^{\tau_b} A(v)^{2(p-q)/q}  \sum_{k=1}^{\infty} \left<  |v|^{q-1} |\sigma_1(U)e_k | \right>^2 \ds\right)^{1/2}\\
	&\leq c_{BDG} \E \bigg( \left(1+\frac{\varepsilon}{q-1}\right) \eta^2 \int_{\tau_a}^{\tau_b} A(v)^{p/q + (p-q)/q}\| | v|^{(q-2)/2} \nablah  v \|_{L^2}^2 \ds\\
	&\hphantom{\leq c_{BDG} \E \bigg( \ } + c_q \int_{\tau_a}^{\tau_b} 	A(v)^{2p/q}+\| \Deltah  \overline{v}\|_{L^{2}}^2	A(v)^{2(p-1)/q}\ds \bigg)^{1/2}\\
	&\leq \frac14 \E \sup_{s\in[\tau_a,\tau_b]}A(v)^{p/q}
	+c_q\E  \int_{\tau_a}^{\tau_b} 
	(1+\| \Deltah  \overline{v}\|_{L^{2}}^2)
	A(v)^{p/q}\ds\\
	& \quad +\left(1+\frac{\varepsilon}{q-1}\right) c_{BDG}^2\eta^2 \E \int_{\tau_a}^{\tau_b} 
	A(v)^{(p-q)/q}\| | v|^{(q-2)/2} \nablah v \|_{L^2}^2 \ds.
\end{align*}
The claim follows by collecting the above and the stochastic Gronwall lemma \ref{lemma:stoch_Gronwall} similarly as in the previous lemmata provided we establish \eqref{eq:q_boundedness}.

To that end, we consider the Galerkin approximations $U^n = (v^n, T^n)$ from Section 3. Since $\Phi_{m, k} \in L^q(M)$ for all $k \in \N_0$, $m \in \N$ and $q \in [2, \infty]$, one has $v^n \in L^\infty(0, t; L^q)$. Using similar estimates as above, boundedness of the operator $\Ac$ and the deterministic Gronwall lemma, we obtain
\begin{align*}
	\E \sup_{s \in [0, t]} A(v^n)^{p/q} &\leq  \E A(v^n(0))^{p/q} + c_{q}\\
	&\quad +c_q \E \left[ \sup_{s \in [0, t]} \| U^n \|_{L^2_z H^1_{xy}}^p + \left( \int_0^t \| \Deltah v^n \|_{L^2}^2 \ds \right)^{p/2}\right]\\
	&\quad + c_q \E \left[ \left( \int_0^t \| f_v^n \|_{L^q}^2 \ds \right)^{p/2}+  \left( \int_0^t  \| \nablah p_s^n \|_{L^2}^2 \ds \right)^{p/2}\right].
\end{align*}
Since $U^n$ in $L^p(\Omega; L^2(0, t; L^2_z H^2_{xy}))$ and $L^p(\Omega; L^\infty(0, t; H^1))$ can be controlled by Lemma \ref{lemma:galerkinestimate}, it remains to deal with the pressure term. First, we observe that $H^1_z H^1_{xy} \cap L^2_z H^2_{xy} \subseteq H^{3/4}_z H^{5/4}_{xy} \subseteq L^\infty$ by the mixed derivative theorem \cite[Proposition 3.2]{Meyries2012}. Thus, since the operator $\Rc$ is bounded, we have
\begin{align*}
	\int_0^t \| &|\widetilde{v}^n| \nablah \widetilde{v}^n \|_{L^2}^2 \ds \leq c \sup_{s \in [0, t]} \| v^n \|_{L^2_z H^1_{xy}}^2 \int_0^t \| v^n \|_{L^\infty}^2 \ds\\
	&\leq c \sup_{s \in [0, t]} \| v^n \|_{L^2_z H^1_{xy}}^4 + c \left( \int_0^t \| v^n \|_{L^2_z H^2_{xy}}^2 \ds \right)^2 + c \left( \int_0^t \| v^n \|_{H^2_z L^2_{xy}}^2 \ds \right)^2.
\end{align*}
The estimates in the proof of Lemma \ref{lemma:pressure} and boundedness of $\Ac$ now imply
\begin{align*}
	\int_0^t \| \nablah p_s^n \|_{L^2}^2 \ds &\leq c \left( 1 + \sup_{s \in [0, t]} \| U^n \|_{L^2_z H^1_{xy}}^4 \right) + c\left( \int_0^t \| v^n \|_{L^2_z H^2_{xy}}^2 \ds \right)^2\\
	&\quad +c\left( \int_0^t \| v^n \|_{H^1_z H^1_{xy}}^2 \ds \right)^2 + c \int_0^t \| f_v \|_{L^2}^2 \ds.
\end{align*}
Hence, by the above and the Young inequality, we get
\begin{align*}
	\E \sup_{s \in [0, t]} &A(v^n)^{p/q} \leq  \E A(v^n(0))^{p/q} + c_{p, q}\\
	&+c_{p_q} \E \left[ \sup_{s \in [0, t]} \| U^n \|_{L^2_z H^1_{xy}}^{2p} 
	+ \left( \int_0^t \| v^n \|_{L^2_zH^2_{xy}}^2 \ds \right)^{p} \right]\\
	&+ c_{p,q} \E \left[ \left( \int_0^t \| v^n \|_{H^1_zH^1_{xy}}^2 \ds \right)^{p} + \left( \int_0^t \| f_v^n \|_{L^q}^2 \ds \right)^{p/2} \right].
\end{align*}
Now, \eqref{eq:q_boundedness} follows from the construction of the solution provided
\[
	U_0 \in L^{2p} \left( \Omega; L^2_z H^1_{xy} \right) \cap L^p\left( \Omega; L^q \right), \quad f \in L^{p}\left( \Omega; L^2(0, t; L^q) \right).
\]
\end{proof}

We conclude this section with an $L^q(M)$-regularity result for the temperature.
\begin{lemma}
	Let $2\leq p \leq q < \infty$, $U_0 \in L^{3p/2}\left( \Omega; L^2_z H^1_{xy} \right) \cap L^p\left( \Omega; L^q \right)$ and $f \in  L^{3p/2}\left( \Omega; L^2(0, t; L^2) \right) \cap L^{p}\left( \Omega; L^2(0, t; L^q) \right) $.
	Then the stopping times $\tau_K^{T, q,p}$ defined for $K \in \N$ by
	\[
	\tau_K^{T,q,p} = \inf \left\lbrace s \geq 0 \mid \sup_{r \in [0, s \wedge \xi)}\norm{T}_{L^q}^p  
	\geq K \right\rbrace
	\]
	satisfies $\tau_K^{T,q,p} \to \infty$ $\PP$-a.s.\ as $K \to \infty$.
\end{lemma}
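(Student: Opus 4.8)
The plan is to repeat the argument of Proposition \ref{prop:Lq_estimates} for the temperature component, which is in fact simpler here because the temperature equation in \eqref{eq:primeqFunct} carries no pressure term. Writing $A(T) = 1 + \|T\|_{L^q}^q$ and applying the It\^{o} formula to $A(T)^{p/q}$, the cancellation property $\langle B(U,T), |T|^{q-2}T\rangle = 0$ removes the nonlinear contribution and leaves a differential inequality whose right-hand side consists of: the linear forcing term $p A(T)^{(p-q)/q}\langle |T|^{q-2}T, f_T\rangle$, handled exactly as in Proposition \ref{prop:Lq_estimates} using $f_T\in L^p(\Omega;L^2(0,t;L^q))$; the negative correction term stemming from $\langle |T|^{q-2}T,\sigma_2(U)e_k\rangle^2$ (recall $p\le q$), which is simply dropped; the positive correction term $\sum_k\langle |T|^{q-2},(\sigma_2(U)e_k)^2\rangle$; and the stochastic integral, controlled by the Burkholder--Davis--Gundy inequality \eqref{eq:bdg}. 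For the two remaining noise contributions I would split $\sigma_2(U)e_k = \Psi_k^T\cdot\nablah T + g_k(v,T)$ as in \eqref{eq:noiseglobal}: the transport part yields a term $(1+\varepsilon)\eta^2\||T|^{(q-2)/2}\nablah T\|_{L^2}^2$, absorbed by the dissipation for $\eta$ small by \eqref{eq:noise_eta}, and the $g_k$-part is controlled through \eqref{eq:g_k_growth} by $\|T\|_{L^q}^{q-2}\bigl(1 + \|T\|_{L^q}^2 + \|\nablah\overline v\|_{L^q}^2 + \|v\|_{L^q}^2\bigr)$.

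The one ingredient not already present in the earlier lemmas is the conversion of $\|\nablah\overline v\|_{L^q}$ and $\|v\|_{L^q}$ into quantities under control. Since $G\subset\R^2$, the embedding $H^2(G)\hookrightarrow W^{1,q}(G)$ holds for every finite $q$, so $\|\nablah\overline v\|_{L^q}^2 \le c(\|\nablah\overline v\|_{L^2}^2 + \|\Deltah\overline v\|_{L^2}^2)$, which becomes integrable in time once one localizes with the stopping times of Lemma \ref{lemma:vtildevbar} and of the improved $H^1$ bound for $\overline v$; similarly $\sup\|v\|_{L^q}$ is finite after localizing with $\tau_K^{v,q,p}$ from Proposition \ref{prop:Lq_estimates}. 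Using Young's inequality to redistribute the powers of $A(T)$ exactly as in Proposition \ref{prop:Lq_estimates}, this produces a differential inequality of the form $\d A(T)^{p/q} + (\text{dissipation}) \le a_s\,A(T)^{p/q}\,\dt + b_s\,\dt + (\text{stochastic})$ with $a_s$ and $b_s$ integrable on $[0, t\wedge\xi)$ after intersecting with $\tau_N$ and the stopping times $\tau_K^{w,q'}$, $\tau_K^{\overline v,2}$, $\tau_K^{\widetilde v,q'}$, $\tau_K^{\nablah\overline v,q'}$ and $\tau_K^{v,q,p}$ for an appropriate auxiliary exponent $q'$. Letting $N\to\infty$ and applying the stochastic Gronwall lemma \ref{lemma:stoch_Gronwall} gives $\sup_{s\in[0,t\wedge\xi)}\|T\|_{L^q}^p < \infty$ $\PP$-almost surely, and Lemma \ref{lemma:auxiliary} then yields $\tau_K^{T,q,p}\to\infty$.

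As in Proposition \ref{prop:Lq_estimates}, the computation is only formal until one checks the a priori integrability $\E\sup_{[0,t\wedge\tau_N]}\|T\|_{L^q}^p < \infty$; I would establish this by running the same estimate on the Galerkin approximations $T^n$ of Section \ref{sect:galerkin}, using that $\Psi_{m,k}\in L^q(M)$ for all finite $q$ so that $T^n\in L^\infty(0,t;L^q)$, applying the deterministic Gronwall lemma, and bounding the resulting right-hand side via Lemma \ref{lemma:galerkinestimate} together with the Galerkin-level $L^q$ bound on $v^n$ already obtained in the proof of Proposition \ref{prop:Lq_estimates}. I expect the main obstacle to be purely one of bookkeeping: one must track the precise moment exponents so that the velocity-dependent source terms stay integrable under the stated hypotheses $U_0\in L^{3p/2}(\Omega;L^2_zH^1_{xy})\cap L^p(\Omega;L^q)$ and $f\in L^{3p/2}(\Omega;L^2(0,t;L^2))\cap L^p(\Omega;L^2(0,t;L^q))$; once the exponents are matched, each individual estimate is a routine repetition of the arguments already carried out for $v$, $\overline v$ and $\widetilde v$.
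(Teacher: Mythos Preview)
Your proposal is correct and follows essentially the same route as the paper: apply the It\^{o} formula to $(1+\|T\|_{L^q}^q)^{p/q}$, use the cancellation of the transport nonlinearity, split $\sigma_2$ according to \eqref{eq:noiseglobal} and estimate the $g_k$-part via \eqref{eq:g_k_growth}, then close with the stochastic Gronwall lemma and Lemma \ref{lemma:auxiliary}. The paper is slightly more economical in its localization, using only $\Upsilon_K = \tau_K^{w,2}\wedge\tau_K^{\nablah\overline v,2}\wedge\tau_K^{v,q,2}$ (so $p=2$ suffices in the velocity $L^q$ stopping time), and it does not spell out the Galerkin-level a priori bound as you do; but these are cosmetic differences rather than distinct strategies.
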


\begin{proof}
For $K \in \N$, let $\Upsilon_K = \tau_K^{w, 2} \wedge \tau_K^{\nablah \overline{v}, 2} \wedge \tau_K^{v, q, 2}$. Let $N \in \N$ and let $0 \leq \tau_a \leq \tau_b \leq t \wedge \tau_N \wedge \Upsilon_K$ be stopping times. Similarly as above, we use \eqref{eq:noiseglobal} and \eqref{eq:g_k_growth} to deduce
\begin{multline*}
\sum_{k=1}^{\infty} \left< |T|^{q-2}, (\sigma_2(U)e_k)^2 \right>\\
\leq (\eta^2+\varepsilon)\| |T|^{(q-2)/2}\nablah T\|_{L^2}^{2}+ (1+\|\Deltah \overline v\|_{L^2}^2+\|v\|_{L^q}^2)(1+\|T\|_{L^q}^q) \ds.
\end{multline*}
Thus, by the Burkholder-Davis-Gundy ineuquality \eqref{eq:bdg}, we get
\begin{align*}
q \E &\sup_{s \in [\tau_a,\tau_b]}
\left| \int_0^s\left< \vert T\vert^{q-1} T, \sigma_2(U) \right> \dW \right|\\
& \leq \E \left[\frac12 \sup_{s \in [\tau_a,\tau_b]} \|T\|_{L^q}^q +
 (q^2 c_{BDG}^2 \eta^2+\varepsilon) \int_{\tau_a}^{\tau_b}  \| |T|^{(q-2)/2}\nablah T\|_{L^2}^{2} \ds \right]\\
&\hphantom{\leq \ } +c\E \int_{\tau_a}^{\tau_b}  (1+\|\Deltah \overline v\|_{L^2}^2+\|v\|_{L^q}^2)(1+\|T\|_{L^q}^q) \ds.
\end{align*}
With $ \int_{\tau_a}^{\tau_b}|\left<|T|^{q-2} T, f_T \right>|\ds  \leq c  \left(\int_{\tau_a}^{\tau_b}\|f_T\|_{L^q}^2\right)^{q/2} + c\int_{\tau_a}^{\tau_b}\|T\|_{L^q}^q\ds$, the claim follows as in the previous proof by applying the It\^{o} formula to $(1+\|T\|_{L^q}^q)^{p/q}$, the stochastic Gronwall lemma \ref{lemma:stoch_Gronwall} and Lemma \ref{lemma:auxiliary}.
\end{proof}

\subsection{Higher order estimates}

\begin{lemma}[$L^\infty$ bound for $v$]
\label{lemma:linfty}
Let $\tau_K^{v, \infty}$ be the stopping time defined for $K \in \N$ by
\[
\tau_K^{v, \infty} = \inf \left\lbrace s \geq 0 \mid \int_0^{s \wedge \xi} \norm{v}_{L^\infty}^2 \ds \geq K \right\rbrace
\]
satisfies $\tau_K^{v, \infty} \to \infty$ $\PP$-a.s.\ as $K \to \infty$.
\end{lemma}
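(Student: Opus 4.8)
The plan is to combine the uniform-in-time $L^{132}$-bound of Proposition \ref{prop:Lq_estimates} with the logarithmic Sobolev inequality \eqref{equation:sobolevlogarithmic} in a logarithmic Gronwall argument in the spirit of \cite[Lemma 2.5]{CaoLiTiti2017}, carried out along the maximal solution with the localizing stopping times $\tau_N$, and then to read off the convergence of the $\tau_K^{v,\infty}$ from Lemma \ref{lemma:auxiliary} exactly as in the proof of Lemma \ref{lemma:global_L2}. Concretely, working first on $[0, t \wedge \tau_N]$, where by \eqref{eq:max_thm_additional_reg} and \eqref{eq:mart_sol_regularity} all the norms below are $\PP$-a.s.\ finite, I would apply the It\^o formula to a quantity $f$ (essentially $e$ plus $\norm{\partial_z v}_{L^2}^2$ and suitable $L^6$-type horizontal norms of $v$, with $\nablah v$ treated through the barotropic/baroclinic split), chosen so that $e + \norm{\nablah v}_{L^6} + \norm{v}_{L^6} + \norm{\partial_z v}_{L^2} + \norm{v}_{L^2} \le c\, f$ pointwise in time, i.e.\ $f$ dominates the argument of the logarithm in \eqref{equation:sobolevlogarithmic} with $p_1 = 6$, $p_2 = 2$. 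The nonlinear contributions are controlled via \eqref{eq:nonlineardz}, \eqref{eq:nonlinearDelta} and the $L^6$-energy estimates for $v$ together with \eqref{eq:cltinterpolation}; after Young's inequality they produce a factor $\norm{v}_{L^\infty}^2$ in front of $f$, the remaining higher-order horizontal derivatives being absorbed into the parabolic dissipation. The It\^o correction and the stochastic integral are estimated using the noise growth assumptions, the transport structure \eqref{eq:noiseglobal}, \eqref{eq:sigma1_dz} of the noise acting on $\nablah v$, and the Burkholder--Davis--Gundy inequality \eqref{eq:bdg}, the $\nablah\partial_z v$- and $\Deltah\overline{v}$-type terms again being absorbed for $\eta^2$ sufficiently small. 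Taking suprema over subintervals and invoking the stochastic Gronwall lemma \ref{lemma:stoch_Gronwall}, one arrives on $[0, t \wedge \tau_N]$ at an inequality of the advertised form
\[
	f(t_1) \;\le\; f(t_0) + c\int_{t_0}^{t_1} \bigl(1 + \norm{v}_{L^\infty}^2\bigr)\, f(s)\,\ds \;+\; \mathcal{R},
\]
where $\mathcal{R}$ collects terms that are $\PP$-a.s.\ finite by Lemmata \ref{lemma:global_L2}, \ref{lemma:vtildevbar} and the hypotheses on $F_U$.

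Inserting now \eqref{equation:sobolevlogarithmic} with $\lambda = \tfrac12$, that is $\norm{v}_{L^\infty}^2 \le c(1 + \norm{v}_{L^{132}}^2)\log(c\, f)$, into this inequality and using $f \ge e$, one sees that $\log f$ satisfies $\tfrac{\d}{\ds}\log f \le c(1 + \norm{v}_{L^{132}}^2)(1 + \log f)$ up to an a.s.\ locally integrable additive term. Proposition \ref{prop:Lq_estimates} applies with $q = 132$, $p = 8/3$ under the hypotheses of Theorem \ref{thm:globalExistence}, so $\sup_{r \in [0, s \wedge \xi)} \norm{v}_{L^{132}}^{8/3} < \infty$, whence $\int_0^{s \wedge \xi} \norm{v}_{L^{132}}^2 \,\dr < \infty$ $\PP$-a.s.; Gronwall's lemma then bounds $\sup_{s \in [0, t \wedge \tau_N]}\log f(s)$ by an a.s.\ finite quantity that is \emph{uniform in} $N$. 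Letting $N \to \infty$ (recall $\tau_N \uparrow \xi$) gives $\sup_{s \in [0, t \wedge \xi)}\log f(s) < \infty$ $\PP$-a.s. Consequently,
\[
	\int_0^{t \wedge \xi} \norm{v}_{L^\infty}^2 \,\ds \;\le\; c\Bigl(\sup_{s \in [0, t \wedge \xi)}\log\bigl(c\, f(s)\bigr)\Bigr)\int_0^{t \wedge \xi}\bigl(1 + \norm{v}_{L^{132}}^2\bigr)\,\ds \;<\; \infty \quad \PP\text{-a.s.}
\]
for every $t > 0$, so $\int_0^{s \wedge \xi}\norm{v}_{L^\infty}^2 \,\dr$ is a.s.\ finite for every $s$. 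Lemma \ref{lemma:auxiliary}, applied as in the proof of Lemma \ref{lemma:global_L2}, then yields $\tau_K^{v,\infty} \to \infty$ $\PP$-a.s.

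The delicate part is the first step: choosing $f$ so that it closes under its own It\^o/energy estimate and simultaneously dominates the full argument of the logarithm in \eqref{equation:sobolevlogarithmic} — in particular handling $\norm{\nablah v}_{L^6}$ through the barotropic estimate (using the $H^2$-control of $\overline{v}$ from the $\tau_K^{\nablah \overline{v}, q}$ bound) and the baroclinic $L^6$-energy estimate — while extracting exactly the first power $\norm{v}_{L^\infty}^2$ (a higher power would give $y' \le c(1 + \norm{v}_{L^{132}}^2)^2 y^2$ for $y = \log f$, which blows up in finite time, destroying the logarithmic Gronwall structure) from the vertical-derivative and horizontal-Laplacian nonlinearities, and controlling the transport noise acting on $\nablah v$ by absorption into the horizontal dissipation as in the preceding lemmata. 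Everything else is a routine, if lengthy, stochastic adaptation of the argument of \cite[Lemma 2.5]{CaoLiTiti2017}.
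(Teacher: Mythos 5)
Your proposal captures the overall strategy (logarithmic Sobolev inequality, a logarithmic Gronwall argument, stochastic Gronwall, then Lemma \ref{lemma:auxiliary}), but it misses the key technical device that makes the paper's proof close, and the step you replace it with is circular.

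You require a Lyapunov-type quantity $f$ that \emph{pointwise dominates the full argument of the logarithm} in \eqref{equation:sobolevlogarithmic}, i.e.\ $e+\Vert \nablah v\Vert_{L^6}+\Vert v\Vert_{L^6}+\Vert \partial_z v\Vert_{L^2}+\Vert v\Vert_{L^2}\le cf$. This cannot be done with any $f$ of the type you describe: $\Vert \nablah v\Vert_{L^6(M)}$ can only be controlled by $\Vert \Deltah v\Vert_{L^2}$ and $\Vert \nablah\partial_z v\Vert_{L^2}$ (via 3D Sobolev embedding), and if you fold those into $f$, the It\^o estimate for $f$ no longer closes — its nonlinear terms would require yet higher derivatives, defeating the purpose. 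Observing that such an $f$ is unavailable is the point of the paper's splitting: the Lyapunov function is $A(t)=e+\Vert(-\Deltah)^{1/2}v\Vert_{L^2}^2+\Vert\partial_z v\Vert_{L^2}^2+\Vert\partial_z v\Vert_{L^6}^6$, which controls only the lower-order part of the log argument, while the dissipative quantity $B(t)=e+\Vert\Deltah v\Vert_{L^2}^2+\Vert\nablah\partial_z v\Vert_{L^2}^2+\Vert\partial_z v\Vert_{L^6}^6$ controls $\Vert\nablah v\Vert_{L^6}$. The crucial identity
\[
\log B(t)=\log A(t)+\log\frac{B(t)}{A(t)}\le \log A(t)+c\left(\frac{B(t)}{A(t)}\right)^{1/4}
\]
converts the extra piece into $\varepsilon\,\Vert\Deltah v\Vert_{L^2}^2/A(t)$ by Young (together with $\Vert v\Vert_{L^{132}}^{8/3}$ from the $\tfrac34$-exponent), and this is precisely what the $1/A$-weighted dissipation from the It\^o formula on $\log A$ is available to absorb. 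Without this split you either fail to close the It\^o estimate for $f$, or — if you use $f=A$ — the Gronwall for $\log f$ does not close because $\log(\cdot)$ is applied to a quantity not dominated by $f$. Your final integrability step, $\int\Vert v\Vert_{L^\infty}^2\le c\,\sup\log(cf)\int(1+\Vert v\Vert_{L^{132}}^2)$, rests on the same unavailable domination and also overlooks that the paper needs the separate stopping time $\tau^{\log A}_K$ to control $\int B(r)/A(r)\,\dr$ before passing to the $L^\infty$-in-time bound.

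In short: the missing idea is that $A$ need not dominate the log argument; it suffices that the dissipation $B$ does, and the discrepancy $\log(B/A)$ is absorbed into the parabolic dissipation after the $\log z\le cz^{1/4}$ inequality and Young. Without this your logarithmic Gronwall step has a gap.
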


\begin{proof}
	We apply the It\^{o} formula to $\log \left(e+\|(-\Deltah)^{1/2} v\|_{L^2}^{2}+\| \partial_z v\|_{L^2}^{2}+\|\partial_z v \|_{L^{6}}^{6} \right)$. Denoting $A(t) = e+\|(-\Deltah)^{1/2} v\|_{L^2}^{2} +\| \partial_z v\|_{L^2}^{2}  +\|\partial_z v \|_{L^{6}}^{6}$, we obtain
	\begin{align*}
		&\d \log (A(t)) +\frac{2\nu}{A(t)} \|\Deltah v\|_{L^2}^{2}+\frac{2\nu}{A(t)} \|\nablah \partial_z v\|_{L^2}^{2} \dt\\
		&\quad +\frac{6\nu}{A(t)} \left[\| |\partial_z v|^{2} \nablah \partial_z v\|_{L^2}^{2}+4\| |\partial_z v|^{2} \nablah |\partial_z v|\|_{L^2}^{2} \right] \dt\\
		&\leq  \frac{2}{A(t)} \left< (-\Deltah)^{1/2} v, (-\Deltah)^{1/2} F_v(U) - (-\Deltah)^{1/2} b(v, v) \right> \dt\\
		&\quad + \frac{1}{A(t)}\|(-\Deltah)^{1/2} \sigma_1 (U)\|^2_{L_2(\cU,L^2)} \dt
		+ \frac{2}{A(t)} \left<(-\Deltah)^{1/2} v,(-\Deltah)^{1/2}\sigma_1(U) \dW \right>\\
		&\quad + \frac{2}{A(t)} \left< \partial_z v, \partial_zF_v(U) - \partial_zb(v, v) \right> \dt\\
		&\quad + \frac{1}{A(t)}\|\partial_z \sigma_1 (U)\|^2_{L_2(\cU,L^2)} \dt
		+ \frac{2}{A(t)} \left<\partial_z v,\partial_z \sigma_1(U) \dW \right>\\
		&\quad -\frac{6}{A(t)} \left< |\partial_z v|^{4}\partial_z v, \partial_z F_v(U) + b(\partial_z v,v)\right> \dt\\
		&\quad 	+\frac{15}{A(t)} \sum_{k=1}^{\infty}\left< |\partial_z v|^{4},  (\partial_z \sigma_1 (U)e_k)^2 \right> \dt
		 + \frac{6}{A(t)}\left< |\partial_z v|^{4}\partial_z v,\partial_z \sigma_1(U) \dW \right>
	\end{align*}
	Integrating by parts, we have
	\begin{align*}
		\frac{6}{A(t)} \left| \left< |\partial_z v|^{4}\partial_z v,b(\partial_z v,v)\right> \right| &\leq \frac{c}{A(t)} \int_{M} |\partial_z v|^5 | (-\Deltah)^{1/2}\partial_z v| |v| \dxyz\\
		&\leq \frac{\varepsilon}{A(t)} \| | \partial _zv|^2 (-\Deltah)^{1/2}\partial_zv\|_{L^2}^2 + \frac{c_{\varepsilon}}{A(t)} \|v\|_{L^\infty}^2 \| \partial _zv\|_{L^6}^6\\
		&\leq \frac{\varepsilon}{A(t)} \| | \partial _zv|^2 (-\Deltah)^{1/2}\partial_zv\|_{L^2}^2 + c_\varepsilon \|v\|_{L^\infty}^2
	\end{align*}
	and 
		\begin{align*}
		\frac{2}{A(t)} \left| \left< \partial_z v,b(\partial_z v,v)\right> \right| 
		&\leq \frac{c}{A(t)} \| \partial _zv\|_{L^4}^2 \| \nablah v\|_{L^2} \leq c.
	\end{align*}
	Similarly, since $v = 0 = \partial_z v$ on $\Gamma_\ell$, the H\"{o}lder, Young and two-dimensional Gagliardo-Nirenberg inequalities yield
	\begin{equation}
	\label{eq:v_infty_nonlin_z}
	\begin{split}
		\frac{2}{A(t)} &\left| \left< (-\Deltah)^{1/2} v, (-\Deltah)^{1/2} b(v,v) \right>  \right| = \frac{2}{A(t)} \left| \left< \Deltah v, v \cdot \nablah v + w(v) \partial_z v \right>  \right| \\
		&\leq \frac{c}{A(t)} \| \Deltah v \|_{L^2} \left( \| v \|_{L^\infty} \| \nablah v \|_{L^2} + \| \divh v \|_{L^2_z L^3_{xy}} \| \partial_z v \|_{L^2_z L^6_{xy}}\right)\\
		&\leq \frac{c}{A(t)} \| \Deltah v \|_{L^2 }\left( \| v \|_{L^\infty} \| \nablah v \|_{L^2} + \| \Deltah v \|_{L^2 }^{1/3}
		 \| \nablah v \|_{L^2}^{2/3} \| \partial_z v \|_{L^6}\right)\\
		&\leq \frac{\varepsilon}{A(t)} \Vert\Delta_Hv\Vert _{L^2 }^2+ \frac{c_{\varepsilon}}{A(t)}(\Vert v\Vert _{L^\infty } ^2\Vert\nabla_Hv\Vert_{L^2 }^2+\Vert \partial _zv\Vert^{6}_{L^6} + \| \nablah v \|_{L^2}^4)\\
		&\leq \frac{\varepsilon}{A(t)} \Vert\Delta_Hv\Vert _{L^2 }^2+ c_{\varepsilon}(\Vert v\Vert_{L^\infty}^2 + 1 + \| \nablah v \|_{L^2}^2).
	\end{split}
	\end{equation}
	The linear terms are straightforward. We estimate
	\begin{align*}
		\frac{2}{A(t)}& \left| \left< (-\Deltah)^{1/2}v, (-\Deltah)^{1/2} F_v(U)\right> \right|\\
		&\leq \frac{\varepsilon}{A(t)} \norm{\Deltah v}^2_{L^2} + \frac{c_\varepsilon}{A(t)} \left( \norm{T}^2_{L^2_zH^1_{xy}}+ \norm{f_v}^2_{L^2}+ \norm{(-\Deltah)^{1/2} v}^2_{L^2} \right)\\
		&\leq \frac{\varepsilon}{A(t)} \norm{\Deltah v}^2_{L^2} + c_\varepsilon \left( \norm{T}^2_{L^2_zH^1_{xy}}+ \norm{f_v}^2_{L^2} + 1 \right),
	\end{align*}
		\begin{align*}
 		\frac{2}{A(t)} \left< \partial_z v, \partial_zF_v(U)\right>
 		& \leq  \frac{c}{A(t)} \left(\|\partial_z  v\|_{L^2}^2+\|\partial_z  v\|_{L^2}\norm{\partial_z  f_v}_{L^2}+\|\partial_z  v\|_{L^2} \norm{T}_{L^2_zH^1_{xy}} \right)\\
 		& \leq c\left(1+\norm{\partial_z  f_v}_{L^2}+\norm{T}_{L^2_zH^1_{xy}} \right)
	\end{align*}
	and
	\begin{align*}
		\frac{6}{A(t)}& |\left< |\partial_z v|^{4}\partial_z v,\partial_z  F_v(U) \right>|\\
		&\leq \frac{\varepsilon}{A(t)} \| |\partial_z v|^{2}\nablah \partial_z v \|_{L^2}^2 + \frac{c_\varepsilon}{A(t)} \left(1+\norm{T}_{L^6}^2+\norm{\partial_z  f_v}_{L^6}\right)(1+\|\partial_z  v\|_{L^6}^{6})\\
		&\leq \frac{\varepsilon}{A(t)} \| |\partial_z v|^{2}\nablah \partial_z v \|_{L^2}^2 + c_\varepsilon \left(1+\norm{T}_{L^6}^2+\norm{\partial_z  f_v}_{L^6}\right),
	\end{align*}
where we used 
	\begin{align*}
	\left|\left< |\partial_z v|^{4}\partial_z v, \int_{\cdot}^{0}\nablah T(x,y,z')\dz'\right> \right| \leq  \| |\partial_z v|^{2}\nablah \partial_z v \|_{L^2}\|\partial_z  v\|_{L^6}^2\norm{T}_{L^6}.
	\end{align*}	
	For $K \in \N$, let $\Upsilon_K = \tau_K^{v, 132, 8/3}$. Let $N \in \N$ and let $0 \leq \tau_a \leq \tau_b \leq t \wedge \tau_N \wedge \Upsilon_K$ be stopping times. By the sub-linear growth of $\sigma_1$ \eqref{eq:sigmaGrowthL2H1} and the Burkholder-Davis-Gundy inequality \eqref{eq:bdg}, we deduce
	\begin{align*}
		2\E \sup_{s\in[\tau_a,\tau_b]} &\left| \int_{\tau_a}^{\tau_b} \frac{1}{A(s)}\left<  (-\Deltah)^{1/2}v, (-\Deltah)^{1/2}\sigma_1(U) \d\! W\right> \right| \\
		&\leq c \E \left( \int_{\tau_a}^{\tau_b}  \frac{\| (-\Deltah)^{1/2}v\|_{L^2}^{2}}{A(s)}  \frac{\| (-\Deltah)^{1/2} \sigma_1(U)\|^2_{L_2(U,L^2)}}{A(t)} \ds \right)^{1/2}\\
		&\leq c \E \left( \int_{\tau_a}^{\tau_b}  \frac{1+\|\Deltah v\|_{L^{2}}^{2}}{A(s)} \ds \right)^{1/2}\\
		&\leq \varepsilon \E \int_{\tau_a}^{\tau_b}  \frac{\|\Deltah v\|_{L^{2}}^{2}}{A(s)} \ds +c_\varepsilon \E \int_{\tau_a}^{\tau_b}  \frac{1}{A(s)} \ds + c_\varepsilon\\
		&\leq \varepsilon \E \int_{\tau_a}^{\tau_b}  \frac{\|\Deltah v\|_{L^{2}}^{2}}{A(s)} \ds + c_\varepsilon		
	\end{align*}
	and, similarly, we estimate the first correction term by
	\begin{align*}
		\frac{1}{A(t)}\|(-\Deltah)^{1/2} \sigma_1 (U)\|^2_{L_2(\cU,L^2)} \leq \eta^2 \frac{1}{A(t)}\|\Deltah v\|^2_{L^2} + c.
	\end{align*}
	In the same way, we obtain
		\begin{align*}
		2\E \sup_{s\in[\tau_a,\tau_b]} &\left| \int_{\tau_a}^{\tau_b} \frac{1}{A(s)}\left<  \partial_z v, \partial_z\sigma_1(U) \d\! W\right> \right| \leq \varepsilon \E \int_{\tau_a}^{\tau_b}  \frac{\|\nablah  \partial_z v\|_{L^{2}}^{2}}{A(s)} \ds + c_\varepsilon	
	\end{align*}
	and
	\begin{align*}
		\frac{1}{A(t)}\|(-\Deltah)^{1/2} \sigma_1 (U)\|^2_{L_2(\cU,L^2)} \leq \eta^2 \frac{1}{A(t)}\|\nablah  \partial_z v\|^2_{L^2} + c.
	\end{align*}
	Recalling \eqref{eq:sigma1_dz} and \eqref{eq:h_k_growth}, we have
	\begin{align*}
		\sum_{k=1}^{\infty} &\left| \left< |\partial_z v|^{4} \partial_z  v,\partial_z  \sigma_1(U)e_k \right> \right|^2\\
		&\leq c\big( \| |\partial_z v|^{2} \nablah \partial_z v\|_{L^2}\| |\partial_z v|^{3}\|_{L^2} + \| |\partial_z v|^{5}\|_{L^{6/5}}\|\nablah \overline{v}\|_{L^{6}} + \| \partial_z v\|_{L^{6}}^6\\
		&\hphantom{\leq c \big( \ } + \| |\partial_z v|^{5}\|_{L^{6/5}}
		\| v\|_{L^{6}}+ \| |\partial_z v|^{5}\|_{L^{1}}\big)^2\\
		&\leq c \left( \| |\partial_z v|^{2} \nablah \partial_z v\|_{L^2}^2\|\partial_z v\|_{L^6}^6 + (1+\|\nablah \overline{v}\|_{L^{2}}^2+\|\nablah \overline{v}\|_{L^{6}}^2 )(1+\| \partial_z v\|_{L^{6}}^{12}) \right),
	\end{align*}
	and hence, by the Burkholder-Davis-Gundy inequality \eqref{eq:bdg} and \eqref{eq:v_bound_tildebar},
	\begin{align*}
		6\E &\sup_{s \in [\tau_a,\tau_b]}
		\left| \int_0^s\frac{1}{A(s)} \left< |\partial_z v|^{4} \partial_z  v,\partial_z  \sigma_1(U) \right> \dW \right|\\
		&\leq c \E \left( \int_{\tau_a}^{\tau_b} \frac{1}{A(s)^2}   \sum_{k=1}^{\infty} |\left< |\partial_z v|^{4} \partial_z  v,\partial_z  \sigma_1(U)e_k \right>|^2 \ds\right)^{1/2}\\
		& \leq c\E \bigg(
		\int_{\tau_a}^{\tau_b} \frac{\| |\partial_z v|^{2} \nablah \partial_z v\|_{L^2}^2\|\partial_z v\|_{L^6}^6}{A(s)^2}\\
		&\hphantom{\leq c\E \bigg(
		\int_{\tau_a}^{\tau_b} \ } +\frac{(1+\| \partial_z v\|_{L^{6}}^{12})}{A(s)^2} (1+\|\nablah \overline{v}\|_{L^{2}}^2+\|\nablah \overline{v}\|_{L^{6}}^2) \ds\bigg)^{1/2}\\
		& \leq c\E \left( \int_{\tau_a}^{\tau_b} \frac{\| |\partial_z v|^{2} \nablah \partial_z v\|_{L^2}^2}{A(s)}  
		+1+\|\Deltah \overline{v}\|_{L^{2}}^2 \ds\right)^{1/2}\\
		&\leq \varepsilon \E \int_{\tau_a}^{\tau_b} \frac{\| |\partial_z v|^{2} \nablah \partial_z v\|_{L^2}^2}{A(s)}  \ds + c_{\varepsilon} \E \int_{\tau_a}^{\tau_b}1+\|\Deltah \overline{v}\|_{L^{2}}^2\ds + c_\varepsilon.
	\end{align*}
	For the remaining correction term, we proceed similarly. By \eqref{eq:v_bound_tildebar}, \eqref{eq:sigma1_dz}, \eqref{eq:h_k_growth} and \eqref{eq:noise_eta}, we obtain
	\begin{align*}
		\frac{15}{A(t)} &\sum_{k=1}^{\infty} \left< |\partial_z v|^{4}, (\partial_z\sigma_1 (U)e_k)^2 \right>\\
		&\leq
		\left(15 \eta^2+\varepsilon\right) \frac{\| |\partial_z v|^{2} \nablah \partial_z v\|_{L^2}^2}{A(t)} +
		\frac{c_\varepsilon}{A(t)} \big( \| |\partial_z v|^{4}\|_{L^{3/2}}\| |\nablah \overline{v}|^{2}\|_{L^{3}}  +
		\|\partial_z v \|_{L^6}^6\\
		&\quad +
		\| |\partial_z v|^{4}\|_{L^{3/2}}\| |v|^{2}\|_{L^{3}}+
		\| \partial_z v\|_{L^4}^4 \big)\\
		&\leq
		\left(15 \eta^2+\varepsilon\right) \frac{\| |\partial_z v|^{2} \nablah \partial_z v\|_{L^2}^2}{A(t)}\\
		&\hphantom{\leq \ } + \frac{c_\varepsilon}{A(t)}\left(\| \partial_z v\|_{L^6}^4 \left(1+\|\Deltah \overline{v}\|_{L^{2}}^2 + \|  v\|_{L^6}^2 \right) + \| \partial_z v\|_{L^6}^6+1\right)\\
		&\leq
		\left(15 \eta^2+\varepsilon\right) \frac{\| |\partial_z v|^{2} \nablah \partial_z v\|_{L^2}^2}{A(t)}	+c_\varepsilon \left(1+\|\Deltah \overline{v}\|_{L^{2}}^2\right)	
	\end{align*}
Collecting the estimates above, we have
\begin{equation}
	\label{eq:v_inf_est1}
	\begin{split}
	&\E \sup_{s\in[\tau_a,\tau_b]} \log (A(s)) +\E \int_{\tau_a}^{\tau_b}\frac{2\nu-\eta^2-3\varepsilon}{A(s)} \|\Deltah v\|_{L^2}^{2}\\
	&\quad +\E \int_{\tau_a}^{\tau_b}\frac{2\nu-\eta^2-3\varepsilon}{A(s)} \|\nablah \partial_z v\|_{L^2}^{2} +\frac{(6\nu-15\eta^2-4\varepsilon)}{A(s)} \| |\partial_z v|^{2} \nablah \partial_z v\|_{L^2}^{2}\ds\\
	&\leq \E \log (A(\tau_a))+c_\varepsilon\\
	&\quad + c_\varepsilon \E  \int_{\tau_a}^{\tau_b} \|v\|_{L^{\infty}}^2+\| \Deltah \overline{v}\|_{L^{2}}^2+\norm{T}_{L^6}^2+\norm{\partial_z  f_v}_{L^6} \ds.
	\end{split}
\end{equation}
Let $B(t) = e + \| \Deltah v\|_{L^2}^2+ \| \nablah \partial_z v\|_{L^2}^2 + \| \partial_z v\|_{L^6}^6$. Now we employ an argument similar to the logarithmic Sobolev inequality. By the Poincar\'e inequality, \eqref{eq:v_bound_tildebar}, the logarithmic Sobolev inequality \eqref{equation:sobolevlogarithmic} with $\lambda=1/2$ and $r_1 = 132$ and the inequality $\log z \leq c z^{1/4}$ holding for $z \geq 1$, we obtain
\begin{align*}
	\Vert v\Vert_{L^\infty}^2 &\leq c \left(1+\Vert v\Vert_{L^{132}}^2 \right) \log\left(e+\Vert \nablah v\Vert_{L^6}+\Vert v\Vert_{L^6}+\Vert \partial_z v\Vert_{L^2}+\Vert v\Vert_{L^2}\right)\\
	&\leq c \left(1+\Vert v\Vert_{L^{132}}^2 \right) \log B(t)\\
	&\leq c \left(1+\Vert v\Vert_{L^{132}}^2 \right) \left( \log A(t) + \log \frac{B(t)}{A(t)} \right)\\
	&\leq c \left(1+\Vert v\Vert_{L^{132}}^2 \right) \left( \log A(t) + \left(\frac{B(t)}{A(t)}\right)^{1/4} \right)\\
	&\leq c_\varepsilon \left(1+\Vert v\Vert_{L^{132}}^{8/3} \right) \left( \log A(t) + 1 \right) + \varepsilon \frac{\| \Deltah v \|_{L^2}^2}{A(t)} + \varepsilon \frac{1 + \| \partial_z v \|_{L^6}^6}{A(t)}\\
	&\leq c_\varepsilon \left(1+\Vert v\Vert_{L^{132}}^{8/3} \right) \left( \log A(t) + 1 \right) + \varepsilon \frac{\| \Deltah v \|_{L^2}^2}{A(t)}.
\end{align*}
Thus, from \eqref{eq:v_inf_est1}, it follows
\begin{align*}
	&\E \sup_{s\in[\tau_a,\tau_b]} \log (A(s))+ \E \int_{\tau_a}^{\tau_b}\frac{2\nu-\eta^2-4\varepsilon}{A(s)} \|\Deltah v\|_{L^2}^{2}\\
	&\quad +\E \int_{\tau_a}^{\tau_b}\frac{2\nu-\eta^2-3\varepsilon}{A(s)} \|\nablah \partial_z v\|_{L^2}^{2}
	 +\frac{(6\nu-15\eta^2-4\varepsilon)}{A(s)} \| |\partial_z v|^{2} \nablah \partial_z v\|_{L^2}^{2}\ds\\
	&\leq\E \left[ \log (A(\tau_a)) + c_\varepsilon \int_{\tau_a}^{\tau_b} 1+\norm{T}_{L^6}^2+\norm{\partial_z  f_v}_{L^6}+\|\Deltah \overline{v}\|_{L^{2}}^2 \ds \right]\\
	&\quad +c_\varepsilon \E \int_{\tau_a}^{\tau_b} \left(1+\Vert v\Vert_{L^{132}}^{8/3} \right)\left(1 + \log A(s) \right) \ds
\end{align*}
By a similar argument as in the previous proofs relying on the stochastic Gronwall lemma \ref{lemma:stoch_Gronwall} and Lemma \ref{lemma:auxiliary}, we deduce that the stopping time $\tau^{\log A}_K$ defined for $K \in \N$ by
\[
	\tau^{\log A}_K = \left\{ s \geq 0 \mid \sup_{r \in [0, s \wedge \xi)} \log A(r) + \int_0^{s \wedge \xi} \frac{B(r)}{A(r)} \dr \geq K \right\}
\]
satisfies $\tau^{\log A}_K \to \infty$ $\PP$-almost surely as $K \to \infty$.

Finally, similarly as above, for $\hat{\Upsilon}_K = \tau^{\log A}_K \wedge \tau^{v, 132, 8/3}_K$ and $s \geq 0$, we get
\begin{align*}
	\E \int_{0}^{s \wedge \tau_N \wedge \hat{\Upsilon}_K} \Vert v\Vert_{L^\infty}^2 \dr  &\leq  c\E \int_0^{s \wedge \tau_N \wedge \hat{\Upsilon}_K} \left( 1 + \Vert v\Vert_{L^{132}}^{8/3}\right)\left( 1 +  \log A(r) \right) + \frac{B(r)}{A(r)} \dr
\end{align*}
and the claim follows by Lemma \ref{lemma:auxiliary}.
\end{proof}

\begin{remark}
The above Lemma can be shown by considering $A(t) = e+\|(-\Deltah)^{1/2} v\|_{L^2}^{2} +\| \partial_z v\|_{L^2}^{2}  +\|\partial_z v \|_{L^{q}}^{4q/(q-2)}$ for $q\in (2,6)$ instead of the case $q=6$.
Taking $q$ large means stronger integrability in the spacial variable, but less in the probability space. The only estimate that has to be substantially adopted is  \ref{eq:v_infty_nonlin_z}. One has to replace it by
	\begin{align*}
		\frac{2}{A(t)} &\left| \left< (-\Deltah)^{1/2} v, (-\Deltah)^{1/2} b(v,v) \right>  \right| = \frac{2}{A(t)} \left| \left< \Deltah v, v \cdot \nablah v + w(v) \partial_z v \right>  \right| \\
		&\leq \frac{c}{A(t)} \| \Deltah v \|_{L^2} \left( \| v \|_{L^\infty} \| \nablah v \|_{L^2} + \| \divh v \|_{L^2_z L^{2q/(q-2)}_{xy}} \| \partial_z v \|_{L^2_z L^q_{xy}}\right)\\
		&\leq \frac{c}{A(t)} \| \Deltah v \|_{L^2 }\left( \| v \|_{L^\infty} \| \nablah v \|_{L^2} + \| \Deltah v \|_{L^2 }^{2/q}
		\| \nablah v \|_{L^2}^{(q-2)/q} \| \partial_z v \|_{L^q}\right)\\
		&\leq \frac{\varepsilon}{A(t)} \Vert\Delta_Hv\Vert _{L^2 }^2+ \frac{c_{\varepsilon}}{A(t)}(\Vert v\Vert _{L^\infty } ^2\Vert\nabla_Hv\Vert_{L^2 }^2+\Vert \partial _zv\Vert^{4q/(q-2)}_{L^q} + \| \nablah v \|_{L^2}^4)\\
		&\leq \frac{\varepsilon}{A(t)} \Vert\Delta_Hv\Vert _{L^2 }^2+ c_{\varepsilon}(\Vert v\Vert_{L^\infty}^2 + 1 + \| \nablah v \|_{L^2}^2).
	\end{align*}
We decided to consider the case $q=6$ for simplicity.
\end{remark}

Having established the $L^\infty$ bound, we can prove the boundedness of $\partial_{z} U$ and deduce the boundedness of $\Deltah U$ with estimates close to the ones used in Section \ref{sect:maximal_solutions_main} for the local existence. In most cases, only the estimates on the nonlinear terms have to be changed.

\begin{lemma}[$L^2$ bound for $\partial_z v$]
\label{lemma:dz_v_L2}
	Let $q\geq 2$, $\partial_z v_0\in L^q(\Omega;L^2)$ and assume that $f_v\in L^q(\Omega;L^2_{loc}(0,\infty;H^1_zL^2_{xy})$.
Then stopping time $\tau_K^{\partial_z v, q}$ defined for $K \in \N$ by
\begin{align*}
	\tau_K^{\partial_z v, q} = \inf \Bigg\lbrace s \geq 0 \mid &\sup_{r \in [0, s \wedge \xi)} \norm{\partial_z v}_{L^2}^q + \int_0^{s \wedge \xi} \norm{\partial_z v}_{L^2}^{q-2} \norm{\nablah \partial_z v}^2_{L^2} \dr\\
	&+\left( \int_0^{s \wedge \xi}\|\nablah \partial_z v\|_{L^2}^{2}\dr\right)^{q/2} \geq K \Bigg\rbrace
\end{align*}
satisfies $\tau_K^{\partial_z v, q} \to \infty$ $\PP$-a.s.\ as $K \to \infty$.
\end{lemma}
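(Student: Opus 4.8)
The plan is to obtain an energy estimate for $\|\partial_z v\|_{L^2}^q$ by applying the It\^o formula to the solution of the Galerkin-type equation \eqref{eq:primeqgalerkinz} (or directly to $\partial_z v$ for the maximal solution, localized by $\tau_N$), mimicking Step 3 of Lemma \ref{lemma:galerkinestimate} but now using the $L^\infty$ control of $v$ coming from Lemma \ref{lemma:linfty} instead of the cut-off $\theta$. First I would write, for $0 \le \tau_a \le \tau_b \le t \wedge \tau_N \wedge \Upsilon_K$ with $\Upsilon_K := \tau_K^{v,\infty} \wedge \tau_K^{w,q} \wedge \tau_K^{\nablah \overline{v}, q}$ (so that $\int \|v\|_{L^\infty}^2$, $\int \|\nablah v\|_{L^2}^2$ and the relevant norms of $U$ are bounded),
\begin{align*}
	\d\|\partial_z v\|_{L^2}^q &+ q\nu \|\partial_z v\|_{L^2}^{q-2}\|\nablah \partial_z v\|_{L^2}^2 \dt\\
	&\leq -q\|\partial_z v\|_{L^2}^{q-2}\left<\partial_z v, \partial_z b(v,v) + \partial_z F_v(U)\right> \dt\\
	&\quad + \tfrac{q(q-1)}{2}\|\partial_z v\|_{L^2}^{q-2}\|\partial_z\sigma_1(U)\|_{L_2(\cU,L^2)}^2\dt + q\|\partial_z v\|_{L^2}^{q-2}\left<\partial_z v,\partial_z\sigma_1(U)\dW\right>,
\end{align*}
where the trace term has already been absorbed using \eqref{eq:sigma1boundary} and $w=0$ on $\Gamma_b\cup\Gamma_u$, exactly as in Lemma \ref{lemma:galerkinestimate}.

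The nonlinear term is the key point. Using the cancellation $\left<b(v,\partial_z v),\partial_z v\right>=0$ and integration by parts with $v=0$ on $\Gamma_l$ — as in the derivation of \eqref{eq:nonlineardz} — one has $\left<\partial_z b(v,v),\partial_z v\right> = -\left<\divh\partial_z v\, v,\partial_z v\right> - \left<\partial_z v\, v,\nablah\partial_z v\right> - \left<\divh v\,\partial_z v,\partial_z v\right>$, and hence by H\"older and Young
\[
	q\|\partial_z v\|_{L^2}^{q-2}\left|\left<\partial_z b(v,v),\partial_z v\right>\right| \leq q\varepsilon\|\nablah\partial_z v\|_{L^2}^2\|\partial_z v\|_{L^2}^{q-2} + c_\varepsilon q\|v\|_{L^\infty}^2\|\partial_z v\|_{L^2}^q,
\]
which is the term handled by $\tau_K^{v,\infty}$ via the stochastic Gronwall lemma. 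The linear term $\partial_z F_v(U)$ is controlled by $c(1 + \|\partial_z v\|_{L^2}^q + (\int \|\nablah v\|_{L^2}^2)^{q/2} + (\int\|\partial_z f_v\|_{L^2}^2)^{q/2})$ as in Step 3 of Lemma \ref{lemma:galerkinestimate}; the correction term is bounded via \eqref{eq:sigmaGrowthH1L2} by $c(1+\|U\|_{H^1_zL^2_{xy}}^q) + \tfrac{q(q-1)}{2}\eta^2\|U\|_{H^1_zL^2_{xy}}^{q-2}\|\nablah\partial_z v\|_{L^2}^2$; and the stochastic term is treated by the Burkholder–Davis–Gundy inequality \eqref{eq:bdg} exactly as before, producing $\tfrac14\sup\|\partial_z v\|_{L^2}^q$, a term $q^2c_{BDG}^2\eta^2\int\|\partial_z v\|_{L^2}^{q-2}\|\nablah\partial_z v\|_{L^2}^2$ to be absorbed on the left (using $\eta$ small), and lower-order terms.

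After collecting these, with $\varepsilon,\eta$ small enough that the coefficient $q[\nu - \varepsilon - \eta^2(\tfrac{q-1}{2}+qc_{BDG}^2)]$ of the dissipative term is positive, the estimate reads
\begin{multline*}
	\E\Big[\sup_{s\in[\tau_a,\tau_b]}\|\partial_z v\|_{L^2}^q + \int_{\tau_a}^{\tau_b}\|\partial_z v\|_{L^2}^{q-2}\|\nablah\partial_z v\|_{L^2}^2\ds\Big]\\
	\leq c\E\|\partial_z v(\tau_a)\|_{L^2}^q + c_\varepsilon\E\int_{\tau_a}^{\tau_b}(1+\|v\|_{L^\infty}^2 + \|U\|_{L^2_zH^1_{xy}}^2 + \|U\|_{H^1_zL^2_{xy}}^2)(1+\|\partial_z v\|_{L^2}^q)\ds + (\text{forcing}).
\end{multline*}
Since on $[0,\Upsilon_K]$ the integral $\int(1+\|v\|_{L^\infty}^2 + \|U\|_{L^2_zH^1_{xy}}^2 + \|U\|_{H^1_zL^2_{xy}}^2)\ds$ is bounded (by $\tau_K^{v,\infty}$, $\tau_K^{w,q}$ and, for the $H^1_zL^2_{xy}$ piece, the analogous already-established bound on $\int\|\partial_z v\|^2$; cf. the regularity \eqref{eq:max_thm_additional_reg}), the stochastic Gronwall lemma \ref{lemma:stoch_Gronwall} applies on $[0,t\wedge\tau_N\wedge\Upsilon_K]$; letting $N\to\infty$ and using $\Upsilon_K\to\infty$ a.s., one concludes $\sup_{[0,t\wedge\xi)}\|\partial_z v\|_{L^2}^q + \int_0^{t\wedge\xi}\|\partial_z v\|_{L^2}^{q-2}\|\nablah\partial_z v\|_{L^2}^2\,\d s<\infty$ a.s. The $L^q$-integrability-in-time of $\|\nablah\partial_z v\|_{L^2}^2$ and the convergence of $\tau_K^{\partial_z v,q}$ then follow from Lemma \ref{lemma:auxiliary}, exactly as at the end of Lemma \ref{lemma:global_L2}. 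I expect the main obstacle to be bookkeeping: checking that all the terms appearing in the coefficient multiplying $1+\|\partial_z v\|_{L^2}^q$ are already controlled by stopping times that have been proven to tend to infinity — in particular that the $L^\infty$ bound from Lemma \ref{lemma:linfty} is exactly what is needed to close the Gronwall argument, since there is no vertical dissipation to absorb the nonlinear term by any other means.
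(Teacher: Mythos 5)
Your overall strategy coincides with the paper's: apply the It\^o formula to $\|\partial_z v\|_{L^2}^q$, use the cancellation $\left<b(v,\partial_z v),\partial_z v\right>=0$ to leave only $\left<b(\partial_z v,v),\partial_z v\right>$, estimate that term to extract a factor $\|v\|_{L^\infty}^2\|\partial_z v\|_{L^2}^q$, and close via the stochastic Gronwall lemma localized by $\tau_K^{v,\infty}\wedge\tau_K^{w,q}\wedge\tau_N$ followed by Lemma~\ref{lemma:auxiliary}. (A small point: for $\left<\divh v\,\partial_z v,\partial_z v\right>$, H\"older and Young alone do not produce the $\|v\|_{L^\infty}$ factor; you need an additional horizontal integration by parts, as the other two terms already received.)

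There is, however, a genuine gap in your verification of the hypotheses of Lemma~\ref{lemma:stoch_Gronwall}. Your Gronwall factor $R$ contains $\|U\|_{H^1_z L^2_{xy}}^2$, which includes $\|\partial_z v\|_{L^2}^2$ and $\|\partial_z T\|_{L^2}^2$, and the lemma requires $\int_0^\tau R\,\ds < \kappa$ $\PP$-almost surely on the localizing interval. At this point in the argument there is no stopping time tending a.s.\ to infinity that controls $\int\|\partial_z T\|_{L^2}^2\,\ds$ --- the $\partial_z T$ estimate is the next lemma and itself relies on $\tau_K^{\partial_z v,4}$ from the one you are proving --- and the $\|\partial_z v\|_{L^2}^2$ contribution is precisely what this lemma is meant to bound. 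Citing \eqref{eq:max_thm_additional_reg} does not help: that gives second-moment bounds up to each $\tau_N$, not an a.s.\ pathwise bound valid up to a stopping time you can legitimately feed into the Gronwall hypothesis. The fix is the finer bookkeeping the paper performs: using the noise structure \eqref{eq:noiseglobal}, \eqref{eq:sigma1_dz}, \eqref{eq:h_k_growth}, \eqref{eq:noise_eta}, the correction and the Burkholder--Davis--Gundy terms involve only $\|\nablah\partial_z v\|_{L^2}^2$ (absorbed into the dissipation for $\eta$ small), $\|v\|_{L^2}$ and $\|\nablah v\|_{L^2}$ (controlled on $[0,\tau_K^{w,q}]$), and $\|\partial_z v\|_{L^2}$ (folded into $X$), so that $R$ reduces to $1+\|v\|_{L^\infty}^2$ and the argument closes. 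Your strategy is otherwise identical to the paper's; only the set-up of the Gronwall factor needs this correction.
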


\begin{proof}
Following Step 3 in Lemma \ref{lemma:galerkinestimate}, we employ the It\^{o} formula and get
\begin{equation*}
	\begin{split}
		\d\! &\|\partial_z v\|_{L^2}^q +q \nu \|\partial_z v\|_{L^2}^{q-2} \|(-\Deltah)^{1/2} \partial_z v\|_{L^2}^{2} \dt\\
		&\leq -q \|\partial_z v\|_{L^2}^{q-2} \left< \partial_z v, \partial_z f(U) +   b(\partial_z v,v)\right> \dt\\
		&\quad +\frac{q(q-1)}{2}  \|\partial_z v\|_{L^2}^{q-2} \|\partial_z \sigma_1 (U)\|^2_{L_2(U,L^2)} \dt\\
		&\quad +q \|\partial_z v\|_{L^2}^{q-2} \left< \partial_z v,\partial_z \sigma(U) \dW \right>.
	\end{split}
\end{equation*}
For the nonlinear term, similarly as in \eqref{eq:v_infty_nonlin_z}, we get
\begin{align*}
|\left< \partial_z v,  b(\partial_z v,v)\right>| &\leq \varepsilon \|\divh \partial_z v\|_{L^2}^2+c\|v\|_{L^\infty}^2  \| \partial_z v\|_{L^2}^2.
\end{align*}
For $K \in \N$ and $N \in \N$, let $0 \leq \tau_a \leq \tau_b \leq t \wedge \tau_N \wedge \tau_K^{w, q} \wedge \tau_K^{v,\infty}$ be stopping times. By the sub-linear growth of $\sigma_1$ \eqref{eq:sigmaGrowthH1L2} and the Burkholder-Davis-Gundy inequality \eqref{eq:bdg}, we follow the estimates in Step 3 in Lemma \ref{lemma:galerkinestimate} and deduce 
\begin{multline*}
	q\E \sup_{s \in [\tau_a, \tau_b]} \left| \int_{\tau_a}^{\tau_b} \|\partial_z v\|_{L^2}^{q-2} \left< \partial_z v,\partial_z \sigma(U)\right> \dW \right| \leq \frac14 \E \sup_{s\in[\tau_a, \tau_b]}\| \partial_z v\|_{L^2}^{q}\\
	+ c\E\int_{\tau_a}^{\tau_b} 1 + \|  v\|_{H^1_zL^2_{xy}}^{q} \ds
	+q^2c_{BDG}^2\eta^2 \E \int_{\tau_a}^{\tau_b} \|v\|_{H^1_zL^2_{xy}}^{q-2}\|(-\Deltah)^{1/2} \partial_z v\|_{L^{2}}^{2}\ds
\end{multline*}
and
\begin{multline*}
	\int_{\tau_a}^{\tau_b} \|\partial_z v\|_{L^2}^{q-2}\left|  \left< \partial_z v, \partial_z f(U)\right> \right| \ds \leq \frac14 \sup_{s \in [0, t]} \| \partial_z v\|_{L^2}^q\\
	+ c \left( \int_{\tau_a}^{\tau_b} \| \nablah T \|_{L^2}^2 \ds \right)^{q/2}
	+ c \int_{\tau_a}^{\tau_b} \| \partial_z v\|_{L^2}^q \ds + c\left(\int_{\tau_a}^{\tau_b} \| \partial_z f_v\|_{L^2}^2 \ds\right)^{q/2}.
\end{multline*}
Collecting the above and using \eqref{eq:sigmaGrowthH1L2} for the correction term, we get
\begin{align*}
	\frac12 \E &\sup_{s\in[\tau_a, \tau_b]}\|\partial_z v\|_{L^2}^{q} + c(q, \nu, \varepsilon, \eta) \E \int_{\tau_a}^{\tau_b} \|\partial_z v\|_{L^2}^{q-2} \|(-\Deltah)^{1/2} \partial_z v\|_{L^2}^{2} \ds\\
	&\leq c \E\left[\|\partial_z v(\tau_a)\|_{L^2}^{q} +1+ \int_{\tau_a}^{\tau_b}\|v\|_{L^2}^{q} + (1+\|v\|_{L\infty}^2) \|\partial_z v\|_{L^2}^{q} \ds \right]\\
	&\quad + c \E \left( \int_{\tau_a}^{\tau_b} \| U \|_{L^2_zH^1_{xy}}^2 \ds \right)^{q/2}+ c\left(\int_{\tau_a}^{\tau_b} \| \partial_z f_v\|_{L^2}^2 \ds\right)^{q/2}
\end{align*}
where $c(q, \nu,\varepsilon, \eta) = q[\nu - \varepsilon -\eta^2(q c_{BDG}^2 + \frac{q-1}{2})]$. The claim follows as in the above proofs.
\end{proof}

\begin{lemma}[$L^2$ bound for $\partial_z T$]
\label{lemma:dz_T_L2}
	Let $q\geq 2$, $\partial_z T_0\in L^q(\Omega;L^2)$ and $f_T\in L^q(\Omega;L^2_{loc}(0,\infty;H^1_zL^2_{xy})$.
	Then  the stopping time $\tau_K^{\partial_z T, q}$ defined for $K \in \N$ by
	\begin{align*}
	\tau_K^{\partial_z T, q} = \inf \Bigg\lbrace s \geq 0 \mid & \sup_{r \in [0, s \wedge \xi)}\norm{\partial_z T}_{L^2}^q  + \int_0^{s \wedge \xi} \norm{\partial_z T}_{L^2}^{q-2} \norm{\nablah \partial_z T}^2_{L^2} \dr\\
	&+\left( \int_{\tau_a}^{\tau_b}\|\nablah \partial_z T\|_{L^2}^{2}\ds\right)^{q/2} \geq K \Bigg\rbrace
	\end{align*}
	satisfies $\tau_K^{\partial_z T, q} \to \infty$ $\PP$-a.s.\ as $K \to \infty$.
\end{lemma}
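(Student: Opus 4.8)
The argument is parallel to the proof of Lemma~\ref{lemma:dz_v_L2}. Following Step~3 of Lemma~\ref{lemma:galerkinestimate}, we apply the It\^o formula to $\|\partial_z T\|_{L^2}^q$ for the $z$-differentiated temperature equation, which contains the nonlinear term $\partial_z b(v,T)$, the forcing $\partial_z f_T$ and the noise $\partial_z\sigma_2(U)$. All estimates will be carried out on the random interval $[0,t\wedge\tau_N\wedge\Upsilon_K]$ with $\Upsilon_K:=\tau_K^{w,q}\wedge\tau_K^{\partial_z v,q}\wedge\tau_K^{v,\infty}$; on this interval $\|U\|_{L^2}$ and $\|\partial_z v\|_{L^2}$ are bounded and $\int_0^{t\wedge\tau_N\wedge\Upsilon_K}(\|\nablah\partial_z v\|_{L^2}^2+\|\nablah T\|_{L^2}^2+\|v\|_{L^\infty}^2)\,\ds$ is bounded, and all three stopping times tend to $\infty$ by Lemmas~\ref{lemma:global_L2}, \ref{lemma:dz_v_L2} and \ref{lemma:linfty}. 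Once the differential inequality is assembled, the claim follows by the stochastic Gronwall lemma~\ref{lemma:stoch_Gronwall}, then letting $N\to\infty$ and invoking Lemma~\ref{lemma:auxiliary}, exactly as in the proof of Lemma~\ref{lemma:global_L2}.

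The only delicate contribution is the nonlinear term $q\|\partial_z T\|_{L^2}^{q-2}\langle\partial_z b(v,T),\partial_z T\rangle$. Using the cancellation $\langle b(v,\partial_z T),\partial_z T\rangle=0$ and integration by parts (recalling $v=0$, hence $\partial_z v=0$, on $\Gamma_l$) as in the derivation of~\eqref{eq:nonlineardz}, one has
\[
\langle\partial_z b(v,T),\partial_z T\rangle=-\langle\divh\partial_z v\,T,\partial_z T\rangle-\langle\partial_z v\,T,\nablah\partial_z T\rangle-\langle\divh v\,\partial_z T,\partial_z T\rangle .
\]
The last term equals $2\langle v\cdot\nablah\partial_z T,\partial_z T\rangle$ after one more integration by parts and is bounded by $\varepsilon\|\nablah\partial_z T\|_{L^2}^2+c_\varepsilon\|v\|_{L^\infty}^2\|\partial_z T\|_{L^2}^2$, exactly as the corresponding term in Lemma~\ref{lemma:dz_v_L2}, the coefficient $\|v\|_{L^\infty}^2$ being time-integrable by Lemma~\ref{lemma:linfty}. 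For the first two terms we use the anisotropic H\"older inequality, the interpolation $\|T\|_{L^\infty_z L^4_{xy}}\le c\|T\|_{H^1_z L^2_{xy}}^{1/2}\|T\|_{L^2_z H^1_{xy}}^{1/2}$ from~\eqref{eq:cltinterpolation}, the two-dimensional Ladyzhenskaya inequality for $\|\partial_z T\|_{L^2_z L^4_{xy}}$ and $\|\partial_z v\|_{L^2_z L^4_{xy}}$, and the boundedness of $\|U\|_{L^2}$ and $\|\partial_z v\|_{L^2}$ on the stopped interval. After multiplying by $q\|\partial_z T\|_{L^2}^{q-2}$ and applying Young's inequality, the pieces proportional to $\|\partial_z T\|_{L^2}^{q-2}\|\nablah\partial_z T\|_{L^2}^2$ are absorbed into the dissipation $q\nu_T\|\partial_z T\|_{L^2}^{q-2}\|(-\Deltah)^{1/2}\partial_z T\|_{L^2}^2$, and the remainder is bounded, after using $\|\partial_z T\|_{L^2}^{q-2}\le 1+\|\partial_z T\|_{L^2}^q$, by a quantity of the form $c\big(1+\|\nablah\partial_z v\|_{L^2}^2+\|\nablah T\|_{L^2}^2+\|\nablah\partial_z v\|_{L^2}^{4/3}\|\nablah T\|_{L^2}^{2/3}\big)\big(1+\|\partial_z T\|_{L^2}^q\big)$. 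The coefficient here is a priori time-integrable on $[0,t\wedge\tau_N\wedge\Upsilon_K]$ because $\|\nablah\partial_z v\|_{L^2}^2$ and $\|\nablah T\|_{L^2}^2$ are (Lemmas~\ref{lemma:dz_v_L2} and \ref{lemma:global_L2}), whence $\|\nablah\partial_z v\|_{L^2}^{4/3}\|\nablah T\|_{L^2}^{2/3}\in L^1(0,t)$ by H\"older, so this term has exactly the structure required by the stochastic Gronwall lemma.

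The linear, stochastic and It\^o-correction terms are routine: $q\|\partial_z T\|_{L^2}^{q-2}|\langle\partial_z T,\partial_z f_T\rangle|$ is estimated using $\|\partial_z T\|_{L^2}^{q-2}\le 1+\|\partial_z T\|_{L^2}^q$ and $\partial_z f_T\in L^2_{\mathrm{loc}}$, while the stochastic integral and the correction term $\tfrac{q(q-1)}{2}\|\partial_z T\|_{L^2}^{q-2}\|\partial_z\sigma_2(U)\|_{L_2(\cU,L^2)}^2$ are handled by the Burkholder--Davis--Gundy inequality~\eqref{eq:bdg} and the growth bound~\eqref{eq:sigmaGrowthH1L2}, the $\|\partial_z T\|_{L^2}^{q-2}\|\nablah\partial_z T\|_{L^2}^2$ part being absorbed into the dissipation (this being where the smallness of $\eta$ is used, as in the preceding lemmata) and the remaining $\|\partial_z T\|_{L^2}^{q-2}\|\nablah\partial_z v\|_{L^2}^2$ part being integrable in time on the stopped interval. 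The main obstacle is precisely the first two pieces of the nonlinear term: in this section there is no control on $\partial_{zz}v$, $\partial_{zz}T$ and no $L^\infty$-type bound on $\partial_z v$ or $T$ (only on $v$), so one cannot close the estimate by a clean $L^\infty$-argument as for $\partial_z v$, and is forced to integrate by parts and distribute the derivatives so that the merely $L^2$-in-time integrable quantities $\nablah\partial_z v$ and $\nablah T$ end up as Gronwall coefficients.
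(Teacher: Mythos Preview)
Your proof is correct and follows the same general scheme as the paper (It\^o formula, estimate the nonlinear term, stochastic Gronwall, Lemma~\ref{lemma:auxiliary}), but you take a genuinely different route on the key nonlinear estimate.

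You integrate $\langle\partial_z v\cdot\nablah T,\partial_z T\rangle$ by parts once more to trade $\nablah T$ for $T$, and you control the term $\langle\divh v\,\partial_z T,\partial_z T\rangle$ via the $L^\infty$ bound on $v$ from Lemma~\ref{lemma:linfty}; accordingly your stopping time is $\Upsilon_K=\tau_K^{w,q}\wedge\tau_K^{\partial_z v,q}\wedge\tau_K^{v,\infty}$. The paper instead keeps $\langle\partial_z v\cdot\nablah T,\partial_z T\rangle$ as is and estimates it directly by the anisotropic H\"older inequality, placing $\nablah T$ in $L^\infty_zL^2_{xy}$ via the interpolation~\eqref{eq:gginterpolation}; for the $\divh v$ term it uses $\|\divh v\|_{L^\infty_z L^2_{xy}}\le c\|\nablah v\|_{L^2}^{1/2}\|\nablah\partial_z v\|_{L^2}^{1/2}$ rather than $\|v\|_{L^\infty}$. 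This yields the Gronwall coefficient $(1+\|\partial_z v\|_{L^2}^2)\|\nablah\partial_z v\|_{L^2}^2+\|U\|_{L^2_zH^1_{xy}}^2$ and allows the paper to work with the smaller stopping time $\Upsilon_K=\tau_K^{w,2}\wedge\tau_K^{\partial_z v,4}$, \emph{without} invoking Lemma~\ref{lemma:linfty}. In practice this distinction is cosmetic, since $\tau_K^{v,\infty}$ is already needed for Lemma~\ref{lemma:dz_v_L2}, which you (and the paper) both use here; so your additional dependency costs nothing. Your final remark that one is ``forced to integrate by parts'' slightly overstates the obstacle: the paper shows a direct anisotropic estimate closes just as well.
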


\begin{proof}
As in the previous Lemma, the estimates in Step 3 in Lemma \ref{lemma:galerkinestimate} can be carried over with the exception of the estimate of the nonlinear term. We replace it by
	\begin{align*}
	 &|\left<\partial_z T, b(\partial_z v,T) \right> |
	 = |\left< \partial_z v \nablah T, \partial_z T \right> - \left< \div_H v \partial_z T, \partial_z T \right>|\\
	 &\quad \leq c( 
	 \| \partial_z v\|_{L^2_zL^4_{xy}} \| \nablah T\|_{L^\infty_zL^2_{xy}} \|  \partial_z T\|_{L^2_zL^4_{xy}}
	 +\| \div_H v\|_{L^\infty_zL^2_{xy}} \| \partial_z T\|_{L^2_zL^4_{xy}}^2)\\
	 &\quad \leq c\|\partial_z v\|_{L^2}^{1/2}\|\partial_z \nablah v\|_{L^2}^{1/2}  (\| \nablah T\|_{L^2}^{1/2} \| \nablah \partial_z T\|_{L^2}^{1/2} 
	 +\| \nablah T\|_{L^2})\\ 
	 &\quad \hphantom{\leq \ } \cdot (\| \partial_z T\|_{L^2}^{1/2} \| \nablah \partial_z T\|_{L^2}^{1/2} 
	 +\| \partial_z T\|_{L^2})\\	 
	 &\quad \hphantom{\leq \ }  +c\|\nablah v\|_{L^2}^{1/2}\|\partial_z\nablah v\|_{L^2}^{1/2}(\| \partial_z T\|_{L^2} \| \nablah \partial_z T\|_{L^2} 
	 +\| \partial_z T\|_{L^2}^2)\\
	 &\quad \leq \varepsilon \| \nablah \partial_z T\|_{L^2}^2 +c (1+\| \nablah T\|_{L^2}^2 +\|\partial_z v\|_{L^2}^2\|\partial_z \nablah v\|_{L^2}^2\\
	 &\quad \hphantom{\leq \ } +\|\nablah v\|_{L^2}\|\partial_z \nablah v\|_{L^2})(1+\| \partial_z T\|_{L^2}^2)\\
	  &\quad \leq \varepsilon \| \nablah \partial_z T\|_{L^2}^2 +c (1 +(1+\|\partial_z v\|_{L^2}^2)\|\partial_z \nablah v\|_{L^2}^2+\|U\|_{L^2_zH^1_{xy}}^2)
	 (1+\| \partial_z T\|_{L^2}^2).
	\end{align*}
	For $K \in \N$, let $\Upsilon_K = \tau_K^{w, 2} \wedge \tau_K^{\partial_z v, 4}$. Let $N \in \N$ and let $0 \leq \tau_a \leq \tau_b \leq t \wedge \tau_N \wedge \Upsilon_K$ be stopping times. We get
	 \begin{multline*}
	 	\E\left[ \frac12 \sup_{s\in[\tau_a, \tau_b]}\|\partial_z T\|_{L^2}^{q} + c(q, \nu, \varepsilon, \eta) \int_{\tau_a}^{\tau_b} \|\partial_z T\|_{L^2}^{q-2} \|(-\Deltah)^{1/2} \partial_z T\|_{L^2}^{2} \ds\right]\\
	 	\leq c \E\Bigg[\|\partial_z T(\tau_a)\|_{L^2}^{q} +1+ \int_{\tau_a}^{\tau_b} \| U \|_{L^2}^2+\| \partial_z v \|_{L^2}^2\ds \\
	 	+ \left( \int_{\tau_a}^{\tau_b} \| U \|_{L^2_zH^1_{xy}}^2 \ds \right)^{q/2}+\left( \int_{\tau_a}^{\tau_b} \| \nablah \partial_z v \|_{L^2}^2 \ds \right)^{q/2}+\left( \int_{\tau_a}^{\tau_b} \| \nablah \partial_z f_T \|_{L^2}^2 \ds \right)^{q/2}  \Bigg]\\
	 	+ c \E \int_{\tau_a}^{\tau_b} (1 +(1+\|\partial_z v\|_{L^2}^2)\|\partial_z \nablah v\|_{L^2}^2+\|U\|_{L^2_zH^1_{xy}}^2)
	 	(1+\| \partial_z T\|_{L^2}^q)\ds
	 \end{multline*}
	 where again $c(q, \nu,\varepsilon, \eta) = q[\nu - \varepsilon -\eta^2(q c_{BDG}^2 + \frac{q-1}{2})]$. The claim follows as above.
\end{proof}

\begin{lemma}[$L^2$ bound for $\nablah U$]
\label{lemma:nablah_U_L2}
	Let $q\geq 2$, $U_0\in L^q(\Omega;L^2_zH^1)$ and $F\in L^q(\Omega;L^2_{loc}(0,\infty;L^2)$.
	Then the stopping $\tau_K^{\nablah U, q}$ time defined for $K \in \N$ by
	\begin{align*}
		\tau_K^{\nablah U, q} = \inf \Bigg\lbrace s \geq 0 \mid & \sup_{r \in [0, s \wedge \xi)}\norm{\nablah U}_{L^2}^q  + \int_0^{s \wedge \xi} \norm{\nablah U}_{L^2}^{q-2} \norm{\Deltah U}^2_{L^2} \dr\\
		&+\left( \int_{\tau_a}^{\tau_b}\|\Deltah U\|_{L^2}^{2}\ds\right)^{q/2} \geq K \Bigg\rbrace
	\end{align*}
	satisfies $\tau_K^{\nablah U, q} \to \infty$ $\PP$-a.s.\ as $K \to \infty$.
\end{lemma}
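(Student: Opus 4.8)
The plan is to reproduce, for the maximal solution localised by $\tau_N$, the argument of Step~5 of Lemma~\ref{lemma:galerkinestimate}, with the cut-off nonlinearity replaced by the true one and the estimates on the nonlinear and stochastic terms adapted to the absence of vertical dissipation, feeding in the $L^\infty$-bound on $v$ (Lemma~\ref{lemma:linfty}) and the stopping times from Lemmas~\ref{lemma:global_L2}, \ref{lemma:dz_v_L2} and~\ref{lemma:dz_T_L2}. Since the basis of $H$ consists of eigenfunctions of $\Ah$, the It\^{o} formula applied to $\|\nablah U\|_{L^2}^q=\|(-\Deltah)^{1/2}U\|_{L^2}^q$ on $[0,t\wedge\tau_N]$ gives, after estimating the trace term,
\begin{multline*}
	\d\!\|\nablah U\|_{L^2}^q+q\nu\|\nablah U\|_{L^2}^{q-2}\|\Deltah U\|_{L^2}^2\,\dt
	\le -q\|\nablah U\|_{L^2}^{q-2}\left<\Deltah U,\,B(U,U)+F(U)\right>\dt\\
	+\tfrac{q(q-1)}{2}\|\nablah U\|_{L^2}^{q-2}\|(-\Deltah)^{1/2}\sigma(U)\|_{L_2(\cU,L^2)}^2\,\dt
	+q\|\nablah U\|_{L^2}^{q-2}\left<\nablah U,\,\nablah\sigma(U)\,\dW\right>.
\end{multline*}
I would fix $K,N\in\N$ and work on $[0,t\wedge\tau_N\wedge\Upsilon_K]$ with $\Upsilon_K:=\tau_K^{w,q}\wedge\tau_K^{v,\infty}\wedge\tau_K^{\partial_z v,4}\wedge\tau_K^{\partial_z T,4}$, afterwards letting $N\to\infty$ by monotone convergence and $K\to\infty$ using the already established divergence of these stopping times.

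The linear term $\left<\Deltah U,F(U)\right>$ is treated as in the previous lemmata via the sub-linear growth and Lipschitz continuity of $F$; the correction term is handled by~\eqref{eq:sigmaGrowthL2H1} and the stochastic term by the Burkholder--Davis--Gundy inequality~\eqref{eq:bdg} together with~\eqref{eq:sigmaGrowthL2H1}, the contributions proportional to $\eta^2\|\nablah U\|_{L^2}^{q-2}\|\Deltah U\|_{L^2}^2$ being absorbed into the left-hand side thanks to $\nu>\eta^2(\tfrac{q-1}2+qc_{BDG}^2)$ and the remaining part being dominated by $c(1+\|U\|_{L^2}^q+\|U\|_{L^2_zH^1_{xy}}^q)$, which after a further Young inequality contributes a constant multiple of $\|\nablah U\|_{L^2}^q$ (kept in the Gronwall term) and the time-integrable quantity $c(1+\|U\|_{L^2}^q)$ (by Lemma~\ref{lemma:global_L2}).

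The heart of the proof is the nonlinear term $\left<\Deltah U,B(U,U)\right>$. I would integrate by parts in $(x,y)$ and in $z$ (using $v=\partial_z v=0$ on $\Gamma_l$, $w(v)=0$ on $\Gamma_b\cup\Gamma_u$ and $\partial_z w(v)=-\divh v$) and apply the two-dimensional Ladyzhenskaya and Gagliardo--Nirenberg inequalities, the $L^\infty$-bound on $v$ and, where $w(v)\partial_z U$ is involved, the estimates $\|\partial_z U\|_{L^2_zL^4_{xy}}\le c\|\partial_z U\|_{L^2}^{1/2}(\|\partial_z U\|_{L^2}+\|\nablah\partial_z U\|_{L^2})^{1/2}$ and $\|w(v)\|_{L^\infty_zL^4_{xy}}\le c\|\nablah v\|_{L^2}^{1/2}\|\Deltah v\|_{L^2}^{1/2}$ (one may alternatively start from~\eqref{eq:nonlinearDelta} in Proposition~\ref{prop:nonlinearityEstimates}). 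After the Young inequality this yields a bound of the form
\[
	q\|\nablah U\|_{L^2}^{q-2}\left|\left<\Deltah U,B(U,U)\right>\right|\le\varepsilon\|\nablah U\|_{L^2}^{q-2}\|\Deltah U\|_{L^2}^2+c_\varepsilon\big(1+\mathcal G(s)\big)\big(1+\|\nablah U\|_{L^2}^q\big)+\mathcal H(s),
\]
where $\mathcal H\in L^1(0,t\wedge\tau_N\wedge\Upsilon_K)$ and the coefficient
\[
	\mathcal G(s)=\|v\|_{L^\infty}^2+\|\partial_z v\|_{L^2}^4+\|\partial_z T\|_{L^2}^4+\|\partial_z v\|_{L^2}^2\|\nablah\partial_z v\|_{L^2}^2+\|\partial_z T\|_{L^2}^2\|\nablah\partial_z T\|_{L^2}^2
\]
is integrable in time on $[0,t\wedge\tau_N\wedge\Upsilon_K]$ by Lemmas~\ref{lemma:global_L2}, \ref{lemma:dz_v_L2}, \ref{lemma:dz_T_L2} and the definition of $\tau_K^{v,\infty}$; this is precisely why the auxiliary $\partial_z v$- and $\partial_z T$-bounds are used at the exponent~$4$, so that the mixed terms are controlled. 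Choosing $\varepsilon$ small, collecting all estimates, applying the stochastic Gronwall lemma~\ref{lemma:stoch_Gronwall} over arbitrary $0\le\tau_a\le\tau_b\le t\wedge\tau_N\wedge\Upsilon_K$, and then letting $N\to\infty$ and $K\to\infty$, gives
\[
	\sup_{s\in[0,t\wedge\xi)}\|\nablah U\|_{L^2}^q+\int_0^{t\wedge\xi}\|\nablah U\|_{L^2}^{q-2}\|\Deltah U\|_{L^2}^2\,\ds<\infty\qquad\PP\text{-a.s.}
\]
The convergence $\tau_K^{\nablah U,q}\to\infty$ then follows from Lemma~\ref{lemma:auxiliary}: its first claim applied to the supremum and the first integral, and its second claim with $\Psi(s)=s^{q/2}$ to include the term $\big(\int_0^{t\wedge\xi}\|\Deltah U\|_{L^2}^2\,\ds\big)^{q/2}$, exactly as at the end of the proof of Lemma~\ref{lemma:global_L2}.

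The main obstacle I anticipate is the nonlinear estimate: the nonlocal, $z$-differentiated term $w(v)\partial_z U$ must be split so that every coefficient multiplying $\|\nablah U\|_{L^2}^q$ is time-integrable along stopping times already shown to diverge — which in turn pins down the exponents needed in Lemmas~\ref{lemma:dz_v_L2}--\ref{lemma:dz_T_L2} and hence the integrability of $\partial_z v_0$, $\partial_z T_0$ granted by the assumptions of Theorem~\ref{thm:globalExistence}. The linear, correction and stochastic terms are routine adaptations of the corresponding steps of Lemma~\ref{lemma:galerkinestimate}.
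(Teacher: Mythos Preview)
Your proposal is correct and follows the same overall scheme as the paper (It\^{o} formula for $\|(-\Deltah)^{1/2}U\|_{L^2}^q$, handling of linear, correction and stochastic terms exactly as in Step~5 of Lemma~\ref{lemma:galerkinestimate}, stochastic Gronwall, then Lemma~\ref{lemma:auxiliary}), but your treatment of the nonlinear term is genuinely different from the paper's.

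You control the horizontal transport part by $\|v\|_{L^\infty}$ (feeding in Lemma~\ref{lemma:linfty}) and the $w(v)\partial_z U$ part via $\|w(v)\|_{L^\infty_z L^4_{xy}}\le c\|\nablah v\|_{L^2}^{1/2}\|\Deltah v\|_{L^2}^{1/2}$ and the Ladyzhenskaya inequality, which forces you to take $\Upsilon_K=\tau_K^{w,q}\wedge\tau_K^{v,\infty}\wedge\tau_K^{\partial_z v,4}\wedge\tau_K^{\partial_z T,4}$ so that your Gronwall coefficient $\mathcal G$ is time-integrable. The paper instead starts directly from \eqref{eq:nonlinearDelta}, then replaces the factor $\|U\|_{L^\infty_z L^4_{xy}}$ via the interpolation \eqref{eq:cltinterpolation}, $\|U\|_{L^\infty_z L^4_{xy}}\le c\|U\|_{H^1_z L^2_{xy}}^{1/2}\|U\|_{L^2_z H^1_{xy}}^{1/2}$, arriving at the Gronwall coefficient
\[
(1+\|U\|_{L^2}^2)(1+\|U\|_{H^1_zL^2_{xy}}^2)(1+\|U\|_{L^2_zH^1_{xy}}^2+\|U\|_{H^1_zH^1_{xy}}^2).
\]
This is time-integrable already along $\Upsilon_K=\tau_K^{w,2}\wedge\tau_K^{\partial_z v,2}\wedge\tau_K^{\partial_z T,2}$, so the paper neither invokes the $L^\infty$-bound of Lemma~\ref{lemma:linfty} here nor needs the $\partial_z$-stopping times at exponent~$4$. (The paper even remarks that the argument becomes simpler if one is willing to spend $\tau_K^{w,6}\wedge\tau_K^{\partial_z v,6}$.) Your route is perfectly valid and perhaps more transparent, since the $v\cdot\nablah U$ part is handled exactly as in Lemma~\ref{lemma:dz_v_L2}; the paper's route is more economical in that it keeps the chain of stopping-time dependencies shorter and uses only exponent-$2$ bounds on $\partial_z U$.
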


\begin{proof}
We only show the necessary changes in the estimates in Step 5 in Lemma \ref{lemma:galerkinestimate}, in particular changes in the estimates of the nonlinear term. From \eqref{eq:nonlinearDelta} and \eqref{eq:cltinterpolation}, we observe
\begin{align*}
	&\| \nablah U \|_{L^2}^{q-2} |\left< B(U, U), \Deltah U \right>| \\
	&\hspace{-0.3em}\quad \leq c \| \nablah U \|_{L^2}^{q-2} \| U \|_{L^\infty_z L^4_{xy}} \left( \| U \|_{L^2_z H^1_{xy}}^{1/2} \| U \|_{L^2_z H^2_{xy}}^{3/2} + \| U \|_{L^2_z H^1_{xy}}^{1/2} \| U \|_{H^1_z H^1_{xy}} \| U \|_{L^2_z H^2_{xy}}^{1/2} \right)\\
	&\hspace{-0.3em}\quad \leq \varepsilon \| U \|_{L^2_z H^2_{xy}}^2 \| \nablah U \|_{L^2}^{q-2} + c_\varepsilon \| U \|_{L^\infty_z L^4_{xy}}^4 \| U \|_{L^2_z H^1_{xy}}^2 \| \nablah U \|_{L^2}^{q-2}\\
	&\hspace{-0.3em}\quad \hphantom{\leq \ } + c_\varepsilon \| U \|_{L^\infty_z L^4_{xy}}^{4/3} \| U \|_{L^2_z H^1_{xy}}^{2/3} \| U \|_{H^1_z H^1_{xy}}^{4/3} \| \nablah U \|_{L^2}^{q-2}\\
	&\hspace{-0.3em}\quad \leq \varepsilon \| U \|_{L^2_z H^2_{xy}}^2 \| \nablah U \|_{L^2}^{q-2} + c_\varepsilon \| U \|_{H^1_z L^2_{xy}}^2 \| U \|_{L^2_z H^1_{xy}}^2 \| U \|_{L^2_z H^1_{xy}}^2 \| \nablah U \|_{L^2}^{q-2}\\
	&\hspace{-0.3em}\quad \hphantom{\leq \ } + c_\varepsilon \| U \|_{H^1_z L^2_{xy}}^{2/3} \| U \|_{L^2_z H^1_{xy}}^{4/3} \| U \|_{H^1_z H^1_{xy}}^{4/3} \| \nablah U \|_{L^2}^{q-2}\\
	&\hspace{-0.3em}\quad \leq \varepsilon \| \Deltah U \|_{L^2}^2 \| \nablah U \|_{L^2}^{q-2}\\
	&\hspace{-0.3em}\quad \hphantom{\leq \ } + c_\varepsilon \left( 1 + \| U \|_{L^2}^2 \right)\left( 1 + \| U \|_{H^1_z L^2_{xy}}^2 \right) \left( 1 + \| U \|_{L^2_z H^1_{xy}}^2 + \| U \|_{H^1_z H^1_{xy}}^2 \right)\\
	&\hspace{-0.3em}\quad  \hphantom{\leq + c_\varepsilon \ } \cdot \left( 1 + \| \nablah U \|_{L^2}^{q}\right).
\end{align*}
Accordingly, we deduce a suitable inequality for the stochastic Gronwall lemma \ref{lemma:stoch_Gronwall} for stopping times $0 \leq \tau_a \leq \tau_b \leq t \wedge \tau_N \wedge \Upsilon_K$ for $K, N \in \N$ and $\Upsilon_K = \tau_K^{w, 2} \wedge \tau_K^{\partial_z v, 2}$. The claim then follows similarly as in the above proofs.
\end{proof}

We remark that above Lemma can be established in an easier manner if one considers $\Upsilon_K = \tau_K^{w, 6} \wedge \tau_K^{\partial_z v, 6}$ instead of the stopping time $\Upsilon_K$ above.

The last auxiliary step is to bound $\|\partial_{zz}U\|_{L^2}$, which also follows similarly as the corresponding bound for the finite-dimensional approximations.

\begin{lemma}[$L^2$ bound for $\partial_{zz} U$]
\label{lemma:dzz_U_L2}
	Let $q\geq 2$, $U_0\in L^q(\Omega;H^2_zL^2)$ and $F\in L^q(\Omega;L^2_{loc}(0,\infty;H^2_zL^2)$.
	Then the stopping time $\tau_K^{\partial_{zz}U, q}$ defined for $K \in \N$ by
	\begin{align*}
		\tau_K^{\partial_{zz}U, q} = \inf \Bigg\lbrace s \geq 0 \mid & \sup_{r \in [0, s \wedge \xi)}\norm{\partial_{zz}U}_{L^2}^q  + \int_0^{s \wedge \xi} \norm{\partial_{zz}U}_{L^2}^{q-2} \norm{\nablah \partial_{zz}U}^2_{L^2} \dr\\
		&+\left( \int_{\tau_a}^{\tau_b}\|\nablah \partial_{zz}U\|_{L^2}^{2}\ds\right)^{q/2} \geq K \Bigg\rbrace
	\end{align*}
	satisfies $\tau_K^{\partial_{zz}U, q} \to \infty$ $\PP$-a.s.\ as $K \to \infty$.
\end{lemma}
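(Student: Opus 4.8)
The plan is to follow the same It\^{o}-formula-plus-stochastic-Gronwall strategy used for Lemmata \ref{lemma:dz_v_L2} and \ref{lemma:dz_T_L2}, now applied to the quantity $\|\partial_{zz} U\|_{L^2}^q$, exploiting the fact that the Galerkin projector $P_n$ and hence the abstract equation \eqref{eq:primeqgalerkinzz} respect the second vertical derivative because of the boundary conditions \eqref{eq:sigma1boundary}, \eqref{eq:sigma2boundary} and $w=0$ on $\Gamma_b\cup\Gamma_u$. First I would apply the It\^{o} formula to $t\mapsto\|\partial_{zz}U\|_{L^2}^q$ along \eqref{eq:primeqgalerkinzz} (for the finite-dimensional approximations, then pass to the limit as in Section \ref{sec:globalexistence}), using the cancellation $\left<\partial_{zz}B(U,\partial_{zz}U),\partial_{zz}U\right>=0$ built into Proposition \ref{prop:nonlinearityEstimateshigherorder}, to obtain
\[
	\d\!\|\partial_{zz}U\|_{L^2}^q + q\nu\|\partial_{zz}U\|_{L^2}^{q-2}\|(-\Deltah)^{1/2}\partial_{zz}U\|_{L^2}^2\dt \leq \sum_{i=1}^3 I_i^n\dt + I_4^n\dW,
\]
where $I_1^n$ collects the linear forcing and Coriolis/pressure contributions, $I_2^n$ the nonlinear term, $I_3^n$ the It\^{o} correction, and $I_4^n$ the stochastic integral.

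The key step is the treatment of the nonlinear term $I_2^n$. Unlike the situation in Lemma \ref{lemma:galerkinestimatemoreregular}, we now have the $L^\infty$ bound for $v$ from Lemma \ref{lemma:linfty} at our disposal, so I would re-derive the estimate \eqref{eq:nonlineardzz} keeping $\|v\|_{L^\infty}$ (rather than $\|U\|_{H^1_zL^4_{xy}}$ coming from a cut-off) as the prefactor: integrating by parts in the horizontal variables using $v=0$ on $\Gamma_l$ and in the vertical variable using $w=0$ on $\Gamma_b\cup\Gamma_u$, one gets a bound of the form
\[
	\left|\left<\partial_{zz}B(U,U),\partial_{zz}U\right>\right| \leq \varepsilon\|\nablah\partial_{zz}U\|_{L^2}^2 + c_\varepsilon\bigl(\|v\|_{L^\infty}^2 + \|U\|_{H^1_zH^1_{xy}}^2\bigr)\bigl(1+\|\partial_{zz}U\|_{L^2}^2\bigr),
\]
so that after multiplying by $\|\partial_{zz}U\|_{L^2}^{q-2}$ the gradient part is absorbed by the dissipation and the remainder feeds into the Gronwall argument. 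The It\^{o} correction $I_3^n$ is handled via \eqref{eq:sigmaGrowthH2L2}, moving the $\eta^2\|(-\Deltah)^{1/2}\partial_{zz}U\|_{L^2}^2$ contribution to the left-hand side (here we again need $\nu$ sufficiently large relative to $\eta^2$), and the stochastic term $I_4^n$ is estimated by the Burkholder-Davis-Gundy inequality \eqref{eq:bdg} exactly as in Step 3 of Lemma \ref{lemma:galerkinestimate}.

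Collecting the estimates, I would localize with the stopping times $\Upsilon_K = \tau_K^{w,2}\wedge\tau_K^{v,\infty}\wedge\tau_K^{\partial_z v,2}\wedge\tau_K^{\nablah U,2}$ (so that $\|v\|_{L^\infty}^2$, $\|U\|_{L^2}^q$, $\|\partial_z v\|_{L^2}$, $\|\Deltah U\|_{L^2}$ are integrable in time on $[0,\Upsilon_K\wedge\xi)$ and all these converge to $\infty$ a.s.\ by the previous lemmata), obtain on arbitrary substopping times $0\leq\tau_a\leq\tau_b\leq t\wedge\tau_N\wedge\Upsilon_K$ an inequality
\[
	\E\Bigl[\tfrac12\sup_{[\tau_a,\tau_b]}\|\partial_{zz}U\|_{L^2}^q + c\int_{\tau_a}^{\tau_b}\|\partial_{zz}U\|_{L^2}^{q-2}\|\nablah\partial_{zz}U\|_{L^2}^2\Bigr] \leq c\E\|\partial_{zz}U(\tau_a)\|_{L^2}^q + \E\int_{\tau_a}^{\tau_b}\Phi(s)\bigl(1+\|\partial_{zz}U\|_{L^2}^q\bigr)\ds + (\text{forcing}),
\]
with $\Phi\in L^1_{\mathrm{loc}}$ a.s., apply the stochastic Gronwall lemma \ref{lemma:stoch_Gronwall} and pass $N\to\infty$ to deduce $\sup_{[0,t\wedge\xi)}\|\partial_{zz}U\|_{L^2}^q + \int_0^{t\wedge\xi}\|\partial_{zz}U\|_{L^2}^{q-2}\|\nablah\partial_{zz}U\|_{L^2}^2\ds<\infty$ a.s.; the conclusion about $\tau_K^{\partial_{zz}U,q}$ then follows from Lemma \ref{lemma:auxiliary} exactly as in the proof of Lemma \ref{lemma:global_L2}. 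The main obstacle will be getting the nonlinear estimate in a form where every stray term is controlled by quantities already known to be time-integrable — in particular handling the terms $\left<\divh\partial_z v\,\partial_z T,\partial_{zz}T\right>$ and $\left<w(\partial_z v)\partial_{zz}T,\partial_{zz}T\right>$ type contributions, which require the $L^\infty$ bound on $v$ together with the $H^1_zH^1_{xy}$ bound on $U$ from Lemmata \ref{lemma:dz_v_L2}, \ref{lemma:dz_T_L2} and \ref{lemma:nablah_U_L2}, and making sure no vertical-dissipation-free term of order strictly higher than what the $\nablah\partial_{zz}U$ dissipation can absorb is left behind.
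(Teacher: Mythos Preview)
Your overall Itô–Gronwall–Lemma \ref{lemma:auxiliary} skeleton is right, but you take an unnecessarily roundabout path for the nonlinear term. You propose to \emph{re-derive} the estimate \eqref{eq:nonlineardzz} with $\|v\|_{L^\infty}$ as the prefactor and then localize using $\tau_K^{v,\infty}$ from Lemma \ref{lemma:linfty}. The paper does something much simpler: it reuses \eqref{eq:nonlineardzz} exactly as stated, with $\|U\|_{H^1_zL^4_{xy}}$ as the prefactor, and simply observes that without the cut-off $\tilde\theta$ the constant $\mu^4$ in the $I_2^n$-estimate of Lemma \ref{lemma:galerkinestimatemoreregular} becomes $\|U\|_{H^1_zL^4_{xy}}^4$. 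Then the Ladyzhenskaya-type interpolation
\[
	\|U\|_{H^1_zL^4_{xy}}^4 \leq c\,\|U\|_{H^1_zL^2_{xy}}^2\,\|U\|_{H^1_zH^1_{xy}}^2
\]
makes this coefficient time-integrable once one localizes with $\tau_K^{\partial_z v,q}$ (together with the corresponding $\partial_z T$ and $L^2$ stopping times), so the stochastic Gronwall lemma applies directly. No $L^\infty$ bound on $v$ is needed for this step at all.

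There is also a soft spot in your proposed re-derivation: for the temperature component the term $\langle \partial_{zz}v\cdot\nablah T,\partial_{zz}T\rangle$, after integration by parts, produces a factor of $T$ (not $v$), so the prefactor cannot be purely $\|v\|_{L^\infty}$; you would still end up needing $\|T\|_{L^\infty_zL^4_{xy}}$ or similar, which pushes you back towards $\|U\|_{H^1_zL^4_{xy}}$ anyway. Your inclusion of $\|U\|_{H^1_zH^1_{xy}}^2$ in the Gronwall factor would likely rescue this, but the paper's route avoids the issue entirely by never leaving the $H^1_zL^4_{xy}$ framework already set up in Proposition \ref{prop:nonlinearityEstimateshigherorder}.
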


\begin{proof}
	We can proceed as in Lemma \ref{lemma:galerkinestimatemoreregular} by replacing in the estimate for the $I^n_2$ the factor $\mu^4$ by $\| U \|_{H^1_zL^4_{xy}}^4$. Using
	\begin{align*}
		\| U \|_{H^1_zL^4_{xy}}^4\leq c\| U \|_{H^1_zL^2_{xy}}^2\| U \|_{H^1_zH^1_{xy}}^2
	\end{align*}
	we can argue as in the previous proof by taking stopping times $0 \leq \tau_a \leq \tau_b \leq t \wedge \tau_N \wedge \tau_K^{\partial_z v, q}$.
\end{proof}

\subsection{Proof of Theorem \ref{thm:globalExistence}}
\label{sect:globalSolutions_proof}

The proof closely follows the proof of \cite[Theorem 3.2]{DebusscheGlattholtzTemamZiane2012} and we include it mainly for completeness. Let $\rho_K$ be the stopping time (collected from Lemmata \ref{lemma:dz_v_L2}, \ref{lemma:dz_T_L2} and \ref{lemma:nablah_U_L2}) defined for $K \in \N$ by
\[
	\rho_K = \tau_K^{w, 2} \wedge \tau_K^{v, \infty} \wedge \tau_K^{\partial_z v, 2} \wedge \tau_K^{\partial_z v, 4}.
\]
Our aim is to show that $\rho_K \leq \xi$ for all $K \in \N$. Since $\rho_K \to \infty$ as $K \to \infty$ a.s.\ by the results above, this will yield global existence.

From the proofs of Lemmata \ref{lemma:dz_v_L2}, \ref{lemma:dz_T_L2} and \ref{lemma:nablah_U_L2} (see the final part of the proof of Lemma \ref{lemma:vtildevbar} for a detailed explanation), we get
\begin{equation}
	\label{eq:glob_ex_contradiction}
	\sup_{s \in [0, t \wedge \rho_K \wedge \xi)} \| U \|_{H^1}^2 + \int_{0}^{t \wedge \rho_K \wedge \xi} \| U \|_{L^2_z H^2_{xy}}^2 + \| U \|_{H^1_z H^1_{xy}}^2 \ds < \infty \quad \PP\text{-a.s.}
\end{equation}
For contradiction, let $\PP(\lbrace \rho_K > \xi \rbrace) > 0$ for some $K \in \N$. Then, since $\lbrace \rho_K > \xi \rbrace = \cup_{t \geq 0} \lbrace \rho_K \wedge t > \xi \rbrace$, there exists $t \geq 0$ such that $\PP(\lbrace \rho_K \wedge t > \xi \rbrace) > 0$. By Definition \ref{def:pathwise_solution} and \eqref{eq:blowup} in particular, this leads to
\begin{multline*}
	\sup_{s \in [0, t \wedge \rho_K \wedge \xi)} \| U \|_{H^1}^2 + \int_{0}^{t \wedge \rho_K \wedge \xi} \| U \|_{L^2_z H^2_{xy}}^2 + \| U \|_{H^1_z H^1_{xy}}^2 \ds\\
	\geq \sup_{s \in [0, t \wedge \xi)} \| U \|_{H^1}^2 + \int_{0}^{t \wedge \xi} \| U \|_{L^2_z H^2_{xy}}^2 + \| U \|_{H^1_z H^1_{xy}}^2 \ds = \infty
\end{multline*}
on a set of positive measure, which contradicts \eqref{eq:glob_ex_contradiction}. This concludes the proof of Theorem \ref{thm:globalExistence}.

\appendix

\section{Auxiliary results}

\begin{lemma}
\label{lemma:auxiliary}
Let $f: \Omega \times [0, \infty) \to [0, \infty]$ be such that $f(\cdot, \omega)$ is non-decreasing and continuous for a.a.\ $\omega \in \Omega$ and $f(t, \omega)$ is measurable and a.s.\ finite for all $t \geq 0$. For $K \in \N$, let $\rho_K = \inf \lbrace s \geq 0 \mid f(s, \omega) \geq K \rbrace$. Then $\rho_K \to \infty$ as $K \to \infty$ almost surely.

Moreover, if $\Phi: [0, \infty) \to [0, \infty)$ is a non-decreasing function such that $\Phi(s) \to \infty$ as $s \to \infty$, then $\rho_K^\Phi \to \infty$ as $K \to \infty$ almost surely, where $\rho_K^\Phi = \lbrace s \geq 0 \mid \Phi(f(s, \omega)) \geq K \rbrace$.
\end{lemma}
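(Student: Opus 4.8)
The statement is elementary, so the plan is short. The first claim follows by monotonicity of $f$ in the time variable: for each fixed $\omega$ in the full-measure set where $f(\cdot, \omega)$ is non-decreasing, continuous, and finite for all $t \geq 0$, the sequence $\rho_K(\omega)$ is non-decreasing in $K$, hence has a limit $\rho_\infty(\omega) \in [0, \infty]$. I would argue by contradiction: if $\rho_\infty(\omega) = t_0 < \infty$, then $\rho_K(\omega) \leq t_0$ for all $K$, so by the definition of the infimum and continuity of $f(\cdot, \omega)$ one has $f(t_0, \omega) \geq K$ for every $K$ (using that $f(\rho_K(\omega), \omega) \geq K$ when $\rho_K(\omega)$ is finite, together with monotonicity to push the evaluation up to $t_0$), forcing $f(t_0, \omega) = \infty$, which contradicts the assumed a.s.\ finiteness of $f(t_0, \cdot)$. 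Hence $\rho_\infty = \infty$ almost surely.

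For the second claim, observe that since $\Phi$ is non-decreasing, the composition $s \mapsto \Phi(f(s, \omega))$ is again non-decreasing in $s$, and it is continuous wherever $\Phi$ is continuous; however, $\Phi$ need only be non-decreasing, so I would instead avoid relying on continuity of the composition and argue directly. For fixed $\omega$ in the good set and fixed $K$, since $\Phi(s) \to \infty$ there is $M_K$ with $\Phi(s) \geq K$ for all $s \geq M_K$; then $\Phi(f(s,\omega)) \geq K$ whenever $f(s, \omega) \geq M_K$, i.e.\ for $s \geq \rho_{\lceil M_K \rceil}(\omega)$ (interpreting $\rho$ at the ceiling of $M_K$). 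Consequently $\rho_K^\Phi(\omega) \geq \rho_{\lceil M_K \rceil}(\omega) \to \infty$ as $K \to \infty$, using the first claim and that $M_K \to \infty$ (which holds because $\Phi$ being finite-valued and non-decreasing with $\Phi(s)\to\infty$ forces $M_K \to \infty$). This gives $\rho_K^\Phi \to \infty$ almost surely.

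The only mild subtlety — and the one point I would be careful to state correctly — is the handling of the infimum at $\rho_K$ when $f$ jumps: one wants $f(\rho_K(\omega), \omega) \geq K$, which follows from continuity of $f(\cdot, \omega)$ together with the definition of the infimum of the closed set $\lbrace s : f(s, \omega) \geq K \rbrace$; this is where the continuity hypothesis on $f(\cdot,\omega)$ is used. No real obstacle arises; the argument is a routine monotonicity-plus-continuity exercise.

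\begin{proof}
Fix $\omega$ in the full-measure set on which $f(\cdot, \omega)$ is non-decreasing and continuous and on which $f(t, \omega) < \infty$ for all $t \in \Q \cap [0, \infty)$; by monotonicity $f(t, \omega) < \infty$ for all $t \geq 0$. Since $f(\cdot, \omega)$ is non-decreasing, the map $K \mapsto \rho_K(\omega)$ is non-decreasing, so $\rho_\infty(\omega) := \lim_{K \to \infty} \rho_K(\omega)$ exists in $[0, \infty]$. Suppose $\rho_\infty(\omega) = t_0 < \infty$. For every $K$ with $\rho_K(\omega) < \infty$, continuity of $f(\cdot, \omega)$ and the definition of $\rho_K(\omega)$ as an infimum give $f(\rho_K(\omega), \omega) \geq K$, and since $\rho_K(\omega) \leq t_0$, monotonicity yields $f(t_0, \omega) \geq K$. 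As this holds for all such $K$ and $\rho_K(\omega) < \infty$ for all large $K$ (otherwise $\rho_\infty(\omega) = \infty$), we conclude $f(t_0, \omega) = \infty$, contradicting finiteness. Hence $\rho_\infty(\omega) = \infty$, proving $\rho_K \to \infty$ a.s.

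For the second part, fix such an $\omega$ and $K \in \N$. Since $\Phi$ is non-decreasing and $\Phi(s) \to \infty$, the quantity $M_K := \inf \lbrace s \geq 0 \mid \Phi(s) \geq K \rbrace$ is finite and $M_K \to \infty$ as $K \to \infty$. If $f(s, \omega) > M_K$ then $\Phi(f(s, \omega)) \geq K$ by monotonicity of $\Phi$; therefore $\rho_K^\Phi(\omega) \geq \inf \lbrace s \geq 0 \mid f(s, \omega) \geq M_K \rbrace \geq \rho_{\lfloor M_K \rfloor}(\omega)$. Letting $K \to \infty$ and using $M_K \to \infty$ together with the first part, we obtain $\rho_K^\Phi(\omega) \to \infty$.
\end{proof}
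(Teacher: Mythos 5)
Your proof of the first claim is correct and is essentially the same elementary monotonicity-plus-continuity argument that the paper (by citation) relies on. Your proof of the second claim has a genuine logical slip in the key step, although the repaired argument is fine and gives a natural alternative to the paper's direct contradiction.

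The slip: you prove the implication ``$f(s,\omega) > M_K \Rightarrow \Phi(f(s,\omega)) \geq K$,'' i.e.\ the set inclusion $\{s : f(s,\omega) > M_K\} \subseteq \{s : \Phi(f(s,\omega)) \geq K\}$. But this gives $\rho_K^\Phi(\omega) = \inf\{s : \Phi(f(s,\omega)) \geq K\} \leq \inf\{s : f(s,\omega) > M_K\}$, an \emph{upper} bound on $\rho_K^\Phi$, which is useless here. What you actually need is the reverse inclusion $\{s : \Phi(f(s,\omega)) \geq K\} \subseteq \{s : f(s,\omega) \geq M_K\}$, which follows from the converse implication ``$\Phi(r) \geq K \Rightarrow r \geq M_K$'' — and this converse is immediate from $M_K$ being the infimum of $\{r : \Phi(r) \geq K\}$. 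With that corrected implication, the chain $\rho_K^\Phi(\omega) \geq \inf\{s : f(s,\omega) \geq M_K\} \geq \rho_{\lfloor M_K \rfloor}(\omega) \to \infty$ goes through as you intend. So the ``therefore'' in your text is a non sequitur as written, though the result it reaches is correct and the fix is a one-line change.

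On the comparison with the paper: for the second claim the paper argues directly by contradiction, writing $f(\rho_K^\Phi, \omega) \geq \Phi^{-1}(K)$ (implicitly a generalized inverse, which is your $M_K$) and concluding $f$ would be unbounded on a finite random interval, contradicting continuity. Your approach instead reduces the second claim to the first via the generalized inverse, which is arguably cleaner because it reuses the already-proved first claim rather than reproving compactness-of-the-interval from scratch. Both routes hinge on the same object, namely $M_K = \inf\{r : \Phi(r) \geq K\}$, and the same observation that $M_K \to \infty$.
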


\begin{proof}
The first claim is established in e.g.\ \cite[Lemma 4.1]{Brzezniak2020}, see also \cite[Proposition A.1]{GlattholtzTemam2011}. To prove the second claim, we argue by contradiction. Assuming $\rho := \lim_K \rho^\Phi_K < \infty$ on a measurable set $\Omega_0 \subseteq \Omega$ with $\PP(\Omega_0) > 0$, we observe $f(\rho_K, \omega) \geq \Phi^{-1}(K)$. In particular, $f(\cdot, \omega)$ is unbounded on the (random) interval $[0, \rho]$ a.s.\ on $\Omega_0$. On the other hand, from $\rho_K \to \infty$, we deduce that $f(\cdot, \omega)$ is a.s.\ bounded on $[0, \rho]$, a contradiction.
\end{proof}

\begin{lemma}[stochastic Gronwall lemma, {\cite[Lemma 5.3]{GlattHoltz2009}}]
\label{lemma:stoch_Gronwall}
Let $t>0$ and let $X, Y, Z, R: [0, \infty) \times \Omega \to [0, \infty)$ be stochastic processes on a probability space $(\Omega, \cF, \PP)$. Let $\tau: \Omega \to [0, t)$ be a stopping time such that $\E \int_0^\tau RX + Z \ds < \infty$ and $\int_0^\tau R \ds < \kappa$ $\PP$-a.s.\ for some $\kappa > 0$. Assume that there exists a constant $c_0 > 0$ such that
\[
	\E \left[ \sup_{s \in [\tau_a, \tau_b]} X + \int_{\tau_a}^{\tau_b} Y \ds \right] \leq c_0 \E \left[ X(\tau_a) + \int_{\tau_a}^{\tau_b} RX + Z \ds \right]
\]
for all stopping times $\tau_a, \tau_b$ satisfying $0 \leq \tau_a \leq \tau_b \leq \tau$. Then
\[
	\E \left[ \sup_{s \in [0, \tau]} X + \int_0^\tau Y \ds \right] \leq c_{c_0, t, \kappa} \E \left[ X(0) + \int_0^\tau Z \ds \right].
\]
\end{lemma}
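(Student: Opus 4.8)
The plan is to iterate the hypothesized local estimate over a random but \emph{deterministically} bounded partition of $[0,\tau]$, the partition being chosen so that $\int R\,\ds$ is small on each sub-interval; this smallness is exactly what allows the offending term $\int RX\,\ds$ to be absorbed into the $\sup$ on the left-hand side. We may assume $\E X(0) < \infty$, since otherwise the right-hand side of the claimed inequality is infinite and there is nothing to prove.

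First I would fix $\delta = \tfrac{1}{2c_0}$ and define, for $k \in \N$,
\[
	\tau_k = \inf\left\{ s \geq 0 \ \Big|\ \int_0^s R\,\dr \geq k\delta \right\} \wedge \tau, \qquad \tau_0 = 0.
\]
Since $s \mapsto \int_0^s R\,\dr$ is non-negative, non-decreasing and (by local integrability of $R$) continuous, and the filtration is right-continuous, each $\tau_k$ is a stopping time, $0 = \tau_0 \leq \tau_1 \leq \cdots$, $\tau_k \uparrow \tau$, and $\int_{\tau_k}^{\tau_{k+1}} R\,\dr \leq \delta$ for every $k$. The crucial observation is that $\int_0^\tau R\,\dr < \kappa$ $\PP$-a.s.\ forces $\tau_k = \tau$ for all $k \geq m := \lceil \kappa/\delta \rceil = \lceil 2c_0\kappa \rceil$, so the partition has at most $m$ non-degenerate blocks and $m$ is a deterministic number depending only on $c_0$ and $\kappa$.

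Next I would apply the hypothesis on each block $[\tau_k, \tau_{k+1}]$. Bounding $X \leq \sup_{s\in[\tau_k,\tau_{k+1}]} X$ inside $\int_{\tau_k}^{\tau_{k+1}} RX\,\ds$ and using $\int_{\tau_k}^{\tau_{k+1}} R\,\dr \leq \delta = \tfrac{1}{2c_0}$ gives
\[
	\E\left[ \sup_{s\in[\tau_k,\tau_{k+1}]} X + \int_{\tau_k}^{\tau_{k+1}} Y\,\ds \right] \leq c_0\,\E X(\tau_k) + \tfrac12 \E\sup_{s\in[\tau_k,\tau_{k+1}]} X + c_0\, \E\int_{\tau_k}^{\tau_{k+1}} Z\,\ds.
\]
One checks inductively that all these expectations are finite (base case $\E X(0) < \infty$; the step uses that $\E\int_{\tau_k}^{\tau_{k+1}} RX+Z\,\ds \leq \E\int_0^\tau RX+Z\,\ds < \infty$), which legitimizes absorbing the $\tfrac12$-term and yields
\[
	\E\left[ \sup_{s\in[\tau_k,\tau_{k+1}]} X + \int_{\tau_k}^{\tau_{k+1}} Y\,\ds \right] \leq 2c_0\,\E X(\tau_k) + 2c_0\, \E\int_{\tau_k}^{\tau_{k+1}} Z\,\ds,
\]
and in particular $\E X(\tau_{k+1}) \leq 2c_0\,\E X(\tau_k) + 2c_0\,\E\int_{\tau_k}^{\tau_{k+1}} Z\,\ds$. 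A deterministic discrete Gronwall iteration over $k=0,\dots,m-1$ then gives $\E X(\tau_k) \leq (2c_0)^k\big(\E X(0) + \E\int_0^\tau Z\,\ds\big)$.

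Finally I would sum the block estimates using $\sup_{s\in[0,\tau]} X \leq \sum_{k=0}^{m-1}\sup_{s\in[\tau_k,\tau_{k+1}]} X$ and $\int_0^\tau Y\,\ds = \sum_{k=0}^{m-1}\int_{\tau_k}^{\tau_{k+1}} Y\,\ds$ to obtain
\[
	\E\left[ \sup_{s\in[0,\tau]} X + \int_0^\tau Y\,\ds \right] \leq \sum_{k=0}^{m-1}\Big( 2c_0\,\E X(\tau_k) + 2c_0\,\E\int_{\tau_k}^{\tau_{k+1}} Z\,\ds \Big) \leq c_{c_0,\kappa}\left( \E X(0) + \E\int_0^\tau Z\,\ds \right),
\]
with $c_{c_0,\kappa} = 2c_0\big(1 + \sum_{k=0}^{m-1}(2c_0)^k\big)$ depending only on $c_0$ and $\kappa$ (the parameter $t$ enters only through ensuring $\tau$ is a.s.\ finite, so all integrals are well defined). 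The only genuinely delicate point — and the place where the hypothesis $\int_0^\tau R\,\dr < \kappa$ $\PP$-a.s.\ (not merely in expectation) is indispensable — is that although the partition $(\tau_k)$ is random, the number of non-degenerate blocks is controlled by the deterministic bound $m = \lceil 2c_0\kappa\rceil$; one must also be careful to propagate the a priori finiteness of the relevant expectations along the iteration so that the absorption step is valid.
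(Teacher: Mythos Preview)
Your argument is correct and is precisely the standard proof of this result: partition $[0,\tau]$ via stopping times so that $\int R\,\ds$ is small on each block, absorb, and iterate over the deterministically bounded number of blocks. The paper itself does not give a proof of this lemma; it simply quotes it from \cite[Lemma 5.3]{GlattHoltz2009}, and your write-up reproduces that argument faithfully (in fact your constant depends only on $c_0$ and $\kappa$, which is slightly sharper than the $c_{c_0,t,\kappa}$ stated).
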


\end{document}